\documentclass[12pt]{amsart}
\usepackage{mathrsfs}
\usepackage{cases}
\usepackage{epic}
\usepackage{amsfonts}
\usepackage{tikz}
\usepackage{tikz-cd}
\usepackage{graphicx}
\usepackage{amsmath}
\usepackage{amssymb, upgreek}
\usepackage{bm}
\usepackage{latexsym,todonotes}
\usepackage{pdflscape}
\usepackage[all]{xypic}
\usepackage[all]{xy}
\usepackage{tikz-cd}
\usepackage{color}
\usepackage{colordvi}
\usepackage{multicol}
\usepackage{mathtools}
\usepackage[normalem]{ulem}
\usepackage[linktocpage=true]{hyperref}
\hypersetup{colorlinks,linkcolor=blue,urlcolor=cyan,citecolor=blue}
\usepackage{graphicx}
\NewDocumentCommand{\sslash}{s}{%
	\IfBooleanTF{#1}
	{\big/\mkern-7mu\big/}
	{/\mkern-6mu/}%
}
\makeatletter
\newsavebox{\@brx}
\newcommand{\llangle}[1][]{\savebox{\@brx}{\(\m@th{#1\langle}\)}%
	\mathopen{\copy\@brx\kern-0.5\wd\@brx\usebox{\@brx}}}
\newcommand{\rrangle}[1][]{\savebox{\@brx}{\(\m@th{#1\rangle}\)}%
	\mathclose{\copy\@brx\kern-0.5\wd\@brx\usebox{\@brx}}}
\makeatother

\tikzcdset{scale cd/.style={every label/.append style={scale=#1},
		cells={nodes={scale=#1}}}}

\DeclareMathAlphabet{\mathbbb}{U}{bbold}{m}{n}
\DeclareMathOperator{\dimv}{\underline{\dim}}

\makeatletter

\newcommand{\Rmnum}[1]{\textup{\expandafter\@slowromancap\romannumeral #1@}}
\makeatother

\allowdisplaybreaks

\begin{document}
	\input xy
	\xyoption{all}
	\newcommand{\iLa}{\Lambda^{\imath}}
	\newcommand{\iadd}{\operatorname{iadd}\nolimits}
	\renewcommand{\mod}{\operatorname{mod}\nolimits}
	\newcommand{\fproj}{\operatorname{f.proj}\nolimits}
	\newcommand{\Fac}{\operatorname{Fac}\nolimits}
	\newcommand{\ci}{{\I}_{\btau}}
	\newcommand{\proj}{\operatorname{proj}\nolimits}
	\newcommand{\inj}{\operatorname{inj}\nolimits}
	\newcommand{\rad}{\operatorname{rad}\nolimits}
	\newcommand{\Span}{\operatorname{Span}\nolimits}
	\newcommand{\soc}{\operatorname{soc}\nolimits}
	\newcommand{\ind}{\operatorname{inj.dim}\nolimits}
	\newcommand{\Ginj}{\operatorname{Ginj}\nolimits}
	\newcommand{\res}{\operatorname{res}\nolimits}
	\newcommand{\np}{\operatorname{np}\nolimits}
	\newcommand{\Mor}{\operatorname{Mor}\nolimits}
	\newcommand{\Mod}{\operatorname{Mod}\nolimits}
	\newcommand{\End}{\operatorname{End}\nolimits}
	\newcommand{\lf}{\operatorname{l.f.}\nolimits}
	\newcommand{\Iso}{\operatorname{Iso}\nolimits}
	\newcommand{\Aut}{\operatorname{Aut}\nolimits}
	\newcommand{\Rep}{\operatorname{Rep}\nolimits}
	
	\newcommand{\colim}{\operatorname{colim}\nolimits}
	\newcommand{\gldim}{\operatorname{gl.dim}\nolimits}
	\newcommand{\cone}{\operatorname{cone}\nolimits}
	\newcommand{\rep}{\operatorname{rep}\nolimits}
	\newcommand{\Ext}{\operatorname{Ext}\nolimits}
	\newcommand{\Tor}{\operatorname{Tor}\nolimits}
	\newcommand{\Hom}{\operatorname{Hom}\nolimits}
	\newcommand{\Top}{\operatorname{top}\nolimits}
	\newcommand{\Coker}{\operatorname{Coker}\nolimits}
	\newcommand{\thick}{\operatorname{thick}\nolimits}
	\newcommand{\rank}{\operatorname{rank}\nolimits}
	\newcommand{\Gproj}{\operatorname{Gproj}\nolimits}
	\newcommand{\Len}{\operatorname{Length}\nolimits}
	\newcommand{\RHom}{\operatorname{RHom}\nolimits}
	\renewcommand{\deg}{\operatorname{deg}\nolimits}
	\renewcommand{\Im}{\operatorname{Im}\nolimits}
	\newcommand{\Ker}{\operatorname{Ker}\nolimits}
	\newcommand{\Coh}{\operatorname{Coh}\nolimits}
	\newcommand{\Id}{\operatorname{Id}\nolimits}
	\newcommand{\Qcoh}{\operatorname{Qch}\nolimits}
	\newcommand{\CM}{\operatorname{CM}\nolimits}
	\newcommand{\sgn}{\operatorname{sgn}\nolimits}
	\newcommand{\utMH}{\operatorname{\cm\ch(\iLa)}\nolimits}
	\newcommand{\GL}{\operatorname{GL}}
	\newcommand{\Perv}{\operatorname{Perv}}
	
	\newcommand{\IC}{\operatorname{IC}}
	\def \hU{\widehat{\U}}
	\def \hUi{\widehat{\U}^\imath}
	\newcommand{\bb}{\psi_*}
	\newcommand{\bvs}{{\boldsymbol{\varsigma}}}
	\def \ba{\mathbf{a}}
	\newcommand{\vs}{\varsigma}
	\def \bfk {\mathbf{k}}

	\def \bd{\mathbf{d}}
	\newcommand{\e}{{\bf 1}}
	\newcommand{\EE}{E^*}
	\newcommand{\dbl}{\operatorname{dbl}\nolimits}
	\newcommand{\ga}{\gamma}
	\newcommand{\tM}{\cm\widetilde{\ch}}
	\newcommand{\la}{\lambda}
	
	\newcommand{\For}{\operatorname{{\bf F}or}\nolimits}
	\newcommand{\coker}{\operatorname{Coker}\nolimits}
	\newcommand{\rankv}{\operatorname{\underline{rank}}\nolimits}
	\newcommand{\diag}{{\operatorname{diag}\nolimits}}
	\newcommand{\swa}{{\operatorname{swap}\nolimits}}
	\newcommand{\supp}{{\operatorname{supp}}}
	
	\renewcommand{\Vec}{{\operatorname{Vec}\nolimits}}
	\newcommand{\pd}{\operatorname{proj.dim}\nolimits}
	\newcommand{\gr}{\operatorname{gr}\nolimits}
	\newcommand{\id}{\operatorname{id}\nolimits}
	\newcommand{\aut}{\operatorname{Aut}\nolimits}
	\newcommand{\Gr}{\operatorname{Gr}\nolimits}
	
	\newcommand{\pdim}{\operatorname{proj.dim}\nolimits}
	\newcommand{\idim}{\operatorname{inj.dim}\nolimits}
	\newcommand{\Gd}{\operatorname{G.dim}\nolimits}
	\newcommand{\Ind}{\operatorname{Ind}\nolimits}
	\newcommand{\add}{\operatorname{add}\nolimits}
	\newcommand{\pr}{\operatorname{pr}\nolimits}
	\newcommand{\oR}{\operatorname{R}\nolimits}
	\newcommand{\oL}{\operatorname{L}\nolimits}
	\def \brW{\mathrm{Br}(W_\btau)}
	\newcommand{\Perf}{{\mathfrak Perf}}
	\newcommand{\cc}{{\mathcal C}}
	\newcommand{\gc}{{\mathcal GC}}
	\newcommand{\ce}{{\mathcal E}}
	\newcommand{\calI}{{\mathcal I}}
	\newcommand{\cs}{{\mathcal S}}
	\newcommand{\cf}{{\mathcal F}}
	\newcommand{\cx}{{\mathcal X}}
	\newcommand{\cy}{{\mathcal Y}}
	\newcommand{\ct}{{\mathcal T}}
	\newcommand{\cu}{{\mathcal U}}
	\newcommand{\cv}{{\mathcal V}}
	\newcommand{\cn}{{\mathcal N}}
	\newcommand{\mcr}{{\mathcal R}}
	\newcommand{\ch}{{\mathcal H}}
	\newcommand{\ca}{{\mathcal A}}
	\newcommand{\cb}{{\mathcal B}}
	\newcommand{\cj}{{\mathcal J}}
	\newcommand{\cl}{{\mathcal L}}
	\newcommand{\cm}{{\mathcal M}}
	\newcommand{\cp}{{\mathcal P}}
	\newcommand{\cg}{{\mathcal G}}
	\newcommand{\cw}{{\mathcal W}}
	\newcommand{\co}{{\mathcal O}}
	\newcommand{\cq}{{\mathcal Q}}
	\newcommand{\cd}{{\mathcal D}}
	\newcommand{\ck}{{\mathcal K}}
	\newcommand{\calr}{{\mathcal R}}
	\newcommand{\cz}{{\mathcal Z}}
	\newcommand{\ol}{\overline}
	\newcommand{\ul}{\underline}
	\newcommand{\st}{[1]}
	\newcommand{\ow}{\widetilde}
	\renewcommand{\P}{\mathbf{P}}
	\newcommand{\pic}{\operatorname{Pic}\nolimits}
	\newcommand{\Spec}{\operatorname{Spec}\nolimits}
	\newcommand{\Fr}{\mathrm{Fr}}
	\newcommand{\Gp}{\mathrm{Gp}}

	%Theorem for the introduciton
	\newtheorem{innercustomthm}{{\bf Theorem}}
	\newenvironment{customthm}[1]
	{\renewcommand\theinnercustomthm{#1}\innercustomthm}
	{\endinnercustomthm}
	
	\newtheorem{innercustomcor}{{\bf Corollary}}
	\newenvironment{customcor}[1]
	{\renewcommand\theinnercustomcor{#1}\innercustomcor}
	{\endinnercustomthm}
	
	\newtheorem{innercustomprop}{{\bf Proposition}}
	\newenvironment{customprop}[1]
	{\renewcommand\theinnercustomprop{#1}\innercustomprop}
	{\endinnercustomthm}
	
	\newtheorem{theorem}{Theorem}[section]
	\newtheorem{acknowledgement}[theorem]{Acknowledgement}
	\newtheorem{algorithm}[theorem]{Algorithm}
	\newtheorem{axiom}[theorem]{Axiom}
	\newtheorem{case}[theorem]{Case}
	\newtheorem{claim}[theorem]{Claim}
	\newtheorem{conclusion}[theorem]{Conclusion}
	\newtheorem{condition}[theorem]{Condition}
	\newtheorem{conjecture}[theorem]{Conjecture}
	\newtheorem{construction}[theorem]{Construction}
	\newtheorem{corollary}[theorem]{Corollary}
	\newtheorem{criterion}[theorem]{Criterion}
	\newtheorem{definition}[theorem]{Definition}
	\newtheorem{example}[theorem]{Example}
	\newtheorem{assumption}[theorem]{Assumption}
	\newtheorem{lemma}[theorem]{Lemma}
	\newtheorem{notation}[theorem]{Notation}
	\newtheorem{problem}[theorem]{Problem}
	\newtheorem{proposition}[theorem]{Proposition}
	\newtheorem{solution}[theorem]{Solution}
	\newtheorem{summary}[theorem]{Summary}
	\newtheorem{hypothesis}[theorem]{Hypothesis}
	\newtheorem*{thm}{Theorem}
	
	\theoremstyle{remark}
	\newtheorem{remark}[theorem]{Remark}
	
	\def \Br{\mathrm{Br}}
	\newcommand{\tK}{K}
	
	\newcommand{\tk}{\widetilde{k}}
	\newcommand{\tU}{\widetilde{{\mathbf U}}}
	\newcommand{\Ui}{{\mathbf U}^\imath}
	\newcommand{\tUi}{\widetilde{{\mathbf U}}^\imath}
	\newcommand{\qbinom}[2]{\begin{bmatrix} #1\\#2 \end{bmatrix} }
	\newcommand{\ov}{\overline}
	\newcommand{\tMHg}{\operatorname{\widetilde{\ch}(Q,\btau)}\nolimits}
	\newcommand{\tMHgop}{\operatorname{\widetilde{\ch}(Q^{op},\btau)}\nolimits}
	
	\newcommand{\rMHg}{\operatorname{\ch_{\rm{red}}(Q,\btau)}\nolimits}
	\newcommand{\dg}{\operatorname{dg}\nolimits}
	\def \fu{{\mathfrak{u}}}
	\def \fv{{\mathfrak{v}}}
	\def \sqq{{\mathbbb{v}}}
	\def \bp{{\mathbf p}}
	\def \bv{{\mathbf v}}
	\def \bw{{\mathbf w}}
	\def \bA{{\mathbf A}}
	\def \bL{{\mathbf L}}
	\def \bF{{\mathbf F}}
	\def \bS{{\mathbf S}}
	\def \bC{{\mathbf C}}
	\def \bU{{\mathbf U}}
	\def \U{{\mathbf U}}
	\def \btau{\varpi}
	\def \La{\Lambda}
	\def \Res{\Delta}
	\newcommand{\ev}{\bar{0}}
	\newcommand{\odd}{\bar{1}}
	\def \fk{\mathfrak{k}}
	\def \ff{\mathfrak{f}}
	\def \fp{{\mathfrak{P}}}
	\def \fg{\mathfrak{g}}
	\def \fn{\mathfrak{n}}
	\def \gr{\mathfrak{gr}}
	\def \Z{\mathbb{Z}}
	\def \F{\mathbb{F}}
	\def \D{\mathbb{D}}
	\def \C{\mathbb{C}}
	\def \N{\mathbb{N}}
	\def \Q{\mathbb{Q}}
	\def \G{\mathbb{G}}
	\def \P{\mathbb{P}}
	\def \K{\mathbb{K}}
	\def \E{\mathbb{K}}
	\def \I{\mathbb{I}}
	
	\def \eps{\varepsilon}
	\def \BH{\mathbb{H}}
	\def \btau{\varrho}
	\def \cv{\varpi}
	
	\def \tR{\widetilde{\bf R}}
	\def \tRZ{\widetilde{\bf R}_\cz}
	\def \hR{\widehat{\bf R}}
	\def \hRZ{\widehat{\bf R}_\cz}
	\def\tRi{\widetilde{\bf R}^\imath}
	\def\hRi{\widehat{\bf R}^\imath}
	\def\tRiZ{\widetilde{\bf R}^\imath_\cz}
	\def\reg{\mathrm{reg}}
	
	\def\hRiZ{\widehat{\bf R}^\imath_\cz}
	\def \tTT{\widetilde{\mathbf{T}}}
	\def \TT{\mathbf{T}}
	\def \br{\mathbf{r}}
	\def \bp{{\mathbf p}}
	\def \tS{\texttt{S}}
	\def \bq{{\bm q}}
	\def \bvt{{v}}
	\def \bs{{ r}}
	\def \tt{{v}}
	\def \k{k}
	\def \bnu{\bm{\nu}}
	\def\bc{\mathbf{c}}
	\def \ts{\textup{\texttt{s}}}
	\def \tt{\textup{\texttt{t}}}
	\def \tr{\textup{\texttt{r}}}
	\def \tc{\textup{\texttt{c}}}
	\def \tg{\textup{\texttt{g}}}
	\def \bW{\mathbf{W}}
	\def \bV{\mathbf{V}}

	\newcommand{\browntext}[1]{\textcolor{brown}{#1}}
	\newcommand{\greentext}[1]{\textcolor{green}{#1}}
	\newcommand{\redtext}[1]{\textcolor{red}{#1}}
	\newcommand{\bluetext}[1]{\textcolor{blue}{#1}}
	\newcommand{\brown}[1]{\browntext{ #1}}
	\newcommand{\green}[1]{\greentext{ #1}}
	\newcommand{\red}[1]{\redtext{ #1}}
	\newcommand{\blue}[1]{\bluetext{ #1}}
	\numberwithin{equation}{section}
	\renewcommand{\theequation}{\thesection.\arabic{equation}}
	
	%todo
	\newcommand{\wtodo}{\rightarrowdo[inline,color=orange!20, caption={}]}
	\newcommand{\lutodo}{\rightarrowdo[inline,color=green!20, caption={}]}
	\def \tT{\widetilde{\mathcal T}}
	
	\def \tTL{\tT(\iLa)}
	\def \iH{\widetilde{\ch}}
	
	%\title[Dual canonical bases arising from quantum symmetric pairs]{Dual canonical bases arising from quantum symmetric pairs}
	
	\title[Dual and double canonical bases of quantum groups]{Dual and double canonical bases of quantum groups}
	
	\author[Ming Lu]{Ming Lu}
	\address{Department of Mathematics, Sichuan University, Chengdu 610064, P.R.China}
	\email{luming@scu.edu.cn}

	\author[Xiaolong Pan]{Xiaolong Pan}
	\address{Department of Mathematics, Sichuan University, Chengdu 610064, P.R.China}
	\email{xiaolong\_pan@stu.scu.edu.cn}

	\subjclass[2020]{Primary 17B37, 18G80.}
	\keywords{dual canonical bases, quantum groups,  $\imath$quantum groups, Hall algebras, quiver varieties}

	\begin{abstract}
		Qin established the geometric realization of entire quantum groups via perverse sheaves, which further give rise to dual canonical bases with integral and positive structure constants for quantum groups of  type ADE. In this paper, we prove that the dual canonical bases of (Drinfeld double) quantum groups coincide with Berenstein--Greenstein's double canonical bases, by reinterpreting their intricate algebraic construction via the geometry of NKS quiver varieties. This result settles several conjectures therein, including those on positivity and invariance under braid group actions.
	\end{abstract}

	\maketitle
	\setcounter{tocdepth}{1}
	\tableofcontents
	
	%%%%%%%
	\section{Introduction}
	
	Ringel \cite{Rin90} first introduced Hall algebras of Dynkin quivers to realize the positive part $\U^+$ of the quantum group $\U$. Inspired by Ringel's pioneering work, Lusztig \cite{Lus90,Lus91,Lus93} utilized perverse sheaves on the representation variety of a quiver $Q$ to give a geometric realization of $\U^+$, and further constructed the canonical basis of $\U^+$ from simple perverse sheaves; see also \cite{Ka91} for the crystal basis construction. These seminal works are widely recognized as the earliest examples of categorifying the half quantum groups.
	
	Bridgeland \cite{Br13} later constructed a Hall algebra from $\Z_2$-graded complexes of projective representations of a quiver, which yields a realization of the Drinfeld double $\tU$ of the quantum group $\U$. The Hall algebra realization 
	has paved ways for fruitful understanding of quantum groups, such as braid group actions and PBW bases; see \cite{Rin3}. Lusztig \cite{Lus90} provided an elementary construction of the canonical basis via the PBW basis, and the Hall algebra interpretation of this construction can be found in \cite{DDPW}.
	
	Motivated by Bridgeland's construction \cite{Br13}, Qin \cite{Qin} established a geometric realization of the Drinfeld double $\tU$ of type ADE using the (dual) quantum Grothendieck rings of cyclic quiver varieties. A key advantage of this geometric approach is the construction of an integral and positive basis for $\tU$, which includes the dual canonical bases (up to mild rescaling) of the half quantum groups $\U^{\pm}$ \cite{Lus90} as subsets. This basis is referred to as the {\em dual canonical basis} in \cite{LW21b,LP25,LP26}. Qin's work builds on Hernandez-Leclerc's construction \cite{HL15} (who realized the half quantum group $\U^+$ via graded Nakajima quiver varieties) and the concept of quantum Grothendieck rings introduced by Nakajima \cite{Na04} and Varagnolo--Vasserot \cite{VV}.
	
	The (universal) $\imath$quantum group $\tUi$ is defined as a subalgebra of $\tU$ associated with a Satake diagram, and the pair $(\tU, \tUi)$ forms a quantum symmetric pair (cf. \cite{Let99, Ko14,LW19}). A central reduction of $\tUi$ yields Letzter's $\imath$quantum group $\Ui =\Ui_\bvs$ with parameters $\bvs$ \cite{Let99,Ko14}. Letzter's $\imath$quantum groups can be regarded as a vast generalization of Drinfeld-Jimbo quantum groups, and quantum groups are $\imath$quantum groups of diagonal type. In \cite{LW19,LW21b}, the first author and Wang introduced the $\imath$Hall algebra and the Grothendieck ring of perverse sheaves, associated with a quiver with involution $(Q,\varrho)$, to realize $\tUi$. In the geometric setting, perverse sheaves further give rise to an integral and positive basis for $\tUi$, also called the dual canonical basis in \cite{LW21b,LP26}.
	
	Recently, we conducted an in-depth study of the dual canonical basis for the quantum group $\tU$ and the $\imath$quantum group $\tUi$ of type ADE from both Hall algebra and quiver variety perspectives (see \cite{LP25,LP26}). In these works, we present a new construction of the dual canonical basis in the framework of $\imath$Hall algebras, introduce Fourier transforms for $\imath$Hall algebras and the Grothendieck ring of perverse sheaves, and prove that the dual canonical basis is invariant under braid group actions and Fourier transforms. We also establish the positivity of the coefficients in the transition matrix from the Hall basis (and PBW basis) to the dual canonical basis. Since quantum groups are $\imath$quantum groups of diagonal type, all these results hold (and are new) for the classical case of quantum groups.
	
	Unlike Lusztig's canonical basis for $\U^+$, the dual canonical basis is defined for the entire $\tU$ and $\tUi$. In fact, to construct canonical bases for the whole quantum groups, Lusztig introduced modified quantum groups $\dot{\U}$ (see \cite{Lus93}). In contrast to Lusztig's approach, Berenstein and Greenstein \cite{BG17} constructed the double canonical basis for $\tU$ by combining Lusztig's dual canonical bases of $\U^{\pm}$ and performing a series of intricate operations (see \S\ref{sec:BG}). They conjecture that the double canonical basis possesses favorable structural properties and propose numerous related conjectures {\em loc. cit.}, such as the positivity of the expansion of the natural product $b_{-}b_{+}$ (where $b_{\pm}$ are dual canonical basis elements of $\U^\pm$) in terms of the double canonical basis, and the invariance of the double canonical basis under Lusztig's braid group actions.
	
	In this paper, we address the coincidence of the dual canonical basis and the double canonical basis for $\tU$. To this end, we reinterpret Berenstein--Greenstein's construction of the double canonical basis using the geometry of NKS quiver varieties associated with the NKS regular singular categories $\mathcal{R}$ and $\mathcal{S}$ (called cyclic quiver varieties in \cite{Qin}). The key observation is that the applications of Lusztig's Lemma in \cite{BG17} correspond to pullbacks along the convolution diagram used to define the restriction functor of $\tR$ (see \eqref{eq:embedding by j and i}). By utilizing a $\C^*$-action on cyclic quiver varieties (as defined in \cite{Na04}), we show that these pullbacks preserve intersection cohomology (IC) sheaves, thereby inducing embeddings of quantum Grothendieck rings. The upper triangular relations in the definition of the double canonical basis then follow naturally from the intrinsic properties of IC sheaves.
	
	\begin{customthm}{{\bf A}} [Theorem~\ref{iCB is DCB}] \label{thm H}
		The double canonical basis of $\tU$ coincides with the dual canonical basis. In particular, it is invariant under Lusztig's braid group actions, and its structure constants lie in $\N[v^{\frac{1}{2}},v^{-\frac{1}{2}}]$.
	\end{customthm}
	
	Using this main result, we answer several conjectures proposed in \cite{BG17} (see Corollaries~\ref{coro: QG double CB braid}, \ref{coro: QG double CB positive} and Proposition~\ref{QG dCB invariant under *}). In particular, the double ($=$dual) canonical basis is invariant under taking the anti-involution $\cdot^*$ and Chevalley involution $\omega$; see Proposition \ref{QG dCB invariant under *}, Corollary \ref{cor:dCB-chevalley}. 
	
	We also provide explicit formulas for the dual canonical basis of $\tU_v(\mathfrak{sl}_2)$ in Section \ref{sec:dCB rank I}, which can be compared with the formula for the double canonical basis of $\tU_v(\mathfrak{sl}_2)$ given in \cite{BG17}.
	
	In \cite{Lus90}, Lusztig gave an elementary construction of the canonical basis of $\U^+$ via the PBW basis, and proved that the entries of the transition matrix from the canonical basis of $\U^+$ to the PBW basis $\{E^\bc\mid \bc\in \N^{|\Phi^+|}\}$ (see Proposition \ref{prop:PBW-QG}) belong to $\N[v^{\frac{1}{2}},v^{-\frac{1}{2}}]$. Inspired by Lusztig's result, we prove that the coefficients of the transition matrix from the PBW basis $\{F^{\ba}E^{\bc}K_\mu K'_{\nu}\mid \ba,\bc\in\N^{|\Phi^+|},\mu,\nu\in\Z^\I\}$ (see \eqref{PBW}) to the double ($=$dual) canonical basis of $\tU$ also lie in $\N[v^{\frac{1}{2}},v^{-\frac{1}{2}}]$ (see Corollary \ref{cor:positive-trans-PBW-dCB}). This result generalizes (the dual version of) Lusztig's result on $\U^+$ to the entire quantum group $\tU$.
	
	It is well known that quantum groups are bialgebras (and even Hopf algebras). Lusztig realized $\U^+$ as a bialgebra using perverse sheaves, and further established that the structure constants of the canonical basis of ${\U}^+$ with respect to the coproduct are integral and positive (see \cite{Lus90}).
	
	From the above discussion, the double ($=$dual) canonical basis of $\widetilde{\U}$ %(and also of $\widehat{\U}$) 
	can be viewed as a natural extension of Lusztig's dual canonical basis for $\U^+$. We propose the following conjecture:
	\begin{conjecture}
		The structure constants of the double (dual) canonical basis with respect to the coproduct also lie in $\mathbb{N}[v^{\frac{1}{2}},v^{-\frac{1}{2}}]$.
	\end{conjecture}
	This conjecture has been verified by direct computation for $\tU_v(\mathfrak{sl}_2)$ in \cite{CZ26}, building on the work of \cite{LP25,LP26} and the present paper. A general proof via direct computation is infeasible; following Lusztig's approach in \cite{Lus90}, we instead turn to geometric realization. However, the geometric realization of quantum groups used in this paper (see \cite{Qin,VV}) only captures the algebra structure, which motivates us to further develop the geometric framework for quantum groups introduced in \cite{Qin,VV}, and in particular, to provide a geometric interpretation of the coproduct for $\tU$.
	
	There have been various connections of quantum groups to cluster algebras in the works \cite{GLS13,HL15, KKKO18,Qin} and references therein. In fact, using the cluster algebra method, Shen \cite{Sh22} constructed a basis of the quantum group $\U$ which is integral, positive, and also invariant under braid group actions. It is interesting to compare this basis with the dual (double) canonical basis as they share these nice properties. For quantum $\mathfrak{sl}_2$, one can see that they coincide with each other (\cite{Sh22}), but it is mysterious for higher rank.

	The paper is organized as follows: Section \ref{sec:QG and iQG} reviews the basics of quantum groups, adopting the presentation in \cite{BG17}; Section \ref{NKS QV section} recapitulates Qin's construction of quantum groups via perverse sheaves on cyclic quiver varieties, using the general language of Nakajima-Keller-Scherozke (NKS) quiver varieties \cite{KS16,Sch19} following \cite{LW21b,LP26}; Section \ref{sec:QG double CB} is dedicated to proving the coincidence of the dual and double canonical bases; explicit formulas for the dual (double) canonical basis of $\tU_v(\mathfrak{sl}_2)$ are given in Section \ref{sec:dCB rank I}.
	
	\vspace{2mm}
	
	\noindent{\bf Acknowledgments.}
	We thank Fan Qin for helpful discussions on quiver varieties. We are grateful to Weiqiang Wang for his collaboration on related projects, as well as for his insightful comments and stimulating discussions. ML is partially supported by the National Natural Science Foundation of China (Grant No. 12171333).
	
	%%%%%%%%%%%%
	\section{Quantum groups}\label{sec:QG and iQG}

	%\subsection{Quantum groups}
	%\label{subsec:QG}
	
	%Let $Q=(Q_0,Q_1)$ be a Dynkin quiver with vertex set $Q_0= \I$.
	%Let $n_{ij}$ be the number of edges connecting vertex $i$ and $j$. 
	Let $\I=\{1,\dots,n\}$ be the index set. 
	Let $C=(c_{ij})_{i,j \in \I}$ be the Cartan matrix of of a simply-laced semi-simple Lie algebra $\fg$.
	Let $\Delta^+=\{\alpha_i\mid i\in\I\}$ be the set of simple roots of $\fg$, and denote the root lattice by $\Z^{\I}:=\Z\alpha_1\oplus\cdots\oplus\Z\alpha_n$. Denote by $\alpha_i^\vee$ ($i\in\I$) the coroots. 
	Let $\Phi^+$ be the set of positive roots. The simple reflection $s_i:\Z^{\I}\rightarrow\Z^{\I}$ is defined to be $s_i(\alpha_j)=\alpha_j-c_{ij}\alpha_i$, for $i,j\in \I$.
	Denote the Weyl group by $W =\langle s_i\mid i\in \I\rangle$.
	
	%Let $\btau$ be an involution of $Q$. We assume that $c_{i,\btau i}=0$ for all $i$, which always hold  for the {\em Dynkin} $\imath$quivers. We denote by $\bs_{i}$ the following element of order 2 in the Weyl group $W$, i.e.,
	%\begin{align}
	%\label{def:simple reflection}
	%\bs_i= \left\{
	%\begin{array}{ll}
	%s_{i}, & \text{ if } \btau i=i;
	%\\
	%s_is_{\btau i}, & \text{ if } \btau i\neq i.
	%\end{array}
	%\right.
	%\end{align}
	%It is well known (cf., e.g., \cite{KP11}) that the {\rm restricted Weyl group} associated to the quasi-split symmetric pair $(\fg, \fg^\theta)$ can be identified with the following subgroup $W_\btau$ of $W$:
	%\begin{align}
	%  \label{eq:Wtau}
	%W_{\btau} =\{w\in W\mid \btau w =w \btau\}
	%\end{align}
	%where $\btau$ is regarded as an automorphism of $\Aut(C)$. Moreover, it admits the following property.

	Let $v$ be an indeterminate. Write $[A, B]=AB-BA$, and $[A,B]_v=AB-vBA$. Denote, for $r,m \in \N$,
	\[
	[r]=\frac{v^r-v^{-r}}{v-v^{-1}},
	\quad
	[r]!=\prod_{i=1}^r [i], \quad \qbinom{m}{r} =\frac{[m][m-1]\ldots [m-r+1]}{[r]!}.
	\]
	Following \cite{BG17}, the Drinfeld double $\hU := \hU_v(\fg)$ is defined to be the $\Q(v^{1/2})$-algebra generated by $E_i,F_i, \tK_i,\tK_i'$, $i\in \I$, %\red{where $\tK_i, \tK_i'$ are invertible,?}
	subject to the following relations:  for $i, j \in \I$,
	\begin{align}
		[E_i,F_j]= \delta_{ij}(v^{-1}-v) (\tK_i-\tK_i'),  &\qquad [\tK_i,\tK_j]=[\tK_i,\tK_j']  =[\tK_i',\tK_j']=0,
		\label{eq:KK}
		\\
		\tK_i E_j=v^{c_{ij}} E_j \tK_i, & \qquad \tK_i F_j=v^{-c_{ij}} F_j \tK_i,
		\label{eq:EK}
		\\
		\tK_i' E_j=v^{-c_{ij}} E_j \tK_i', & \qquad \tK_i' F_j=v^{c_{ij}} F_j \tK_i',
		\label{eq:K2}
	\end{align}
	and for $i\neq j \in \I$,
	\begin{align}
		& \sum_{r=0}^{1-c_{ij}} (-1)^r \left[ \begin{array}{c} 1-c_{ij} \\r \end{array} \right]  E_i^r E_j  E_i^{1-c_{ij}-r}=0,
		\label{eq:serre1} \\
		& \sum_{r=0}^{1-c_{ij}} (-1)^r \left[ \begin{array}{c} 1-c_{ij} \\r \end{array} \right]  F_i^r F_j  F_i^{1-c_{ij}-r}=0.
		\label{eq:serre2}
	\end{align}
	%\red{Note that $\tK_i \tK_i'$ are central in $\hU$ for all $i$. %\brown{Consider the necessity for $K_i,K_i'$ to be invertible.}
	%The comultiplication $\Delta: \widehat{\U} \rightarrow \widehat{\U} \otimes \widehat{\U}$ is given by
	%\begin{align}  \label{eq:Delta}
	%\begin{split}
	%\Delta(E_i)  = E_i \otimes 1 + \tK_i \otimes E_i, & \quad \Delta(F_i) = 1 \otimes F_i + F_i \otimes \tK_{i}', \\
	%\Delta(\tK_{i}) = \tK_{i} \otimes \tK_{i}, & \quad \Delta(\tK_{i}') = \tK_{i}' \otimes \tK_{i}'.
	%\end{split}
	%\end{align}
	%}
	
	We define $\tU=\tU_v(\fg)$ to be the $\Q(v^{1/2})$-algebra constructed from $\widehat{\U}$ by making $\tK_i,\tK_i'$ ($i\in\I$) invertible. Then $\tU$ and $\hU$ are $\Z^\I$-graded by setting $\deg E_i=\alpha_i$, $\deg F_i=-\alpha_i$, $\deg K_i=0=\deg K_i'$. %Moreover,
	%let $\tU_\mu$ be the homogeneous subspace of degree $\mu$.  $\tU=\oplus_{\mu\in\Z^\I} \tU_\mu$ and $\hU=\oplus_{\mu\in\Z^\I} \hU_\mu$. 
	
	\begin{remark}
		Note that $\tU$ defined here is different but isomorphic to the one given in \cite{LW19}. 
		In fact, denote 
		\begin{align}
			\label{eq:Udj-gen}
			\ce_i=\frac{E_i}{v^{-1}-v},\qquad \cf_i=\frac{F_i}{v-v^{-1}},\qquad \forall i\in\I.
		\end{align} 
		Then the presentation of $\tU$ given in \cite{LW19} is generated by $\ce_i,\cf_i,K_i,K_i'$ ($i\in\I$).
	\end{remark}

	The quantum group $\bU$ is defined to be quotient algebra of $\hU$ (also $\tU$) modulo the ideal generated by $K_iK_i'-1$ ($i\in\I$). In fact, $\U$ 
	is 
	the $\Q(v^{1/2})$-algebra generated by $E_i,F_i, K_i, K_i^{-1}$, $i\in \I$, subject to the  relations modified from \eqref{eq:KK}--\eqref{eq:serre2} with $\tK_i'$ replaced by $K_i^{-1}$. %The comultiplication $\Delta$ is obtained by modifying \eqref{eq:Delta} with $\tK_i'$ replaced by  $K_i^{-1}$. 
	
	Let $\U^+$ be the subalgebra of $\U$ (also of $\tU$ and $\hU$) generated by $E_i$ $(i\in \I)$, $\U^0$ be the subalgebra of $\widehat{\bU}$ generated by $\tK_i^{\pm1}$ $(i\in \I)$, and $\U^-$ (also of $\tU$ and $\hU$) be the subalgebra of $\U$ generated by $F_i$ $(i\in \I)$, respectively.
	The Cartan subalgebras $\tU^0$ (resp. $\hU^0$) of $\tU$ (resp. $\hU$) are defined similarly. Then the algebras $\hU$, $\widetilde{\bU}$ and $\bU$ have triangular decompositions:
	\begin{align*}
		\hU=\U^+\otimes\hU^0\otimes \U^-,\qquad 
		\widetilde{\bU} =\U^+\otimes \widetilde{\bU}^0\otimes\U^-,
		\qquad
		\bU &=\bU^+\otimes \bU^0\otimes\bU^-.
	\end{align*}
	Clearly, %${\bU}^+\cong\U^+\cong\U^+$, ${\bU}^-\cong \U^-\cong\U^-$, 
	$\hU^0\cong\Q(v^{1/2})[\tK_i,\tK_i'\mid i\in\I]$, $\tU^0\cong \Q(v^{1/2})[\tK_i^{\pm1},(\tK_i')^{\pm1}\mid i\in\I]$ and $\bU^0\cong\Q(v^{1/2})[K_i^{\pm1}\mid i\in\I]$. Note that 
	${\bU}^0 \cong \hU^0/(\tK_i \tK_i' -1 \mid   i\in \I)$. 
	For any $\mu=\sum_{i\in\I}m_i\alpha_i\in\Z^\I$, we denote by $K_\mu=\prod_{i\in\I} K_i^{m_i}$, $K_\mu'=\prod_{i\in\I} (K_i')^{m_i}$ in $\tU$ (or $\U$); we can view $\tK_\mu,\tK_\mu'$ in $\hU$ if $\mu\in\N^\I$.
	
	\begin{lemma}[cf. \cite{BG17}]\label{QG bar-involution def}
		There exists an anti-involution $u\mapsto \ov{u}$ on $\hU$ (also $\tU$, $\U$) given by $\ov{v^{1/2}}=v^{-1/2}$, $\ov{E_i}=E_i$, $\ov{F_i}=F_i$, and $\ov{K_i}=K_i$, $\ov{K_i'}=K_i'$, for $i\in\I$.
	\end{lemma}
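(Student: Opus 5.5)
We will define the map on the free algebra and check that it respects the defining relations. Let $\mathcal{F}$ be the free $\Q(v^{1/2})$-algebra on $E_i,F_i,\tK_i,\tK_i'$ ($i\in\I$). There is a unique ring anti-automorphism of $\mathcal{F}$ which is $\Q$-linear, satisfies $v^{1/2}\mapsto v^{-1/2}$, and fixes all generators. It is semilinear with respect to the field automorphism $v^{1/2}\mapsto v^{-1/2}$ of $\Q(v^{1/2})$, and it reverses the order of products: $\ov{x_1\cdots x_m}=\ov{x_m}\cdots\ov{x_1}$. Its square is a ring automorphism fixing $v^{1/2}$ and all generators, so it is the identity. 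Hence it suffices to show that the two-sided ideal generated by the relations \eqref{eq:KK}--\eqref{eq:serre2} is stable under this map. Because the map is an anti-automorphism, it sends two-sided ideals to two-sided ideals, so it is enough to check that each defining relation is sent into the ideal.

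We check the relations in turn.

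\emph{Commutation of the $\tK$'s.} The relations $[\tK_i,\tK_j]=0$ and their primed versions are sent to $-[\tK_i,\tK_j]$ and similar, which are again relations.

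\emph{Relation \eqref{eq:EK}.} The relation $\tK_iE_j-v^{c_{ij}}E_j\tK_i$ is sent to $E_j\tK_i-v^{-c_{ij}}\tK_iE_j$. This equals $-v^{-c_{ij}}(\tK_iE_j-v^{c_{ij}}E_j\tK_i)$, so it lies in the ideal. The $F$-relations in \eqref{eq:EK} and the relations \eqref{eq:K2} are handled identically.

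\emph{Relation \eqref{eq:KK}.} The relation $E_iF_j-F_jE_i-\delta_{ij}(v^{-1}-v)(\tK_i-\tK_i')$ is sent to
\[
F_jE_i-E_iF_j-\delta_{ij}(v-v^{-1})(\tK_i-\tK_i').
\]
This is exactly $-1$ times the original relation. Here it matters that $(v^{-1}-v)$ changes sign under bar, which matches the sign change of the commutator.

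\emph{Serre relations \eqref{eq:serre1}, \eqref{eq:serre2}.} The quantum binomials are bar-invariant. Reversing the word $E_i^rE_jE_i^{1-c_{ij}-r}$ gives $E_i^{1-c_{ij}-r}E_jE_i^{r}$. Reindexing $r\mapsto 1-c_{ij}-r$ and using the symmetry $\qbinom{m}{r}=\qbinom{m}{m-r}$, the image equals $(-1)^{1-c_{ij}}$ times the original relation. The same argument applies to the $F$-relations.

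Thus the map descends to an anti-involution of $\hU$. For $\tU$, the map sends $\tK_i$ and $\tK_i'$ to themselves, so it extends to the localization by setting $\ov{\tK_i^{-1}}=\tK_i^{-1}$. This extension is compatible, since the bar of $\tK_i\tK_i^{-1}=1$ reads $\tK_i^{-1}\tK_i=1$. For $\U$, the central element $\tK_i\tK_i'-1$ is fixed up to the reordering $\tK_i'\tK_i-1$, which equals it because $\tK_i$ and $\tK_i'$ commute; hence the ideal generated by these elements is stable and the map descends.

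All of these checks are routine. The only point needing care is the sign bookkeeping in \eqref{eq:KK}, where the commutator and the factor $(v^{-1}-v)$ both change sign; this is what makes the bar map an anti-involution rather than an involution.
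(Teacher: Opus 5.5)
Your proof is correct. You extend the semilinear, word-reversing map from the free algebra, check that each defining relation goes to a scalar multiple of itself ($-1$, $-v^{\mp c_{ij}}$, or $(-1)^{1-c_{ij}}$), and then pass to the localization $\tU$ and the quotient $\U$; this is exactly the routine verification that the paper leaves to \cite{BG17}, and your closing remark is only slightly imprecise, since \eqref{eq:EK} (not just \eqref{eq:KK}) already rules out a ring \emph{automorphism} fixing $K_i,E_j$ and inverting $v$: such a map would send $K_iE_j-v^{c_{ij}}E_jK_i$ to an element equal in $\hU$ to $(v^{c_{ij}}-v^{-c_{ij}})E_jK_i\neq 0$ for $c_{ij}\neq 0$.
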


	Let $\Br(W)$ be the braid group associated to the Weyl group $W$, generated by $t_i$ ($i\in\I$).
	Lusztig introduced braid group symmetries $T_{i,e}',T_{i,e}''$ for $i\in\I$ and $e=\pm1$, on the Drinfeld-Jimbo quantum group $\U$ \cite[\S37.1.3]{Lus93}. These braid group symmetries can be lifted to  the Drinfeld double $\tU$; see \cite[Propositions 6.20–6.21]{LW21a}, which are denoted by $\widetilde{T}_{i,e}',\widetilde{T}_{i,e}''$. We shall use the following modified version of $\widetilde{T}_i:=\widetilde{T}_{i,1}''$ (compare with \cite[Theorem 1.13]{BG17}).
	\begin{proposition}
		%[\cite{Lus90a}]
		\label{prop:braid1}
		There exists an automorphism $\widetilde{T}_{i}$, for $i\in \I$, on $\tU$ such that
		\begin{align*}
			&\widetilde{T}_{i}(K_\mu)= K_{s_i(\mu)},
			\qquad \widetilde{T}_{i}(K'_\mu)= K'_{s_i(\mu)},\;\;\forall \mu\in \Z^\I,\\
			&\widetilde{T}_{i}(E_i)=v(K_i')^{-1}F_i,\qquad \widetilde{T}_{i}(F_i)=v^{-1}E_iK_i^{-1},\\
			&\widetilde{T}_i(E_j)=E_j,\qquad \widetilde{T}_i(F_j)=F_j,\qquad \text{ if }c_{ij}=0,
			\\
			&\widetilde{T}_{i}(E_j)=\frac{v^{\frac{1}{2}}E_iE_j-v^{-\frac{1}{2}}E_jE_i}{v-v^{-1}},\qquad  \widetilde{T}_{i}(F_j)=\frac{v^{\frac{1}{2}}F_iF_j-v^{-\frac{1}{2}}F_jF_i}{v-v^{-1}}, \text{ if } c_{ij}=-1. %\qquad  j\neq i.
		\end{align*}
		Moreover, there exists a group homomorphism $\Br(W)\rightarrow \Aut(\tU)$, $t_i\mapsto \widetilde{T}_i$ for $i\in\I$.    
	\end{proposition}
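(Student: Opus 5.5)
The statement is the existence of Lusztig-type braid automorphisms $\widetilde{T}_i$ on $\tU$ with the given formulas, together with the braid relations. I will not verify the relations by brute force on $\tU$. Instead I will obtain $\widetilde{T}_i$ from the lift $\widetilde{T}''_{i,1}$ of \cite[Propositions 6.20--6.21]{LW21a} by conjugating with an explicit rescaling automorphism, since \cite{LW21a} works with the generators $\ce_i,\cf_i$ of \eqref{eq:Udj-gen}. Conjugation by a fixed isomorphism preserves both the automorphism property and the braid relations, so everything reduces to checking formulas on generators.

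\textbf{Step 1: the model isomorphism.} Let $\phi$ be the isomorphism between $\tU$ in the present presentation and the version of \cite{LW19,LW21a}, given by $E_i\mapsto (v^{-1}-v)\ce_i$, $F_i\mapsto (v-v^{-1})\cf_i$ and the identity on $K_i,K_i'$. I will record the formulas for $\widetilde{T}''_{i,1}$ on $\ce_j,\cf_j,K_j,K_j'$ from \cite{LW21a}. In that setting $\widetilde T''_{i,1}(\ce_i)=-\cf_i (K_i')^{-1}$ up to a power of $v$ and the normalization of $K'$; for $c_{ij}=-1$ one has $\widetilde T''_{i,1}(\ce_j)=\ce_i\ce_j-v^{-1}\ce_j\ce_i$, and similarly for $\cf_j$. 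Transporting through $\phi$ gives an automorphism $T:=\phi^{-1}\widetilde T''_{i,1}\phi$ of $\tU$ whose images of $E_j,F_j$ agree with the claimed ones only up to scalar factors: powers of $v^{1/2}$, powers of $(v-v^{-1})$, signs, and possibly an exchange of the order of factors.

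\textbf{Step 2: fix the discrepancy by a diagonal twist.} The candidate map in the statement differs from $T$ at most by composition with automorphisms of the following kinds.
\begin{itemize}
\item[(a)] The rescalings $\sigma_{\mathbf a}:E_j\mapsto a_jE_j$, $F_j\mapsto a_j^{-1}F_j$, which preserve all relations \eqref{eq:KK}--\eqref{eq:serre2}, including $[E_i,F_i]$.
\item[(b)] Possibly an anti-automorphism or involution such as $\ov{\,\cdot\,}$ from Lemma~\ref{QG bar-involution def}, which swaps $v^{1/2}E_iE_j - v^{-1/2}E_jE_i$ with the reversed form.
\end{itemize}
The half-integer powers $v^{\pm1/2}$ in $\widetilde T_i(E_j)$ suggest exactly such a twist: $\widetilde T_i=\sigma\circ T\circ\sigma^{-1}$ with $\sigma$ a rescaling by $v^{1/2}$-powers and $(v-v^{-1})$-factors depending on a height function. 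I will solve for $\sigma$ so that the images of $E_i$, $F_i$ and $E_j,F_j$ with $c_{ij}=-1$ match the statement. Since $\sigma$ is independent of $i$, the braid relations then transfer: the map $t_i\mapsto\widetilde T_i$ is the homomorphism $t_i\mapsto \widetilde T''_{i,1}$ conjugated by the fixed map $\sigma\phi^{-1}$.

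\textbf{Step 3: direct sanity check.} As a safeguard I will verify directly that the formulas define an endomorphism. Concretely, I check \eqref{eq:KK}--\eqref{eq:K2} on images, which is easy since $\widetilde T_i(E_j)$ has weight $s_i(\alpha_j)$. I also check $[\widetilde T_i(E_i),\widetilde T_i(F_i)]=(v^{-1}-v)(K_i^{-1}-K_i'^{-1})$; this is the defining relation for $j=i$, since $\widetilde T_i(K_i)=K_i^{-1}$ and $\widetilde T_i(K_i')=K_i'^{-1}$. This relation pins down the normalizing constants $v$ and $v^{-1}$ in $\widetilde T_i(E_i),\widetilde T_i(F_i)$. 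Invertibility follows from the conjugation description.

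\textbf{Main obstacle.} I expect the hardest part to be Step 2. It requires reconciling the conventions of \cite{LW21a} with those of \cite{BG17}, including which of $T',T''$ and which $e$ are used, the sign of the $v^{1/2}$ exponents, and the normalization of $K'$. It also requires showing that a single $i$-independent rescaling, rather than one depending on $i$, achieves all the formulas simultaneously. If no rescaling works, the fallback is to compose additionally with the bar involution or with $\omega$, and use that these commute appropriately with the $\widetilde T''_{i,1}$ up to replacing $T''$ by $T'$, as in \cite[\S37.2]{Lus93}. The Serre relations then come for free in either case.
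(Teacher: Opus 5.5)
Your plan is essentially the paper's own justification: the paper gives no separate argument, but takes the lift $\widetilde{T}''_{i,1}$ of Lusztig's symmetry from \cite[Propositions 6.20--6.21]{LW21a} and uses a modified (rescaled) version of it, citing \cite[Theorem 1.13]{BG17} for comparison. The obstacle you flag in Step 2 is resolved by a single $i$-independent rescaling, $\sigma(E_k)=-v^{1/2}E_k$, $\sigma(F_k)=-v^{-1/2}F_k$, with $\sigma$ trivial on $K_k^{\pm1},K_k'^{\pm1}$. It gives $\widetilde T_i(x)=(-v^{1/2})^{\eta(s_i\mu)-\eta(\mu)}\,\widetilde T''_{i,1}(x)$ for $x\in\tU$ of degree $\mu$, so no bar involution or $\omega$ is needed and no reordering of factors occurs, since $v^{1/2}E_iE_j-v^{-1/2}E_jE_i=v^{1/2}(E_iE_j-v^{-1}E_jE_i)$.
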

	
	Hence, we can define
	\begin{align}\widetilde{T}_w 
		:= \widetilde{T}_{i_1}\cdots
		\widetilde{T}_{i_r} \in \Aut(\tU),
	\end{align}
	where $w = s_{i_1}
	\cdots s_{i_r}$
	is any reduced expression of $w \in W$.
	
	\begin{lemma}
		\label{lem:QGbraid-bar}
		The braid group actions $\widetilde{T}_{i}$ commute with the bar-involution, i.e., $\ov{\widetilde{T}_i(u)}=\widetilde{T}_i(\ov{u})$ for any $u\in\tU$.
	\end{lemma}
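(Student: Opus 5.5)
The plan is to reduce the identity to a check on generators. The key observation is that $\Phi := \ov{\,\cdot\,}\circ \widetilde{T}_i \circ \ov{\,\cdot\,}$ is itself an algebra automorphism of $\tU$. By Lemma~\ref{QG bar-involution def}, the bar map is a ring anti-automorphism which is $\Q$-linear and satisfies $\ov{v^{1/2}}=v^{-1/2}$, and it is an involution. Composing two anti-automorphisms with the automorphism $\widetilde{T}_i$ gives a ring automorphism. The two semilinear twists cancel, so $\Phi$ is $\Q(v^{1/2})$-linear. Since $\tU$ is generated by $E_j,F_j,K_j^{\pm1},(K_j')^{\pm1}$, it then suffices to show that $\Phi$ and $\widetilde{T}_i$ agree on these generators. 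Equivalently, since all generators are bar-invariant, it suffices to show that $\widetilde{T}_i(X)$ is bar-invariant for each generator $X$. Once $\Phi=\widetilde{T}_i$, applying the bar map gives $\ov{\widetilde{T}_i(u)}=\widetilde{T}_i(\ov u)$ for all $u\in\tU$.

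The generator checks use the formulas of Proposition~\ref{prop:braid1} together with the relations \eqref{eq:EK}--\eqref{eq:K2}.

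\emph{Cartan part.} $\widetilde{T}_i(K_\mu)=K_{s_i\mu}$ and $\widetilde{T}_i(K'_\mu)=K'_{s_i\mu}$ are bar-invariant because each $K_j$ and $K_j'$ is.

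\emph{$E_i$.} Since the bar map reverses products, $\ov{vK_i'^{-1}F_i}=v^{-1}F_iK_i'^{-1}$. The relation $K_i'F_i=v^{2}F_iK_i'$ gives $F_iK_i'^{-1}=v^{2}K_i'^{-1}F_i$. Hence $\ov{\widetilde{T}_i(E_i)}=vK_i'^{-1}F_i=\widetilde{T}_i(E_i)$.

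\emph{$F_i$.} Similarly, $\ov{v^{-1}E_iK_i^{-1}}=vK_i^{-1}E_i$. The relation $K_iE_i=v^2E_iK_i$ gives $K_i^{-1}E_i=v^{-2}E_iK_i^{-1}$. Hence $\ov{\widetilde{T}_i(F_i)}=v^{-1}E_iK_i^{-1}=\widetilde{T}_i(F_i)$.

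\emph{$j\ne i$ with $c_{ij}=0$.} Here $\widetilde{T}_i$ fixes $E_j$ and $F_j$, so there is nothing to check.

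\emph{$j\ne i$ with $c_{ij}=-1$.} Applying the bar map to $\frac{v^{1/2}E_iE_j-v^{-1/2}E_jE_i}{v-v^{-1}}$ gives $\frac{v^{-1/2}E_jE_i-v^{1/2}E_iE_j}{v^{-1}-v}$. This equals the original expression after multiplying numerator and denominator by $-1$. The same computation applies to $F_j$.

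The only step that needs care is the first one: checking that $\Phi$ really is a $\Q(v^{1/2})$-linear algebra automorphism, so that agreement on generators suffices. This follows formally from the anti-multiplicativity and semilinearity of the bar map. One also has to confirm that the bar map is well defined on $\tU$, i.e.\ compatible with inverting the $K_i,K_i'$, and this is included in Lemma~\ref{QG bar-involution def}. After that, everything is the routine verification above.
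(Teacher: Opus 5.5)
Your proposal is correct and follows the same route as the paper: reduce to generators and check that $\widetilde{T}_i$ sends each generator to a bar-invariant element, using the formulas of Proposition~\ref{prop:braid1}. You also supply details the paper leaves implicit, namely that $\ov{\,\cdot\,}\circ\widetilde{T}_i\circ\ov{\,\cdot\,}$ is a $\Q(v^{1/2})$-linear algebra automorphism and the explicit $v$-power computations for $E_i$, $F_i$ and the $c_{ij}=-1$ case, and all of these check out.
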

	
	\begin{proof}
		It is enough to check $\ov{\widetilde{T}_i(u)}=\widetilde{T}_i(\ov{u})$ for $u=E_i,F_i,K_i,K_i'$, which is obvious by Proposition \ref{prop:braid1}. 
	\end{proof}

	\begin{proposition}[\text{\cite[Proposition 1.10]{Lus90a}}]
		\label{prop:PBW-QG}
		Let $w_0$ be the longest element of $W$ and fix a reduced expression ${w_0}=s_{i_1} \cdots s_{i_l}$ for the longest element $\omega_0\in W$. Set
		\begin{align}
			\label{eq:F-root}
			F_{\beta_k}=\widetilde{T}_{i_1}^{-1}\widetilde{T}_{i_2}^{-1}\cdots \widetilde{T}_{i_{k-1}}^{-1}(F_{i_k}),\qquad 
			E_{\beta_k}=\widetilde{T}_{i_1}^{-1}\widetilde{T}_{i_2}^{-1}\cdots \widetilde{T}_{i_{k-1}}^{-1}(E_{i_k}),
		\end{align}
		for $1\leq k\leq l,a\in \N$. Then the monomials 
		$$F^{\ba}=v^{\frac{1}{2}n_{\mathbf{a}}}F_{\beta_1}^{a_1} F_{\beta_2}^{a_2}\cdots F_{\beta_l}^{a_l}, \qquad \ba=(a_1,\ldots, a_l)\in \N^l$$ 
		form a $\Q(v^{1/2})$-basis for $\U^-$, where 
		\[n_\ba=\sum_{1\leq k<l\leq N}(\beta_k,\beta_l)a_ka_l.\]
		Similarly, the monomials $$E^{\ba}=v^{\frac{1}{2}n_\bc}E_{\beta_1}^{a_1} E_{\beta_2}^{a_2}\cdots E_{\beta_l}^{a_l}, \qquad \ba=(a_1,\ldots, a_l)\in \N^l$$ 
		form a $\Q(v^{1/2})$-basis for $\U^+$.
		Moreover, the monomials \begin{align}
			\label{PBW}
			F^{\ba}E^{\bc}K_\mu K'_{\nu},\qquad \ba,\bc\in\N^l,\mu,\nu\in\Z^\I
		\end{align}
		form a $\Q(v^{1/2})$-basis for $\tU$. Similar result holds for $\hU$. 
	\end{proposition}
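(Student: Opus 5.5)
The plan is to reduce the statement to Lusztig's PBW theorem for $\U^\pm$ \cite[Proposition 1.10]{Lus90a} together with the triangular decomposition of $\tU$ (and $\hU$) recorded above.

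First I will check that the $\widetilde{T}_i$ of Proposition \ref{prop:braid1} agree, up to the rescaling \eqref{eq:Udj-gen} and up to a twist by $K$'s, with Lusztig's $T''_{i,1}$. Concretely, on $\U^+$ and $\U^-$ the formulas for $\widetilde{T}_i(E_j),\widetilde{T}_i(F_j)$ with $j\neq i$ are Lusztig's formulas $T''_{i,1}(E_j)=E_iE_j-v^{-1}E_jE_i$ after substituting $E_j=(v^{-1}-v)\ce_j$ and multiplying by a power of $v^{1/2}$. This identification implies two things:
\begin{itemize}
\item[(i)] each root vector $E_{\beta_k}$, resp. $F_{\beta_k}$, in \eqref{eq:F-root} is a nonzero scalar multiple, in $\Q(v^{1/2})^\times$, of Lusztig's root vector;
\item[(ii)] $E_{\beta_k}$ lies in $\U^+$, and $F_{\beta_k}$ in $\U^-$.
\end{itemize}
Point (ii) is the step needing care. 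The general fact is that $T_{i_1}^{-1}\cdots T_{i_{k-1}}^{-1}(E_{i_k})\in\U^+$ whenever $s_{i_1}\cdots s_{i_k}$ is reduced. Here $\widetilde{T}_i^{-1}$ acts on $\tU$, where $K_i,K_i'$ are independent, and $\widetilde{T}_i(E_i)=v(K_i')^{-1}F_i$ involves $K'$. So I must verify that no Cartan factors survive. I plan to argue this via the central reduction $\tU\to\U$: it sends $\widetilde{T}_i$ to Lusztig's symmetry, up to the rescaling above. The kernel of this reduction meets $\U^+$ trivially. Moreover, the $\widetilde{T}_i$-images of $E$'s lie in $\U^+\cdot\tU^0$ by degree considerations, and the $\tU^0$-part is forced to be $1$ by comparison in $\U$. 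I expect this to be the main obstacle, since one must rule out factors like $K_\mu K'_{-\mu}$, which become trivial in $\U$. To handle it I would track the grading by $\Z^\I\times\Z^\I$ on $\tU^0$ (via the weights of $K$ versus $K'$). Alternatively, I would simply invoke \cite[Propositions 6.20–6.21]{LW21a}, which state that $\widetilde{T}$ restricts to Lusztig's action on the half algebras.

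Given (i) and (ii), the monomials $F^\ba$ are nonzero scalar multiples of Lusztig's PBW monomials, since $v^{\frac12 n_\ba}$ is a unit. Hence they form a basis of $\U^-$, and likewise $E^\ba$ form a basis of $\U^+$. For the last claim I use the triangular decomposition $\tU\cong\U^+\otimes\tU^0\otimes\U^-$. Applying the anti-automorphism-free reordering (the PBW theorem for the Drinfeld double), multiplication also gives an isomorphism $\U^-\otimes\U^+\otimes\tU^0\to\tU$. This second order is obtained from the first using the commutation relations \eqref{eq:KK}–\eqref{eq:K2}, which show that $\U^-\U^+\tU^0$ spans, together with a graded dimension count. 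Combining this with the basis $\{K_\mu K'_\nu\}$ of $\tU^0$ yields that the elements in \eqref{PBW} form a basis. For $\hU$ the same argument applies with $\mu,\nu\in\N^\I$.
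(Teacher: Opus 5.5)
Your reduction to Lusztig's PBW theorem plus the triangular decomposition is the route the paper intends; it gives no argument beyond citing \cite[Proposition 1.10]{Lus90a}. Your point (i) is fine. The gap is in (ii), which you flag but do not close.

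The claim that $\widetilde{T}_{i_1}^{-1}\cdots\widetilde{T}_{i_{k-1}}^{-1}(E_{i_k})$ lies in $\U^+\tU^0$ ``by degree considerations'' follows from no available grading. For $x\in\U^+_{\beta+\alpha_i}$, the element $F_i\,x\,K_i^{-1}$ has the same $\Z^\I$-degree as $E_\beta$, and even the same bidegree for the finer grading used below. Comparison in $\U$ cannot finish the job either. The Cartan factors that become trivial in $\U$ are $K_\mu K'_\mu$, not $K_\mu K'_{-\mu}$ (which maps to $K_{2\mu}$). And the fact that $\ker(\tU\to\U)$ meets $\U^+$ trivially says nothing about elements not yet known to lie in $\U^+$. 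The fallback citation does not help: $\widetilde{T}_i$ does not preserve $\U^+$ (already $\widetilde{T}_i(E_i)=v(K_i')^{-1}F_i$), so there is no ``restriction to the half algebras'' to invoke. Finally, no grading of $\tU$ can separate the weights of $K_i$ and $K_i'$, since $K_i-K_i'$ is a scalar multiple of $[E_i,F_i]$.

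The fix is to use your two tools together and bypass $\U^+\tU^0$ entirely. Give $\tU$ the bigrading $\deg E_i=(\alpha_i,0)$, $\deg F_i=(0,\alpha_i)$, $\deg K_i^{\pm1}=\deg (K_i')^{\pm1}=\pm(\alpha_i,\alpha_i)$ in $\Z^\I\times\Z^\I$; this is the paper's $\deg_\Gamma$, extended. All defining relations are homogeneous. The formulas of Proposition~\ref{prop:braid1} show that $\widetilde{T}_i^{\pm1}$ maps bidegree $(a,b)$ to $(s_ia,s_ib)$, so $E_{\beta_k}$ has bidegree $(\beta_k,0)$.

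Next, the quotient map $p\colon\tU\to\U$ is injective on each bihomogeneous component. Take a basis element $xK_\mu K'_\nu y$ with $x\in\U^+_\delta$ and $y\in\U^-$ homogeneous basis elements. If it has bidegree $(a,b)$, then $\mu+\nu=a-\delta$ is fixed by $x$. Meanwhile $p(xK_\mu K'_\nu y)=xK_{\mu-\nu}y$ records $\mu-\nu$. So distinct basis elements of a given bidegree go to distinct basis elements of $\U$.

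Now $p(E_{\beta_k})$ is a multiple of Lusztig's root vector, so it equals $p(u)$ for some $u\in\U^+_{\beta_k}$, and $u$ also has bidegree $(\beta_k,0)$. Injectivity gives $E_{\beta_k}=u\in\U^+$, and likewise $F_{\beta_k}\in\U^-$.

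Separately, your reordering to $\U^-\otimes\U^+\otimes\tU^0$ cannot rest on a graded dimension count. Every nonzero homogeneous component of $\tU$ is infinite-dimensional: with $xK_\lambda K'_\nu y$ it contains $xK_{\lambda+\mu}K'_{\nu-\mu}y$ for all $\mu$. Instead, apply the Chevalley involution $\omega$ to $\tU=\U^+\otimes\tU^0\otimes\U^-$, then move $\tU^0$ past $\U^+$ using \eqref{eq:EK}--\eqref{eq:K2}.
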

	
	Let $\cz:=\Z[v^{\frac12},v^{-\frac12}]$. Then the integral form $\tU_\cz$ of $\tU$ is defined to be $\cz$-free module with a basis given by \eqref{PBW}. It is known that $\tU_\cz$ is an algebra; cf. \cite[Proposition 4.10]{LP25}.

	%%%%%%%%%%%%%
	\section{Quantum groups via quiver varieties}\label{NKS QV section}
	
	In this section, we review the geometric realization of quantum groups given in \cite{Qin}, also see \cite{LP26}. 
	In this section, we assume $k$ is an algebraically closed field of characteristic zero, and denote by $\mod(k)$ the category of $k$-vector spaces. For a $k$-linear category $\mathcal{C}$, we denote by $\mod(\mathcal{C})$ the category of $\mathcal{C}$–modules, i.e. $k$-linear contravariant functors $\mathcal{C}\to\mod(k)$. Furthermore, for a category $\mathcal{C}$, we denote by $\mathcal{C}_0$ the set of objects of $\mathcal{C}$.
	
	For a quiver $Q=(Q_0,Q_1,\ts,\tt)$, we denote by $kQ$ its path algebra, and by $\mod(kQ)$ the category of finite-dimensional left  $kQ$-modules. The bounded derived category is $\cd_Q=\cd^b(\mod(kQ))$, and its suspension functor is $\Sigma$. The simple modules are $\tS_i$ ($i\in Q_0$).
	
	%%%%%%%
	\subsection{NKS categories}
	\label{subsec:NKS}
	
	Let $Q=(Q_0=\I,Q_1)$ be a Dynkin quiver and set
	\[\ov{Q}_1=\{\bar{h}:j\rightarrow i\mid (h:i\rightarrow j)\in Q_1\}.\] 
	Define the \emph{repetition quiver} $\Z Q$ of $Q$ as follows:
	
	$\triangleright$ the set of vertices is $\{(i,p)\in Q_0\times \Z\}$;
	
	$\triangleright$ an arrow $(\alpha,p):(i,p)\rightarrow (j,p)$ and an arrow $(\bar{\alpha},p):(j,p-1)\rightarrow (i,p)$ are given, for any arrow $\alpha:i\rightarrow j$ in $Q$ and any vertex $(i,p)$.
	
	Define the automorphism $\tau$ of $\Z Q$ to be the shift by one unit to the left, i.e., $\tau(i,p)=(i,p-1)$ for all $(i,p)\in Q_0\times \Z$.
	By a slight abuse of notation, associated to $\beta:y\rightarrow x$ in $\Z Q$, we denote by $\bar{\beta}$ the arrow that runs from $\tau x\rightarrow y$. %For convenience, we denote
	%\begin{align}
	%\label{eq:Omega-def}
	%   \Omega:=\{(\alpha,p)\mid \alpha\in Q_1,p\in\Z\},
	%   \qquad
	%  \ov{\Omega}:=\{(\ov{\alpha},p)\mid \alpha\in Q_1,p\in\Z\}.
	%\end{align}
	Let $k(\Z Q)$ be the \emph{mesh category} of $\Z Q$.
	By a theorem of Happel \cite{Ha2}, there is an equivalence
	\begin{align}
		\label{eq:Happelfunctor}
		H: k(\Z Q)\stackrel{\simeq}{\longrightarrow} \Ind \cd_Q,
	\end{align}
	where $\Ind \cd_Q$ denotes the category of indecomposable objects in the bounded derived category $\cd_Q=\cd^b(\mod(kQ))$.
	Using this equivalence, we label once and for all the vertices of $\Z Q$ by the isoclasses of indecomposable objects of $\cd_Q$. %The functor $H$ is an equivalence if and only if $Q$ is of Dynkin type.
	Note that the action of $\tau$ on $\Z Q$ corresponds to the action of the AR-translation on $\cd_Q$, and this explains the notation $\tau$.

	Let 
	\begin{align}
		\label{eq:C}
		C=
		\{\text{the vertices labeled by }\Sigma^j\tS_i, \text{ for all } i\in Q_0 \text{ and } j\in\Z \}.
	\end{align}
	Let $\Z Q_C$ be the quiver constructed from $\Z Q$ by adding to every vertex $c \in C$ a new object denoted by $\sigma c$ together with arrows $\tau c \rightarrow \sigma c$ and $\sigma c \rightarrow c$; we refer to $\sigma c$, for $c\in C$, as {\em frozen vertices}.

	The \emph{graded NKS category} $\mcr^{\gr}_C$ is defined to be the $k$-linear category with
	\begin{itemize}
		\item[$\triangleright$] objects: the vertices of $\Z Q_C$;
		\item[$\triangleright$] morphisms: $k$-linear combinations of paths in $\Z Q_C$ modulo the ideal spanned by the mesh relations
		$\sum_{\alpha:y\rightarrow x}\alpha\bar{\alpha},$
		where the sum runs through all arrows of $\Z Q_C$ ending at $x\in\Z Q$ (including the new arrow $\sigma x \rightarrow x$ if $x\in C$).
	\end{itemize}
	%These categories were formulated in \cite{KS16, Sch19}; here and below NKS stands for Nakajima-Keller-Scherotzke. The work \cite{KS16} was in turn motivated by \cite{Na01,Na04,HL15,LeP13}; also cf. \cite{Qin}.

	Let $F=\Sigma^2:\cd_Q\rightarrow \cd_Q$.
	Note that the subset $C$ in \eqref{eq:C} is $F$-invariant. The isomorphism $F$ of $\Z Q$ can be uniquely lifted to an isomorphism of $\Z Q_C$  by setting $F(\sigma c)=\sigma(Fc)$ for any $c\in C$, and then the functor $F$ of the mesh category $k(\Z Q)$ can be uniquely lifted to $\mcr^{\gr}_C$, which is also denoted by $F$.
	
	Let
	\[
	\mcr=\mcr_{C,F}:=\mcr^{\gr}_C/F,
	\]
	and let $\cs=\cs_{C,F}$ be the full subcategory of $\mcr$ formed by all $\sigma c$ ($c\in C$), following \cite{Sch19}. Then $\mcr$ and $\cs$ are called the \emph{regular NKS category} and the \emph{singular NKS category} of the pair $(F,C)$. %The quotient category 
	%$$\cp:=\mcr/( \cs ),$$ which is equivalent to $k(\Z Q/F)$ is called the \emph{preprojective NKS category}.  By our assumption, $\cd_Q/F$ is a triangulated category and $\Ind \cd_Q/F\simeq \cp$. 
	
	Let $Q^{\dbl} =Q\sqcup  Q'$,  where $Q'$ is an identical copy of $Q$.
	The {\em double framed quiver} $Q^\sharp$
	is the quiver constructed from $Q^{\rm dbl
	}$ by adding arrows $\varepsilon_i: i\rightarrow i' ,\varepsilon'_i: i'\rightarrow i$ for any $i\in\I$. 
	Let $I^{\sharp}$ be the admissible ideal of $\bfk Q^{\sharp}$ generated by
	\begin{itemize}
		\item
		(Nilpotent relations) $\varepsilon_i \varepsilon_i'$, $\varepsilon_i'\varepsilon_i$ for any $i\in Q_0$;
		\item
		(Commutative relations) $\varepsilon_j' \alpha' -\alpha\varepsilon_i'$, $\varepsilon_j \alpha -\alpha'\varepsilon_i$ for any $(\alpha:i\rightarrow j)\in Q_1$.
	\end{itemize}
	We define the algebra $\Lambda= \bfk Q^{\sharp} \big/ I^{\sharp}$. Then $\cs\cong \proj(\Lambda)$, and then $\mod(\cs)\simeq \mod(\Lambda)$; see \cite[Remark 3.7]{LW21b}.
	
	In Nakajima's original construction \cite{Na01,Na04}, the graded Nakajima category $\mcr^{\gr}$ is defined similarly with
	$C$  the set of all vertices of $\Z Q$. We denote by $\cs^{\gr}$ the full subcategory of $\mcr^\gr$ formed by all $\sigma c$, $c\in \Z Q$. The categories $\mcr^\gr$ and $\cs^\gr$ shall be used in \S\ref{subsec: QV C^*-action}.
	
	%%%%%%%%%
	\subsection{NKS quiver varieties}
	
	Let $\cs$ be a singular NKS category, and $\mcr$ its corresponding regular NKS category. An $\mcr$-module $M$ is \emph{stable}  if the support of $\soc(M)$ is contained in $\cs_0$. 
	
	Let $\bv \in \N^{\mcr_0-\cs_0}$ and $\bw \in \N^{\cs_0}$ be dimension vectors (with finite supports). Denote by $\rep(\bv,\bw,\mcr)$ the variety of $\mcr$-modules of dimension vector $(\bv,\bw)$. Let $\e_x$ denote the characteristic function of $x\in\mcr_0$, which is also viewed as the unit vector supported at $x$. Let $G_\bv:=\prod_{x\in \mcr_0-\cs_0}\GL({\bv(x)}, k)$.

	\begin{definition}
		\label{def:NKS}
		The {\em NKS quiver variety}, or simply {\em NKS variety}, $\cm(\bv,\bw)$ is the quotient $\cs t(\bv,\bw)/G_\bv$,  where $\cs t(\bv,\bw)$ is the subset of $\rep(\bv,\bw,\mcr)$ consisting of all stable $\mcr$-modules of dimension vector $(\bv,\bw)$. Define the affine variety
		\begin{equation}
			\label{eq:M0}
			\cm_0(\bv,\bw)=\cm_0(\bv,\bw, \mcr) :=\rep(\bv,\bw,\mcr)\sslash G_\bv
		\end{equation}
		to be the categorical quotient, whose coordinate algebra is $k[\rep(\bv,\bw,\mcr)]^{G_\bv}$.
	\end{definition}
	Then $\cm(\bv,\bw)$ is a pure dimensional smooth quasi-projective variety; see \cite[Theorem 3.2]{Sch19}. The projection map
	\begin{equation}   \label{eq:pi}
		\pi:\cm(\bv,\bw)\longrightarrow \cm_0(\bv,\bw),
	\end{equation}
	which sends the $G_\bv$-orbit of a stable $\mcr$-module $M$ to the unique closed $G_\bv$-orbit in the closure of $G_\bv M$, is proper; see \cite[Theorem 3.5]{Sch19}. Let $\rep(\bw,\cs)$ be the variety of $\cs$-modules of dimension vector $\bw$. 
	Define the affine variety
	\[
	\cm_0(\bw) =\cm_0(\bw, \mcr) :=\colim\limits_{\bv} \cm_0(\bv,\bw)
	\]
	to be the colimit of $\cm_0(\bv,\bw)$ along the inclusions. By \cite[Lemma 3.14]{LW21b}, we know that 
	$\cm_0(\bw)\cong \rep(\bw,\cs)$ for any dimension vector $\bw$, and identify them below.
	
	Denote by $\cm^{\text{reg}}(\bv,\bw)\subset \cm(\bv,\bw)$ the open subset consisting of the union of closed $G_\bv$-orbits of stable modules, and then
	\[
	\cm_0^{\text{reg}}(\bv,\bw):=\pi(\cm^{\text{reg}}(\bv,\bw))
	\]
	is an open subset of $\cm_0(\bv,\bw)$.

	Given $\bv \in \N^{\mcr_0-\cs_0}$,  we define a quantum Cartan matrix (cf. \cite{Sch19})
	%\red{[this explicit definition appeared right before \cite[Proposition 4.6]{Sch18}] }
	\begin{align}
		\label{def:Cq}
		\begin{split}
			{\mathcal C}_q \bv:  \mcr_0-\cs_0 & \longrightarrow\Z,
			\\
			({\mathcal C}_q\bv)(x)& =\bv(x)+\bv(\tau x) -\sum_{y\rightarrow x}\bv(y),
			\quad \text{ for }x \in\mcr_0-\cs_0,
		\end{split}
	\end{align}
	where the sum runs over all arrows $y\rightarrow x$ of $\mcr$ with $y\in\mcr_0-\cs_0$.
	Given $\bw \in \N^{\cs_0}$, define a dimension vector
	\[
	\sigma^*\bw:\mcr_0-\cs_0\longrightarrow\N,
	\qquad
	x \mapsto
	\begin{cases}
		\bw(\sigma x), & \text{ if } x\in C,
		\\
		0, & \text{otherwise}.
	\end{cases}
	\]
	Given $\bv \in \N^{\mcr_0-\cs_0}$, define the dimension vector
	\[
	\tau^*\bv: \mcr_0-\cs_0 \longrightarrow \N,
	\qquad
	x \mapsto \bv(\tau x).
	\]
	
	By \cite[Proposition 4.6]{Sch19}, %if the fibre of $\pi:\cm(\bv,\bw)\rightarrow \cm_0(\bw)$ is non-empty, then $\sigma^*\bw-{\mathcal C}_q\bv \ge \bf 0$. So 
	we obtain the following more precise form of a stratification of $\cm_0(\bw)$
	\begin{equation}   \label{eqn:stratification}
		\cm_0(\bw)=\bigsqcup_{\bv:\sigma^*\bw-{\mathcal C}_q\bv\geq0} \cm_0^{\text{reg}}(\bv,\bw).
	\end{equation}
	A pair $(\bv,\bw)$ of dimension vectors $\bv \in \N^{\mcr_0-\cs_0}$ and $\bw \in \N^{\cs_0}$ is called $l$\emph{-dominant} if $\sigma^*\bw-{\mathcal C}_q\bv\geq0$.
	
	For the categories $\mathcal{R}^{\gr}$ and $\mathcal{S}^{\gr}$, we can also define NKS quiver varieties
	$\mathcal{M}(\bv,\bw,\mathcal{R}^{\gr})$ and $\mathcal{M}_0(\bv,\bw,\mathcal{R}^{\gr})$.

	%%%%%%%%
	\subsection{Quantum Grothendieck rings}
	\label{subsec: graded Groth ring}

	We review the quantum Grothendieck ring and its convolution product, following \cite{VV}; also see \cite{Na01,Na04, Qin}. 
	
	For any two dimension vectors $\alpha,\beta$ of $\cs$, let $V_{\alpha+\beta}$ be a vector space of graded dimension $\alpha+\beta$. Fix a vector subspace $W_0\subset V_{\alpha+\beta}$ of graded dimension $\alpha$, and let
	\[
	F_{\alpha,\beta}:=\{y\in \rep(\alpha+\beta,\cs)\mid y(W_0)\subset W_0\}
	\]
	be the closed subset of $\rep(\alpha+\beta,\cs)$. 
	Then $y \in F_{\alpha,\beta}$ induces a natural linear map $y': V/W_0 \rightarrow V/W_0$, i.e., $y' \in \rep(\beta,\cs)$.
	Hence we obtain the following convolution diagram
	\[
	\rep(\alpha,\cs)\times \rep(\beta,\cs) \stackrel{p}{\longleftarrow} F_{\alpha,\beta}\stackrel{q}{\longrightarrow}\rep(\alpha+\beta,\cs),
	\]
	where $p(y):=(y|_{W_0},y')$ and $q$ is the natural closed embedding.

	Let $\cd_c(\rep(\alpha,\cs))$ be the derived category of constructible sheaves on $\rep(\alpha,\cs)$. We have the following restriction functor (called comultiplication),
	\begin{align}
		\label{eq:Delta1}
		\widetilde{\Delta}^{\alpha+\beta}_{\alpha,\beta}: \cd_c \big(\rep(\alpha+\beta,\cs) \big) \longrightarrow \cd_c \big(\rep(\alpha,\cs) \big)\times \cd_c \big(\rep(\beta,\cs) \big), \quad F\mapsto p_!q^*(F).
	\end{align}
	
	%For any $(\bv,\bw)$ such that $\cm(\bv,\bw)\neq \emptyset$, we do not know if  $\cm(\bv,\bw)$ is connected or not, however, it is pure dimensional by \cite[Theorem 3.2]{Sch18}. 
	Choose a set $\{\alpha_\bv\}$ such that it parameterizes the connected components 
	of $\cm_0(\bv,\bw,\mathcal{R})$. For any $l$-dominant pair $(\bv,\bw)$, since the restriction of $\pi$ on the regular stratum $\cm_0^{\text{reg}}(\bv,\bw,\mathcal{R})$ is a homeomorphism by \cite[Definition 3.6]{Sch19}, the set $\{\alpha_\bv\}$  naturally parameterizes the connected components of this regular stratum:
	\begin{align}
		\cm_0^{\text{reg}}(\bv,\bw,\mathcal{R})=\bigsqcup_{\alpha_\bv} \cm_0^{\reg;\alpha_\bv}(\bv,\bw,\mathcal{R}).
	\end{align}
	
	Let $\underline{k}_{\cm(\bv,\bw)}$ be the constant sheaf on $\cm(\bv,\bw)$. Denote by $\pi^{\cs}(\bv,\bw)\in \cd_c(\rep(\bw,\cs))$ (or $\pi(\bv,\bw)$ when there is no confusion) the pushforward along $\pi: \cm(\bv,\bw)\rightarrow \cm_0(\bw)\cong \rep(\bw,\cs)$ of $\underline{k}_{\cm(\bv,\bw)}$ with a grading shift:
	\begin{align}
		\label{eq:piPS}
		\pi(\bv,\bw):= \pi_!(\underline{k}_{\cm(\bv,\bw)}) [\dim \cm(\bv,\bw,\mathcal{R})].
	\end{align}
	
	For a strongly $l$-dominant pair $(\bv,\bw)$, let $\cl^{\cs}(\bv,\bw)$ (or $\cl(\bv,\bw)$ when there is no ambiguity) be the intersection cohomology (IC for short) complex associated to the stratum $\cm_0^{\reg}(\bv,\bw,\mathcal{R})$ with respect to the trivial local system, that is,
	\begin{align}
		\label{eq:decomp}
		\cl(\bv,\bw)=\IC(\cm_0^{\reg}(\bv,\bw,\mathcal{R}))=\bigoplus_{\alpha_\bv}\IC(\cm_0^{\reg;\alpha_\bv}(\bv,\bw,\mathcal{R})).
	\end{align}
	%Thanks to the transverse slice theorem \cite{Na01, Na04}  (also see \cite{Qin}), 
	%We expect the $\imath$NKS quiver varieties always satisfy the following property (the main point is that only IC sheaves with trivial local systems can show up in the decomposition). 
	
	%\begin{hypothesis}
	%	\label{hypothesis}
	We have a decomposition
	\begin{equation}
		\label{eqn:decomposition theorem}
		\pi(\bv,\bw)=\sum_{\bv':\sigma^*\bw-{\mathcal C}_q\bv'\geq0,\bv'\leq \bv} a_{\bv,\bv';\bw}(v)\cl(\bv',\bw),
	\end{equation}
	where we denote by $\cf^{\oplus m}[d]$ by $mv^{-d}\cf$ using an indeterminate $v$, for any sheaf $\cf$, $m\in\N$, and $d\in\Z$ (to compare with \cite{LW21b}, set $v=t^{-1}$). 
	Moreover, we have $a_{\bv,\bv';\bw}(v)\in\N[v, v^{-1}]$, $a_{\bv,\bv';\bw}(v^{-1})=a_{\bv,\bv';\bw}(v)$, and $a_{\bv, \bv;\bw}(v) =1$. (Any $f(v)\in \N[v, v^{-1}]$ such that $f(v^{-1})=f(v)$ is called \emph{bar-invariant}.)
	%\end{hypothesis}

	For each $\bw\in \N^{\cs_0}$, the Grothendieck group $K_\bw(\mod(\cs))$ is defined as the free abelian group generated by the perverse sheaves $\cl(\bv,\bw)$ appearing in (\ref{eqn:decomposition theorem}), for various $\bv$. It has a $\Z[v, v^{-1}]$-basis by \eqref{eqn:decomposition theorem}:
	\begin{align}
		\label{eq:bases}
		\{\cl(\bv,\bw) \mid \sigma^*\bw-{\mathcal C}_q\bv\geq0\}.
	\end{align}
	Consider the free $\Z[v, v^{-1}]$-module
	\begin{equation}
		\label{eq:Kgr}
		K^{\mathrm{gr}}(\mod(\cs)) := \bigoplus_\bw K_\bw(\mod(\cs)).
	\end{equation}
	Then $\{\widetilde{\Delta}^{\bw}_{\bw_1,\bw_2} \}$ induces a comultiplication $\widetilde{\Delta}$ on $K^{\mathrm{gr}}(\mod(\cs))$.
	
	Introduce a bilinear form $d(\cdot,\cdot)$ on $\N^{\mcr_0-\cs_0}$ by letting
	\begin{equation}\label{definition:d}
		d\big((\bv_1,\bw_1),(\bv_2,\bw_2) \big)=(\sigma^*\bw_1-{\mathcal C}_q\bv_1)\cdot \tau^* \bv_2+\bv_1\cdot \sigma^*\bw_2,
	\end{equation}
	where $\cdot$ denotes the standard inner product, i.e., $\bv' \cdot \bv'' =\sum_{x\in \mcr_0-\cs_0} \bv'(x)\bv''(x)$.
	\begin{proposition}[\text{\cite[Proposition 4.8]{SS16}; see also \cite[Lemma 4.1]{VV},\cite[eq. (11)]{Qin} }]
		The comultiplication $\widetilde{\Delta}$ is coassociative and given by
		\begin{align}
			\label{eqn:comultiplication}
			\widetilde{\Delta}^{\bw}_{\bw_1,\bw_2} \big(\pi(\bv,\bw) \big)=\bigoplus_{\stackrel{\bv_1+\bv_2=\bv}{\bw_1+\bw_2=\bw}} v^{d((\bv_1,\bw_1),(\bv_2,\bw_2))-d((\bv_2,\bw_2),(\bv_1,\bw_1))}\pi(\bv_1,\bw_1)\boxtimes\pi(\bv_2,\bw_2).
		\end{align}
	\end{proposition}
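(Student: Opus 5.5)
The formula is the standard restriction computation for Nakajima-type varieties, as in \cite{VV,Qin,SS16}, and I will follow that route. The first step is to realise both $\cm(\bv,\bw)$ and the convolution diagram on the level of stable $\mcr$-modules. Fix a point $y\in F_{\bw_1,\bw_2}$, i.e. an $\cs$-module $W$ of dimension $\bw$ with a submodule $W_0$ of dimension $\bw_1$. A stable $\mcr$-module $M$ whose restriction to $\cs$ is compatible with $q(y)$ determines a unique $\mcr$-submodule $M_1\subset M$ generated over $W_0$, with quotient $M_2=M/M_1$. The submodule $M_1$ is stable because its socle is contained in the socle of $M$. The quotient $M_2$ is stable because $M_1$ contains everything of $M$ hit from $W_0$; here I use the description of stability as "cogenerated by $\cs$" from \cite{Sch19}.

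This yields a decomposition of $q^{-1}$ of the fibre, equivalently of $\pi^{-1}(q(F_{\bw_1,\bw_2}))$, into locally closed pieces $Z_{\bv_1,\bv_2}$ indexed by $\bv_1+\bv_2=\bv$, where $\dim M_i=(\bv_i,\bw_i)$. Each piece carries a map $Z_{\bv_1,\bv_2}\to\cm(\bv_1,\bw_1)\times\cm(\bv_2,\bw_2)$ compatible with $p$.

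The second step is the local structure of these maps, and this is the key point. The map $Z_{\bv_1,\bv_2}\to\cm(\bv_1,\bw_1)\times\cm(\bv_2,\bw_2)$ should be a vector bundle whose fibre over $(M_1,M_2)$ is an extension space $\Ext^1_\mcr(M_2,M_1)$ (with fixed restriction to $\cs$). I will compute its rank via Euler form computations in $\mod\mcr$, using that stable modules have vanishing higher Ext groups in the appropriate range \cite{Sch19}. A $\C^*$-action in the spirit of \cite{Na04,VV} should then let me apply the hyperbolic localization argument. With this action, $p_!q^*$ of $\pi_!\underline{k}$ becomes the direct sum over $(\bv_1,\bv_2)$ of pushforwards of the constant sheaf on the attracting sets $Z_{\bv_1,\bv_2}$. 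Hence
\[
\widetilde{\Delta}\big(\pi(\bv,\bw)\big)=\bigoplus v^{s}\,\pi(\bv_1,\bw_1)\boxtimes\pi(\bv_2,\bw_2),
\]
where the shift $s$ is
\[
s=\dim\cm(\bv,\bw)-\dim\cm(\bv_1,\bw_1)-\dim\cm(\bv_2,\bw_2)-2\,\mathrm{rank}.
\]

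The third step identifies this shift with $d((\bv_1,\bw_1),(\bv_2,\bw_2))-d((\bv_2,\bw_2),(\bv_1,\bw_1))$. I will use the dimension formula $\dim\cm(\bv,\bw)=d((\bv,\bw),(\bv,\bw))+d((\bv,\bw),(\bv,\bw))^{\rm op}$-type expression from \cite{Sch19}, namely $\bv\cdot\sigma^*\bw+(\sigma^*\bw-\mathcal C_q\bv)\cdot\tau^*\bv$ up to symmetrization. Bilinearity gives the dimension difference as the symmetrised form, and the rank equals one of the two $d$-terms, so the difference is the antisymmetric combination.

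Coassociativity then follows formally from base change applied to the iterated convolution diagrams, the three-step flag version, together with the additivity of $d$. I expect the main obstacle to be the second step. There I must prove that the fibres are exactly affine spaces of constant rank, that the $\C^*$-hyperbolic localization applies to the nonprojective $p$, and that signs and orientation conventions (which of the two $d$-terms appears) match. If the direct argument is messy, I will simply import the computation of \cite[Proposition 4.8]{SS16}, checking that their category satisfies the hypotheses for $\mcr=\mcr^{\gr}_C/F$.
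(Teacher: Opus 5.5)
Your fallback, importing \cite[Proposition 4.8]{SS16}, is exactly what the paper does: it gives no argument and simply quotes \cite{SS16,VV,Qin}. Your self-contained sketch follows the hyperbolic-localization route of \cite{VV}, which is the right strategy, but Step 1 is wrong as written.

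The submodule \emph{generated} by $W_0$ is stable, but its quotient need not be. Take the $\mathfrak{sl}_2$ category of the last section with $\bv=\e_{\tS_1}$, $\bw=\e_{\sigma\tS_1}$ and $W_0=W$, so the second factor is zero. The unique stable $M$ has $M(\tS_1)=k\xrightarrow{\sim}M(\sigma\tS_1)=k$. Every path from $\tS_1$ to $\sigma\tS_1$ passes through $\beta_2\alpha_1=0$, so $\Hom_{\mcr}(\tS_1,\sigma\tS_1)=0$. Hence the submodule generated by $W_0$ is the simple at $\sigma\tS_1$, and the quotient is the simple at $\tS_1$, which is not stable (indeed $\cm(\e_{\tS_1},0)=\emptyset$).

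What you need is the \emph{largest} submodule $M_1\subseteq M$ with $M_1|_{\cs}\subseteq W_0$: everything that maps \emph{into} $W_0$, not everything hit from $W_0$.
\begin{itemize}
\item It contains the generated submodule, so $M_1|_{\cs}=W_0$.
\item $M/M_1$ is stable precisely by maximality.
\item It is the unique submodule with $\cs$-part $W_0$ and stable quotient.
\item It is what the Bia{\l}ynicki-Birula limit produces for the cocharacter of $G_\bw$ with larger weight on $W_0$; that cocharacter has attracting set $F_{\bw_1,\bw_2}$ in $\rep(\bw,\cs)$ and limit map $p$.
\end{itemize}
Only with this choice are your pieces $Z_{\bv_1,\bv_2}$ and their maps to $\cm(\bv_1,\bw_1)\times\cm(\bv_2,\bw_2)$ defined.

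There are also two smaller points in Step 2. First, the fibre is the space of \emph{framed} extensions, i.e.\ $H^1$ of Nakajima's three-term complex $C(M_2,M_1)$. Here $W$ is fixed as a flagged vector space, and its $\cs$-structure varies along the fibre; it is not a space of extensions with fixed restriction to $\cs$. Second, turning the stratification $\pi^{-1}(F)=\bigsqcup Z_{\bv_1,\bv_2}$ into a direct sum needs a purity argument such as \cite[\S 8.1.6]{Lus93}, as the paper uses for its own BB decompositions.

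The actual content of the formula, the exponent, is left open (``the rank equals one of the two $d$-terms''). In addition, your display writes $v^{s}$, whereas the paper's dictionary $\cf[d]\leftrightarrow v^{-d}\cf$ turns a shift $[s]$ into $v^{-s}$.

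Write $V_i$ for the non-frozen part of $M_i$. The positive-weight directions at $M_1\oplus M_2$ are the new maps from $M_2$-spaces to $M_1$-spaces along the arrows of $\mcr$, modulo $\bigoplus_x\Hom(V_2(x),V_1(x))$ and the mesh relations. Using $H^0=H^2=0$ (by stability of $M_1$, respectively $M_2$), the rank is
\[
(\sigma^*\bw_2-\mathcal C_q\bv_2)\cdot\tau^*\bv_1+\bv_2\cdot\sigma^*\bw_1=d\big((\bv_2,\bw_2),(\bv_1,\bw_1)\big).
\]
Since $\dim\cm(\bv,\bw)=d((\bv,\bw),(\bv,\bw))$ exactly, with no symmetrization, the shift is $[\,d((\bv_1,\bw_1),(\bv_2,\bw_2))-d((\bv_2,\bw_2),(\bv_1,\bw_1))\,]$.

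As a check, take $\bv=\e_{\tS_1}$, $\bw=\e_{\sigma\tS_1}+\e_{\sigma\Sigma\tS_1}$ and $W_0$ the $\sigma\tS_1$-part. Then $(\bv_1,\bw_1)=(\e_{\tS_1},\e_{\sigma\tS_1})$ and $(\bv_2,\bw_2)=(0,\e_{\sigma\Sigma\tS_1})$. The variety $\cm(\bv,\bw)\cong\mathbb{A}^1$ maps isomorphically onto $F$, so $p_!q^*\pi(\bv,\bw)=R\Gamma_c(\mathbb{A}^1,k)[1]=k[-1]$. Meanwhile $d((\bv_1,\bw_1),(\bv_2,\bw_2))=0$ and $d((\bv_2,\bw_2),(\bv_1,\bw_1))=1$.

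Under $\cf[d]\leftrightarrow v^{-d}\cf$ the result $k[-1]$ is $v^{+1}$, while the displayed exponent is $-1$. So the stated formula is right when its exponent is read as the shift. Your $v^{s}$ agrees with it only because it silently uses the opposite dictionary. You should fix this normalization explicitly, not leave it to the end.
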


	Denote
	\begin{align}
		\label{eq:Kgr2}
		\begin{split}
			{\bf R}_\bw(\mod(\cs)) &=\Hom_{\Z[v, v^{-1}]} (K_\bw(\mod(\cs)),\Z[v, v^{-1}]),
			\\
			K^{\mathrm{gr}*}(\mod(\cs)) &=\bigoplus_{\bw\in \N^{\cs_0}}{\bf R}_\bw(\mod(\cs)).
		\end{split}
	\end{align}
	Then as the graded dual of a coalgebra, $K^{\mathrm{gr}*}(\mod(\cs) )$ becomes a $\Z[v, v^{-1}]$-algebra, whose multiplication is denoted by $\ast$.
	Note that $K^{\mathrm{gr}*}(\mod(\cs))$ is a $\N^{\cs_0}$-graded algebra (called the {\em quantum Grothendieck ring}).  It has a distinguished basis
	\begin{align}
		\label{eq:bases dual}
		\{L (\bv,\bw) \mid \sigma^*\bw-{\mathcal C}_q \bv\geq0\},
	\end{align}
	dual to the basis in \eqref{eq:bases}, called the dual IC basis.
	
	By the transversal slice Theorem \cite{Na01, LW21b}, we have
	\begin{equation}
		\label{equation multiplication}
		L(\bv_1,\bw_1)\ast  L(\bv_2,\bw_2)=\sum_{\bv \geq \bv_1+\bv_2} c_{\bv_1,\bv_2}^\bv(v) L(\bv,\bw_1+\bw_2),
	\end{equation}
	with a leading term $c_{\bv_1,\bv_2}^{\bv_1+\bv_2}(v) L(\bv_1+\bv_2,\bw_1+\bw_2)$; moreover, we have
	\begin{align}
		c_{\bv_1,\bv_2}^\bv(v) & \in \N[v, v^{-1}],
		\label{eq:positive}
		\\
		c_{\bv_1,\bv_2}^{\bv_1+\bv_2}(v) &= v^{d((\bv_1,\bw_1),(\bv_2,\bw_2))-d((\bv_2,\bw_2),(\bv_1,\bw_1))}.
		\label{eqn: leading term}
	\end{align}
	
	Recall $\e_x$ denotes the characteristic function of $x\in\mcr_0$.  %In this setting, the $\bw^i,\bv^i$ defined in  \eqref{eq:v^i} are
	Define
	\begin{align}
		%\label{eq:v^i}
		%\begin{split}
		\bw^{i}&=\e_{\sigma \tS_{ i}}+\e_{\sigma\Sigma\tS_{i}},
		\qquad
		\bv^{i}= \sum_{z\in\mcr_0-\cs_0} \dim\cp(\tS_i ,z)\e_z,\qquad
		\bv^{\Sigma i}=\Sigma^*\bv^i.
		%\end{split}
	\end{align}
	Denote
	\begin{align}
		&W^{+}=\bigoplus_{x\in\{\tS_i,i\in Q_0\}}\N \e_{\sigma x},&&
		V^{+}=\bigoplus_{x\in\Ind \mod(kQ),\, x\text{ is not injective}} \N \e_x,\\
		&W^{-}=\Sigma^*W^{+},&&
		V^{-}=\Sigma^*V^{+},\\
		&W^{0}=\bigoplus_{i\in Q_0} \N \bw^{i},&&
		V^{0}=\bigoplus_{i\in Q_0} \N \bv^{i}\oplus \N \bv^{\Sigma i}.
	\end{align}
	
	\begin{lemma}[{\cite{Qin}}]
		Let $\bv\in V^0$, $\bw\in W^0$ be such that $\sigma^*\bw-{\mathcal C}_q \bv=0$. Then we have $(\bv,\bw)\in\bigoplus_{i\in \I}\N(\bv^{i},\bw^{i})+\bigoplus_{i\in \I}\N(\bv^{\Sigma i},\bw^{i})$.
	\end{lemma}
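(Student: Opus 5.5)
Since $\bv\in V^0$ and $\bw\in W^0$, I will write
\[
\bv=\sum_{i\in\I}\big(a_i\bv^i+b_i\bv^{\Sigma i}\big),\qquad \bw=\sum_{i\in\I}c_i\bw^i,
\qquad a_i,b_i,c_i\in\N .
\]
The plan is to show that $c_i=a_i+b_i$ for every $i$. Then
\[
(\bv,\bw)=\sum_i a_i(\bv^i,\bw^i)+\sum_i b_i(\bv^{\Sigma i},\bw^i),
\]
which is the claim. Because both $\sigma^*$ and ${\mathcal C}_q$ are linear, the hypothesis $\sigma^*\bw-{\mathcal C}_q\bv=0$ becomes
\[
\sum_i c_i\,\sigma^*\bw^i=\sum_i\big(a_i\,{\mathcal C}_q\bv^i+b_i\,{\mathcal C}_q\bv^{\Sigma i}\big).
\]
So everything reduces to computing ${\mathcal C}_q\bv^i$ and ${\mathcal C}_q\bv^{\Sigma i}$ explicitly.

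The key step is to show that ${\mathcal C}_q\bv^i=\e_{\tS_i}+\e_{\Sigma\tS_i}$, viewed as a function on $\mcr_0-\cs_0$ via the vertices $\tS_i,\Sigma\tS_i\in C$. Note that $\sigma^*\bw^i$ is exactly this function, so the claim says $(\bv^i,\bw^i)$ itself satisfies the equation. To prove it I will use that $\bv^i(z)=\dim\cp(\tS_i,z)$, where $\cp$ is the mesh category $k(\Z Q/F)$. Applying ${\mathcal C}_q$ to $z\mapsto\dim\cp(\tS_i,z)$ computes, at each $x$, the alternating sum along the mesh ending at $x$. By the AR-triangle $\tau x\to\bigoplus_{y\to x} y\to x\to\Sigma\tau x$ in $\cd_Q$, together with the identification $\Ind\cd_Q/F\simeq\Ind\cp$, this sum vanishes except at finitely many vertices. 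For the given $x$, these exceptional vertices are those $x$ where $\Hom(\tS_i,-)$ fails to be exact on the mesh, i.e.\ $x=\tS_i$ and the vertex through which the triangle's connecting map $x\to\Sigma\tau x$ is nonzero on $\Hom(\tS_i,-)$. Using Serre duality $\Sigma\tau=\mathbb{S}$ and the $2$-periodicity $F=\Sigma^2$, I expect this to give exactly the vertices $\tS_i$ and $\Sigma\tS_i$, each with coefficient $1$. The same computation transported by $\Sigma^*$ then gives ${\mathcal C}_q\bv^{\Sigma i}=\e_{\Sigma\tS_i}+\e_{\Sigma^2\tS_i}=\e_{\Sigma\tS_i}+\e_{\tS_i}$ modulo $F$.

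With these formulas the equation reads
\[
\sum_i c_i\big(\e_{\tS_i}+\e_{\Sigma\tS_i}\big)=\sum_i(a_i+b_i)\big(\e_{\tS_i}+\e_{\Sigma\tS_i}\big).
\]
The vertices $\tS_i,\Sigma\tS_i$ are pairwise distinct in $\mcr/F$, so comparing coefficients gives $c_i=a_i+b_i$ for all $i$. The expression for $\bv$ is unique: $\bv^i$ is supported in degree $0$ and $\bv^{\Sigma i}$ in degree $1$, and within each degree the $\bv^i$ are linearly independent because their ${\mathcal C}_q$-images are. This finishes the proof.

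The main obstacle is the mesh computation of ${\mathcal C}_q\bv^i$. In particular, I must check that $\cp(\tS_i,-)$ is correctly described by Hom-dimensions in $\cd_Q/\Sigma^2$, where contributions from both $\Hom$ and $\Hom(\tS_i,\Sigma^{2k}-)$ occur. I must also check that the defect of the mesh sums appears precisely at $\tS_i$ and at $\Sigma\tS_i$ (Serre duality gives $\Hom(\tS_i,\Sigma\tau\tS_i)\neq0$) and nowhere else. My first attempt will be to verify this using the standard projective resolution of $\cp(\tS_i,-)$ as a module over the mesh category, as in Keller--Scherotzke. If that fails, I will treat $A_1$ directly and reduce the general case to a vertex-by-vertex comparison.
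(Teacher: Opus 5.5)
The paper gives no argument for this lemma; it is simply quoted from Qin. Your proof is correct and self-contained. Its only real input is the identity
\[
{\mathcal C}_q\bv^i={\mathcal C}_q\bv^{\Sigma i}=\sigma^*\bw^i=\e_{\tS_i}+\e_{\Sigma\tS_i},
\]
that is, that $(\bv^i,\bw^i)$ and $(\bv^{\Sigma i},\bw^i)$ themselves satisfy the equation. Once this holds, comparing coefficients on the disjointly supported vectors $\e_{\tS_i}+\e_{\Sigma\tS_i}$ forces $c_i=a_i+b_i$. This works for \emph{any} chosen expression of $\bv$, and that is all the lemma asks.

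Your mesh computation is the right idea, but tighten how you locate the second defect. Your description via the connecting map $x\to\Sigma\tau x$ being nonzero on $\Hom(\tS_i,-)$ picks out $x=\tS_i$ again. The defect at $\Sigma\tS_i$ comes from the left end of the mesh.

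Precisely, take the AR triangle $\tau z\to E\to z\to\Sigma\tau z$ in $\cd_Q$. Let $C$ be the cokernel of $\Hom(\tS_i,E)\to\Hom(\tS_i,z)$ and $K$ the kernel of $\Hom(\tS_i,\tau z)\to\Hom(\tS_i,E)$. Exactness in the middle gives
\[
\dim\Hom(\tS_i,z)+\dim\Hom(\tS_i,\tau z)-\dim\Hom(\tS_i,E)=\dim C+\dim K.
\]
\begin{itemize}
\item Since $E\to z$ is right almost split, $\dim C=\delta_{z,\tS_i}$.
\item By Serre duality, $DK$ is the cokernel of $\Hom(E,\mathbb{S}\tS_i)\to\Hom(\tau z,\mathbb{S}\tS_i)$.
\item Since $\tau z\to E$ is left almost split, $\dim K=1$ if $\tau z\cong\mathbb{S}\tS_i=\Sigma\tau\tS_i$, i.e.\ $z\cong\Sigma\tS_i$, and $\dim K=0$ otherwise.
\end{itemize}
Now sum over $F$-orbits using $\cp(\tS_i,z)=\bigoplus_n\Hom_{\cd_Q}(\tS_i,F^n\tilde z)$. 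This gives ${\mathcal C}_q\bv^i=\e_{\tS_i}+\e_{\Sigma\tS_i}$. Transporting by $\Sigma^*$ handles $\bv^{\Sigma i}$.

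Drop your uniqueness paragraph: it is not needed, and its justification is false.
\begin{itemize}
\item $\bv^i$ is not supported in degree $0$. If $\tS_i$ is not projective, then $\tau\tS_i\in\mod(kQ)$, so $\Sigma\tau\tS_i$ lies in degree $1$. Yet $\bv^i(\Sigma\tau\tS_i)\geq\dim\Hom(\tS_i,\mathbb{S}\tS_i)=1$.
\item ${\mathcal C}_q$ cannot separate $\bv^i$ from $\bv^{\Sigma i}$, because their images coincide.
\end{itemize}
Linear independence does in fact hold. For $X\in\mod(kQ)$ one has $(\bv^i-\bv^{\Sigma i})(X)=\dim\Hom(\tS_i,X)-\dim\Ext^1(\tS_i,X)$, which is the nondegenerate Euler form. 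But the lemma only asserts membership, so this is not required.
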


	\begin{lemma} [$l$-dominant pair decomposition; see {\cite{Qin}}]
		\label{lem:Qin decomposition}
		If $(\bv,\bw)$ is an $l$-dominant pair for $\mcr$, then we have a unique decomposition of $(\bv,\bw)$ into $l$-dominant pairs $(\bv^+,\bw^+) \in (V^+, W^+)$, $(\bv^-,\bw^-) \in (V^-, W^-)$, $(\bv^0,\bw^0) \in (V^0, W^0)$ such that
		\[
		(\bv^+, \bw^+) + (\bv^-, \bw^-) + (\bv^0, \bw^0)= (\bv, \bw),
		\qquad
		\sigma^*\bw^0-{\mathcal C}_q \bv^0=0.
		\]
	\end{lemma}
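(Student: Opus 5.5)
The natural plan is to split the vertex set $\mcr_0-\cs_0$ into four blocks. Let $A^+$ be the non-injective indecomposables of $\mod(kQ)$, $A^{+}_{\rm inj}=\{I_j\mid j\in\I\}$ the injectives, $A^-=\Sigma A^+$ and $A^-_{\rm inj}=\{\Sigma I_j\}$. Since $F=\Sigma^2$, every non-frozen vertex of $\mcr$ lies in exactly one of these blocks. By definition $V^\pm$ is supported on $A^\pm$, so the values of $\bv$ on $A^+_{\rm inj}\sqcup A^-_{\rm inj}$ must come entirely from the $V^0$-component. This forces the decomposition and gives uniqueness, provided one linear-algebra fact holds. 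The resulting pieces are then $\bv^\pm$ (the remaining parts of $\bv$ on $A^\pm$), $\bw^0=\sum_i(a_i+b_i)\bw^i$, and $\bw^\pm$ (the remaining parts of $\bw$ at the frozen vertices $\sigma\tS_i$, respectively $\sigma\Sigma\tS_i$).

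First I would compute $\bv^i$ and $\bv^{\Sigma i}$ explicitly from the mesh category $\cp\simeq k(\Z Q/F)$. The function $z\mapsto\dim\cp(\tS_i,z)$ is the hammock starting at $\tS_i$. Using $\Hom_{\cd_Q}(\tS_i,-)$ together with the Serre functor (the hammock stops at $\Sigma\tau\tS_i$, i.e.\ one mesh before $F$ brings it back), I would check two facts. The first is $\sigma^*\bw^i-{\mathcal C}_q\bv^i=0$, which is essentially the mesh relation; this justifies the statement of the preceding Lemma of \cite{Qin}. The second is that the restriction of $\bv^i,\bv^{\Sigma i}$ to the injective vertices $A^+_{\rm inj}\sqcup A^-_{\rm inj}$ is unitriangular. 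Concretely, $\bv^i(I_j)=\dim\Hom(\tS_i,I_j)=\delta_{ij}$, and $\bv^i$ vanishes on $A^-_{\rm inj}$ or is triangular there with respect to a suitable order; symmetrically for $\bv^{\Sigma i}$. It follows that the coefficients $a_i,b_i$ with $\bv^0=\sum a_i\bv^i+b_i\bv^{\Sigma i}$ are uniquely determined by $\bv|_{A^\pm_{\rm inj}}$, which gives uniqueness.

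For existence I need the following nonnegativity statements: $a_i,b_i\ge0$, $\bv^\pm=\bv|_{A^\pm}-\bv^0|_{A^\pm}\ge0$, $\bw^\pm\ge0$, and each piece is $l$-dominant. Since ${\mathcal C}_q\bv^0=\sigma^*\bw^0$, we have
\[
\sigma^*\bw-{\mathcal C}_q\bv=\sigma^*(\bw^++\bw^-)-{\mathcal C}_q\bv^+-{\mathcal C}_q\bv^- .
\]
The key observation is that ${\mathcal C}_q\bv^+$ is supported on $A^+\sqcup A^+_{\rm inj}$ and ${\mathcal C}_q\bv^-$ on $A^-\sqcup A^-_{\rm inj}$. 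To see this, note that $\tau^{-1}I_j=\Sigma P_j$, and the arrows into the vertices $\Sigma P_j$ come only from injectives, on which $\bv^+$ vanishes. Consequently $({\mathcal C}_q\bv^+)(\Sigma P_j)=\bv^+(I_j)+\ldots=0$. So the right-hand side splits by support, and the $l$-dominance inequalities separate into the $+$ part and the $-$ part.

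The main obstacle is showing that $a_i,b_i\ge0$ and $\bv^\pm\ge0$. Nonnegativity of $\bv$ alone does not give this directly; one must use $l$-dominance of $(\bv,\bw)$ at the injective vertices and at the vertices $\Sigma P_j$. My first attempt would be an induction along the AR-quiver. At each vertex, evaluate the inequality $({\mathcal C}_q\bv)(x)\le\sigma^*\bw(x)$ and propagate along the hammock of $\tS_i$. This should show that $\bv-\sum_i a_i\bv^i-b_i\bv^{\Sigma i}$ remains nonnegative and vanishes on the injective vertices. Given that, $\bw^\pm\ge0$ follows by reading off the frozen coordinates, which are determined by the separated inequalities above.
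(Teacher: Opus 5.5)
You prove uniqueness and the support splitting correctly, and the triangularity step is sharper than you say. We have $\bv^i(I_j)=\dim\Hom(\tS_i,I_j)=\delta_{ij}$ and $\bv^i(\Sigma I_j)=\dim\Ext^1(\tS_i,I_j)=0$, so the matrix is the identity. Hence $a_i=\bv(I_i)$ and $b_i=\bv(\Sigma I_i)$, and $a_i,b_i\ge 0$ is automatic. The paper gives no proof of this lemma (it only cites \cite{Qin}), so your combinatorial route has to carry the whole argument.

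\textbf{The gap} is the existence step: showing $\bv^+:=(\bv-\bv^0)|_{A^+}\ge 0$ and $\bw^+\ge0$, and their $\Sigma$-analogues. This is the only place where $l$-dominance is really used, and you leave it as ``should show''.

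The local induction you sketch does not obviously work. Set $\mathbf m:=\mathcal C_q\bv^+$ on $\Ind\mod(kQ)$. At every non-simple $z$, $l$-dominance only says $\mathbf m(z)\le 0$. Feeding this into the recursion $\bv^+(x)=\mathbf m(x)-\bv^+(\tau x)+\sum_{y\to x}\bv^+(y)$ yields upper bounds on $\bv^+$, or lower bounds on sums over predecessors, not lower bounds on individual values. What actually forces positivity is the boundary condition $\bv^+(I_j)=0$ at the injective end.

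For the same reason, $\bw^+\ge 0$ does not follow by ``reading off''. The separated inequality only gives $\bw^+(\sigma\tS_i)\ge \mathbf m(\tS_i)$. The bound $\mathbf m(\tS_i)\ge 0$ does not follow from $\bv^+\ge 0$ alone.

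\textbf{A global argument closes the gap.}
\begin{enumerate}
\item Extend $\bv^+$ by zero to $\Z Q$. By your support computation, its mesh defect is $\mathbf m$, supported on $\Ind\mod(kQ)$.
\item The function $\dim\Hom_{\cd_Q}(z,-)$ has mesh defect $\e_z+\e_{\Sigma z}$, and $\Sigma z\notin\mod(kQ)$. Inverting $\mathcal C_q$ from the left therefore gives $\bv^+(x)=\sum_z\mathbf m(z)\dim\Hom(z,x)$ for all $x\in\Ind\mod(kQ)$.
\item Evaluating at $x=I_j$ gives $\sum_z\mathbf m(z)\,\dimv z=0$.
\item Hence $\mathbf m(\tS_i)=\sum_{z\ \text{non-simple}}(-\mathbf m(z))\,[z:\tS_i]\ge 0$, which gives $\bw^+(\sigma\tS_i)\ge 0$.
\item Substituting back,
\[
\bv^+(x)=\sum_{z\ \text{non-simple}}(-\mathbf m(z))\Big(\sum_i [z:\tS_i]\dim\Hom(\tS_i,x)-\dim\Hom(z,x)\Big)\ge 0,
\]
because left exactness of $\Hom(-,x)$ makes $\dim\Hom(-,x)$ subadditive along composition series.
\item The $-$ part follows by applying $\Sigma^*$.
\end{enumerate}
Once nonnegativity is established, your support splitting gives $l$-dominance of each piece, and the proof is complete.
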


	Let $Q^{\dbl} =Q\sqcup  Q'$,  where $Q'$ is an identical copy of $Q$. We can view $\bw\in\N^{\cs_0}$ as a dimension vector of $Q^{\rm dbl}$ by noting that $\cs\cong \proj(\Lambda)$; see \cite[\S4]{LP26}. With the help of the Euler form $\langle-,-\rangle_{Q^{\dbl}}$ of $kQ^{\dbl}$, we define the bilinear forms $\langle-,-\rangle_{Q^{\rm dbl},a}$ as follows: for any dimension vectors $\bw,\bw' \in \N^{\cs_0}$, let
	\begin{align}
		\langle \bw,\bw'\rangle_{Q^{\rm dbl},a}&= \langle \bw,\bw'\rangle_{Q^{\rm dbl}}-\langle \bw',\bw\rangle_{Q^{\rm dbl}}.\label{eqn: antisymmetric bilinear form}
	\end{align}

	Let us fix a square root $v^{1/2}$ of $v$ once for all, and set  $\cz:=\Z[v^{1/2}, v^{-1/2}]$.
	
	The coalgebra $K^{\mathrm{gr}}(\mod(\cs))$ (cf. \eqref{eq:Kgr}) and its graded dual (cf. \eqref{eq:Kgr2}) up to a base change here in the current setting read as follows:
	\begin{gather}
		K^{\mathrm{gr}}(\mod(\cs)) =\bigoplus_{\bw\in W^{+}+W^{-}} K_\bw(\mod(\cs)),
		\label{eq:KS}
		\\
		\hRZ=\bigoplus_{\bw\in W^{+}+W^{-}} \hR_{\cz,\bw},
		\quad \text{where } \hR_{\cz,\bw}  =   \Hom_{\cz} \big(K_\bw(\mod(\cs)),\cz\big).
		\label{eq:tR}
	\end{gather}
	
	Then $(\hRZ, \cdot)$ is the $\cz$-algebra corresponding to the coalgebra $K^{\mathrm{gr}}(\mod(\cs))$ with the {\em twisted} comultiplication
	\begin{equation}
		\label{eq:tw}
		\{\Res^{\bw}_{\bw^1,\bw^2} :=v^{\frac12\langle \bw^1,\bw^2\rangle_{Q^{\dbl},a}} \widetilde{\Res}^{\bw}_{\bw^1,\bw^2}\};
	\end{equation}
	in practice, the product sign $\cdot$ is often omitted.
	
	We shall need the $\Q(v^{1/2})$-algebra obtained by a base change below:
	\begin{align}
		\label{eq:hRQZ}
		\hR  =\Q(v^{1/2}) \otimes_{\cz} \hRZ.
	\end{align}
	We also set $\hR^+$ to be the submodule of $\hR$ generated by $L(\bv,\bw)$, $(\bv,\bw)\in (V^+,W^+)$. It also becomes an algebra under $\Delta^\bw_{\bw_1+\bw_2}$, and the inclusion $\hR^+\hookrightarrow\hR$ is an algebra homomorphism. Similarly we can define $\hR^-$.
	
	We define $\tRZ$ to be the localization of $\tRZ$ with respect to the multiplicatively closed subset generated by $L(\bv^i,\bw^i)$ and $L(\bv^{\Sigma i},\bw^i)$ ($i\in Q_0$). Define 
	\begin{align}
		\label{eq:tRQZ}
		\tR  =\Q(v^{1/2}) \otimes_{\cz} \tRZ.
	\end{align} 
	
	The following theorem recovers the main result of \cite{Qin}.
	
	\begin{theorem}
		\label{thm:Qin}
		Let $Q$ be a Dynkin $\imath$quiver. Then there exists an isomorphism of $\Q(v^{\frac{1}{2}})$-algebras $\tilde{\kappa}:\tU\stackrel{\simeq}{\rightarrow}\tR$ which sends
		\begin{align*}
			E_i\mapsto L(0,\e_{\sigma\tS_i}),\quad F_i\mapsto L(0,\e_{\sigma \Sigma \ts_i}),\quad K_i\mapsto L(\bv^i,\bw^i),\quad K_i'\mapsto L(\bv^{\Sigma i},\bw^i).
		\end{align*}
	\end{theorem}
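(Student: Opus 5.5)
Since the statement recovers Qin's main result, I will not rebuild the geometry. The plan is to define $\tilde\kappa$ on generators, check the defining relations of $\tU$ inside $\tR$, and then prove bijectivity using the triangular decompositions on both sides.

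\emph{Well-definedness.} The images $L(\bv^i,\bw^i)$ and $L(\bv^{\Sigma i},\bw^i)$ are invertible in $\tR$ by construction of the localization, so $\tilde\kappa$ is defined on $K_i^{\pm1},(K_i')^{\pm1}$.

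\begin{enumerate}
\item \emph{Cartan relations.} For $(\bv,\bw)\in(V^0,W^0)$ with $\sigma^*\bw-\mathcal{C}_q\bv=0$, the stratum $\cm_0^{\reg}(\bv,\bw)$ is a single point. Hence in \eqref{equation multiplication} only the leading term survives, and the product is a monomial
\[
v^{d((\bv_1,\bw_1),(\bv_2,\bw_2))-d((\bv_2,\bw_2),(\bv_1,\bw_1))}\,L(\bv_1+\bv_2,\bw_1+\bw_2)
\]
by \eqref{eqn: leading term}, twisted by $v^{\frac12\langle\bw^1,\bw^2\rangle_{Q^{\dbl},a}}$. A direct computation of $d$ and of the antisymmetric Euler form on $\bw^i=\e_{\sigma\tS_i}+\e_{\sigma\Sigma\tS_i}$ shows that these exponents cancel. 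This gives commutativity of the $K$'s.
\item \emph{$K$--$E$ and $K$--$F$ relations.} The same leading-term argument applies to $L(\bv^i,\bw^i)\ast L(0,\e_{\sigma\tS_j})$, provided one first shows that this product is a single dual IC class; this should follow from Lemma~\ref{lem:Qin decomposition}. The two orders then differ by $v^{\pm c_{ij}}$, again computed from $d$ and $\langle-,-\rangle_{Q^{\dbl},a}$.
\item \emph{Serre relations.} These hold inside $\hR^+$ and $\hR^-$, because $\hR^{+}$ is the dual Hall/Lusztig realization of $\U^+$ in the Hernandez--Leclerc form. I would invoke that realization rather than reprove it.
\item \emph{Commutator $[E_i,F_j]$.} This is the main obstacle. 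For $i\neq j$ the supports are unrelated and there are no extensions, so $\pi(\bv,\e_{\sigma\tS_i}+\e_{\sigma\Sigma\tS_j})$ splits and the two products agree. For $i=j$, take $\bw=\bw^i$. The $l$-dominant $\bv$ are $0$, $\bv^i$ and $\bv^{\Sigma i}$. I would compute $\widetilde\Delta(\cl(\bv,\bw^i))$ explicitly from \eqref{eqn:comultiplication}, using that the relevant varieties $\cm(\bv,\bw^i)$ are points or affine spaces and that $\pi$ is small here. This yields
\[
L(0,\e_{\sigma\tS_i})L(0,\e_{\sigma\Sigma\tS_i})=v^{a}L(0,\bw^i)+v^{b}L(\bv^i,\bw^i),
\]
with $L(\bv^i,\bw^i)$ replaced by $L(\bv^{\Sigma i},\bw^i)$ in the opposite order. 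Subtracting should give $(v^{-1}-v)(K_i-K_i')$ after tracking the half-integer twists; getting these twists exactly right is the delicate point.
\end{enumerate}

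\emph{Bijectivity.} By Lemma~\ref{lem:Qin decomposition}, every $l$-dominant pair splits uniquely as $+$, $-$ and $0$ parts. By \eqref{equation multiplication}, $L(\bv^-,\bw^-)L(\bv^0,\bw^0)L(\bv^+,\bw^+)$ equals $L(\bv,\bw)$ up to a power of $v$, plus terms with larger $\bv$. Unitriangularity then shows that $\hR\cong\hR^-\otimes\hR^0\otimes\hR^+$ as vector spaces, with $\hR^0$ spanned by monomials in the $L(\bv^i,\bw^i)$ and $L(\bv^{\Sigma i},\bw^i)$, which are algebraically independent. After localizing, $\tR\cong\hR^-\otimes\tR^0\otimes\hR^+$.

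Finally, $\tilde\kappa$ restricts to isomorphisms $\U^\pm\cong\hR^\pm$ (the known half case) and $\tU^0\cong\tR^0$. Comparing with the triangular decomposition $\tU=\U^+\otimes\tU^0\otimes\U^-$ then shows that $\tilde\kappa$ is bijective.
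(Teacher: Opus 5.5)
\paragraph{The gap: the $[E_i,F_i]$ relation in item 4.} Your predicted shape is
\[E_iF_i=v^aL(0,\bw^i)+v^bL(\bv^i,\bw^i),\]
with only $L(\bv^{\Sigma i},\bw^i)$ appearing in $F_iE_i$. This cannot produce $(v^{-1}-v)(K_i-K_i')$, however the twists are tracked: in $E_iF_i-F_iE_i$ the coefficient of $K_i$ would be the monomial $v^b$, never $v^{-1}-v$. In fact there is no half-integer twist here at all, since $\langle e_i,e_{i'}\rangle_{Q^{\dbl}}=0=\langle e_{i'},e_i\rangle_{Q^{\dbl}}$.

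What the geometry actually gives is the following.
\begin{itemize}
\item $\cm_0(\bw^i)=\rep(\bw^i,\Lambda)=\{(x,y)\in k^2\mid xy=0\}$, where $x,y$ are the values of $\varepsilon_i,\varepsilon_i'$.
\item $\cl(0,\bw^i)$ is the skyscraper at the origin, and $\cl(\bv^i,\bw^i)$, $\cl(\bv^{\Sigma i},\bw^i)$ are the shifted constant sheaves on the two coordinate axes.
\item The convolution variety $F_{\e_{\sigma\tS_i},\e_{\sigma\Sigma\tS_i}}$ is one of these axes. So $p_!q^*$ is nonzero on all three sheaves: it is the compactly supported cohomology of a line for one axis and of the origin for the other, with shifts differing by $2$.
\end{itemize}
Hence each product involves all three classes:
\[E_iF_i=L(0,\bw^i)+v^{-1}K_i+vK_i',\qquad F_iE_i=L(0,\bw^i)+vK_i+v^{-1}K_i'.\]
This is exactly the rank-one computation of Section~\ref{sec:dCB rank I}, where $C=EF-v^{-1}K-vK'=FE-vK-v^{-1}K'$. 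Once you know each coefficient is a single power of $v$, bar-invariance pins them down: $\ov{E_iF_i}=F_iE_i$, because $\ov{\cdot}$ is an anti-automorphism fixing every $L(\bv,\bw)$. Only this three-term structure yields $(v^{-1}-v)(K_i-K_i')$.

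\paragraph{A false claim in item 1, and what the route needs.} $\cm_0^{\reg}(\bv^i,\bw^i)$ is a punctured coordinate axis, not a point. Moreover, point strata would not force a single term anyway: $\cm_0(\e_{\sigma\tS_i})$ and $\cm_0(\e_{\sigma\Sigma\tS_i})$ are points, yet $E_iF_i$ has three terms. The input you need in items 1 and 2, and for identifying $\hR^0$ in the bijectivity step, is Lemma~\ref{lem: multiply by L^0} (\cite[Lemma 4.19]{LP26}). It says that multiplying by $L(\bv^0,\bw^0)$ with $\sigma^*\bw^0={\mathcal C}_q\bv^0$ gives a single dual IC class up to a power of $v^{\frac12}$; the symmetric statement for multiplication on the other side is also needed. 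With these repairs your route is a workable, self-contained alternative: check the defining relations, then use Lemma~\ref{lem:Qin decomposition} and unitriangularity of $L(\bv^-,\bw^-)L(\bv^0,\bw^0)L(\bv^+,\bw^+)$ to match the triangular decompositions. The paper gives no argument of its own here and simply quotes the result from \cite{Qin}.
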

	
	We know that $\widetilde{\kappa}$ induces an isomorphism of $\cz$-algebras $\widetilde{\kappa}:\tU_\cz\xrightarrow{\cong}\tR_\cz$; cf. \cite[Corollary 5.4]{LP26}.

	\begin{definition}[Dual canonical basis of quantum groups]
		\label{def:dual CB}
		The dual IC basis $\{\cl(\bv,\bw) \mid \sigma^*\bw-{\mathcal C}_q\bv\geq0\}$ for $\tR_\cz$ is transferred to a basis for $\tU_\cz$ via the isomorphism in Theorem~\ref{thm:Qin}, which are called the {\em dual canonical basis} for $\tU$.
	\end{definition}
	
	The Verdier duality defines an anti-automorphism $\bar{\cdot}$ on $\tR$ (and also $\hR$) by $\ov{L(\bv,\bw)}=L(\bv,\bw)$, $\ov{v}=v^{-1}$. This anti-automorphism coincides with the bar-involution of $\tU$ (and also $\hU$) via the isomorphism $\widetilde{\kappa}$.  
	We can define a $\diamond$-action of the subalgebra generated by $L(\bv^0,\bw^0)$, $(\bv^0,\bw^0)\in ({V^0},{W^0})$ on $\tR$, characterized by
	\begin{equation}\label{eq:diamond for L def}
		L(\bv^0,\bw^0)\diamond L(\bv,\bw) = L(\bv^0+\bv,\bw^0+\bw) = \ov{L(\bv^0,\bw^0)\diamond L(\bv,\bw)}
	\end{equation}
	for any $l$-dominant pair $(\bv,\bw)$; see \cite[(6.3)]{LP26}.
	
	\subsection{An algebraic construction of dual canonical bases}\label{dCB of HA subsec}
	
	In this subsection, we shall review the algebraic construction of dual canonical basis of $\hU$ given in \cite{LP25}. Recall the generators $\ce_i$, $\cf_i$ in \eqref{eq:Udj-gen}. Lusztig constructed canonical basis of $\U^{\pm}$ via the generators $\ce_i,\cf_i$ ($i\in\I$). Let $\mathbf{B}^-$ be Lusztig's canonical basis of $\U^-$ (via the generators $\cf_i$ ($i\in\I$)). Following \cite{Ka91}, we define a Hopf pairing on $\U^-\otimes\U^+$ by
	\begin{align} 
		\label{eq:hopf}(F_i,E_j)_{K}=\delta_{ij}(v-v^{-1}).
	\end{align}
	
	For $b\in\mathbf{B}^-$, we denote by $\delta_b\in\U^+$ the dual basis of $b$ under the pairing $(\cdot,\cdot)_K$. We define a norm function $N:\Z^\I\rightarrow\Z$ by
	\[N(\alpha)=\frac{1}{2}(\alpha,\alpha)_Q-\eta(\alpha).\]
	where $\eta:\Z^\I\rightarrow\Z$ is the augmentation map defined by $\eta(\sum_ia_i\alpha_i)=\sum_ia_i$ (cf. \cite{GLS13,HL15}). The rescaled dual canonical basis of $\U^+$ is then defined to be
	\[\widetilde{\mathbf{B}}^+:=\{v^{\frac{1}{2}N(-\deg(b))}\delta_b\mid b\in\mathbf{B}^-\}.\]
	The rescaled dual canonical basis of $\U^-$ is defined as $\varphi(\widetilde{\mathbf{B}}^+)$, where $\varphi$ is the isomorphism $\U^+\rightarrow\U^-$ defined by $E_i\mapsto F_i$.

	\begin{example}
		For $\tU=\tU_v(\mathfrak{sl}_2)$, we have $\widetilde{\mathbf{B}}^+=\{E_1^n\mid n\in\N\}$ and $\widetilde{\mathbf{B}}^-=\{F_1^n\mid n\in\N\}$.
	\end{example}

	\begin{theorem}[{\cite{HL15, Qin}}]\label{dCB of U^+ by L}
		Under the isomorphism $\kappa^+:\U^+\rightarrow\tR^+$, $E_i\mapsto L(0,\e_{\sigma\tS_i})$, the basis $\{L(\bv,\bw)\mid (\bv,\bw)\in(V^+,W^+)\}$ gets identified with the rescaled dual canonical basis of $\U^+$.
	\end{theorem}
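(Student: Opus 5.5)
The plan is to reduce the statement to Lusztig's geometric realization of $\U^+$, through the identification of Hernandez--Leclerc.

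\textbf{Step 1: restrict to the positive part.} For $\bw\in W^+$ and $\bv\in V^+$, the relevant part of the NKS category $\mcr$ only involves vertices labelled by indecomposable $kQ$-modules. These vertices lie in a fundamental domain for $F=\Sigma^2$. Hence the quotient by $F$ plays no role, and $\cm(\bv,\bw,\mcr)$, $\cm_0(\bv,\bw,\mcr)$ coincide with the corresponding graded Nakajima varieties for $\mcr^{\gr}$.

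\textbf{Step 2: identify $\cm_0(\bw)$ with a representation space.} For $\bw\in W^+$ (i.e.\ supported on $\sigma\tS_i$), I would use $\cm_0(\bw)\cong\rep(\bw,\cs)$ and the description $\cs\cong\proj(\Lambda)$. The arrows $\varepsilon_i,\varepsilon_i'$ must act by zero, since there is no component in $Q'$. So $\rep(\bw,\cs)$ is just $\rep(\bw,kQ)$, Lusztig's representation space $E_\bw$ of $Q$.

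\textbf{Step 3: match strata, IC sheaves and the restriction functor.} Following Hernandez--Leclerc (graded analogue of Nakajima's result), the strata $\cm_0^{\reg}(\bv,\bw)$ for $l$-dominant $(\bv,\bw)$ with $\bv\in V^+$ are exactly the $G_\bw$-orbits $\mathcal O_M$ in $E_\bw$. Here $\bv$ is recovered from $M$ by $\bv(z)=\dim\Hom(\tS_i\text{-projective data},z)$, i.e.\ $\bv$ records the dimensions $\dim\Hom(P,\tau^{-}\text{-shifts})$ of $M$. Consequently $\cl(\bv,\bw)=\IC(\overline{\mathcal O_M})$, which are precisely Lusztig's simple perverse sheaves for Dynkin $Q$. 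Under this identification, the convolution diagram defining $\widetilde\Delta$ is literally Lusztig's restriction diagram. Lusztig's restriction functor on $\bigoplus_\bw K(E_\bw)$ thus agrees with $\widetilde\Delta$, up to the twist $v^{\frac12\langle\bw^1,\bw^2\rangle_{Q^{\dbl},a}}$ in \eqref{eq:tw} and the shift normalization in \eqref{eq:piPS}. Lusztig's theorem then says that the Grothendieck group with induction/restriction is a twisted bialgebra isomorphic to $\U^-$ (generated by $\cf_i$), with $\IC$-sheaves corresponding to $\mathbf B^-$. Dualizing, $\hR^+$ becomes the graded dual of $\U^-$ under a Hopf pairing, and $L(\bv,\bw)$ is the dual basis of $\mathbf B^-$.

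\textbf{Step 4: fix the scalars.} It remains to check that this duality is $(\cdot,\cdot)_K$ of \eqref{eq:hopf} after the rescaling of generators, and that $L(\bv,\bw)$ equals $v^{\frac12N(-\deg b)}\delta_b$ rather than some other multiple. This is where I expect the main difficulty: matching the twist in \eqref{eq:tw} with Lusztig's twisted multiplication, and the shift $\dim\cm(\bv,\bw)$ with the norm $N$.

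I would handle this by a uniqueness argument. Both bases are homogeneous and bar-invariant: $L(\bv,\bw)$ by Verdier duality, and $v^{\frac12N}\delta_b$ because $N$ is precisely the correction making $\delta_b$ bar-invariant, as in \cite{GLS13,HL15}. Both are also unitriangular with respect to the dual PBW basis, ordered by degeneration of orbits, with leading coefficient $1$. Such a basis is unique. So it suffices to verify the normalization on generators, $L(0,\e_{\sigma\tS_i})=E_i$ and $\delta_{F_i}=E_i/(v-v^{-1})$ rescaled, together with the unitriangularity coefficient $1$. The latter follows from $a_{\bv,\bv;\bw}=1$ in \eqref{eqn:decomposition theorem} and from the leading term \eqref{eqn: leading term} of the product.
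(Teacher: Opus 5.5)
The paper gives no proof of this statement; it is quoted from \cite{HL15, Qin}. Your Steps 1--3 follow the route of those references, and they are fine in outline: for $\bw\in W^+$ the cyclic variety reduces to the graded one, $\cm_0(\bw)=\rep(\bw,kQ)$ with the strata being the orbits, and the convolution diagram is Lusztig's restriction diagram.

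\textbf{The gap is in Step 4.} The uniqueness principle you invoke is false as stated. Bar-invariance together with unitriangularity (leading coefficient $1$) does not determine a basis: replacing $b_{\bc}$ by $b_{\bc}+(v+v^{-1})b_{\bc'}$ with $\bc'<\bc$ preserves both properties. Lusztig's lemma needs the off-diagonal coefficients to lie in $v^{-1}\Z[v^{-1}]$ (or $v\Z[v]$), and both bases must satisfy the same one-sided condition relative to the same standard basis. To run that argument you would have to do two things.
\begin{itemize}
\item[(i)] Show that $\kappa^+$ sends the normalized dual PBW basis to the geometric standard elements, i.e.\ the functionals given by stalks at points of the orbits $\mathcal O_M$, with the correct powers of $v^{1/2}$. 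This is exactly the normalization problem you flagged, only relocated, and it is the substantive input of \cite{HL15}.
\item[(ii)] Obtain the one-sided condition for $L(\bv,\bw)$ from the IC support condition.
\end{itemize}
Neither follows from what you cite. The identity $a_{\bv,\bv;\bw}=1$ in \eqref{eqn:decomposition theorem} only gives unitriangularity against the duals of the $\pi(\bv,\bw)$, and there the off-diagonal coefficients are bar-invariant, not one-sided. Likewise, \eqref{eqn: leading term} gives leading coefficient $v^{d((\bv_1,\bw_1),(\bv_2,\bw_2))-d((\bv_2,\bw_2),(\bv_1,\bw_1))}$, not $1$. That power still has to be matched against the factor $v^{\frac12 n_{\bc}}$ in the PBW monomials and against $N$. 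A smaller point: $\mathbf B^-$ is built from the $\cf_i$, so the generator check is $\delta_{\cf_i}=E_i$ with $N(\alpha_i)=0$; the element $F_i$ is not in $\mathbf B^-$.

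A cheaper way to close the argument avoids PBW bases altogether, provided Step 3 is carried out with the twists tracked. Then $\kappa^+(v^{\frac12 N(\nu)}\delta_b)=c(\nu)\,L(\bv_b,\bw_b)$, where $c(\nu)$ depends only on the degree $\nu$. It has the form (a function multiplicative in $\nu$)$\,\cdot\,v^{q(\nu)}$ with $q$ quadratic:
\begin{itemize}
\item the multiplicative part comes from comparing Lusztig's form with $(\cdot,\cdot)_K$;
\item the quadratic part comes from the symmetric part of the difference between the twist in \eqref{eq:tw} and Lusztig's Euler-form shift. The antisymmetric parts must agree, since $\kappa^+$ is an algebra map sending generators to generators.
\end{itemize}
The check on generators removes the multiplicative part. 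Since $\kappa^+$ intertwines Verdier duality with $\ov{\,\cdot\,}$ (both are antilinear anti-automorphisms fixing the generators), bar-invariance of both sides then forces the remaining power of $v$ to be trivial. This is precisely the role of the factor $v^{\frac12 N}$.
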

	
	Denote by $\U^-\otimes\U^+$ the tensor algebra. 
	Using the $\imath$Hall algebra realization of $\hU$ in \cite{LW19}, we obtain an embedding of linear spaces
	\[\iota:\U^-\otimes\U^+\hookrightarrow\hU.\]
	We emphasize that $\iota$ is neither an algebra homomorphism nor induced by the triangular decomposition of $\hU$.
	In order to describe $\iota$, we shall recall the skew-derivations $\partial^L_i:\U^\pm\rightarrow \U^\pm$, for $i\in\I$ (cf. \cite{Lus93}). These are linear maps satisfying $\partial_i^L(1)=0$, $\partial_i^L(E_j)=\delta_{ij}=\partial_i^L(F_j)$, and
	\begin{align*}
		&\partial_i^L(fg)=v^{(\alpha_i,\nu)}\partial_i^L(f)g+f\partial_i^L(g),
	\end{align*}
	for any $j\in\I, f\in\U^\pm_{\pm\mu}, g\in\U^\pm_{\pm\nu}$. (Note that the operator $\partial_i^L:\U^-\rightarrow\U^-$ here corresponds to $\partial_i^R:\U^-\rightarrow\U^-$ in \cite{Lus93}.)
	By translating the $\imath$Hall product (see \cite[Proposition 3.3]{LR24}), we can describe the image $\iota(y\otimes x)$ for $x\in\U^+_\nu,y\in\U^-_{-\mu}$. It is the unique element determined by the following relations, for all $i\in\I$, 
	\begin{align}
		&\iota(y\otimes 1)=y,\qquad \iota(1\otimes x)=x,
		\\
		&  E_i\iota(y\otimes x)=\iota(y\otimes E_ix)+(v-v^{-1})v^{-(\alpha_i,\mu-\alpha_i)} K_i'\cdot\iota\big(\partial_{ i}^L(y)\otimes x\big),
		\\
		& F_i\iota(y\otimes x)=\iota(F_iy\otimes E_i)+(v-v^{-1})v^{-(\alpha_i,\nu-\alpha_i)}K_i\cdot\iota\big(y\otimes\partial_{ i}^L(x)\big).
	\end{align}

	In what follows, we shall study the image of elements of the form $F^\ba\otimes E^\bc$ under this embedding, where $F^\ba$ and $E^\bc$ are the PBW basis elements of $\hU$ defined in Proposition~\ref{prop:PBW-QG}.
	
	Recall from Lemma~\ref{QG bar-involution def} that $\hU$ admits a bar-involution determined by $\ov{v}=v^{-1}$, $\ov{E}_i=E_i$, $\ov{F}_i=F_i$ and $\ov{K}_i=K_i$, $\ov{K}'_i=K_i'$ for $i\in\I$.
	
	Let $\Gamma=\N^\I\times\N^\I$ and define
	\[\alpha_{+i}=(\alpha_i,0),\quad \alpha_{-i}=(0,\alpha_i).\] 
	We introduce a new degree function on $\hU$ by setting
	\[\deg_\Gamma(E_i)=\alpha_{+i},\quad \deg_\Gamma(F_i)=\alpha_{-i},\quad \deg_\Gamma(K_i)=\deg_\Gamma(K_i')=\alpha_{+i}+\alpha_{-i}.\]
	It is straightforward to verify that, with respect to this degree function, $\hU$ becomes a $\Gamma$-graded algebra. Using this grading, we define an action $\diamond$ of $\hU^0$ on $\hU$ by
	\begin{align*}
		K_i\diamond x=v^{-\frac{1}{2}\check{\alpha}_i(\deg_\Gamma(x))}K_ix,\quad K_i'\diamond x=v^{\frac{1}{2}\check{\alpha}_i(\deg_\Gamma(x))}K_i'x
	\end{align*}
	for homogeneous $x\in\hU$, where $\check{\alpha}_i\in\Hom_\Z(\Gamma,\Z)$ is defined by $\check{\alpha_i}(\alpha_{\pm i})=\pm c_{ij}$. This action is characterized by the compatibility condition
	\begin{equation}\label{QG diamond action char}
		\ov{K\diamond x}=K \diamond \ov{x},\quad  K\in\hU^0,x\in\hU.
	\end{equation}
	
	The following theorem gives an algebraic characterization of dual canonical basis of $\hU$.
	
	\begin{theorem}[{\cite[Proposition 4.13]{LP25}}]
		For any $\ba,\bc\in\N^l$ and $\alpha,\beta\in\N^\I$, there exists a unique element $C_{\alpha,\beta;\ba,\bc}\in\hU$ such that $\ov{C_{\alpha,\beta;\ba,\bc}}=C_{\alpha,\beta;\ba,\bc}$ and
		\[C_{\alpha,\beta;\ba,\bc}-K_\alpha K'_\beta\diamond \iota(F^\ba\otimes E^\bc)\in\sum v^{-1}\Z[v^{-1}]K_{\alpha'} K'_{\beta'}\diamond \iota(F^{\ba'}\otimes E^{\bc'})\]
		Moreover, $C_{\alpha,\beta;\ba,\bc}=K_\alpha K'_\beta\diamond C_{0,0;\ba,\bc}$ and $\{C_{\alpha,\beta;\ba,\bc}\mid\alpha,\beta\in\N^\I,\ba,\bc\in\N^l\}$ forms a $\Q(v^\frac{1}{2})$-basis of $\hU$, which coincides with the dual canonical basis of $\hU$.
	\end{theorem}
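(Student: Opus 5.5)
We would prove the statement geometrically, transporting everything to $\hR$ via $\widetilde{\kappa}$ (restricted to $\hU$) and using the dual IC basis $\{L(\bv,\bw)\}$.

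\emph{Step 1: the building blocks.} By Lemma~\ref{lem:Qin decomposition}, every $l$-dominant pair splits uniquely as $(\bv^+,\bw^+)+(\bv^-,\bw^-)+(\bv^0,\bw^0)$ with $\sigma^*\bw^0-\mathcal C_q\bv^0=0$, and $(\bv^0,\bw^0)$ is an $\N$-combination of the $(\bv^i,\bw^i)$ and $(\bv^{\Sigma i},\bw^i)$. These data correspond to $(\beta,\alpha,\bc,\ba)$ with $\alpha,\beta\in\N^\I$. Under this correspondence:
\begin{itemize}
\item by Theorem~\ref{dCB of U^+ by L} and its $\U^-$ analogue, $L(\bv^+,\bw^+)$ and $L(\bv^-,\bw^-)$ are the rescaled dual canonical basis elements of $\U^{\pm}$;
\item Lusztig's PBW construction expresses each of these as $E^{\bc}$ (resp. $F^{\ba}$) plus a $v^{-1}\Z[v^{-1}]$-combination of other PBW monomials.
\end{itemize}

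\emph{Step 2: geometric meaning of $\iota$.} We want to show that $\widetilde\kappa(\iota(F^{\ba}\otimes E^{\bc}))$ is a ``standard'' element, namely the dual of $\pi(\bv,\bw)$-type restricted classes for $\bw=\bw^++\bw^-$ in the regular stratum without $V^0$ contributions. The argument is that the $\imath$Hall product and the transversal slice theorem give an upper-triangular relation
\[
L(\bv^-,\bw^-)\ast L(\bv^+,\bw^+)=L(\bv^-+\bv^+,\bw^-+\bw^+)+\text{lower terms},
\]
where the lower terms involve $\bv$ strictly larger, and so have nonzero $V^0$-part. This matches the correction terms of $\iota$, which involve $K_i,K_i'$ times $\partial_i^L$. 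We would verify that $\iota$ intertwines the recursion defining it with the multiplication by $L(0,\e_{\sigma\tS_i})$ and $L(0,\e_{\sigma\Sigma\tS_i})$. The check runs on generators: the correction term in $E_i\iota(y\otimes x)$ comes from the component of $\widetilde\Delta$ in \eqref{eqn:comultiplication} in which $\bv$ acquires $\bv^{\Sigma i}$.

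\emph{Step 3: unitriangularity and bar-invariance.} The $\diamond$-action satisfies \eqref{eq:diamond for L def} and \eqref{QG diamond action char}. Hence $K_\alpha K'_\beta\diamond(\cdot)$ corresponds to adding $(\bv^0,\bw^0)$, and the bar-involution matches Verdier duality. Bar-invariance of $L(\bv,\bw)$ is then automatic. For the triangularity condition, we expand the standard elements in the dual IC basis. The coefficients are the graded dimensions of stalks of $\cl(\bv',\bw)$ on strata, which by the support and parity conditions of IC sheaves (together with purity from the $\C^*$-action) lie in $\delta_{\bv\bv'}+v^{-1}\Z[v^{-1}]$. Inverting this unitriangular matrix gives the required form. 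Uniqueness is the standard argument: a bar-invariant element of $\sum v^{-1}\Z[v^{-1}]\cdot(\text{standard})$ must vanish, because the partial order on $\bv$ with fixed $\bw$ is finite below each element.

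\emph{Main obstacle.} Step 2, the identification of $\iota(F^\ba\otimes E^\bc)$ with the geometric standard element, is the hard part. We would first check it in rank one, comparing with the explicit $\mathfrak{sl}_2$ computation. For the general case we would induct on $\bc$ using the recursion for $E_i\iota(\cdot)$, with the braid-group compatibility (Lemma~\ref{lem:QGbraid-bar}) used to reduce PBW root vectors to simple ones.
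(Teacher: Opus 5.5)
\textbf{There is a genuine gap, located in your Step 2.} The paper does not reprove this statement; it quotes it from \cite[Proposition 4.13]{LP25}, where it is obtained on the $\imath$Hall algebra side. There, $\iota$ is built from the $\imath$Hall algebra of $\Lambda$ so that $\iota(F^\ba\otimes E^\bc)$ is, up to normalization, the Hall basis element of the $kQ^{\dbl}$-module attached to $(\bc,\ba)$ (Ringel's PBW correspondence on each copy of $Q$). The action $K_\alpha K'_\beta\diamond(-)$ corresponds to twisted multiplication by $[K_\alpha]*[K'_\beta]$. The comparison of \cite{LW21b} between $\imath$Hall algebras and dual perverse sheaves (function--sheaf dictionary plus purity) then turns these Hall basis elements into normalized stalk functionals along the strata $\cm_0^{\reg}(\bv,\bw)$. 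Your Step 3 is the right last step: expand these stalk functionals in $\{L(\bv,\bw)\}$, use the IC support condition to get a unitriangular matrix with off-diagonal entries in $v^{-1}\N[v^{-1}]$, invert, and run the usual uniqueness argument. Purity is not even needed there. But the proof hinges on identifying $K_\alpha K'_\beta\diamond\iota(F^\ba\otimes E^\bc)$ with the stalk functional of the stratum indexed by $(\bv^0+\bv^++\bv^-,\bw)$. Your Step 2 leaves this open, and the route it sketches cannot deliver it.

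\emph{First}, the transversal-slice expansion of $L(\bv^-,\bw^-)\ast L(\bv^+,\bw^+)$ concerns products of dual canonical basis elements, i.e.\ $b_-b_+$; this is what the paper uses for Berenstein--Greenstein's construction. But $F^\ba$ and $E^\bc$ are PBW monomials, not the elements $L(\bv^\pm,\bw^\pm)$, and $\iota$ is not the multiplication map. So matching ``lower terms'' with the correction terms of $\iota$ identifies nothing.

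\emph{Second}, if ``dual of $\pi(\bv,\bw)$-type classes'' is meant literally, it is the wrong standard basis. Writing $\pi(\bv,\bw)^\vee$ for the dual basis, one has $L(\bv',\bw)=\sum_{\bv}a_{\bv,\bv';\bw}(v)\,\pi(\bv,\bw)^\vee$ with bar-invariant coefficients. Already in rank one, $\pi(\e_{\tS_1}+\e_{\Sigma\tS_1},\bw)=\cl(\e_{\tS_1}+\e_{\Sigma\tS_1},\bw)\oplus\cl(0,\bw)$ for $\bw=2\e_{\sigma\tS_1}+2\e_{\sigma\Sigma\tS_1}$. So no $v^{-1}\Z[v^{-1}]$-triangularity can come from that basis; the standard elements must be the stalk functionals of your Step 3.

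\emph{Third}, to show the stalk functionals obey the recursion $E_i\iota(y\otimes x)=\iota(y\otimes E_ix)+\cdots$, you must compute stalks of $\Res\,\cf=p_!q^*\cf$ at points $(\tS_i,M)$. That means the $v$-graded compactly supported cohomology of the pieces of the affine fibre of $p$ lying over each stratum. These are Hall numbers. \eqref{eqn:comultiplication}, which concerns the pushforwards $\pi(\bv,\bw)$, says nothing about them. Getting them with the correct powers of $v$ needs exactly the purity and point-counting input that the $\imath$Hall algebra and \cite{LW21b} supply.

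\emph{Fourth}, the proposed induction does not close. The recursion for $\iota$ only involves simple generators, while $E^\bc$ is a monomial in root vectors. Lemma~\ref{lem:QGbraid-bar} only says $\widetilde{T}_i$ commutes with the bar-involution. In fact $\widetilde{T}_i$ does not even preserve $\hU$ (it sends $E_i$ to $v(K_i')^{-1}F_i$), let alone the image of $\iota$ or the stalk-functional basis. Braid invariance of the dual canonical basis is a later result (\cite[Theorem 5.16]{LP25}), so it cannot be used here. A rank-one verification does not bootstrap to higher rank.
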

	By making $K_i,K_i'$ invertible, we also know that $\{C_{\alpha,\beta;\ba,\bc}\mid\alpha,\beta\in\Z^\I,\ba,\bc\in\N^l\}$ forms a $\Q(v^\frac{1}{2})$-basis of $\tU$, which coincides with the dual canonical basis of $\tU$.
	As stated in \cite{Qin}, the dual canonical basis of $\hU$ (and also $\tU$) contains the rescaled dual canonical bases $\widetilde{\mathbf{B}}^\pm$ of $\U^\pm$.  
	
	%%%%%%%%%%%%%%%5
	\section{Double canonical basis}\label{sec:QG double CB}
	
	In \cite{BG17} Berenstein and Greenstein defined a basis of $\hU$ from the dual canonical bases of $\U^+$ and $\U^-$, called the double canonical basis. In this section we recall their construction and prove that this basis coincides with the dual canonical basis of $\hU$. 
	
	%%%%%%%%%
	\subsection{Berenstein-Greenstein's constructions}
	\label{sec:BG}
	
	First, following \cite{BG17} we define the quantum Heisenberg algebras $\mathcal{H}^{\pm}$ by 
	\[\mathcal{H}^+=\hU/\langle K_i'\mid i\in\I\rangle,\quad \mathcal{H}^-=\hU/\langle K_i\mid i\in\I\rangle.\]
	Let $\mathbf{K}^+$ (resp. $\mathbf{K}^-$) be the submonoid of $\hU$ generated by the $K_i$ (respectively, the $K_i'$), $i\in\I$. Then we have triangular decompositions
	\[\mathcal{H}^+=\mathbf{K}^+\otimes\U^-\otimes \U^+,\quad \mathcal{H}^-=\mathbf{K}^-\otimes\U^-\otimes \U^+.\]
	which induce natural embeddings of vector spaces
	\begin{align*}
		\iota_+&:\mathcal{H}^+=\mathbf{K}^+\otimes\U^-\otimes \U^+\hookrightarrow \hU=\mathbf{K}^-\otimes(\mathbf{K}^+\otimes\U^-\otimes \U^+),\\
		\iota_-&:\mathcal{H}^-=\mathbf{K}^-\otimes\U^+\otimes \U^-\hookrightarrow \hU=\mathbf{K}^+\otimes(\mathbf{K}^-\otimes\U^+\otimes \U^-)
	\end{align*}
	splitting the canonical projections $\hU\rightarrow\mathcal{H}^+$ and $\hU\rightarrow\mathcal{H}^-$.
	
	Let $\widetilde{\mathbf{B}}^{\pm}$ be the rescaled dual canonical basis of $\hU^{\pm}$ defined in \S\ref{dCB of HA subsec} and recall that bar-involution and $\diamond$-action of $\tU$ defined therein. Note that the $\diamond$-action as well as the bar-involution factors through to a $\mathbf{K}^{\pm}$-action and an anti-involution on $\mathcal{H}^{\pm}$ via the canonical projection $\hU\rightarrow\mathcal{H}^{\pm}$, and (\ref{QG diamond action char}) still holds. 
	
	The following theorems are either proved in \cite{BG17}, or can be deduced from the methods therein.
	
	\begin{theorem}[\text{\cite[Main Theorem 1.3]{BG17}}]
		\label{thm:doubleCB-H+}
		For any $(b_-,b_+)\in\widetilde{\mathbf{B}}^-\times\widetilde{\mathbf{B}}^+$, there is a unique element $b_-\circ b_+\in\mathcal{H}^+$ fixed by $\bar{\cdot}$ and satisfying
		\[b_-\circ b_+-b_-b_+\in\sum v\Z[v]K\diamond(b'_-b'_+)\]
		where the sum is taken over $K\in\mathbf{K}^+\setminus\{1\}$ and $b'_{\pm}\in\widetilde{\mathbf{B}}^{\pm}$ such that $\deg_\Gamma(b_-b_+)=\deg_\Gamma(K)+\deg_\Gamma(b'_-b'_+)$. The basis $\{K\diamond(b_-\circ b_+)\mid K\in\mathbf{K}^+,b_{\pm}\in\widetilde{\mathbf{B}}^{\pm}\}$ is called the double canonical basis of $\mathcal{H}^+$.
	\end{theorem}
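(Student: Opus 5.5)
The plan is to apply Lusztig's lemma to the bar-involution on $\mathcal{H}^+$, using the triangular decomposition $\mathcal{H}^+=\mathbf{K}^+\otimes\U^-\otimes\U^+$. First I would check that $\{K\diamond(b_-b_+)\mid K\in\mathbf{K}^+,\ b_\pm\in\widetilde{\mathbf{B}}^\pm\}$ is a basis of $\mathcal{H}^+$. This holds because $\widetilde{\mathbf{B}}^\pm$ are bases of $\U^\pm$, and $K\diamond-$ only rescales $Kx$ by a power of $v^{1/2}$. Each $\Gamma$-homogeneous component of $\mathcal{H}^+$ is finite dimensional, since $\deg_\Gamma(K)\le\deg_\Gamma(b_-b_+)$ allows only finitely many $K$. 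I would order these basis elements by $\deg_\Gamma(K)$, so that $K\diamond(b'_-b'_+)$ with $K\neq 1$ lies strictly below $b_-b_+$ in the same $\Gamma$-degree.

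Next I would compute $\ov{b_-b_+}$. The rescaled dual canonical basis elements are bar-invariant: by Theorem~\ref{dCB of U^+ by L} they correspond to the Verdier-self-dual classes $L(\bv,\bw)$, and similarly for $\U^-$ after applying $\varphi$. Since $\ov{\cdot}$ is an anti-involution, this gives $\ov{b_-b_+}=b_+b_-$. In $\mathcal{H}^+$ we have $K_i'=0$, so the relation becomes $E_iF_j=F_jE_i+\delta_{ij}(v^{-1}-v)K_i$. Iterating it, or more cleanly using the skew-derivations $\partial_i^L$ as in the formulas for $\iota$, I would write
\[
b_+b_-=b_-b_+ +\sum_{K\neq 1} \rho^{K;b'_-,b'_+}_{b_-,b_+}\,K\diamond(b'_-b'_+).
\]
The resulting transition matrix of $\ov{\cdot}$ is unitriangular with respect to the order above. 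By \eqref{QG diamond action char}, $\ov{K\diamond x}=K\diamond\ov{x}$, so the same triangularity holds on every basis element $K\diamond(b_-b_+)$.

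Lusztig's lemma then yields a unique bar-invariant $b_-\circ b_+$ congruent to $b_-b_+$ modulo $\sum v\Z[v]\,K\diamond(b'_-b'_+)$ with $K\neq1$, provided the coefficients $\rho$ lie in $\Z[v,v^{-1}]$. Uniqueness is the standard argument: the difference of two candidates is bar-invariant and lies in $v\Z[v]$-span of the lower basis elements, and induction down the order forces it to be zero. The same argument, with $\Z[v^{\frac12},v^{-\frac12}]$ and lattice $v^{\frac12}\Z[v^{\frac12}]$, would only give the weaker statement.

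I expect the main obstacle to be integrality: showing the $\rho$'s lie in $\Z[v^{\pm1}]$ rather than only in $\Q(v^{1/2})$ or $\Z[v^{\pm1/2}]$. My first approach would be to expand $b_+b_-$ via the quasi-$R$-matrix / Hopf pairing $(\cdot,\cdot)_K$, writing it as a sum of $(v^{-1}-v)^{|\gamma|}$-multiples of $K_\gamma$ times products of $\partial$-type components. Because $\widetilde{\mathbf{B}}^\pm$ is dual to Lusztig's integral canonical basis, those components have integral structure constants. The half-integral powers coming from the rescaling $v^{\frac12N(\cdot)}$ and from $\diamond$ should then cancel by a parity check on $N$ and $\check{\alpha}_i$. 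As a fallback, I would get integrality from the geometric side: project the integral algebra $\hRZ\cong\hU_\cz$ to $\mathcal{H}^+$ and use that the $L(\bv,\bw)$ have bar-invariant integral structure constants by \eqref{eq:positive}.
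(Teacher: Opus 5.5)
Your plan is correct. Order $\{K\diamond(b_-b_+)\}$ by $\deg_\Gamma(K)$; then the bar-matrix is unitriangular, and everything reduces, as you say, to showing $\rho\in\Z[v^{\pm1}]$. This Lusztig-lemma route is essentially that of \cite{BG17}. The paper gives no argument of its own for this statement and simply imports it.

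Your first integrality sketch does close. Write $b_\pm=v^{\frac12N(\beta_\pm)}\delta_\pm$ and $K_\gamma x=v^{\frac12\check\alpha_\gamma(\deg_\Gamma x)}K_\gamma\diamond x$, where $\check\alpha_\gamma=\sum_i\gamma_i\check\alpha_i$, $\beta'_\pm$ are the weights of $b'_\pm$, and $\gamma=\beta_\pm-\beta'_\pm$. Two identities do the work: $N(\beta'+\gamma)-N(\beta')=(\beta',\gamma)+N(\gamma)$, and $\check\alpha_\gamma(\deg_\Gamma(b'_-b'_+))=(\gamma,\beta'_+)-(\gamma,\beta'_-)$. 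Together they show that the coefficient of $K_\gamma\diamond(b'_-b'_+)$ in $b_+b_-$ is $v^{(\gamma,\beta'_+)+N(\gamma)}$ times the coefficient $r$ of $K_\gamma\delta'_-\delta'_+$ in $\delta_+\delta_-$. So the half-integral powers do cancel, and only $r\in\Z[v^{\pm1}]$ remains to be shown. For that you need two inputs. The first is integrality of the coproduct components, which are dual to products of canonical basis elements. The second, which you should state explicitly, is that $(\cdot,\cdot)_K$ is $\Z[v^{\pm1}]$-valued on pairs of dual canonical elements. This holds because the span of the dual canonical basis (the De Concini--Kac form) lies inside Lusztig's divided-power form.

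The paper's own geometry in \S\ref{subsec: compare cb} reaches the same elements by a different route, without Lusztig's lemma. For $b_\pm\leftrightarrow B(\bv^\pm,\bw^\pm)$ it exhibits $b_-\circ b_+=L^+(\bv^-+\bv^+,\bw^-+\bw^+)$ directly. Bar-invariance comes from Verdier duality. Triangularity comes from \eqref{dCB + embedding-1}, obtained by analysing $(j^+)^!\mathcal{L}(\bv,\bw)$ at generic points via \cite[2.1.9]{BBD}. It already gives off-diagonal coefficients in $v\N[v]$, so integrality and even positivity are automatic, and uniqueness is the standard argument.

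Your route is purely algebraic and avoids the quiver-variety machinery. The geometric route additionally gives the identification with the dual canonical basis that Theorem~\ref{iCB is DCB} needs.

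Your geometric fallback, as phrased, would not supply integrality. The product on $\hR$ is twisted by $v^{\frac12\langle\cdot,\cdot\rangle_{Q^{\dbl},a}}$, so \eqref{eq:positive} only yields $\cz$-coefficients. Upgrading requires the leading coefficient of $b_-b_+$ on $L^+$ to be exactly $1$, which is \eqref{dCB + embedding-1}. At that point $L^+$ is already $b_-\circ b_+$, and Lusztig's lemma is no longer needed.
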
 
	
	\begin{theorem}
		[cf. \text{\cite[Main Theorem 1.3]{BG17}}]
		\label{thm:doubleCB-H-}
		For any $(b_-,b_+)\in\widetilde{\mathbf{B}}^-\times\widetilde{\mathbf{B}}^+$, there is a unique element $b_+\circ b_-\in\mathcal{H}^-$ fixed by $\bar{\cdot}$ and satisfying
		\[b_+\circ b_--b_+b_-\in\sum v\Z[v]K\diamond(b'_+b'_-)\]
		where the sum is taken over $K\in\mathbf{K}^-\setminus\{1\}$ and $b'_{\pm}\in\widetilde{\mathbf{B}}^{\pm}$ such that $\deg_\Gamma(b_+b_-)=\deg_\Gamma(K)+\deg_\Gamma(b'_+b'_-)$. The basis $\{K\diamond(b_+\circ b_-)\mid K\in\mathbf{K}^-,b_{\pm}\in\widetilde{\mathbf{B}}^{\pm}\}$ is called the double canonical basis of $\mathcal{H}^-$.
	\end{theorem}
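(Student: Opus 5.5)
The plan is to transfer the proof of Theorem~\ref{thm:doubleCB-H+} from \cite{BG17} to $\mathcal{H}^-$ by a symmetry of $\hU$ that swaps $\U^+$ with $\U^-$ and $\mathbf{K}^+$ with $\mathbf{K}^-$. We do not redo the Lusztig-lemma argument. Concretely, we take the algebra anti-automorphism, or rather an automorphism composed with the Chevalley-type involution, $\omega$ of $\hU$ given by
\[
\omega(E_i)=F_i,\quad \omega(F_i)=E_i,\quad \omega(K_i)=K_i',\quad \omega(K_i')=K_i,\quad \omega(v^{1/2})=v^{1/2}.
\]
The first step is to check that $\omega$ preserves the defining relations \eqref{eq:KK}--\eqref{eq:serre2}. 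The commutator relation $[E_i,F_j]=\delta_{ij}(v^{-1}-v)(K_i-K_i')$ becomes $[F_i,E_j]=\delta_{ij}(v^{-1}-v)(K_i'-K_i)$, which is the same relation. The relations \eqref{eq:EK} and \eqref{eq:K2} are exchanged with each other, and the Serre relations are swapped. Hence $\omega$ is an algebra automorphism of $\hU$ and an involution. It sends the ideal $\langle K_i'\rangle$ to $\langle K_i\rangle$, so it induces an isomorphism $\omega:\mathcal{H}^+\xrightarrow{\sim}\mathcal{H}^-$ that carries $\mathbf{K}^+$ onto $\mathbf{K}^-$. It also commutes with the bar-involution, since both fix the generators and $\omega$ is $v$-linear.

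The second step is to check compatibility with the data entering the statement.

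\begin{itemize}
\item The bases: $\omega$ restricts on $\U^+$ to the isomorphism $\varphi:E_i\mapsto F_i$. Since $\widetilde{\mathbf{B}}^-=\varphi(\widetilde{\mathbf{B}}^+)$ by definition, and $\varphi$ is an involution at the level of generators, we get $\omega(\widetilde{\mathbf{B}}^\pm)=\widetilde{\mathbf{B}}^\mp$.
\item The grading: $\omega$ swaps the two components of $\Gamma$.
\item The $\diamond$-action: we need $\omega(K_i\diamond x)=K_i'\diamond\omega(x)$. The $\diamond$-twist for $K_i$ is $v^{-\frac12\check\alpha_i(\deg_\Gamma x)}$ and for $K_i'$ is $v^{+\frac12\check\alpha_i}$. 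Since $\check\alpha_i$ takes opposite signs on $\alpha_{\pm j}$, swapping the components of $\Gamma$ negates $\check\alpha_i$, so the two sign flips cancel.
\end{itemize}

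If a sign mismatch nevertheless appears, we fall back on the characterization \eqref{QG diamond action char}. The $\diamond$-action is the unique rescaling making $K\diamond x$ bar-compatible, and $\omega$ commutes with bar, so $\omega(K\diamond x)$ must equal $\omega(K)\diamond\omega(x)$ up to a power of $v$ that is forced to be trivial.

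The third step is the transfer itself. Given $(b_-,b_+)$, apply Theorem~\ref{thm:doubleCB-H+} to the pair $(\omega(b_+),\omega(b_-))\in\widetilde{\mathbf{B}}^-\times\widetilde{\mathbf{B}}^+$ to obtain $\omega(b_+)\circ\omega(b_-)\in\mathcal{H}^+$. Then set
\[
b_+\circ b_-:=\omega\bigl(\omega(b_+)\circ\omega(b_-)\bigr).
\]
This element is bar-invariant. Applying $\omega$ to the congruence $\omega(b_+)\circ\omega(b_-)-\omega(b_+)\omega(b_-)\in\sum v\Z[v]K\diamond(b'_-b'_+)$ gives
\[
b_+\circ b_- - b_+b_- \in \sum v\Z[v]\,\omega(K)\diamond\bigl(\omega(b'_-)\omega(b'_+)\bigr),
\]
with $\omega(K)\in\mathbf{K}^-\setminus\{1\}$ and the degree condition transported. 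Uniqueness follows in the same way, by pulling back any competitor via $\omega$. The basis property of $\{K\diamond(b_+\circ b_-)\}$ follows because $\omega$ is a linear isomorphism and carries the double canonical basis of $\mathcal{H}^+$ onto this set.

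The main obstacle is the exact matching of normalizations. This means confirming that $\widetilde{\mathbf{B}}^\pm$, including the factor $v^{\frac12N}$, are exchanged by $\omega$, and that the signs in $\diamond$ match. If $\omega$ failed on the $v$-powers, we would instead rerun the \cite{BG17} argument directly on $\mathcal{H}^-$. In that argument $\mathcal{H}^-$ has triangular decomposition $\mathbf{K}^-\otimes\U^+\otimes\U^-$, and the straightening of $b_+b_-$ into $\sum K\diamond b'_+b'_-$ involves only $K\in\mathbf{K}^-$, with unitriangularity relative to the $\Gamma$-degree order. Lusztig's lemma then gives existence and uniqueness as before.
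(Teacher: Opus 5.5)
Your argument is correct, but it takes a different route from the paper. The paper gives no separate proof: it labels the statement ``cf.\ \cite[Main Theorem 1.3]{BG17}'' and says it can be deduced from the methods of \cite{BG17}. That means rerunning the Lusztig-lemma construction on $\mathcal{H}^-=\mathbf{K}^-\otimes\U^+\otimes\U^-$ exactly as for $\mathcal{H}^+$, which is your fallback. You instead deduce the statement formally from Theorem~\ref{thm:doubleCB-H+} by transport along the Chevalley involution $\omega$. The compatibilities you list hold exactly, so your hedges are unnecessary:
\begin{itemize}
\item $\omega$ preserves the defining relations and maps $\langle K_i'\rangle$ to $\langle K_i\rangle$, so it induces $\mathcal{H}^+\cong\mathcal{H}^-$.
\item $\omega(\widetilde{\mathbf{B}}^\pm)=\widetilde{\mathbf{B}}^\mp$ holds by definition, since $\widetilde{\mathbf{B}}^-:=\varphi(\widetilde{\mathbf{B}}^+)$ and $\omega|_{\U^\pm}=\varphi^{\pm1}$.
\item $\omega(K_i\diamond x)=K_i'\diamond\omega(x)$ exactly, because $\deg_\Gamma\omega(x)$ is the swap of $\deg_\Gamma x$ and $\check\alpha_i$ changes sign under the swap.
\end{itemize}
One small correction: $\omega$ does not fix the generators, it permutes them. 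It commutes with $\bar{\cdot}$ because $\omega\circ\bar{\cdot}\circ\omega^{-1}$ is an antilinear anti-automorphism fixing every generator, hence equals $\bar{\cdot}$.

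Your route has two advantages. Nothing about straightening or unitriangularity in $\mathcal{H}^-$ needs re-verifying. You also get for free that $\omega(b_-\circ b_+)=\omega(b_-)\circ\omega(b_+)$, i.e.\ $\omega$ carries the double canonical basis of $\mathcal{H}^+$ onto that of $\mathcal{H}^-$. The paper later performs the same kind of transfer with the anti-involution $\cdot^*$ in Proposition~\ref{QG dCB invariant under *}. That transfer needs the nontrivial input \cite[Lemma 3.5]{BG17} that $\cdot^*$ preserves $\widetilde{\mathbf{B}}^\pm$; with $\omega$ the corresponding fact holds by definition. The direct rerun of \cite{BG17}, by contrast, is independent of Theorem~\ref{thm:doubleCB-H+} and of the normalization $\widetilde{\mathbf{B}}^-=\varphi(\widetilde{\mathbf{B}}^+)$.
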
 
	
	\begin{theorem}[\text{\cite[Main Theorem 1.5]{BG17}}]
		\label{thm:doubleCB-U +}
		For any $(b_-,b_+)\in\widetilde{\mathbf{B}}^-\times\widetilde{\mathbf{B}}^+$, there is a unique element $b_-\bullet b_+\in\hU$ fixed by $\bar{\cdot}$ and satisfying
		\[b_-\bullet b_+-\iota_+(b_-\circ b_+)\in \sum v^{-1}\Z[v^{-1}]K\diamond\iota_+(b'_-\circ b'_+)\]
		where the sum is taken over $K\in\hU^0\setminus\mathbf{K}^+$ and $b'_{\pm}\in\widetilde{\mathbf{B}}^{\pm}$ such that $\deg_\Gamma(b_-b_+)=\deg_\Gamma(K)+\deg_\Gamma(b'_-b'_+)$. The basis $\{K\diamond(b_-\bullet b_+)\mid K\in\hU^0,b_{\pm}\in\widetilde{\mathbf{B}}^{\pm}\}$ is called the positive double canonical basis of $\hU$.
	\end{theorem}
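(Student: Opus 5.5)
The statement is Theorem~\ref{thm:doubleCB-U +}: existence and uniqueness of $b_-\bullet b_+$, and the basis property of the positive double canonical basis. I would prove it by a Lusztig-lemma argument inside a fixed $\Gamma$-graded piece of $\hU$.

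First, I would show that
\[\{K\diamond\iota_+(b_-\circ b_+)\mid K\in\hU^0,\ b_\pm\in\widetilde{\mathbf{B}}^\pm\}\]
is a $\Q(v^{1/2})$-basis of $\hU$. Here $K$ runs over monomials $K_\alpha K'_\beta$ with $\alpha,\beta\in\N^\I$. By Theorem~\ref{thm:doubleCB-H+}, the elements $K\diamond(b_-\circ b_+)$ with $K\in\mathbf{K}^+$ form a basis of $\mathcal{H}^+$. The decomposition $\hU=\mathbf{K}^-\otimes\mathcal{H}^+$ via $\iota_+$ then shows that applying $K'_\beta\diamond$ for $\beta\in\N^\I$ yields a basis of $\hU$. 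This uses that $\diamond$ differs from left multiplication only by a power of $v^{1/2}$ on homogeneous elements.

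Each $\Gamma$-graded component of $\hU$ is finite dimensional and spanned by finitely many such elements. The reason is that $\deg_\Gamma(K_i)=\deg_\Gamma(K'_i)=\alpha_{+i}+\alpha_{-i}\neq 0$, so only finitely many $K$ can occur in a fixed degree.

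Next, I would define the partial order. Set $K\diamond\iota_+(b'_-\circ b'_+)\prec\iota_+(b_-\circ b_+)$ whenever $K\in\hU^0\setminus\mathbf{K}^+$, i.e.\ $K$ contains some $K'_i$, and the degrees match. More generally, I would order by the $\mathbf{K}^-$-part of the Cartan factor: $K'_\beta\cdots\prec K'_{\beta'}\cdots$ if $\beta>\beta'$.

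The key step is the triangularity of the bar-involution:
\[\ov{\iota_+(b_-\circ b_+)}-\iota_+(b_-\circ b_+)\in\sum \Z[v^{\pm1/2}]\,K\diamond\iota_+(b'_-\circ b'_+),\qquad K\in\hU^0\setminus\mathbf{K}^+.\]
To see this, note that $\pi_+\colon\hU\to\mathcal{H}^+$ commutes with bar and $\pi_+\circ\iota_+=\id$. Hence $\ov{\iota_+(x)}-\iota_+(\ov x)\in\ker\pi_+=\sum_i K'_i\hU$. Since $b_-\circ b_+$ is bar-invariant in $\mathcal{H}^+$, the difference lies in the ideal generated by the $K'_i$. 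Expanding that ideal in the basis of the first step, and using \eqref{QG diamond action char} to move $K'_i$ into $\diamond$-form, gives exactly the terms with $K\notin\mathbf{K}^+$.

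Applying the same argument to $K'_\beta\diamond\iota_+(\cdots)$ gives strict triangularity with respect to the order on $\beta$. This uses that bar commutes with $K'_\beta\diamond$ by \eqref{QG diamond action char}. The order is interval-finite in each $\Gamma$-degree, since $\beta$ is bounded by the degree.

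Then Lusztig's lemma, in its standard form for a finite poset with a unitriangular involution, produces a unique bar-invariant $b_-\bullet b_+$ with the required $v^{-1}\Z[v^{-1}]$ tail. The coefficients lie in $\Z[v^{\pm1/2}]$ once the half powers from $\diamond$ are absorbed; if needed, I would check integrality via the $\cz$-form $\tU_\cz$.

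The elements $K\diamond(b_-\bullet b_+)$ are unitriangular with respect to the basis $K\diamond\iota_+(b_-\circ b_+)$. Indeed, bar-invariance is preserved by $K\diamond$, and by uniqueness $K\diamond(b_-\bullet b_+)$ is the element attached to $K\diamond\iota_+(\cdots)$. Hence they form a basis.

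The main obstacle is the key step. One must check that the correction term $\ov{\iota_+(x)}-\iota_+(\ov x)$, expressed in the $\diamond$-normalized basis, is genuinely lower, i.e.\ involves strictly more $K'$ factors, and that no term with $K\in\mathbf{K}^+$ reappears. I would handle this by writing $\hU=\bigoplus_\beta K'_\beta\,\iota_+(\mathcal{H}^+)$ as a filtration by $|\beta|$. Bar preserves the filtration $\bigoplus_{|\beta|\ge m}$ because the ideals $(K')^m$ are bar-stable, and on the associated graded it acts as the $\mathcal{H}^+$ bar twisted by $\diamond$.
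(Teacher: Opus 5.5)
The paper gives no proof of this statement: it quotes it from \cite{BG17} and then uses its uniqueness clause as an input to Theorem~\ref{iCB is DCB}. Your argument is correct and is essentially the algebraic route of \cite{BG17}. You work in each finite-dimensional $\Gamma$-graded piece of $\hU$ with the basis $\{K\diamond\iota_+(b_-\circ b_+)\}$, which you get from Theorem~\ref{thm:doubleCB-H+} and $\hU=\bigoplus_{\beta}K'_\beta\,\iota_+(\mathcal{H}^+)$. Strict triangularity of the bar involution follows from $\ov{\iota_+(x)}-\iota_+(\ov{x})\in\ker\pi_+=\bigoplus_{\beta\neq 0}K'_\beta\,\iota_+(\mathcal{H}^+)$ together with \eqref{QG diamond action char}, and Lusztig's lemma finishes.

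The argument the paper actually develops, in the proof of Theorem~\ref{iCB is DCB}, runs the other way. Instead of constructing $b_-\bullet b_+$, it exhibits a bar-invariant solution. By \eqref{eq:res of tRi by pullback} the restriction factors through the $\C^*$-fixed locus, so $\iota_+$ and the products $b_-b_+$ are induced by pullbacks of IC complexes. IC-stalk estimates then give \eqref{dCB + embedding-1}--\eqref{dCB + embedding-2}, with coefficients in $v\N[v]$ and $v^{-1}\N[v^{-1}]$. Hence the Verdier self-dual elements $L(\bv,\bw)$ satisfy the defining triangularity, and only the easy uniqueness half of Lusztig's lemma is needed. Your route is self-contained, purely algebraic and not tied to type ADE or quiver varieties. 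The geometric route additionally gives positivity and the identification with the dual canonical basis, at the cost of substantial geometric input.

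Two steps you only hedge should be made explicit.

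First, Lusztig's lemma needs an integral bar matrix: the $\cz$-span of $\{K\diamond\iota_+(b_-\circ b_+)\}$ must be bar-stable. This holds. By the $\Z[v]$-unitriangularity in Theorem~\ref{thm:doubleCB-H+}, that span equals the $\cz$-span of the $K_\alpha K'_\beta\,b_-b_+$, which is the integral form of $\hU$. That integral form is a subalgebra, and the anti-involution $\ov{\cdot}$ preserves it because it fixes $\widetilde{\mathbf{B}}^\pm$ and $\hU^0$.

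Second, over $\cz=\Z[v^{\pm1/2}]$ Lusztig's lemma only yields a tail in $v^{-1/2}\Z[v^{-1/2}]$, not the $v^{-1}\Z[v^{-1}]$ in the statement. To close this, check that in a fixed $\Gamma$-degree $(\gamma_+,\gamma_-)$ every basis vector $K\diamond\iota_+(b_-\circ b_+)$ is $v^{\phi/2}$ times an element of the $\Q(v)$-form. Here $\phi\equiv N(\gamma_+)+N(\gamma_-)\pmod 2$, independent of $K$ and $b_\pm$; this is exactly what the $\diamond$-twist and the $v^{\frac12 N}$-rescaling achieve. It forces the bar matrix into $\Z[v^{\pm1}]$, and Lusztig's lemma then gives the stated tail.
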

	
	\begin{theorem}[cf. \text{\cite[Main Theorem 1.5]{BG17}}]
		\label{thm:doubleCB-U -}
		For any $(b_-,b_+)\in\widetilde{\mathbf{B}}^-\times\widetilde{\mathbf{B}}^+$, there is a unique element $b_+\bullet b_-\in\hU$ fixed by $\bar{\cdot}$ and satisfying
		\[b_+\bullet b_--\iota_-(b_+\circ b_-)\in \sum v^{-1}\Z[v^{-1}]K\diamond\iota_-(b'_+\circ b'_-)\]
		where the sum is taken over $K\in\hU^0\setminus\mathbf{K}^-$ and $b'_{\pm}\in\widetilde{\mathbf{B}}^{\pm}$ such that $\deg_\Gamma(b_+b_-)=\deg_\Gamma(K)+\deg_\Gamma(b'_+b'_-)$. The basis $\{K\diamond(b_+\bullet b_-)\mid K\in\hU^0,b_{\pm}\in\widetilde{\mathbf{B}}^{\pm}\}$ is called the negative double canonical basis of $\hU$.
	\end{theorem}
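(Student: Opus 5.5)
The plan is to deduce the statement from Theorem~\ref{thm:doubleCB-U +} (and Theorem~\ref{thm:doubleCB-H+}) by transporting everything along an algebra automorphism of $\hU$ that swaps the roles of $\U^+$ and $\U^-$, and of $\mathbf{K}^+$ and $\mathbf{K}^-$. I take the Chevalley-type map $\omega:\hU\to\hU$ given by
\[
\omega(E_i)=F_i,\quad \omega(F_i)=E_i,\quad \omega(K_i)=K_i',\quad \omega(K_i')=K_i,
\]
which is $\Q(v^{1/2})$-linear.

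\textbf{Step 1: $\omega$ is a well-defined involutive automorphism.} I check the relations \eqref{eq:KK}--\eqref{eq:serre2}.
\begin{itemize}
\item The Serre relations \eqref{eq:serre1} and \eqref{eq:serre2} are interchanged.
\item The relations \eqref{eq:EK} and \eqref{eq:K2} are interchanged: $K_iE_j=v^{c_{ij}}E_jK_i$ maps to $K_i'F_j=v^{c_{ij}}F_jK_i'$.
\item For the commutator, $\omega([E_i,F_j])=[F_i,E_j]=-\delta_{ij}(v^{-1}-v)(K_i-K_i')$. This equals $\omega\big(\delta_{ij}(v^{-1}-v)(K_i-K_i')\big)=\delta_{ij}(v^{-1}-v)(K_i'-K_i)$.
\end{itemize}
Since $\omega$ fixes $v^{1/2}$ and maps generators to generators, it commutes with the bar-involution of Lemma~\ref{QG bar-involution def}.

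\textbf{Step 2: compatibility of $\omega$ with the structures used in the statement.}
\begin{itemize}
\item \emph{Grading.} $\omega$ swaps the two components of $\deg_\Gamma$, so $\check\alpha_i(\deg_\Gamma\omega(x))=-\check\alpha_i(\deg_\Gamma x)$.
\item \emph{The $\diamond$-action.} From the previous item,
\[
\omega(K_i\diamond x)=v^{-\frac12\check\alpha_i(\deg_\Gamma x)}K_i'\,\omega(x)=K_i'\diamond\omega(x),
\]
and similarly $\omega(K_i'\diamond x)=K_i\diamond\omega(x)$. Hence $\omega(K\diamond x)=\omega(K)\diamond\omega(x)$ for $K\in\hU^0$.
\item \emph{Quotients.} $\omega$ maps the ideal $\langle K_i'\rangle$ onto $\langle K_i\rangle$. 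It therefore induces an isomorphism $\bar\omega:\mathcal{H}^+\to\mathcal{H}^-$ that intertwines bar and $\diamond$.
\item \emph{Splittings.} $\omega$ maps $\mathbf{K}^\pm$ onto $\mathbf{K}^\mp$ and $\hU^0\setminus\mathbf{K}^+$ onto $\hU^0\setminus\mathbf{K}^-$. It respects the triangular decompositions, so $\omega\circ\iota_+=\iota_-\circ\bar\omega$.
\item \emph{Dual canonical bases.} $\omega|_{\U^+}=\varphi$ and $\omega|_{\U^-}=\varphi^{-1}$, and by definition $\widetilde{\mathbf{B}}^-=\varphi(\widetilde{\mathbf{B}}^+)$. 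Hence $\omega$ exchanges $\widetilde{\mathbf{B}}^+$ and $\widetilde{\mathbf{B}}^-$.
\end{itemize}

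\textbf{Step 3: transport.} Given $(b_-,b_+)$, put $b_-^\omega:=\omega(b_+)\in\widetilde{\mathbf{B}}^-$ and $b_+^\omega:=\omega(b_-)\in\widetilde{\mathbf{B}}^+$.
\begin{itemize}
\item Apply $\bar\omega$ to the element $b^\omega_-\circ b^\omega_+$ of Theorem~\ref{thm:doubleCB-H+}. This gives a bar-invariant element of $\mathcal{H}^-$ congruent to $b_+b_-$ modulo $\sum v\Z[v]K\diamond(b_+'b_-')$ with $K\in\mathbf{K}^-\setminus\{1\}$. This proves Theorem~\ref{thm:doubleCB-H-}, and we set $b_+\circ b_-:=\bar\omega(b^\omega_-\circ b^\omega_+)$. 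Uniqueness transfers because $\bar\omega$ is bijective and preserves all the data.
\item Set $b_+\bullet b_-:=\omega(b_-^\omega\bullet b_+^\omega)$, using Theorem~\ref{thm:doubleCB-U +}. By Step 2 it is bar-invariant and
\[
b_+\bullet b_- -\iota_-(b_+\circ b_-)\in\sum v^{-1}\Z[v^{-1}]K\diamond\iota_-(b'_+\circ b'_-),\qquad K\in\hU^0\setminus\mathbf{K}^-,
\]
with the degree condition preserved because $\omega$ is compatible with $\deg_\Gamma$ up to the swap.
\item If $x$ is another element with these properties, then $\omega^{-1}(x)$ satisfies the characterization in Theorem~\ref{thm:doubleCB-U +}, so uniqueness follows from that theorem.
\item The basis property likewise follows by applying $\omega$ to the positive double canonical basis.
\end{itemize}

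The main obstacle I expect is the bookkeeping in Step 2: the normalisations of $\diamond$, $\iota_\pm$ and $\widetilde{\mathbf{B}}^\pm$ must match exactly under $\omega$, and in particular the sign in the $\diamond$-twist. If some normalisation failed to match (e.g.\ a rescaling factor $v^{\frac12N}$ of $\widetilde{\mathbf{B}}^\pm$), I would instead rerun the Lusztig-lemma argument of \cite{BG17} directly. That argument only needs the bar-invariance of the relevant spanning sets and unitriangularity with respect to a partial order on $\Gamma$-degrees, and both are available symmetrically.
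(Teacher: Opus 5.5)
Your proof is correct, but it follows a different route from the one the paper indicates.

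The paper gives no separate argument for this statement. It attributes it to \cite[Main Theorem 1.5]{BG17} and says it ``can be deduced from the methods therein'', meaning Berenstein--Greenstein's Lusztig-lemma construction rerun with $\U^+,\U^-$ and $\mathbf K^+,\mathbf K^-$ interchanged. In parallel, the geometric proof of Theorem~\ref{iCB is DCB} realizes the negative basis independently, using $i^-$, the $\star$-action and the embeddings $m_-,\iota_-$.

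You instead deduce it formally from Theorems~\ref{thm:doubleCB-H+} and~\ref{thm:doubleCB-U +} by transport along the Chevalley involution $\omega$. Your verifications all hold:
\begin{itemize}
\item $\omega$ preserves the defining relations and commutes with bar.
\item It swaps the two components of $\deg_\Gamma$, so $\omega(K\diamond x)=\omega(K)\diamond\omega(x)$.
\item It sends $\langle K_i'\rangle$ to $\langle K_i\rangle$.
\item It satisfies $\omega\circ\iota_+=\iota_-\circ\bar\omega$, because $\mathbf K^+\otimes\U^-\otimes\U^+$ is carried onto $\mathbf K^-\otimes\U^+\otimes\U^-$.
\item It exchanges $\widetilde{\mathbf B}^{+}$ and $\widetilde{\mathbf B}^{-}$, since $\widetilde{\mathbf B}^-$ is defined as $\varphi(\widetilde{\mathbf B}^+)$. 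So the normalisation worry in your last paragraph does not arise.
\end{itemize}

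Your route is short, and it gives an explicit fact: the negative double canonical basis is exactly the $\omega$-image of the positive one. Combined with the identity $b_+\bullet b_-=b_-\bullet b_+$ from \eqref{eq:left-double-dCB}, this yields Corollary~\ref{cor:dCB-chevalley} directly, without passing through $\cdot^*$ and $\cdot^t$.

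What transport cannot give is any relation between $b_+\bullet b_-$ and $b_-\bullet b_+$ for the same pair. That coincidence needs the negative basis constructed on its own footing, as in the geometric identification with $L(\bv,\bw)$. This is why the paper treats the two versions in parallel rather than deriving one from the other.
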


	%%%%%%%%%%
	\subsection{$\C^*$-actions on NKS quiver varieties}\label{subsec: QV C^*-action}
	%%%%
	
	According to the results of \cite{Qin}, $\mathcal{M}(\bv,\bw,\mathcal{R})$ and $\mathcal{M}_0(\bv,\bw,\mathcal{R})$ are cyclic quiver varieties considered by Nakajima in \cite{Na04}. In {\em loc. cit.} Nakajima defined a $\C^*$-action on $\mathcal{M}(\bv,\bw,\mathcal{R})$ to relate it with graded quiver varieties. We will recall his construction in this subsection. We use $\Z Q$ to label objects in $k(\Z Q)$, see \S\ref{subsec:NKS} for the definition of the repetition quiver $\Z Q$.
	
	For $\mathbf{v}\in\N^{\mathcal{R}_0^{\gr}-\mathcal{S}_0^{\gr}}$, $\mathbf{w}\in\N^{\mathcal{S}_0^{\gr}}$, we define vector spaces 
	\[V(i,p)=\C^{\bv(i,p)},\quad W(\sigma(i,p))=\C^{\bw(\sigma(i,p))}.\]
	Then an element of $\rep(\bv,\bw,\mathcal{R}^{\gr})$ can be denoted by
	\begin{align*}
		(B,\alpha,\beta)&=\Big(\bigoplus_{h\in Q_1}B_h,\bigoplus_{\bar{h}\in\ov{Q}_1}B_{\bar{h}},\bigoplus_{i\in Q_0}\alpha_i,\bigoplus_{i\in Q_0}\beta_i\Big)\\
		&=\Big(\bigoplus_{h\in Q_1}\bigoplus_{p\in\Z}B_{h,p},\bigoplus_{\bar{h}\in\ov{Q}_1}\bigoplus_{p\in\Z}B_{\bar{h},p},\bigoplus_{i\in Q_0}\bigoplus_{p\in\Z}\alpha_{i,p},\bigoplus_{i\in Q_0}\bigoplus_{p\in\Z}\beta_{i,p}\Big)
	\end{align*}
	where
	\begin{equation}\label{eq:R^gr rep element}
		\begin{gathered}
			B_{h,p}:V(\tt(h),p)\rightarrow V(\ts(h),p),\quad B_{\bar{h},p}:V(\ts(h),p)\rightarrow V(\tt(h),p-1),\\
			\alpha_{i,p}:W(\sigma(i,p))\rightarrow V(i,p-1),\quad \beta_{i,p}:V(i,p)\rightarrow W(\sigma(i,p)).
		\end{gathered}
	\end{equation}
	%Also, the action of an element $g\in G_\mathbf{v}$ on $\rep(\bv,\bw,\mathcal{R}^{\gr})$ is given by
	%\[g\cdot(B,\alpha,\beta)=(gBg^{-1},g\alpha,\beta g^{-1}).\]
	
	Following \cite{Qin}, we choose a height function $\xi:Q_0\rightarrow\Z$ such that $\xi(i)=\xi(j)+1$ whenever there is an arrow from $i$ to $j$. Using this function we can embed $\mathcal{R}_0-\mathcal{S}_0$ into $\mathcal{R}_0^{\gr}-\mathcal{S}_0^{\gr}$ as the subset formed by elements $(i,p)$ such that $p-\xi(i)\in\{0,1,\dots,\tc-1\}$, where $\tc$ is the Coxeter number of the quiver $Q$. Similarly we can view $\mathcal{S}_0$ as a subset of $\mathcal{S}_0^{\gr}$. Via this labeling, elements of $\rep(\bv,\bw,\mathcal{R})$ can also be denoted by 
	\begin{align*}
		(B,\alpha,\beta)=\Big(\bigoplus_{h\in Q_1}B_h,\bigoplus_{\bar{h}\in\ov{Q}_1}B_{\bar{h}},\bigoplus_{i\in Q_0}\alpha_i,\bigoplus_{i\in Q_0}\beta_i\Big).
	\end{align*}
	Here each component of $(B,\alpha,\beta)$ has the same form given by \eqref{eq:R^gr rep element}, but now the indices are written periodically, see Example~\ref{CQV *-action eg} for an illustration.
	
	For $\bv\in\N^{\mathcal{R}_0-\mathcal{S}_0}$ and $\bw\in\N^{\mathcal{S}_0}$, we consider the following $\C^*$-action on $\rep(\bv,\bw,\mathcal{R})$ defined by
	\[t\ast\Big(\bigoplus_{h\in Q_1}B_h,\bigoplus_{\bar{h}\in\ov{Q}_1}B_{\bar{h}},\bigoplus_{i\in Q_0}\alpha_i,\bigoplus_{i\in Q_0}\beta_i\Big)=\Big(\bigoplus_{h\in Q_1}B_h,\bigoplus_{\bar{h}\in\ov{Q}_1}tB_{\bar{h}},\bigoplus_{i\in Q_0}t\alpha_is(t)^{-1},\bigoplus_{i\in Q_0}s(t)\beta_i\Big)\]
	where $s(t)\in G_\bw$ is given by 
	\[s(t)=\bigoplus_{\sigma(i,p)\in\mathcal{S}_0}t^p\Id_{W(\sigma(i,p))}.\]
	This action is compatible with the mesh relations and commutes with the action of $G_\mathbf{v}$. Therefore it induces an action on $\mathcal{M}_0(\mathbf{v},\mathbf{w},\mathcal{R})$. It also preserves the stability condition, hence induces an action on $\mathcal{M}(\mathbf{v},\mathbf{w},\mathcal{R})$. These induced actions are also denoted by $\ast$. The map $\pi:\mathcal{M}(\mathbf{v},\mathbf{w},\mathcal{R})\rightarrow\mathcal{M}_0(\mathbf{v},\mathbf{w},\mathcal{R})$ is equivariant. The following lemma is proved in \cite{Na04}.
	
	\begin{lemma}[{\cite[Lemma 7.1]{Na04}}]
		Let $[B,\alpha,\beta]\in\mathcal{M}(\mathbf{v},\mathbf{w},\mathcal{R})$ and consider the flow $t\ast[B,\alpha,\beta]$ for $t\in\C^*$. It has a limit when $t\rightarrow 0$.
	\end{lemma}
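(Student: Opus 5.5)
Since $\pi:\cm(\bv,\bw,\mcr)\to\cm_0(\bv,\bw,\mcr)$ is proper and $\C^*$-equivariant, I reduce the existence of $\lim_{t\to0}t\ast[B,\alpha,\beta]$ to the same question downstairs in $\cm_0(\bv,\bw,\mcr)$. Indeed, if the orbit map $\C^*\to\cm_0$, $t\mapsto t\ast\pi([B,\alpha,\beta])$, extends to $\C\to\cm_0$, then the orbit map $\C^*\to\cm(\bv,\bw,\mcr)$ extends to $\C$ by the valuative criterion of properness: the punctured disc maps to $\cm$, its composite with $\pi$ extends over $0$, and properness lifts this extension. Uniqueness of the limit follows from separatedness of $\cm$, which is quasi-projective.

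For the affine quotient, the coordinate ring $k[\rep(\bv,\bw,\mcr)]^{G_\bv}$ is generated by the functions $\langle \phi,\beta\, \gamma\,\alpha\,\psi\rangle$, where $\gamma$ runs over paths in $\mcr$ (compositions of $B_h,B_{\bar h}$) and $\phi,\psi$ are linear forms on the framing spaces. This is Lusztig's/Le Bruyn–Procesi's description of invariants of a framed quiver, and equivalently it is the identification $\cm_0(\bw)\cong\rep(\bw,\cs)$, in which the coordinates are the entries of the maps $\beta\gamma\alpha$ between framing vertices. It therefore suffices to show that each such generator $f$, viewed as a function of $t$, is a polynomial in $t$ (only nonnegative powers). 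Then $t\mapsto t\ast x$ extends over $0$ in the affine variety $\cm_0$.

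The key computation is the weight bookkeeping. Take a path from $\sigma(i,p)$ to $\sigma(j,q)$: $\alpha_{i,p}$ lands at $(i,p-1)$, the path then travels through $\mcr$, and $\beta_{j,q}$ leaves from $(j,q)$. Under the action, the composite $\beta_{j,q}\gamma\alpha_{i,p}$ is multiplied by $t^{q}\cdot t^{\#\bar h\in\gamma}\cdot t\cdot t^{-p}$. In the repetition quiver $\Z Q$ the arrows $h$ preserve the second coordinate and the arrows $\bar h$ decrease it by $1$. Lifting the path to $\mcr^{\gr}$ via the height-function embedding therefore gives
\[
q=(p-1)-\#\{\bar h\in\gamma\}+\tc'\cdot m ,
\]
where the $F$-periodic identification contributes a shift by a multiple $m$ of the period of $F=\Sigma^2$ in the $p$-coordinate. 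Summing the exponents, $q-p+1+\#\bar h=\tc' m$. In the graded (non-cyclic) case this exponent is $0$; in the cyclic case it equals $\tc'm$ with $m\ge0$, because a path always wraps in the direction of decreasing labels. Hence every invariant has weight $t^{\text{nonnegative}}$, and the limit exists.

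The main obstacle is the sign convention in the periodic labelling: the choice of representatives $p-\xi(i)\in\{0,\dots,\tc-1\}$ and the powers $t^p$ in $s(t)$ must be reconciled so that the winding number $m$ is never negative. I would check this carefully by tracking the arrow $\bar h$ that crosses the boundary of the fundamental domain, and verify it on the $A_1$ case as a sanity check.
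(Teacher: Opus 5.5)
Your proposal is correct and is essentially the argument behind the cited \cite[Lemma 7.1]{Na04}, which the paper does not reprove: properness and equivariance of $\pi$ reduce the question to $\mathcal{M}_0(\mathbf{v},\mathbf{w},\mathcal{R})$, whose frozen-to-frozen path coordinates have weight $t^{\tc m}$ with $m\geq 0$; this is the same computation the paper carries out in the proof of Lemma~\ref{QV fixed point for affine}. To make the step you flagged rigorous, track $p-\xi(i)$ rather than $p$: it drops by one along $\alpha$ and along each $h$-arrow, is unchanged along $\bar h$-arrows, and lies in $\{0,\dots,\tc-1\}$ at both endpoint labels, so $\tc m\geq 2-\tc>-\tc$ and hence $m\geq 0$.
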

	
	\begin{example}\label{CQV *-action eg}
		Let $Q_0=\{i,j\}$, $Q_1=\{j\rightarrow i\}$, and take $\xi(i)=0$, $\xi(j)=1$. The space $\rep(\bv,\bw,\mathcal{R})$ then has the form
		\begin{center}
			\vspace{-0.5cm}
			\begin{equation*}
				\begin{tikzpicture}[scale=1.5]
					%\fill[opacity=0.5,fill=purple!60] 
					(1.9,0) --(5,3.1) -- (5.1,3) -- (2,-0.1)--(1.9,0);
					%\fill[opacity=0.5,fill=purple!20] (3.4,-0.2) --(7.2,3.6) -- (7.6,3.2) -- (3.8,-0.6)--(3.4,-0.2);
					%\fill[opacity=0.5,fill=purple!60] 
					(3.9,0) --(7,3.1) -- (7.1,3) -- (4,-0.1)--(3.9,0);
					
					%\fill[opacity=0.5,fill=black!60]  
					(1.9,0) --(3,1.1)--(4.1,0)--(4,-0.1)--(3,0.9)--(2,-0.1)--(1.9,0);
					%\fill[opacity=0.5,fill=black!60] 
					(7,3.1) --(7.1,3)--(6,1.9)--(4.9,3)--(5,3.1)--(6,2.1)--(7,3.1);
					
					%\fill[opacity=0.5,fill=purple!60] 
					(4.9,3) --(5,3.1) -- (8.1,0) -- (8,-0.1)--(4.9,3);
					%\fill[opacity=0.5,fill=purple!40] (6.4,3.2) --(6.8,3.6) -- (10.6,-0.2) -- (10.2,-0.6)--(6.4,3.2);
					%\fill[opacity=0.5,fill=purple!60] 
					(6.9,3) --(7,3.1) -- (10.1,0) -- (10,-0.1)--(6.9,3);
					
					\node at (2,0) {\scriptsize$W(\sigma(i,0))$};
					\node at (4,0) {\scriptsize$W(\sigma(i,1))$};
					\node at (3,1) {\scriptsize$V(i,0)$};
					\node at (4,2) {\scriptsize$V(j,1)$};
					\node at (5,3) {\scriptsize$W(\sigma(j,2))$};
					\node at (5,1) {\scriptsize$V(i,1)$};
					\node at (6,2) {\scriptsize$V(j,2)$};
					\node at (7,3) {\scriptsize$W(\sigma(j,3))$};
					\node at (7,1) {\scriptsize$V(i,2)$};
					\node at (8,0) {\scriptsize$W(\sigma(i,0))$};
					\node at (10,0) {\scriptsize$W(\sigma(i,1))$};
					\node at (9,1) {\scriptsize$V(i,0)$};
					\node at (8,2) {\scriptsize$V(j,3)$};
					
					\draw[->] (2.85,0.85) -- (2.15,0.15);
					\draw[->] (3.85,0.11) -- (3.15,0.85);
					\draw[->] (3.85,1.85) -- (3.15,1.15) node[midway,rectangle,fill=white] {\scriptsize$\blue{t}$};
					\draw[->] (4.85,1.15) -- (4.15,1.85);
					\draw[->] (4.85,2.85) -- (4.15,2.15) node[midway,rectangle,fill=white] {\scriptsize$\blue{t^{-1}}$};
					\draw[->] (5.85,2.15) -- (5.15,2.85) node[midway,rectangle,fill=white] {\scriptsize$\blue{t^2}$};
					
					\draw[->] (6.85,2.85) -- (6.15,2.15) node[midway,rectangle,fill=white] {\scriptsize$\blue{t^{-2}}$};
					\draw[->] (5.85,1.85) -- (5.15,1.15) node[midway,rectangle,fill=white] {\scriptsize$\blue{t}$};
					\draw[->] (4.85,0.85) -- (4.15,0.15) node[midway,rectangle,thick, fill=white] {\scriptsize$\blue{t}$};
					
					\draw[->] (8.85,0.85) -- (8.15,0.15);
					\draw[->] (7.85,1.85) -- (7.15,1.15) node[midway,rectangle,fill=white] {\scriptsize$\blue{t}$};
					\draw[->] (8.85,1.15) -- (8.15,1.85);
					\draw[->] (9.85,0.15) -- (9.15,0.85);
					\draw[->] (7.85,2.15) -- (7.15,2.85) node[midway,rectangle,thick, fill=white] {\scriptsize$\blue{t^3}$};
					\draw[->] (6.85,1.15) -- (6.15,1.85) ;
					\draw[->] (7.85,0.15) -- (7.15,0.85) node[midway,rectangle,fill=white] {\scriptsize$\blue{t}$};
				\end{tikzpicture}
			\end{equation*}
		\end{center}
		where the $\ast$-action by $t\in\C^*$ is indicated by the labeling on each arrow. The induced $\C^*$-action on $\mathcal{M}_0(\bw,\mathcal{R})$ can be easily read off:
		\begin{center}
			\vspace{-0.5cm}
			\begin{equation*}
				\begin{tikzpicture}[scale=1.2]
					\node at (0,0) {\scriptsize$W(\sigma(i,0))$};
					\node at (2,0) {\scriptsize$W(\sigma(i,1))$};
					\node at (0,1.5) {\scriptsize$W(\sigma(j,2))$};
					\node at (2,1.5) {\scriptsize$W(\sigma(j,3)$};
					
					\draw[->] (1.3,0) -- (0.7,0);
					\draw[->] (1.3,1.5) -- (0.7,1.5);
					\draw[->] (-0.15,0.2) -- (-0.15,1.3) node[midway,rectangle,thick, fill=white] {\scriptsize$\blue{t^3}$};
					\draw[->] (0.15,1.3) -- (0.15,0.2);
					\draw[->] (1.85,0.2) -- (1.85,1.3) node[midway,rectangle,thick, fill=white] {\scriptsize$\blue{t^3}$};
					\draw[->] (2.15,1.3) -- (2.15,0.2);
				\end{tikzpicture}
			\end{equation*}
			\vspace{-0.85cm}
		\end{center}
	\end{example}
	
	We want to relate the fixed point subvariety of $\mathcal{M}(\mathbf{v},\mathbf{w},\mathcal{R})$ with some graded quiver varieties. For this consider vectors $\mathbf{v}_{\tg}\in\N^{\mathcal{R}_0^{\gr}-\mathcal{S}_0^{\gr}}$ and $\mathbf{w}_{\tg}\in\N^{\mathcal{S}_0^{\gr}}$ such that $\mathbf{w}_{\tg}$ is supported on $\mathcal{S}_0$ (no condition for $\mathbf{v}_{\tg}$). We can then view $\mathbf{w}_{\tg}$ as an element in $\N^{\mathcal{S}_0}$. Now define a vector $\mathbf{v}\in\N^{\mathcal{R}_0-\mathcal{S}_0}$ by
	\[\mathbf{v}(i,p)=\sum_{n\equiv p\mod \texttt{c}} \mathbf{v}_{\tg}(i,n),\quad \forall (i,p)\in\mathcal{R}_0-\mathcal{S}_0.\]
	There is a map $\rep(\mathbf{v}_{\tg},\mathbf{w}_{\tg},\mathcal{R}^{\gr})\rightarrow\rep(\mathbf{v},\mathbf{w},\mathcal{R})$ defined in the obvious way. It is equivariant under the $G_{\mathbf{v}_{\tg}}$-action, where $G_{\mathbf{v}_{\tg}}\rightarrow G_\mathbf{v}$ is an obvious homomorphism, and preserves the stability condition. Therefore, we obtain a morphism 
	\begin{equation}\label{QV morphism from graded}
		\mathcal{M}(\mathbf{v}_{\tg},\mathbf{w}_{\tg},\mathcal{R}^{\gr})\longrightarrow\mathcal{M}(\mathbf{v},\mathbf{w},\mathcal{R}).
	\end{equation}
	Note that $\bw_{\tg}$ is uniquely determined by $\bw$, but $\bv_{\tg}$ is not determined by $\bv$.
	
	\begin{lemma}[{\cite[Lemma 7.3]{Na04}}]\label{QV fixed point}
		A point $[B,\alpha,\beta]\in\mathcal{M}(\mathbf{v},\mathbf{w},\mathcal{R})$ is fixed by the $\ast$-action if and only if it is contained in the image of (\ref{QV morphism from graded}). Moreover, the map (\ref{QV morphism from graded}) is a closed embedding.
	\end{lemma}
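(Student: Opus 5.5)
The plan is to follow Nakajima's argument in \cite{Na04}, adapted to the NKS setting. We take the two claims of Lemma~\ref{QV fixed point} in turn: the characterization of fixed points, and then the closed embedding property.

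\textbf{Fixed points lie in the image.} This direction is formal. Suppose a point comes from $\rep(\bv_{\tg},\bw_{\tg},\mcr^{\gr})$. Then $V=\bigoplus_{(i,p)}V(i,p)$ carries a finer $\Z$-grading $V=\bigoplus_n V_{\tg}(i,n)$, with $n\equiv p$ modulo $\tc$. We define $\lambda(t)\in G_\bv$ to act on $V_{\tg}(i,n)$ by $t^{n}$, with a suitable normalization matching the weight $t^p$ used in $s(t)$. A direct check shows $\lambda(t)\cdot(t\ast(B,\alpha,\beta))=(B,\alpha,\beta)$ arrow by arrow:
\begin{itemize}
\item $B_h$ preserves the degree $n$;
\item $B_{\bar h}$ lowers $n$ by one, which compensates the factor $t$;
\item $\alpha_{i,p}$ and $\beta_{i,p}$ shift the degree between $W(\sigma(i,p))$ and $V(i,p-1)$ or $V(i,p)$, compatibly with $s(t)$.
\end{itemize}
So the $G_\bv$-orbit is $\ast$-fixed.

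\textbf{Image contains all fixed points.} Take a stable point $x=(B,\alpha,\beta)$ with $[x]$ fixed. Stability makes $G_\bv$ act freely on $\cs t(\bv,\bw)$. Hence for each $t$ there is a unique $\lambda(t)\in G_\bv$ with $t\ast x=\lambda(t)^{-1}x$. Uniqueness forces $t\mapsto\lambda(t)$ to be a homomorphism $\C^*\to G_\bv$, and it is algebraic because the quotient $\cs t\to\cm$ is a principal bundle. Decompose each $V(i,p)$ into weight spaces of $\lambda$. Writing the equivariance arrow by arrow shows:
\begin{itemize}
\item $B_h$ preserves weights;
\item $B_{\bar h}$ shifts weights by one;
\item $\alpha$ and $\beta$ relate the weight on $V$ to the weight $p$ on $W(\sigma(i,p))$.
\end{itemize}
Setting $V_{\tg}(i,n)$ to be a suitably shifted weight space inside $V(i,p)$ gives a graded representation, i.e.\ a point of $\rep(\bv_{\tg},\bw_{\tg},\mcr^{\gr})$ mapping to $x$. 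Two points need care. First, the mesh relations must hold componentwise, which is automatic since they are homogeneous. Second, $\bw_{\tg}$ must be supported on $\cs_0\subset\cs^{\gr}_0$, which is forced by $s(t)$ having weight $p$ on $W(\sigma(i,p))$.

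\textbf{Closed embedding.} This is the main obstacle.
\begin{itemize}
\item \emph{Injectivity.} Suppose two graded stable modules become isomorphic in $\cm(\bv,\bw,\mcr)$ via $g\in G_\bv$. By uniqueness of $\lambda$, the map $g$ intertwines the two weight decompositions, so it is graded. Thus $g$ lies in the image of $G_{\bv_{\tg}}$.
\item \emph{Tangent map.} We compare deformation complexes. The tangent space of $\cm(\bv_{\tg},\bw_{\tg},\mcr^{\gr})$ is the weight-zero part of the $\C^*$-equivariant tangent space of $\cm(\bv,\bw,\mcr)$. Hence the tangent map is injective, being the inclusion of the fixed part.
\item \emph{Properness.} The morphism is a union over the finitely many $\bv_{\tg}$ with given $\bv$ (finitely many since $\bv$ is finite). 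Its image is the fixed locus, which is closed. Injective immersions onto a closed subset that are proper are closed embeddings.
\end{itemize}
For properness we would use that the fixed locus of a $\C^*$-action on a smooth quasi-projective variety is a smooth closed subvariety. Its components are then identified with the images of the individual graded varieties, each of which is smooth by the NKS smoothness theorem \cite[Theorem 3.2]{Sch19} applied to $\mcr^{\gr}$. A bijective morphism between smooth varieties that is injective on tangent spaces is an isomorphism onto its image.
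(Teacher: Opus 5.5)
\textbf{Gap: the congruence of the weights.} Your easy direction (graded points are fixed) and your injectivity argument (uniqueness of $\lambda$ forces an isomorphism between graded points to be graded) agree with the paper. The gap is the step where you pass from the weight decomposition of $\lambda$ to a point of $\rep(\bv_{\tg},\bw_{\tg},\mcr^{\gr})$.

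To lie in the image of \eqref{QV morphism from graded} you need $V(i,p)=\bigoplus_{n\equiv p \pmod{\tc}}V(i,p)[n]$. Nothing you wrote gives this: a priori $\lambda$ can have weights on $V(i,p)$ in every residue class modulo $\tc$. A ``suitable shift'' is not available either. For any point in the image, the cocharacter acting by $t^n$ on $V_{\tg}(i,n)$ solves the fixed-point equation, so by the uniqueness you invoke yourself it must equal $\lambda$. Hence the graded lift is forced to be $V_{\tg}(i,n)=V(i,n)[n]$, and being in the image is \emph{equivalent} to the vanishing of all weight spaces with $n\not\equiv p$. That vanishing is exactly what has to be proved, and your list of ``two points needing care'' omits it.

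\textbf{The missing idea.} The paper uses stability a second time; you use it only to get freeness. Put $S(i,p)=\bigoplus_{n\not\equiv p}V(i,p)[n]$. It is preserved by the $B_h$, which preserve weights. It is preserved by the $B_{\bar h}$, since weight and index both drop by one, compatibly with the $\tc$-periodic labelling. It is killed by $\beta$, because $\beta_{i,p}$ only sees weight $p$. So $S$ is a sub-$\mcr$-module supported on $\mcr_0-\cs_0$, hence zero. Only then does $\bv_{\tg}(i,n)=\dim V(i,n)[n]$ define the graded lift. Alternatively, freeness can be made to do the job: the cocharacter acting by $t^{r}$ on the weight spaces of $V(i,p)$ with $n\equiv p+r$ ($0\le r<\tc$) commutes with all $B$'s and fixes $\alpha,\beta$, so it is trivial. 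Either argument has to be supplied.

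\textbf{Minor point in the closed-embedding part.} ``Finitely many $\bv_{\tg}$ since $\bv$ is finite'' is not a valid reason, because $\bv(i,p)$ can be distributed over infinitely many $(i,n)$ with $n\equiv p$. Finiteness of the nonempty pieces comes from stability bounding the support. Your fixed-locus/\'etale argument does not actually need it, though. Your tangent-space comparison is a reasonable expansion of the differential computation the paper simply cites from \cite[Lemma 7.3]{Na04}, so this part is fine once the gap above is closed.
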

	\begin{proof}
		Fix a representative $(B,\alpha,\beta)$ of $[B,\alpha,\beta]$. Then $[B,\alpha,\beta]$ is fixed if and only if there exists, for each $t\in\C^*$, an element $\lambda(t)\in G_\mathbf{v}$ such that 
		\[t\ast(B,\alpha,\beta)=\lambda(t)^{-1}\cdot(B,\alpha,\beta).\]
		Such $\lambda(t)$ is unique since the action of $G_\mathbf{v}$ is free. In particular, $\lambda:\C^*\rightarrow G_\mathbf{v}$ is a group homomorphism.
		
		Let $V(i,p)[n]$ be the weight space of $V(i,p)$ with eigenvalue $t^n$. The above equation means that
		\begin{gather*}
			B_{h,p}(V(\tt(h),p)[n])\subseteq V(\ts(h),p)[n],\quad B_{\bar{h},p}(V(\ts(h),p)[n])\subseteq V(\textup{t}(h),p-1)[n-1],\\
			\alpha_{i,p}(W(\sigma(i,p)))\subseteq V(i,p-1)[p-1],\quad \beta_{i,p}(V(i,p)[n])=0\quad\text{if $n\neq p$}.
		\end{gather*}
		Let us define a subspace $S$ of $V$ by
		\[S(i,p)=\bigoplus_{n\not\equiv p\mod \tc}V(i,p)[n].\]
		Then $S(i,p)$ gives a sub-$\mathcal{R}$-module of $(B,\alpha,\beta)$ supported on $\mathcal{R}_0-\mathcal{S}_0$, so $S=0$ by stability condition. This means that $[B,\alpha,\beta]$ is in the image of (\ref{QV morphism from graded}) if we set 
		\begin{equation}\label{eq: QV morphism from graded bv def}
			\mathbf{v}_{\tg}(i,n)=\dim V(i,n)[n].
		\end{equation}
		Conversely, a point in the image is a fixed point. Since $\lambda$ is unique, the map (\ref{QV morphism from graded}) is injective.
		Finally, the fact that (\ref{QV morphism from graded}) is a closed embedding can be seen by calculating the differential at $(B,\alpha,\beta)$; see \cite[Lemma 7.3]{Na04} for details.
	\end{proof}
	
	Let $\mathcal{M}(\mathbf{v},\mathbf{w},\mathcal{R})^{\ast,\C^*}$ be the fixed pointed subvariety of the $\ast$-action. From the proof of Lemma~\ref{QV fixed point} we see that different $\mathcal{M}(\mathbf{v}_{\tg},\mathbf{w}_{\tg},\mathcal{R}^{\gr})$ have disjoint image under (\ref{QV morphism from graded}), i.e.
	\[\mathcal{M}(\mathbf{v},\mathbf{w},\mathcal{R})^{\ast,\C^*}=\bigsqcup_{\bv_{\tg}}\mathcal{M}(\mathbf{v}_{\tg},\mathbf{w}_{\tg},\mathcal{R}^{\gr}).\]
	The graded quiver varieties $\mathcal{M}(\mathbf{v}_{\tg},\mathbf{w}_{\tg},\mathcal{R}^{\gr})$ are connected by \cite[Proposition 7.3.4]{Na01}, so they constitute the connected components of $\mathcal{M}(\mathbf{v},\mathbf{w},\mathcal{R})^{\ast,\C^*}$. Now consider the Bialynicki-Birula decomposition of $\mathcal{M}(\mathbf{v},\mathbf{w},\mathcal{R})$:
	\begin{equation}\label{eq:BB decomposition}
		\mathcal{M}(\mathbf{v},\mathbf{w},\mathcal{R})=\bigsqcup_{\mathbf{v}_{\tg}} S(\mathbf{v}_{\tg},\mathbf{w}_{\tg})
	\end{equation}
	where we define
	\[S(\mathbf{v}_{\tg},\mathbf{w}_{\tg}):=\{x\in\mathcal{M}(\mathbf{v},\mathbf{w},\mathcal{R})\mid \lim_{t\rightarrow 0}t\ast x\in \mathcal{M}(\mathbf{v}_{\tg},\mathbf{w}_{\tg},\mathcal{R}^{\gr})\}\]
	By the general theory, each $S(\mathbf{v}_{\tg},\mathbf{w}_{\tg})$ is a locally closed subvariety of $\mathcal{M}(\mathbf{v},\mathbf{w},\mathcal{R})$, and the natural map $S(\mathbf{v}_{\tg},\mathbf{w}_{\tg})\rightarrow\mathcal{M}(\mathbf{v}_{\tg},\mathbf{w}_{\tg},\mathcal{R}^{\gr})$ is an affine fiber bundle of rank, say, $r(\mathbf{v}_{\tg},\mathbf{w}_{\tg})$.
	
	\begin{lemma}\label{QV fixed point restriction on +}
		Let $i^+:\mathcal{M}_0(\mathbf{w}_{\tg},\mathcal{R}^{\gr})\rightarrow\mathcal{M}_0(\mathbf{w},\mathcal{R})$ be the natural inclusion. Then
		\begin{align*}
			(i^+)^*(\pi(\mathbf{v},\mathbf{w}))&=\bigoplus_{\mathbf{v}_{\tg}}\pi(\mathbf{v}_{\tg},\mathbf{w}_{\tg})[-a(\bv,\bw;\bv_{\tg},\bw_{\tg})],\\
			(i^+)^!(\pi(\mathbf{v},\mathbf{w}))&=\bigoplus_{\mathbf{v}_{\tg}}\pi(\mathbf{v}_{\tg},\mathbf{w}_{\tg})[a(\bv,\bw;\bv_{\tg},\bw_{\tg})],
		\end{align*}
		where $a(\bv,\bw;\bv_{\tg},\bw_{\tg})=\dim\mathcal{M}(\mathbf{v},\mathbf{w},\mathcal{R})-\dim\mathcal{M}(\mathbf{v}_{\tg},\mathbf{w}_{\tg},\mathcal{R}^{\gr})-2r(\bv_{\tg},\bw_{\tg})$.
	\end{lemma}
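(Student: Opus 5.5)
The plan is to use hyperbolic localization, in the form of Braden's theorem, together with the Bialynicki-Birula decomposition \eqref{eq:BB decomposition}. First I identify $\mathcal{M}_0(\bw_{\tg},\mathcal{R}^{\gr})$ with the $\C^*$-fixed locus of $\mathcal{M}_0(\bw,\mathcal{R})$ under the $\ast$-action. Because $\bw_{\tg}$ is supported on $\mathcal{S}_0$, the matrix coefficients of a point of $\rep(\bw,\cs)$ acquire weights $t^{m}$ with $m$ determined by the height differences. Example~\ref{CQV *-action eg} suggests that every weight is nonnegative, and in fact positive on all arrows that are not present in the graded category. If so, $\mathcal{M}_0(\bw,\mathcal{R})$ is contracted by $t\to0$ onto the fixed locus, and $i^+$ is the inclusion of that fixed locus; in the Dynkin/cyclic situation this fixed locus is exactly $\mathcal{M}_0(\bw_{\tg},\mathcal{R}^{\gr})$. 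I would check this weight claim first, since Braden's contracting version needs the attracting set to be the whole space.

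Second, the map $\pi:\mathcal{M}(\bv,\bw,\mathcal{R})\to\mathcal{M}_0(\bw,\mathcal{R})$ is proper and $\C^*$-equivariant, and $\pi_!\underline{k}$ is a $\C^*$-equivariant (weakly equivariant) complex. For a contracting $\C^*$-action onto the fixed locus $F$, with inclusion $i:F\to X$ and retraction $p:X\to F$, we have $i^*\cong p_*$ and $i^!\cong p_!$ on equivariant complexes; this is the standard ``attracting" case of hyperbolic localization. Proper base change then computes $(i^+)^*\pi_!\underline{k}_{\mathcal{M}}$ as the pushforward from $\pi^{-1}(\mathcal{M}_0(\bw_{\tg},\mathcal{R}^{\gr}))$. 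To obtain the decomposition into graded pieces I would instead use $(i^+)^!\cong$ the pushforward along the fixed points of $\mathcal{M}(\bv,\bw,\mathcal{R})$ of the restriction to the attracting cells. Concretely, base change for the proper map $\pi$ turns hyperbolic restriction on the base into hyperbolic restriction upstairs. Upstairs the space is smooth, so Lemma~\ref{QV fixed point} and \eqref{eq:BB decomposition} apply: each piece $S(\bv_{\tg},\bw_{\tg})\to\mathcal{M}(\bv_{\tg},\bw_{\tg},\mathcal{R}^{\gr})$ is an affine bundle of rank $r$, and the fixed component is smooth of dimension $\dim\mathcal{M}(\bv_{\tg},\bw_{\tg},\mathcal{R}^{\gr})$.

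Third, I compute the shifts. The $!$-restriction of the constant sheaf on the smooth $\mathcal{M}(\bv,\bw,\mathcal{R})$ to the fixed component has codimension $\dim\mathcal{M}-\dim\mathcal{M}^{\gr}$. Integrating along the affine fibres of rank $r$ contributes the shift $[-2r]$ (for $!$, dually $+$). The cohomological shift $[\dim\mathcal{M}]$ in \eqref{eq:piPS} has to be swapped for $[\dim\mathcal{M}^{\gr}]$. Collecting these gives the shift $\pm a(\bv,\bw;\bv_{\tg},\bw_{\tg})$ with $a=\dim\mathcal{M}-\dim\mathcal{M}^{\gr}-2r$, and the two formulas are exchanged by Verdier duality, since $\pi(\bv,\bw)$ is self-dual. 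The Bialynicki-Birula stratification yields a filtration rather than a direct sum; the direct sum comes from the decomposition theorem, since each graded piece $\pi(\bv_{\tg},\bw_{\tg})$ is pure. Purity splits the associated distinguished triangles, which are built from cells ordered by closure.

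I expect the main obstacle to be justifying that the $\ast$-action contracts all of $\mathcal{M}_0(\bw,\mathcal{R})$ onto exactly the image of $\mathcal{M}_0(\bw_{\tg},\mathcal{R}^{\gr})$. This is what legitimizes $i^*=p_*$, and it needs the positivity of the weights on the invariant functions, namely traces of cycles through framing vertices. Failing that, I would fall back on Braden's general hyperbolic localization and argue separately that the repelling part is trivial.
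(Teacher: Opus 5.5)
Your proposal is correct and is essentially the paper's argument. The attracting case of hyperbolic localization gives $(i^+)^!\cong p_!$ and $(i^+)^*\cong p_*$ (the paper cites \cite[Theorem 2.10.3]{Ach21}); $p_!\pi(\bv,\bw)$ is then computed through the Bialynicki-Birula cells $S(\bv_{\tg},\bw_{\tg})$ upstairs, using the purity splitting (the paper cites \cite[\S 8.1.6]{Lus93}), the shift $[-2r(\bv_{\tg},\bw_{\tg})]$ from the affine bundles, and Verdier duality for the $*$-version.

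The contraction you flag as the main obstacle is supplied by Lemma~\ref{QV fixed point for affine}, where the $\ast$-action on $\mathcal{M}_0(\bw,\mathcal{R})\cong\rep(\bw,\cs)$ has only the weights $0$ and $\tc>0$. One small correction to your shift count: $a$ comes solely from $[-2r]$ together with replacing the normalization $[\dim\mathcal{M}(\bv,\bw,\mathcal{R})]$ by $[\dim\mathcal{M}(\bv_{\tg},\bw_{\tg},\mathcal{R}^{\gr})]$. There is no separate codimension term from a $!$-restriction to the fixed component; including one would double-count.
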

	
	\begin{proof}
		Let $p:\mathcal{M}_0(\mathbf{w},\mathcal{R})\rightarrow \mathcal{M}_0(\mathbf{w}_{\tg},\mathcal{R}^{\gr})$ be the map defined by taking limit $t\rightarrow 0$. Then by \cite[Theorem 2.10.3]{Ach21} (note that the $\C^\ast$-action on $\mathcal{M}_0(\bw,\mathcal{R})$ is attracting, and each $\pi(\bv,\bw)$ is equivariant under the $\ast$-action), we have
		\[(i^+)^!(\pi(\mathbf{v},\mathbf{w}))\cong p_!(\pi(\mathbf{v},\mathbf{w})),\quad (i^+)^*(\pi(\mathbf{v},\mathbf{w}))\cong p_*(\pi(\mathbf{v},\mathbf{w})).\]
		Now consider the diagram
		\[\begin{tikzcd}
			\bigsqcup\mathcal{M}(\bv_{\tg},\bw_{\tg},\mathcal{R}^{\gr})\ar[d,swap,"\pi_{\bv_{\tg},\bw_{\tg}}"]\ar[r,hook] &\mathcal{M}(\bv,\bw,\mathcal{R})\ar[d,"\pi_{\bv,\bw}"]\\
			\mathcal{M}_0(\bv_{\tg},\bw_{\tg},\mathcal{R}^{\gr})\ar[r,hook]&\mathcal{M}_0(\bv,\bw,\mathcal{R})
		\end{tikzcd}\]
		Let $p_{\bv_{\tg}}:S(\bv_{\tg},\bw_{\tg})\rightarrow \mathcal{M}(\bv_{\tg},\bw_{\tg},\mathcal{R}^{\gr})$ denote the projection map, which is an affine fiber bundle. Then by applying \cite[\S 8.1.6]{Lus93} to \eqref{eq:BB decomposition}, we see that 
		\begin{align*}
			p_!(\pi_{\bv,\bw})_!(\underline{k}_{\mathcal{M}(\bv,\bw,\mathcal{R})})&=\bigoplus_{\bv_{\tg}}(\pi_{\bv_{\tg},\bw_{\tg}}\circ p_{\bv_{\tg}})_!(\underline{k}_{\mathcal{M}(\bv,\bw,\mathcal{R})})=\bigoplus_{\bv_{\tg}}(\pi_{\bv_{\tg},\bw_{\tg}})_!\circ (p_{\bv_{\tg}})_!(\underline{k}_{S(\bv_{\tg},\bw_{\tg})})\\
			&=\bigoplus_{\bv_{\tg}}(\pi_{\bv_{\tg},\bw_{\tg}})_!(\underline{k}_{\mathcal{M}(\bv_{\tg},\bw_{\tg},\mathcal{R}^{\gr})})[-2r(\bv_{\tg},\bw_{\tg})].
		\end{align*}
		This proves the assertion for $(i^+)^!(\pi(\mathbf{v},\mathbf{w}))$. The case for $(i^+)^*(\pi(\mathbf{v},\mathbf{w}))$ can be deduced by taking Verdier duality.
	\end{proof}
	
	For any dimension vector $\mathbf{w}\in\N^{\mathcal{S}_0}$ as above, we denote by $\mathcal{K}_\mathbf{w}$ the category of direct sums of shifts of $\mathcal{L}(\mathbf{v},\mathbf{w})$ for $(\bv,\bw)$ strongly $l$-dominant, which are complexes on $\mathcal{M}_0(\mathbf{w},\mathcal{R})$. We denote by $\mathcal{K}_\mathbf{w}^+$ the similar category over $\mathcal{M}_0(\mathbf{w}_{\tg},\mathcal{R}^{\gr})$ (recall that $\bw_{\tg}$ is uniquely determined by $\bw$, whence the notation). 
	
	\begin{lemma}\label{QV fixed point for affine}
		Under the isomorphism $\mathcal{M}_0(\bw,\mathcal{R})\cong\rep(\bw,\mathcal{S})$, the $\ast$-action acts by 
		\begin{equation}\label{QV ast action on affine}
			t\ast (x_h,\eps_i,\eps_{i^\diamond})=(x_h,t^{\tc}\eps_i,\eps_{i^\diamond}).
		\end{equation}
		Therefore $\mathcal{M}_0(\bw,\mathcal{R})^{\ast,\C^*}=\mathcal{M}_0(\bw_{\tg},\mathcal{R}^{\gr})\cong \rep(\mathbf{w}_{\tg},\Lambda^+)$, where $\Lambda^+$ is quotient algebra of $\Lambda$ by the ideal generated by $(\varepsilon_{i}\mid i\in Q_0)$. Moreover, under the decomposition $(\bv,\bw)=(\bv^0,\bw^0)+(\bv^+,\bw^+)+(\bv^-,\bw^-)$ of Lemma~\ref{lem:Qin decomposition}, we have
		\begin{align}
			\label{eq:decompRgr+}
			\mathcal{M}_0(\mathbf{w}_{\tg},\mathcal{R}^{\gr})=\bigsqcup_{(\bv^0,\bw^0)\in\bigoplus_i\N(\bv^i,\bw^i)}\mathcal{M}_0^{\reg}(\bv,\mathbf{w},\mathcal{R}).
		\end{align}
	\end{lemma}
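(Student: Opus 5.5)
We work directly with the coordinates on $\rep(\bw,\cs)=\rep(\bw,\Lambda)$ and follow the $\ast$-action through the isomorphism $\cm_0(\bw,\mcr)\cong\rep(\bw,\cs)$ of \cite[Lemma 3.14]{LW21b}. That isomorphism is built from $G_\bv$-invariant functions: the coordinates of the point of $\rep(\bw,\cs)$ are compositions $\beta\circ(\text{path in }B)\circ\alpha$ from $W(\sigma x)$ to $W(\sigma y)$. Under $\cs\cong\proj(\Lambda)$, the arrows $x_h$ of $Q^{\dbl}$ and the arrows $\eps_i,\eps_{i^\diamond}$ (our notation for $\varepsilon_i,\varepsilon_i'$) correspond to the shortest such compositions.

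\textbf{Step 1: the weights.} A composition $\beta_{y}\,B_{\text{path}}\,\alpha_{x}$ gets the factor $t^{\#\bar h}\cdot t\cdot t^{-p_x}\cdot t^{p_y}$ from the definition of $\ast$. Here $\#\bar h$ is the number of $\bar h$-arrows along the path, and the grading bookkeeping in \eqref{eq:R^gr rep element} records the drop in $p$. In the cyclic category each $B_{\bar h}$ lowers $p$ by one, and the only place where $p$ jumps by $\tc$ is when the path wraps around periodically. The Example~\ref{CQV *-action eg} shows exactly this pattern.
\begin{itemize}
\item For paths realizing $x_h$, and for the arrow $\sigma\Sigma\tS_i\to\sigma\tS_i$ in one direction, the exponents cancel, so the weight is $1$.
\item For the other direction, which passes through the identification $(i,p)\sim(i,p+\tc)$ coming from $F=\Sigma^2$, the weight is $t^{\tc}$.
\end{itemize}
The verification uses the height function $\xi$ and the fact that in $\Z Q$ the object $\Sigma\tS_i$ sits at $\tau^{-?}$-distance determined by $\tc$, namely $\Sigma^2=\tau^{-\tc}$. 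So the plan is to check the total exponent along the one canonical path from $\sigma\tS_i$ to $\sigma\Sigma\tS_i$ and back, and to check that it vanishes along the $x_h$ paths. By the mesh relations, all paths between the same frozen vertices give proportional maps, so it suffices to check one representative path. This gives \eqref{QV ast action on affine}.

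\textbf{Step 2: the fixed locus.} The fixed locus of $(x_h,t^{\tc}\eps_i,\eps_{i^\diamond})$ is $\{\eps_i=0\}$, which is $\rep(\bw,\Lambda^+)$. Viewing $\bw$ as $\bw_{\tg}$ in the graded setting, $\Lambda^+$ is precisely the algebra whose projectives form $\cs^{\gr}$ restricted to the support of $\bw_{\tg}$. Cutting the wrap-around arrow $\eps_i$ leaves the graded frozen category, which has no cycles. Hence applying \cite[Lemma 3.14]{LW21b} (or its graded analogue from \cite{Na01}) to $\mcr^{\gr}$ gives $\cm_0(\bw_{\tg},\mcr^{\gr})\cong\rep(\bw_{\tg},\Lambda^+)$, compatibly with the natural inclusion $i^+$.

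\textbf{Step 3: the stratification \eqref{eq:decompRgr+}.} We intersect the stratification \eqref{eqn:stratification} with the fixed locus. The stratum $\cm_0^{\reg}(\bv,\bw,\mcr)$ consists of points whose generic representative is a stable and costable module, which can be viewed as an $\mcr$-module $M$ with $\bv$ recording $\dim\Hom(\cdot)$.
\begin{itemize}
\item The stratum meets the fixed locus exactly when the corresponding $\Lambda$-module has $\eps_i=0$ after degeneration. Via Lemma~\ref{lem:Qin decomposition}, this means the $\Lambda$-module is a direct sum of a $\Lambda^+$-part (the $(\bv^\pm,\bw^\pm)$ pieces) and of $\bw^0$-pieces.
\item Each $\bw^0$-piece must be the module corresponding to $(\bv^i,\bw^i)$, i.e.\ the $\eps_{i^\diamond}$-nonzero indecomposable. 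It cannot be the one for $(\bv^{\Sigma i},\bw^i)$, which has $\eps_i\neq0$.
\item Conversely, such strata lie entirely inside the fixed locus, because the closed orbit is $\C^*$-stable with $\eps_i$ acting by zero. This uses Step 1 together with the fact that the strata are $\C^*$-stable, since $\pi$ is equivariant.
\end{itemize}
The main obstacle is Step 1: making the weight computation rigorous through the isomorphism of \cite{LW21b}, and confirming that the wrap-around contributes exactly $t^{\tc}$ and not $t^{-\tc}$ or a shift on $\eps_{i^\diamond}$. I would first check this on the $A_2$ example above and then argue in general using the height function.
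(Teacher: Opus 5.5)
Your route is the paper's. The coordinate functions on $\cm_0(\bw,\mcr)\cong\rep(\bw,\cs)$ are frozen-to-frozen compositions $\beta\circ B\circ\alpha$, and your weight $t^{\#\bar h}\cdot t\cdot t^{-p_x}\cdot t^{p_y}$ is the one the paper computes. The gap is that you stop before the decisive step: the two bullets of Step~1 are asserted rather than derived. You also concede that you have not excluded $t^{-\tc}$, higher powers of $t^{\tc}$, or a rescaling of $\eps_{i^\diamond}$.

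The missing observation is short, and the paper's proof supplies it:
\begin{itemize}
\item Lift the path to $\mcr^{\gr}$ so that it ends at $W(\sigma(i,k))$ with $(i,k)$ in the window $0\le k-\xi(i)\le\tc-1$. It then starts at $W(\sigma(j,k+N+1))$ with $N=\#\bar h$.
\item If $\sigma(j,r)$ is the window representative of the source, the weight is $t^{1-r}\,t^{N}\,t^{k}=t^{k+N+1-r}$.
\item Since $k+N+1\equiv r\pmod{\tc}$ and $k+N+1\ge r$ by the choice of window, the exponent is $\tc$ times the number of times the path wraps around. In particular it is never negative.
\item A path wrapping more than once is zero in $\mcr$. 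Hence every nonzero coordinate function has weight $1$ or $t^{\tc}$, the latter exactly for the wrapped paths. For the window fixed by $\xi$, these are the $\eps_i$.
\end{itemize}
This gives \eqref{QV ast action on affine} for all coordinate functions at once. Your reduction to one representative path is therefore unnecessary; it would in any case need $\dim e_a\Lambda e_b\le 1$, homogeneity of the mesh relations, and a nonzero representative.

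For your specific worry about $\eps_{i^\diamond}$, here is a further argument of my own, not in the paper. Concatenate the quiver paths realizing $\eps_i$ and $\eps_{i^\diamond}$. The resulting morphism is zero, but the weight bookkeeping applies to quiver paths regardless. The concatenation lifts to a path from $\sigma\tS_i$ to $\sigma F\tS_i$, so its exponent is exactly $\tc$. Weights multiply, and each exponent is a nonnegative multiple of $\tc$, so exactly one of the two arrows is rescaled.

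Step~3 also needs repair, although the paper itself treats this part as immediate. Lemma~\ref{lem:Qin decomposition} decomposes only the pair of dimension vectors $(\bv,\bw)$. It gives no direct-sum decomposition of the $\Lambda$-modules in $\cm_0^{\reg}(\bv,\bw,\mcr)$ into a $\Lambda^+$-part and ``$\bw^0$-pieces'', so your first two bullets are unsupported. Likewise, $\C^*$-stability of the strata does not show that a stratum containing one point with all $\eps_i=0$ consists entirely of such points. What \eqref{eq:decompRgr+} requires is that the vanishing of all $\eps_i$ is constant along each regular stratum, and holds precisely when $\bv^0\in\bigoplus_i\N\bv^i$. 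That needs an actual argument rather than the heuristics you give.
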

	\begin{proof}
		By \cite{Sch19}, the coordinate ring of $\mathcal{M}_0(\bw,\mathcal{R})$ is generated by coordinate maps to paths from a frozen vertex to a frozen vertex. Now consider such a path from $W(\sigma(j,k+N+1))$ to $W(\sigma(i,k))$, indicated as follows:
		\[\begin{tikzcd}
			W(\sigma(j,k+N+1))\ar[r]&V(j,k+N)\ar[r]&\cdots\ar[r]&V(i,k)\ar[r]&W(\sigma(i,k))
		\end{tikzcd}\]
		For convenience we may assume that $(i,k)\in\mathcal{R}_0-\mathcal{S}_0$. Then under the $\ast$-action, any coordinate function of the path is multiplied by
		\[t^{-r+1}t^{N}t^{k}=t^{k+N+1-r}\]
		where $r$ is the unique integer such that $(j,r)\in\mathcal{R}_0-\mathcal{S}_0$. From the definition of $r$ and our assumption on $(i,k)$, we see that $k+N+1-r\geq 0$ and is a multiple of $\tc$. However, for such a path in $\mathcal{R}$ to be nonzero, the only possibilities are $k+N+1-r=0$ or $\tc$, so we get \eqref{QV ast action on affine}. The rest of the lemma is now clear.
	\end{proof}
	
	In particular, we see that $(i^+)_*(\mathcal{L}(\bv_{\tg},\bw_{\tg}))\in\mathcal{K}_{\bw}$ for each strongly $l$-dominant pair $(\bv_{\tg},\bw_{\tg})$. On the other hand, from Lemma~\ref{QV fixed point restriction on +} we also have $(i^+)^*(\mathcal{L}(\bv,\bw))\in\mathcal{K}_{\bw}^+$ for any strongly $l$-dominant pair $(\bv,\bw)$. Since $(i^+)_*$ is fully faithful, we can (and will) identify $\mathcal{K}_\mathbf{w}^+$ with its image under $(i^+)_*$. The following result is immediate.
	
	\begin{proposition}\label{dCB on + char}
		For any strongly $l$-dominant pair $(\bv,\bw)$ for $\mathcal{R}$, the complex $\mathcal{L}(\mathbf{v},\mathbf{w})$ belongs to $\mathcal{K}_\mathbf{w}^+$ if and only if $(\mathbf{v}^0,\mathbf{w}^0)\in\bigoplus_i\N(\mathbf{v}^i,\mathbf{w}^i)$ for the decomposition $(\mathbf{v},\mathbf{w})=(\mathbf{v}^0,\mathbf{w}^0)+(\mathbf{v}^+,\mathbf{w}^+)+(\mathbf{v}^-,\mathbf{w}^-)$.
	\end{proposition}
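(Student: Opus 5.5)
The plan is to reduce the statement to a support computation. The category $\mathcal{K}_\bw^+$ is identified, via the fully faithful functor $(i^+)_*$, with complexes on $\mathcal{M}_0(\bw,\mathcal{R})$ supported on the closed subvariety $\mathcal{M}_0(\bw_{\tg},\mathcal{R}^{\gr})$ that are direct sums of shifts of IC complexes $\mathcal{L}(\bv_{\tg},\bw_{\tg})$. The object $\mathcal{L}(\bv,\bw)$ is simple, namely the IC extension of the trivial local system on the stratum $\mathcal{M}_0^{\reg}(\bv,\bw,\mathcal{R})$. Hence it lies in $\mathcal{K}_\bw^+$ if and only if its support $\overline{\mathcal{M}_0^{\reg}(\bv,\bw,\mathcal{R})}$ lies in $\mathcal{M}_0(\bw_{\tg},\mathcal{R}^{\gr})$, and in that case it equals some $(i^+)_*\mathcal{L}(\bv_{\tg},\bw_{\tg})$.

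\emph{Forward direction.} Suppose $\mathcal{L}(\bv,\bw)\in\mathcal{K}^+_\bw$. Then its support, and in particular the open dense stratum $\mathcal{M}_0^{\reg}(\bv,\bw,\mathcal{R})$, is contained in $\mathcal{M}_0(\bw_{\tg},\mathcal{R}^{\gr})$. By the decomposition \eqref{eq:decompRgr+} of Lemma~\ref{QV fixed point for affine}, this fixed locus is a disjoint union of the strata $\mathcal{M}_0^{\reg}(\bv',\bw,\mathcal{R})$ over those $\bv'$ whose $V^0$-component lies in $\bigoplus_i\N\bv^i$. By the stratification \eqref{eqn:stratification}, strata for distinct $\bv$ are disjoint. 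Therefore $\bv$ must be one of these, which gives $(\bv^0,\bw^0)\in\bigoplus_i\N(\bv^i,\bw^i)$.

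\emph{Converse.} Suppose $(\bv^0,\bw^0)\in\bigoplus_i\N(\bv^i,\bw^i)$. By \eqref{eq:decompRgr+}, the stratum $\mathcal{M}_0^{\reg}(\bv,\bw,\mathcal{R})$ lies in the fixed locus $\mathcal{M}_0(\bw,\mathcal{R})^{\ast,\C^*}=\mathcal{M}_0(\bw_{\tg},\mathcal{R}^{\gr})$. This locus is closed, being the fixed points of a torus action, so the closure of the stratum lies there as well. Since $i^+$ is a closed embedding, $\mathcal{L}(\bv,\bw)=(i^+)_*(i^+)^*\mathcal{L}(\bv,\bw)$, and IC complexes of subvarieties are intrinsic, so $(i^+)^*\mathcal{L}(\bv,\bw)$ is the IC complex of the same stratum viewed inside $\mathcal{M}_0(\bw_{\tg},\mathcal{R}^{\gr})$. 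It remains to see that this stratum is a union of graded regular strata $\mathcal{M}_0^{\reg}(\bv_{\tg},\bw_{\tg},\mathcal{R}^{\gr})$, so that its IC complex is a sum of $\mathcal{L}(\bv_{\tg},\bw_{\tg})$, hence in $\mathcal{K}^+_\bw$.

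For this I would use Lemma~\ref{QV fixed point}: a regular point of the stratum is fixed, so its stable preimage comes from a unique graded $\bv_{\tg}$. Alternatively, one can note that the earlier remark already gives $(i^+)^*\mathcal{L}(\bv,\bw)\in\mathcal{K}^+_\bw$ via Lemma~\ref{QV fixed point restriction on +}, and combine this with $\mathcal{L}(\bv,\bw)=(i^+)_*(i^+)^*\mathcal{L}(\bv,\bw)$, which makes this step immediate.

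The main obstacle is confirming that the support of $\mathcal{L}(\bv,\bw)$ is exactly the closure of a single stratum, and that the strata in \eqref{eq:decompRgr+} are precisely the ones listed. I would take both directly from \eqref{eqn:stratification} and Lemma~\ref{QV fixed point for affine}, together with the fact that the fixed locus is closed.
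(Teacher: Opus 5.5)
Your argument is correct and is essentially the paper's own. The paper calls the statement immediate from two ingredients. The first is the decomposition \eqref{eq:decompRgr+} of the fixed locus into regular strata. The second is the preceding remark that $(i^+)^*\mathcal{L}(\bv,\bw)\in\mathcal{K}^+_\bw$ by Lemma~\ref{QV fixed point restriction on +}, together with the identification of $\mathcal{K}^+_\bw$ with its image under the fully faithful functor $(i^+)_*$. These are exactly your support argument and your second route for the converse.

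One small correction: by \eqref{eq:decomp}, $\mathcal{L}(\bv,\bw)$ is a direct sum of IC complexes over the connected components of $\mathcal{M}_0^{\reg}(\bv,\bw,\mathcal{R})$, so it need not be simple, and it may correspond to a sum of several $\mathcal{L}(\bv_{\tg},\bw_{\tg})$. This changes nothing, since your argument only uses that its support is the closure of the whole stratum.
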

	
	We also need a variation of the $\C^*$-action $\ast$. For this, recall the shift functor $\Sigma$ induces an isomorphism on quiver varieties:
	\begin{equation}\label{QV shift functor automorphism}
		\begin{tikzcd}
			\mathcal{M}(\mathbf{v},\mathbf{w},\mathcal{R})\ar[r,"\Sigma^*"]\ar[d]&\mathcal{M}(\Sigma^*(\mathbf{v}),\Sigma^*(\mathbf{w}),\mathcal{R})\ar[d]\\
			\mathcal{M}_0(\mathbf{v},\mathbf{w},\mathcal{R})\ar[r,"\Sigma^*"]&\mathcal{M}_0(\Sigma^*(\mathbf{v}),\Sigma^*(\mathbf{w}),\mathcal{R})
		\end{tikzcd}
	\end{equation}
	Using this we can define a new $\C^*$-action on $\mathcal{M}(\mathbf{v},\mathbf{w},\mathcal{R})$ such that
	\[\Sigma^*(t\star[B,\alpha,\beta])=t\ast\Sigma^*([B,\alpha,\beta]).\]
	One can consider $\star$ as the conjugation of $\ast$ under $\Sigma^*$.
	
	\begin{example}
		Retain the notations in Example~\ref{CQV *-action eg}. Then the $\star$-action on $\mathcal{M}_0(\bv,\bw,\mathcal{R})$ is given by
		\begin{center}
			\vspace{-0.5cm}
			\begin{equation*}
				\begin{tikzpicture}[scale=1.5]
					%\fill[opacity=0.5,fill=purple!60] 
					(1.9,0) --(5,3.1) -- (5.1,3) -- (2,-0.1)--(1.9,0);
					%\fill[opacity=0.5,fill=purple!20] (3.4,-0.2) --(7.2,3.6) -- (7.6,3.2) -- (3.8,-0.6)--(3.4,-0.2);
					%\fill[opacity=0.5,fill=purple!60] 
					(3.9,0) --(7,3.1) -- (7.1,3) -- (4,-0.1)--(3.9,0);
					
					%\fill[opacity=0.5,fill=black!60]  
					(1.9,0) --(3,1.1)--(4.1,0)--(4,-0.1)--(3,0.9)--(2,-0.1)--(1.9,0);
					%\fill[opacity=0.5,fill=black!60] 
					(7,3.1) --(7.1,3)--(6,1.9)--(4.9,3)--(5,3.1)--(6,2.1)--(7,3.1);
					
					%\fill[opacity=0.5,fill=purple!60] 
					(4.9,3) --(5,3.1) -- (8.1,0) -- (8,-0.1)--(4.9,3);
					%\fill[opacity=0.5,fill=purple!40] (6.4,3.2) --(6.8,3.6) -- (10.6,-0.2) -- (10.2,-0.6)--(6.4,3.2);
					%\fill[opacity=0.5,fill=purple!60] 
					(6.9,3) --(7,3.1) -- (10.1,0) -- (10,-0.1)--(6.9,3);
					
					\node at (2,0) {\scriptsize$W(\sigma(i,0))$};
					\node at (4,0) {\scriptsize$W(\sigma(i,1))$};
					\node at (3,1) {\scriptsize$V(i,0)$};
					\node at (4,2) {\scriptsize$V(j,1)$};
					\node at (5,3) {\scriptsize$W(\sigma(j,2))$};
					\node at (5,1) {\scriptsize$V(i,1)$};
					\node at (6,2) {\scriptsize$V(j,2)$};
					\node at (7,3) {\scriptsize$W(\sigma(j,3))$};
					\node at (7,1) {\scriptsize$V(i,2)$};
					\node at (8,0) {\scriptsize$W(\sigma(i,0))$};
					\node at (10,0) {\scriptsize$W(\sigma(i,1))$};
					\node at (9,1) {\scriptsize$V(i,0)$};
					\node at (8,2) {\scriptsize$V(j,3)$};
					
					\draw[->] (2.85,0.85) -- (2.15,0.15) node[midway,rectangle,fill=white] {\scriptsize$\blue{t^2}$};
					\draw[->] (3.85,0.11) -- (3.15,0.85) node[midway,rectangle,fill=white] {\scriptsize$\blue{t^{-2}}$};
					\draw[->] (3.85,1.85) -- (3.15,1.15);
					\draw[->] (4.85,1.15) -- (4.15,1.85) node[midway,rectangle,fill=white] {\scriptsize$\blue{t}$};
					\draw[->] (4.85,2.85) -- (4.15,2.15) node[midway,rectangle,fill=white] {\scriptsize$\blue{t}$};
					\draw[->] (5.85,2.15) -- (5.15,2.85);
					
					\draw[->] (6.85,2.85) -- (6.15,2.15);
					\draw[->] (5.85,1.85) -- (5.15,1.15);
					\draw[->] (4.85,0.85) -- (4.15,0.15) node[midway,rectangle,thick, fill=white] {\scriptsize$\blue{t^3}$};
					
					\draw[->] (8.85,0.85) -- (8.15,0.15) node[midway,rectangle,fill=white] {\scriptsize$\blue{t^2}$};
					\draw[->] (7.85,1.85) -- (7.15,1.15);
					\draw[->] (8.85,1.15) -- (8.15,1.85) node[midway,rectangle,thick, fill=white] {\scriptsize$\blue{t}$};
					\draw[->] (9.85,0.15) -- (9.15,0.85) node[midway,rectangle,fill=white] {\scriptsize$\blue{t^{-2}}$};
					\draw[->] (7.85,2.15) -- (7.15,2.85) node[midway,rectangle,thick, fill=white] {\scriptsize$\blue{t}$};
					\draw[->] (6.85,1.15) -- (6.15,1.85) node[midway,rectangle,thick, fill=white] {\scriptsize$\blue{t}$};
					\draw[->] (7.85,0.15) -- (7.15,0.85) node[midway,rectangle,thick, fill=white] {\scriptsize$\blue{t^{-1}}$};
				\end{tikzpicture}
			\end{equation*}
		\end{center}
		and the induced action on $\mathcal{M}_0(\bw,\mathcal{R})$ has the form
		\begin{center}
			\vspace{-0.4cm}
			\begin{equation*}
				\begin{tikzpicture}[scale=1.2]
					\node at (0,0) {\scriptsize$W(\sigma(i,0))$};
					\node at (2,0) {\scriptsize$W(\sigma(i,1))$};
					\node at (0,1.5) {\scriptsize$W(\sigma(j,2))$};
					\node at (2,1.5) {\scriptsize$W(\sigma(j,3)$};
					
					\draw[->] (1.3,0) -- (0.7,0);
					\draw[->] (1.3,1.5) -- (0.7,1.5);
					
					\draw[->] (-0.15,0.2) -- (-0.15,1.3);
					\draw[->] (0.15,1.3) -- (0.15,0.2) node[midway,rectangle,thick, fill=white] {\scriptsize$\blue{t^3}$};
					\draw[->] (1.85,0.2) -- (1.85,1.3);
					\draw[->] (2.15,1.3) -- (2.15,0.2) node[midway,rectangle,thick, fill=white] {\scriptsize$\blue{t^3}$};
				\end{tikzpicture}
			\end{equation*}
			\vspace{-0.85cm}
		\end{center}
	\end{example}
	
	For any vectors $\ov{\bv}_{\tg}\in\N^{\mathcal{R}^{\gr}_0-\mathcal{S}^{\gr}_0}$ and $\ov{\bw}_{\tg}\in\N^{\mathcal{S}^{\gr}_0}$ such that $\ov{\bw}_{\tg}$ is supported on $\mathcal{S}_0$, by \eqref{QV morphism from graded} there is an embedding
	\[\mathcal{M}(\ov{\bv}_{\tg},\ov{\bw}_{\tg},\mathcal{R}^{\gr})\longrightarrow \mathcal{M}(\Sigma^*(\mathbf{v}),\Sigma^*(\mathbf{w}),\mathcal{R})\]
	where $\Sigma^*(\bw)$ corresponds to $\ov{\bw}_{\tg}$ and $\Sigma^*(\bv)$ is defined as \eqref{eq: QV morphism from graded bv def}. Composing with the isomorphism $(\Sigma^{-1})^*$ gives us a new embedding
	\begin{equation}\label{QV morphism from graded shifted}
		\mathcal{M}(\ov{\bv}_{\tg},\ov{\bw}_{\tg},\mathcal{R}^{\gr})\longrightarrow \mathcal{M}(\mathbf{v},\mathbf{w},\mathcal{R}),
	\end{equation}
	From the results of Lemma~\ref{QV fixed point} and Lemma~\ref{QV fixed point for affine}, it is clear that 
	\[\cm(\bv,\bw,\mcr)^{\star,\C^*}=\bigsqcup_{\ov{\bv}_{\tg}}\mathcal{M}(\ov{\bv}_{\tg},\ov{\bw}_{\tg},\mathcal{R}^{\gr}),\]
	and the $\star$-action on $\mathcal{M}_0(\bw,\mathcal{R})$ is given by 
	\[t\star(x_h,\eps_i,\eps_{i^\diamond})=(x_h,\eps_i,t^{\tc}\eps_{i^\diamond}).\]
	In particular, we have $\cm_0(\bw,\mcr)^{\star,\C^*}=\cm_0(\ov{\bw}_{\tg},\mathcal{R}^{\gr})\cong\rep(\ov{\bw}_{\tg},\Lambda^-)$ where $\Lambda^-$ is quotient algebra of $\Lambda$ by $(\varepsilon_{i^\diamond}\mid i\in Q_0)$, and
	\[
	\cm_0(\ov{\bw}_{\tg},\mathcal{R}^{\gr})=\bigsqcup_{(\bv^0,\bw^0)\in\bigoplus_i\N(\bv^{\Sigma i},\bw^i)}\mathcal{M}_0^{\reg}(\bv,\bw,\mathcal{R})
	\]
	
	\begin{lemma}\label{QV fixed point restriction on -}
		Let $i^-:\mathcal{M}_0(\ov{\mathbf{w}}_{\tg},\mathcal{R}^{\gr})\rightarrow\mathcal{M}_0(\mathbf{w},\mathcal{R})$ be the natural inclusion. Then
		\begin{align*}
			(i^-)^*(\pi(\mathbf{v},\mathbf{w}))&=\bigoplus_{\ov{\mathbf{v}}_{\tg}}\pi(\ov{\mathbf{v}}_{\tg},\ov{\mathbf{w}}_{\tg})[-a(\Sigma^*(\bv),\Sigma^*(\bw);\ov{\bv}_{\tg},\ov{\bw}_{\tg})],\\
			(i^-)^!(\pi(\mathbf{v},\mathbf{w}))&=\bigoplus_{\ov{\mathbf{v}}_{\tg}}\pi(\ov{\mathbf{v}}_{\tg},\ov{\mathbf{w}}_{\tg})[a(\Sigma^*(\bv),\Sigma^*(\bw);\ov{\bv}_{\tg},\ov{\bw}_{\tg})].
		\end{align*}
	\end{lemma}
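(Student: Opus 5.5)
The plan is to reduce the statement to Lemma~\ref{QV fixed point restriction on +} by conjugating with the isomorphism $\Sigma^*$ of \eqref{QV shift functor automorphism}. By construction, $\Sigma^*$ intertwines the $\star$-action on $\mathcal{M}(\bv,\bw,\mathcal{R})$ and $\mathcal{M}_0(\bw,\mathcal{R})$ with the $\ast$-action on $\mathcal{M}(\Sigma^*\bv,\Sigma^*\bw,\mathcal{R})$ and $\mathcal{M}_0(\Sigma^*\bw,\mathcal{R})$. Moreover, $\Sigma^*$ commutes with $\pi$. It therefore carries the $\star$-fixed locus $\mathcal{M}_0(\ov{\bw}_{\tg},\mathcal{R}^{\gr})\hookrightarrow\mathcal{M}_0(\bw,\mathcal{R})$ onto the $\ast$-fixed locus of $\mathcal{M}_0(\Sigma^*\bw,\mathcal{R})$. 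In other words,
\[
i^-=(\Sigma^{-1})^*\circ i^+_{\Sigma^*\bw},
\]
where $i^+_{\Sigma^*\bw}$ denotes the inclusion of Lemma~\ref{QV fixed point restriction on +} for the dimension vector $\Sigma^*\bw$. This is exactly how the embedding \eqref{QV morphism from graded shifted} was defined.

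Next, since $\Sigma^*$ is an isomorphism of varieties compatible with $\pi$ and preserves dimensions, we have
\[
\big((\Sigma^{-1})^*\big)^*\pi(\bv,\bw)\cong\pi(\Sigma^*\bv,\Sigma^*\bw),
\]
with the same shift $\dim\mathcal{M}(\bv,\bw,\mathcal{R})=\dim\mathcal{M}(\Sigma^*\bv,\Sigma^*\bw,\mathcal{R})$. Pullback and $!$-pullback along an isomorphism agree. Hence
\[
(i^-)^{*}\pi(\bv,\bw)\cong(i^+_{\Sigma^*\bw})^{*}\pi(\Sigma^*\bv,\Sigma^*\bw),
\]
and likewise for $(\cdot)^!$.

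Finally, we apply Lemma~\ref{QV fixed point restriction on +} to the pair $(\Sigma^*\bv,\Sigma^*\bw)$. Its graded data are by definition $(\ov{\bv}_{\tg},\ov{\bw}_{\tg})$, and the connected components of the fixed locus are indexed by $\ov{\bv}_{\tg}$. This gives the sum of $\pi(\ov{\bv}_{\tg},\ov{\bw}_{\tg})$ with shifts $\mp a(\Sigma^*\bv,\Sigma^*\bw;\ov{\bv}_{\tg},\ov{\bw}_{\tg})$. Here the rank $r$ is that of the Bialynicki-Birula cell for the $\ast$-action on $\mathcal{M}(\Sigma^*\bv,\Sigma^*\bw,\mathcal{R})$, which is transported by $\Sigma^*$ to the $\star$-cell. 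This is precisely the claim.

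The only point needing care is the hypothesis used in the proof of Lemma~\ref{QV fixed point restriction on +}: the attracting property of the action on $\mathcal{M}_0$, together with equivariance of $\pi(\bv,\bw)$, which is needed to invoke the hyperbolic localization result \cite[Theorem 2.10.3]{Ach21}. Both transfer immediately through $\Sigma^*$. Indeed, the explicit formula $t\star(x_h,\eps_i,\eps_{i^\diamond})=(x_h,\eps_i,t^{\tc}\eps_{i^\diamond})$ shows that the $\star$-action is attracting with positive weight. So I expect no real obstacle beyond bookkeeping of which $\Sigma^*$-twisted dimension vectors label the components.
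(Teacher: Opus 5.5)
Your proposal is correct and follows the same route as the paper, whose proof simply notes that $i^-$ is the composite of the inclusion $i^+$ (for the dimension vector $\Sigma^*\bw$) with the isomorphism induced by $\Sigma^*$, and then invokes Lemma~\ref{QV fixed point restriction on +}. Your additional bookkeeping ($\Sigma^*$ commutes with $\pi$ and preserves dimensions, and $\ov{\bw}_{\tg}$ is the graded vector attached to $\Sigma^*\bw$) spells out what the paper leaves implicit. Re-checking the hyperbolic localization hypotheses is harmless but redundant, since you apply that lemma directly to $(\Sigma^*\bv,\Sigma^*\bw)$.
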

	\begin{proof}
		Since $i^-$ is noting but the composition of $i^+$ and $\Sigma^*$, these can be deduced from Lemma~\ref{QV fixed point restriction on +}.
	\end{proof}
	
	Again we denote by $\mathcal{K}_\mathbf{w}^-$ the category of direct sums of shifts of $\mathcal{L}(\ov{\mathbf{v}}_{\tg},\ov{\mathbf{w}}_{\tg})$ for strongly $l$-dominant pairs $(\ov{\mathbf{v}}_{\tg},\ov{\mathbf{w}}_{\tg})$ for $\mathcal{R}^{\gr}$, which are complexes on $\mathcal{M}_0(\ov{\mathbf{w}}_{\tg},\mathcal{R}^{\gr})$. Similar to \eqref{eq:decompRgr+}, we can write
	\[
	\mathcal{M}_0(\ov{\mathbf{w}}_{\tg},\mathcal{R}^{\gr})=\bigsqcup_{(\bv^0,\bw^0)\in\bigoplus_i\N(\bv^{\Sigma i},\bw^i)}\mathcal{M}_0(\bv,\bw,\mathcal{R}),
	\]
	so there are functors
	\[(i^-)_*:\mathcal{K}_\mathbf{w}^-\longrightarrow \mathcal{K}_\mathbf{w},\quad (i^-)^*:\mathcal{K}_\mathbf{w}\longrightarrow \mathcal{K}_\mathbf{w}^-.\]
	The following characterization of the image of $(i^-)_*$ is immediate.
	
	\begin{proposition}\label{dCB on - char}
		For any strongly $l$-dominant pair $(\bv,\bw)$, the complex $\mathcal{L}(\mathbf{v},\mathbf{w})$ belongs to $\mathcal{K}_\mathbf{w}^-$ if and only if $(\mathbf{v}^0,\mathbf{w}^0)\in\bigoplus_i\N(\mathbf{v}^{\Sigma i},\mathbf{w}^i)$ for the decomposition $(\mathbf{v},\mathbf{w})=(\mathbf{v}^0,\mathbf{w}^0)+(\mathbf{v}^+,\mathbf{w}^+)+(\mathbf{v}^-,\mathbf{w}^-)$.
	\end{proposition}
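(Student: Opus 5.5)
The plan is to mirror the argument for Proposition~\ref{dCB on + char}, transporting everything along the isomorphism $\Sigma^*$ of \eqref{QV shift functor automorphism}. By construction, the $\star$-action is the conjugate of the $\ast$-action under $\Sigma^*$. Hence $i^-$ factors as $(\Sigma^{-1})^*\circ i^+\circ \Sigma^*$ on the level of affine varieties. First I would check that $\Sigma^*$ sends the stratum $\cm_0^{\reg}(\bv,\bw,\mcr)$ isomorphically onto $\cm_0^{\reg}(\Sigma^*\bv,\Sigma^*\bw,\mcr)$. Since $\Sigma^*$ is an isomorphism of the whole diagram $\cm\to\cm_0$, it follows that $(\Sigma^*)_*\cl(\bv,\bw)=\cl(\Sigma^*\bv,\Sigma^*\bw)$. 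So $\cl(\bv,\bw)$ lies in $\mathcal{K}^-_\bw$ if and only if $\cl(\Sigma^*\bv,\Sigma^*\bw)$ lies in $\mathcal{K}^+_{\Sigma^*\bw}$.

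Next I would apply Proposition~\ref{dCB on + char} to the pair $(\Sigma^*\bv,\Sigma^*\bw)$. The condition there is that its $V^0$-component lies in $\bigoplus_i\N(\bv^i,\bw^i)$. The remaining step is to track how the decomposition of Lemma~\ref{lem:Qin decomposition} behaves under $\Sigma^*$. Because $F=\Sigma^2$ acts trivially on $\mcr$, the map $\Sigma^*$ is an involution. It exchanges $W^+$ with $W^-$ and $V^+$ with $V^-$ (by definition $W^-=\Sigma^*W^+$ and $V^-=\Sigma^*V^+$). It also exchanges $\bv^i$ with $\bv^{\Sigma i}$ and fixes $\bw^i=\e_{\sigma\tS_i}+\e_{\sigma\Sigma\tS_i}$. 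Moreover, it preserves $\mathcal{C}_q$ and $\sigma^*$, because it is an automorphism of the quiver $\Z Q_C/F$ commuting with $\tau$ and $\sigma$. Hence it preserves $l$-dominance and the vanishing condition $\sigma^*\bw^0-\mathcal{C}_q\bv^0=0$.

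Combining these facts with the uniqueness in Lemma~\ref{lem:Qin decomposition}, the decomposition of $(\Sigma^*\bv,\Sigma^*\bw)$ is obtained from that of $(\bv,\bw)$ by applying $\Sigma^*$ componentwise, with the $\pm$ parts swapped. Its $V^0$-part is therefore $\Sigma^*(\bv^0,\bw^0)$. This part lies in $\bigoplus_i\N(\bv^i,\bw^i)$ if and only if $(\bv^0,\bw^0)$ lies in $\bigoplus_i\N(\bv^{\Sigma i},\bw^i)$, which is the claim.

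Alternatively, one can argue directly as in the proof of Proposition~\ref{dCB on + char}. The support of $\cl(\bv,\bw)$ is the closure of $\cm_0^{\reg}(\bv,\bw,\mcr)$. By the stratification displayed before Lemma~\ref{QV fixed point restriction on -}, $\cm_0(\ov{\bw}_{\tg},\mcr^{\gr})$ is the union of exactly those strata whose $V^0$-part lies in $\bigoplus_i\N(\bv^{\Sigma i},\bw^i)$. This set is closed, and $(i^-)_*$ is fully faithful. So $\cl(\bv,\bw)$ lies in the image of $(i^-)_*$ exactly when its open stratum lies in $\cm_0(\ov{\bw}_{\tg},\mcr^{\gr})$. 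In that case the IC complex computed on the closed subvariety agrees with the one computed on $\cm_0(\bw,\mcr)$.

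The main obstacle is verifying that $\Sigma^*$ interchanges $\bv^i$ and $\bv^{\Sigma i}$ while fixing $\bw^i$. This should follow directly from $\bv^{\Sigma i}=\Sigma^*\bv^i$ together with $\Sigma^2=F\simeq\mathrm{id}$ on $\mcr$. The remaining bookkeeping, namely that the $\star$-fixed locus equals $\cm_0(\ov{\bw}_{\tg},\mcr^{\gr})$, is already recorded before the statement.
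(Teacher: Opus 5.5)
Your proposal is correct. Your second argument is essentially the paper's own. The paper gives no written proof and calls the statement ``immediate'' from two facts: the displayed decomposition of $\cm_0(\ov{\bw}_{\tg},\mcr^{\gr})$ into the strata $\cm_0^{\reg}(\bv,\bw,\mcr)$ with $(\bv^0,\bw^0)\in\bigoplus_i\N(\bv^{\Sigma i},\bw^i)$, and full faithfulness of $(i^-)_*$ on complexes supported on this closed subvariety.

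Your main argument takes a different route: it transports Proposition~\ref{dCB on + char} itself along the involution $\Sigma^*$. The combinatorial content is your check that $\Sigma^*$ swaps the $\pm$-parts and $\bv^i\leftrightarrow\bv^{\Sigma i}$, fixes $\bw^i$, and preserves $\mathcal{C}_q$ and $\sigma^*$. By the uniqueness in Lemma~\ref{lem:Qin decomposition}, it therefore sends the $V^0$-part of $(\bv,\bw)$ to that of $(\Sigma^*\bv,\Sigma^*\bw)$.

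The paper also uses $\Sigma^*$, but one level earlier. It obtains the stratification of the $\star$-fixed locus by conjugating the $\ast$-picture and declares this ``clear'', leaving exactly your bookkeeping implicit. So your route makes explicit why $\bv^{\Sigma i}$ rather than $\bv^i$ appears, and reduces everything to the single geometric input of the $+$ case. The direct route is shorter once the displayed stratification is accepted.

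One small notational point. $\cm_0(\ov{\bw}_{\tg},\mcr^{\gr})$ is already the domain of $i^+$ for the weight $\Sigma^*\bw$. So the factorization is $i^-_{\bw}=(\Sigma^{-1})^*\circ i^+_{\Sigma^*\bw}$, without the extra $\Sigma^*$ on the right. This gives $(\Sigma^*)_*\circ(i^-)_*=(i^+)_*$, which is exactly what your equivalence between membership in $\mathcal{K}^-_\bw$ and in $\mathcal{K}^+_{\Sigma^*\bw}$ needs.
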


	\subsection{Coincidence with dual canonical basis}\label{subsec: compare cb}
	
	In this subsection, we shall prove that the dual canonical basis coincides with the double canonical basis for $\hU$. By Definition \ref{def:dual CB}, we only need to relate the double canonical basis with the dual canonical basis $L(\mathbf{v},\mathbf{w})$ of $\hR$. 
	
	We recall the set up of \S\ref{subsec: QV C^*-action}. Consider the inclusions
	\[\begin{tikzcd}
		&\mathcal{M}_0(\bw_{\tg},\mathcal{R}^{\gr})\ar[rd,"i^+"]&\\
		\mathcal{M}_0(\mathbf{w}^-,\mathcal{R})\times \mathcal{M}_0(\mathbf{w}^+,\mathcal{R})\ar[ru,"j^+"]\ar[rd,swap,"j^-"]&&\mathcal{M}_0(\mathbf{w},\mathcal{R})\\
		&\mathcal{M}_0(\ov{\bw}_{\tg},\mathcal{R}^{\gr})\ar[ru,swap,"i^-"]&
	\end{tikzcd}\] 
	Let $\widehat{\mathbf{H}}^\pm=\bigoplus_{\mathbf{w}}\widehat{\mathbf{H}}^\pm_{\mathbf{w}}$, where $\widehat{\mathbf{H}}_{\mathbf{w}}^\pm$ is the dual of the Grothendieck group generated by the perverse sheaves $\mathcal{L}(\mathbf{v}_{\tg},\mathbf{w}_{\tg})$ (resp. $\mathcal{L}(\ov{\bv}_{\tg},\ov{\bw}_{\tg})$) on $\mathcal{M}_0(\bw_{\tg},\mathcal{R}^{\gr})$ (resp. $\mathcal{M}_0(\ov{\bw},\mathcal{R}^{\gr})$) (recall that $\bw_{\tg}$ and $\ov{\bw}_{\tg}$ are uniquely determined by $\bw$). The algebra structure of $\widehat{\mathbf{H}}^\pm$ is defined similarly to $\hR$.  In view of Lemma~\ref{QV fixed point restriction on +}, we can use $(j^\pm)^!$ and $(i^\pm)^*$ to obtain linear embeddings of dual Grothendieck rings:
	\begin{equation}\label{eq:embedding by j and i}
		\begin{tikzcd}
			\hR^-\otimes \hR^+\ar[r,hook,"m_+"]&\widehat{\mathbf{H}}^+\ar[rd,hook,"\iota_+"]&\\
			&&\hR\\
			\hR^+\otimes \hR^-\ar[r,hook,swap,"m_-"]&\widehat{\mathbf{H}}^-\ar[ru,hook,swap,"\iota_-"]&
		\end{tikzcd}
	\end{equation}
	Recall that by \cite[Theorem 2.10.3]{Ach21}, the restriction functors of $\mathcal{M}_0(\bw,\mathcal{R})$ satisfy
	\begin{equation}\label{eq:res of tRi by pullback}
		\Res^{\bw}_{\bw^-,\bw^+}\cong(j^+)^!(i^+)^*,\quad \Res^{\bw}_{\bw^+,\bw^-}\cong(j^-)^!(i^-)^*,
	\end{equation}
	so the map $m_+$ (resp. $m_-$) sends $b_-\otimes b_+$ to the product $b_-b_+$ (resp. $b_+\otimes b_-$ to $b_+b_-$), computed in $\widehat{\mathbf{H}}^+$ (resp. in $\widehat{\mathbf{H}}^-$), and $\iota_\pm$ is the map induced from the triangular decomposition.
	
	\begin{lemma}
		The algebra $\widehat{\mathbf{H}}^\pm$ is isomorphic to $\mathcal{H}^\pm$ and we have the following commutative diagrams
		\[\begin{tikzcd}
			\U^-\otimes\U^+\ar[r,hook,"m_+"]\ar[d]&\mathcal{H}^+\ar[r,hook,"\iota_+"]\ar[d]&\hU\ar[d]\\
			\hR^-\otimes \hR^+\ar[r,hook,"m_+"]&\widehat{\mathbf{H}}^+\ar[r,hook,"\iota_+"]&\hR
		\end{tikzcd}\quad \begin{tikzcd}
			\U^+\otimes\U^-\ar[r,hook,"m_-"]\ar[d]&\mathcal{H}^-\ar[r,hook,"\iota_-"]\ar[d]&\hU\ar[d]\\
			\hR^+\otimes \hR^-\ar[r,hook,"m_-"]&\widehat{\mathbf{H}}^-\ar[r,hook,"\iota_-"]&\hR
		\end{tikzcd}\]
		where the map $m_+:\U^-\otimes\U^+\rightarrow \mathcal{H}^+$ (resp. $m_-:\U^+\otimes\U^-\rightarrow \mathcal{H}^-$) is defined by sending $b_-\otimes b_+$ to $b_-b_+$ (resp. $b_+\otimes b_-$ to $b_+b_-$).
	\end{lemma}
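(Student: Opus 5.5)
I would prove the lemma for $\widehat{\mathbf{H}}^+$ and obtain the $\widehat{\mathbf{H}}^-$ case by conjugating with $\Sigma^*$, as in Lemma~\ref{QV fixed point restriction on -}.

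\emph{Step 1: the isomorphism.} By Lemma~\ref{QV fixed point for affine}, $\mathcal{M}_0(\bw_{\tg},\mathcal{R}^{\gr})\cong\rep(\bw_{\tg},\Lambda^+)$. Here $\Lambda^+$ is the quotient of $\Lambda$ by the ideal generated by the $\varepsilon_i$. This is exactly the setting of the NKS category attached to the graded pair $(\mathcal{R}^{\gr},\mathcal{S}^{\gr})$ restricted to the strip in which the frozen vertices $\sigma\tS_i$ and $\sigma\Sigma\tS_i$ live with only the $\varepsilon_{i'}$-arrows. I would repeat the construction of $\hR$ for $\widehat{\mathbf{H}}^+$: dual Grothendieck group of the $\mathcal{L}(\bv_{\tg},\bw_{\tg})$, with the twisted restriction \eqref{eq:tw}. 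The analogue of Theorem~\ref{thm:Qin} (Qin/Hernandez--Leclerc for graded varieties, or directly via the $\imath$Hall algebra of $\Lambda^+$-modules) gives generators $L(0,\e_{\sigma\tS_i})$, $L(0,\e_{\sigma\Sigma\tS_i})$ and $L(\bv^i,\bw^i)$ (using Proposition~\ref{dCB on + char}). I would check the defining relations of $\mathcal{H}^+$: these are the relations \eqref{eq:KK}--\eqref{eq:serre2} with $K_i'=0$. The Serre relations and the $K$-commutations come from the sub-quiver-varieties, exactly as in $\hR$. The relation $[E_i,F_j]=\delta_{ij}(v^{-1}-v)K_i$ comes from $\rep(\e_{\sigma\tS_i}+\e_{\sigma\Sigma\tS_i},\Lambda^+)=\mathbb{A}^1$, where only one $\varepsilon$ survives, so only one of the two $K$-terms appears. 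A PBW/dimension count then shows that the induced surjection $\mathcal{H}^+\to\widehat{\mathbf{H}}^+$ is an isomorphism, since both sides have graded dimensions matching $\mathbf{K}^+\otimes\U^-\otimes\U^+$. In the process, $(i^+)^*$ is identified with the quotient $\hU\to\mathcal{H}^+$ on the dual side, by \eqref{eq:res of tRi by pullback}.

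\emph{Step 2: the left square.} Because $\Res^{\bw}_{\bw^-,\bw^+}\cong(j^+)^!(i^+)^*$, dualising $(j^+)^!$ gives the multiplication $\hR^-\otimes\hR^+\to\widehat{\mathbf{H}}^+$. This is compatible with $\kappa^\pm$ of Theorem~\ref{dCB of U^+ by L} and its $\Sigma$-shifted analogue. Hence the left square commutes by algebra-homomorphism properties, once Step 1 identifies the middle vertical arrow as an algebra map extending $\kappa^-\otimes\kappa^+$.

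\emph{Step 3: the right square, which I expect to be the main obstacle.} The geometric $\iota_+$ is induced from $(i^+)^*$ via the decomposition \eqref{eq:decompRgr+}. It sends $L(\bv_{\tg},\bw_{\tg})$ to the dual element determined through $\mathcal{K}^+_\bw\subset\mathcal{K}_\bw$. The algebraic $\iota_+$ is the splitting $\mathbf{K}^+\otimes\U^-\otimes\U^+\to\mathbf{K}^-\otimes\mathbf{K}^+\otimes\U^-\otimes\U^+$. I would show that both maps are $\mathbf{K}^+$-equivariant for the $\diamond$-action, using \eqref{eq:diamond for L def}. It then suffices to compare them on products $b_-b_+$ with no $K$-factors. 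The algebraic map sends $b_-b_+$ to the element $b_-b_+$ of $\hU$. For the geometric map, I would show that $(i^+)^*\circ\Res$ and $\Res$ agree on the $K'$-free part. Concretely, this means proving that the coefficients of $L(\bv,\bw)$ with $(\bv^0,\bw^0)\notin\bigoplus\N(\bv^i,\bw^i)$ in the product $\kappa(b_-)\kappa(b_+)\in\hR$ are exactly those killed by $(i^+)^*$. I would verify this using the $\star$-action limits and the support statement of Proposition~\ref{dCB on + char}, then conclude by linearity.
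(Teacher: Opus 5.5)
Your Step~2 agrees with the paper. Steps~1 and~3 have genuine gaps, and you are missing the idea that makes the paper's proof short.

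The paper never checks relations. It dualises $(i^+)_*$, not $(i^+)^*$, to obtain a map $\pi^+:\hR\to\widehat{\mathbf{H}}^+$.
\begin{itemize}
\item $\pi^+$ is an algebra homomorphism because $(i^+)_*$ commutes with the restriction functors.
\item $\pi^+$ is split by $\iota_+$ because $(i^+)^*(i^+)_*\cong\mathrm{id}$.
\item By \eqref{eq:decompRgr+} the fixed locus is a union of strata, so $(i^+)_*\mathcal{L}^+(\bv,\bw)=\mathcal{L}(\bv,\bw)$. Hence $\pi^+$ sends each $L(\bv,\bw)$ either to $L^+(\bv,\bw)$ or to $0$, according to Proposition~\ref{dCB on + char}.
\item By Lemma~\ref{lem: multiply by L^0}, the kernel is the ideal generated by the images $L(\bv^{\Sigma i},\bw^i)$ of the $K_i'$.
\end{itemize}
So $\widehat{\mathbf{H}}^+\cong\hU/\langle K_i'\rangle=\mathcal{H}^+$, compatibly with the projections, and everything is inherited from $\hR\cong\hU$.

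Your Step~1 instead rests on surjectivity of $\mathcal{H}^+\to\widehat{\mathbf{H}}^+$, i.e.\ that $E_i,F_i,K_i$ generate $\widehat{\mathbf{H}}^+$. For this you only cite an ``analogue of Theorem~\ref{thm:Qin}'' that is not available. Without it, the dimension count proves nothing. You also have the functors reversed. The dual of $(i^+)^*$ is the embedding $\iota_+:\widehat{\mathbf{H}}^+\to\hR$, which is not multiplicative. The quotient $\hU\to\mathcal{H}^+$ comes from $(i^+)_*$.

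Step~3, as you describe it, does not work. In $\kappa(b_-)\kappa(b_+)\in\hR$, the coefficients of the $L(\bv,\bw)$ with a $\bv^{\Sigma i}$-component are in general nonzero. For $\mathfrak{sl}_2$, the paper's relation $C=FE-vK-v^{-1}K'$ gives $FE=C+vK+v^{-1}K'$. More to the point, knowing what the quotient kills does not determine the section $\iota_+$.

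The right square on $K$-free elements is in fact immediate and needs no $\star$-action. By \eqref{eq:res of tRi by pullback}, $\iota_+\circ m_+$ is dual to $(j^+)^!(i^+)^*\cong\Res^{\bw}_{\bw^-,\bw^+}$, so it is the multiplication $\hR^-\otimes\hR^+\to\hR$. Therefore $\iota_+\big(m_+(\kappa^-(b_-)\otimes\kappa^+(b_+))\big)=\kappa^-(b_-)\kappa^+(b_+)$, which is the image of $\iota_+(b_-b_+)=b_-b_+\in\hU$.

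Finally, the $\mathbf{K}^+$-equivariance of the geometric $\iota_+$ does not follow from \eqref{eq:diamond for L def} alone. It amounts to the multiplicities in $(i^+)^*\mathcal{L}(\bv,\bw)$ being unchanged under shifting by $(\bv^i,\bw^i)$. That is a transversal-slice statement you would still have to supply.
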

	\begin{proof}
		We only prove the claim for $\mathcal{H}^+$, the case for $\mathcal{H}^-$ is similar. Using the functor $(i^+)_*$ we can define a quotient map
		\[\pi^+:\hR\longrightarrow\widehat{\mathbf{H}}^+\]
		which acts by sending $L(\mathbf{v}^{\Sigma i},\mathbf{w}^i)$ ($i\in Q_0$) to zero in view of Proposition~\ref{dCB on + char}. Since $(i^+)_*$ commutes with the restriction functors, this is a homomorphism of algebras. Moreover, the embedding $\iota:\widehat{\mathbf{H}}^+\rightarrow\hR$ splits $\pi^+$. Since $\hR$ is isomorphic to $\hU$, this implies $\widehat{\mathbf{H}}^+\cong\mathcal{H}^+$. The commutativity of the diagram is now clear.
	\end{proof}
	
	To distinguish between different quantum Grothendieck rings, for $(\bv^\pm,\bw^\pm)\in(V^\pm,W^\pm)$, let us denote by $\mathcal{P}(\bv^\pm,\bw^\pm)$ the IC complex associated to the constant local system on $\mathcal{M}_0^\reg(\bv^\pm,\bw^\pm)$, and $B(\bv^\pm,\bw^\pm)$ by the corresponding dual basis, which lies in $\hR^\pm$. Similarly, for $(\mathbf{v},\mathbf{w})$ such that $(\mathbf{v}^0,\mathbf{w}^0)\in\bigoplus_i\N(\mathbf{v}^i,\mathbf{w}^i)$ (resp. $(\mathbf{v}^0,\mathbf{w}^0)\in\bigoplus_i\N(\mathbf{v}^{\Sigma i},\mathbf{w}^i)$), we denote by $\mathcal{L}^+(\mathbf{v},\mathbf{w})$ (resp. $\mathcal{L}^-(\bv,\bw)$) the restriction of $\mathcal{L}(\mathbf{v},\mathbf{w})$ to $\mathcal{M}_0(\bw_{\tg},\mathcal{R}^{\gr})$ (resp. $\mathcal{M}_0(\ov{\bw}_{\tg},\mathcal{R}^{\gr})$), and let $L^+(\mathbf{v},\mathbf{w})$ (resp. $L^-(\bv,\bw)$ be its dual basis in $\widehat{\mathbf{H}}^+$ (resp. $\widehat{\mathbf{H}}^-$). 
	
	\begin{lemma}
		For $(\bv^\pm,\bw^\pm)\in(V^\pm,W^\pm)$, we have
		\begin{align}
			\label{dCB + embedding-1}
			\begin{split}&B(\mathbf{v}^-,\mathbf{w}^-)\cdot B(\mathbf{v}^+,\mathbf{w}^+)\\
				&=L^+(\mathbf{v}^++\bv^-,\mathbf{w}^++\bw^-)+\sum_{\bv^++\bv^-<\mathbf{v}'}a^+_{\mathbf{v}^++\bv^-,\mathbf{v}'}L^+(\mathbf{v}',\mathbf{w}^++\bw^-)\in \widehat{\mathbf{H}}^+,\end{split}
			\\ 
			\begin{split}
				&B(\mathbf{v}^+,\mathbf{w}^+)\cdot B(\mathbf{v}^-,\mathbf{w}^-)
				\\
				&=L^-(\mathbf{v}^++\bv^-,\mathbf{w}^++\bw^-)+\sum_{\bv^++\bv^-<\mathbf{v}'}a^-_{\mathbf{v}^++\bv^-,\mathbf{v}'}L^-(\mathbf{v}',\mathbf{w}^++\bw^-)\in \widehat{\mathbf{H}}^-
				\label{dCB - embedding-1}
			\end{split}
		\end{align}
		where $a^\pm_{\mathbf{v}^++\bv^-,\mathbf{v}'}\in v\N[v]$. On the other hand, for $(\mathbf{v},\mathbf{w})$ such that $(\mathbf{v}^0,\mathbf{w}^0)\in\bigoplus_i\N(\mathbf{v}^i,\mathbf{w}^i)$ (resp. $(\mathbf{v}^0,\mathbf{w}^0)\in\bigoplus_i\N(\mathbf{v}^{\Sigma i},\mathbf{w}^i)$), we have
		\begin{align}
			\iota_+(L^+(\mathbf{v},\mathbf{w}))&=L(\mathbf{v},\mathbf{w})+\sum_{\mathbf{v}<\mathbf{v}'}b^+_{\mathbf{v},\mathbf{v}'}L(\mathbf{v}',\mathbf{w}),
			\label{dCB + embedding-2}\\
			\iota_-(L^-(\mathbf{v},\mathbf{w}))&=L(\mathbf{v},\mathbf{w})+\sum_{\mathbf{v}<\mathbf{v}'}b^-_{\mathbf{v},\mathbf{v}'}L(\mathbf{v}',\mathbf{w}),
			\label{dCB - embedding-2}
		\end{align}
		where $b^\pm_{\mathbf{v},\mathbf{v}'}\in v^{-1}\N[v^{-1}]$.
	\end{lemma}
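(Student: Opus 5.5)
We prove the four identities by dualizing decompositions of sheaves; we treat the ``$+$'' statements, and the ``$-$'' ones follow by conjugating with $\Sigma^*$ as in Lemma~\ref{QV fixed point restriction on -}. Everything reduces to decomposing IC complexes under the two pullbacks in \eqref{eq:embedding by j and i}. Dually, an element $L^+(\bv,\bw)$ is sent by $\iota_+$ to $\sum_{\bv'}\langle (i^+)^*\cl(\bv',\bw),\cl^+(\bv,\bw)\rangle\, L(\bv',\bw)$. Here $\langle-,-\rangle$ means ``graded multiplicity of $\cl^+(\bv,\bw)$ in the decomposition of $(i^+)^*\cl(\bv',\bw)$''. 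The product $B(\bv^-,\bw^-)B(\bv^+,\bw^+)$ is dual to the restriction $(j^+)^!$ by \eqref{eq:res of tRi by pullback}, twisted by the factor in \eqref{eq:tw}.

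For \eqref{dCB + embedding-2}, we first write $\cl(\bv',\bw)$ through \eqref{eqn:decomposition theorem} as a triangular combination of the $\pi(\bv'',\bw)$ and apply Lemma~\ref{QV fixed point restriction on +}. Since $\pi$ commutes with the fixed-point embedding and is proper, we get that $(i^+)^*\cl(\bv',\bw)$ lies in $\mathcal{K}^+_\bw$.

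The key point is a purity and support argument. The $\C^*$-action is attracting, so by \cite[Theorem 2.10.3]{Ach21} we have $(i^+)^*\cong p_*$. The map $p_*$ preserves pure complexes of weight zero, hence $(i^+)^*\cl(\bv',\bw)$ is semisimple. Its restriction to the open stratum $\cm_0^{\reg}(\bv',\bw)\cap\cm_0(\bw_\tg,\mcr^{\gr})$ is the shifted constant sheaf. This open stratum is nonempty exactly when $\cl(\bv',\bw)\in\mathcal{K}^+_\bw$, i.e.\ $\bv'=\bv$ occurs, by Proposition~\ref{dCB on + char} and \eqref{eq:decompRgr+}. Because that stratum is open in the fixed locus near the point, the normalisation built into the shift $a(\cdot)$ shows that the $\cl^+(\bv',\bw)$ summand appears with coefficient exactly $1$. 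The remaining summands $\cl^+(\bv,\bw)$ are supported on strata in the closure of the open one, so $\bv>\bv'$.

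The sign constraint $b^+\in v^{-1}\N[v^{-1}]$ comes from the perverse degree bound. Since $i^+$ is a closed embedding, $(i^+)^*$ of a perverse IC sheaf lies in ${}^pD^{\le 0}$. Moreover, by the IC support conditions its restriction to a smaller stratum lies strictly in degrees $<0$. Thus every summand $\cl^+(\bv,\bw)$ with $\bv\neq\bv'$ appears with strictly positive shift $[d]$, $d>0$, i.e.\ with coefficient $v^{-d}$ in the paper's convention. Dualizing transposes this unitriangular matrix, which gives \eqref{dCB + embedding-2}. Positivity of the coefficients is automatic because they are multiplicities.

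For \eqref{dCB + embedding-1} we run the same argument for $(j^+)^!$ applied to $\cl^+(\bv,\bw)$. The image of $j^+$ is the fixed locus of the complementary torus acting on $\cm_0(\bw_\tg,\mcr^{\gr})\cong\rep(\bw_\tg,\Lambda^+)$ by scaling the $\eps_{i^\diamond}$ arrows, and this action is again attracting. Hence $(j^+)^!\cong p_!$, where $p$ is the limit map, and the result is pure by hyperbolic localization. Now ${}^!$-restriction lands in ${}^pD^{\ge0}$, with the non-open strata contributing strictly positive degrees. The shift conventions are inverted relative to the previous paragraph, so this yields coefficients in $v\N[v]$ after dualizing. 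We must check that the twist \eqref{eq:tw} together with the leading term \eqref{eqn: leading term} normalises the leading coefficient to exactly $1$. We expect this to follow by computing $d(\cdot,\cdot)$ for the pair $(\bv^-,\bw^-),(\bv^+,\bw^+)$ and comparing with $\frac12\langle\bw^-,\bw^+\rangle_{Q^{\dbl},a}$.

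The main obstacle is this shift bookkeeping, in two places. First, we must confirm that the leading term in each case is exactly the expected IC complex with shift $0$. For this we plan to use the transversal slice theorem on the regular stratum together with the dimension formula $a(\bv,\bw;\bv_\tg,\bw_\tg)$ of Lemma~\ref{QV fixed point restriction on +}. Second, we must confirm that the signs of the remaining shifts come out as $v\N[v]$ for the product and as $v^{-1}\N[v^{-1}]$ for $\iota_\pm$, as the statement requires.
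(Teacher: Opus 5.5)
Your proposal is correct and follows essentially the paper's argument. You decompose $(i^+)^*$ and $(j^+)^!$ of the IC complexes, take the leading coefficient $1$ from the stratum that lies entirely in the smaller space, and bound the remaining coefficients using the IC support and cosupport conditions \cite[2.1.9]{BBD} at generic points of boundary strata, which is what the paper does.

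Two small corrections. First, the non-leading summands $\mathcal{L}^+(\bv,\bw)$ of $(i^+)^*\mathcal{L}(\bv',\bw)$ live on boundary strata of $\mathcal{M}_0^{\reg}(\bv',\bw,\mathcal{R})$, so they satisfy $\bv<\bv'$, not $\bv>\bv'$. Second, the normalization you defer is immediate. The twist \eqref{eq:tw} is trivial because $\bw^-$ and $\bw^+$ live on different components of $Q^{\dbl}$. Both $d$-terms in \eqref{eqn: leading term} vanish because $(\bv^+,\bw^+)$ is supported on modules and $(\bv^-,\bw^-)$ on their shifts.
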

	\begin{proof}
		Let us deal with $B(\mathbf{v}^-,\mathbf{w}^-)\cdot B(\mathbf{v}^+,\mathbf{w}^+)$, the other equalities can be proved similarly. Now from the arguments above we know that $B(\mathbf{v}^-,\mathbf{w}^-)\cdot B(\mathbf{v}^+,\mathbf{w}^+)=m_+(B(\mathbf{v}^-,\mathbf{w}^-)\otimes B(\mathbf{v}^+,\mathbf{w}^+))$, so it suffices to consider $(j^+)^!\mathcal{L}(\bv,\bw)$ where $(\bv,\bw)$ is such that $(\mathbf{v}^0,\mathbf{w}^0)\in\bigoplus_i\N(\mathbf{v}^i,\mathbf{w}^i)$. Let us write 
		\[(j^+)^!\mathcal{L}(\bv,\bw)=\sum_{\bv'^++\bv'^-<\bv}a^+_{\bv'^+\bv'^-,\bv}\mathcal{P}(\bv'^-,\bw^-)\boxtimes\mathcal{P}(\bv'^+,\bw^+).\]
		From the definition it is clear that $a^+_{\bv^+\bv^-,\bv^++\bv^-}=1$ if $\bv^0=\bw^0=0$. Otherwise by taking a generic point in $\mathcal{M}_0^\reg(\bv^-,\bw^-)\times\mathcal{M}_0^\reg(\bv^+,\bw^+)$ and using \cite[2.1.9]{BBD}, it is easy to see that $a^+_{\bv^+\bv^-,\bv}\in v\N[v]$. This proves the assertion.
	\end{proof}

	\begin{theorem}\label{iCB is DCB}
		The dual canonical bases of $\mathcal{H}^\pm$ coincide with the double canonical bases, and the positive (resp. negative) double canonical basis for $\hU$ coincides with the dual canonical basis.
	\end{theorem}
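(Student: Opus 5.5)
The plan is to verify that the dual IC bases satisfy the defining properties of the double canonical bases, and then conclude by the uniqueness statements in Theorems~\ref{thm:doubleCB-H+}--\ref{thm:doubleCB-U -}. All work is transported through the identifications of the preceding lemma, namely $\widehat{\mathbf{H}}^\pm\cong\mathcal{H}^\pm$, $\hR\cong\hU$ and $\hR^\pm\cong\U^\pm$, which are compatible with $m_\pm$ and $\iota_\pm$.

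\textbf{Step 1 (the Heisenberg algebras).} By Theorem~\ref{dCB of U^+ by L} and its analogue for $\U^-$, the elements $B(\bv^\pm,\bw^\pm)$ correspond to $b_\pm\in\widetilde{\mathbf{B}}^\pm$. I will show that $L^+(\bv^++\bv^-,\bw^++\bw^-)$ corresponds to $b_-\circ b_+$.

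\begin{itemize}
\item \emph{Bar-invariance.} The element $L^+$ is dual to an IC complex, so it is fixed by Verdier duality. I will check that this induced involution on $\widehat{\mathbf{H}}^+$ agrees with the bar-involution on $\mathcal{H}^+$. This should follow because $\pi^+$ is an algebra map, it is compatible with duality on $\hR$, and the involution on $\mathcal{H}^+$ is the factorization of the one on $\hU$.
\item \emph{Unitriangularity.} Equation \eqref{dCB + embedding-1} expresses $b_-b_+$ as $L^+(\bv^++\bv^-,\cdot)$ plus a $v\N[v]$-combination of $L^+(\bv',\cdot)$ with $\bv'>\bv^++\bv^-$. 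By Proposition~\ref{dCB on + char} and the $\diamond$-action \eqref{eq:diamond for L def}, each such $L^+(\bv',\bw)$ can be written as $L(\bv'^0,\bw'^0)\diamond L^+(\bv'^++\bv'^-,\cdot)$, where $\bv'^0\in\bigoplus_i\N\bv^i$ corresponds to some $K\in\mathbf{K}^+$. Since $\bv'>\bv^++\bv^-$ while $\bw$ is fixed, I expect $K\neq 1$ whenever $\bv'\neq\bv^++\bv^-$ in the relevant graded sense.
\item \emph{Conclusion for $\mathcal{H}^+$.} Inverting the unitriangular system by induction on $\bv$ shows that $b_-b_+-L^+$ lies in $\sum v\Z[v]\,K\diamond(b'_-b'_+)$ with $K\neq1$. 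Uniqueness in Theorem~\ref{thm:doubleCB-H+} then gives $L^+=b_-\circ b_+$. The $\diamond$-compatibility extends this to the full basis $K\diamond(b_-\circ b_+)$.
\item \emph{The case $\mathcal{H}^-$.} This is identical, using \eqref{dCB - embedding-1} and Proposition~\ref{dCB on - char}.
\end{itemize}

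\textbf{Step 2 (the whole algebra).} Next apply $\iota_+$ and use \eqref{dCB + embedding-2}:
\[
\iota_+\bigl(L^+(\bv,\bw)\bigr)=L(\bv,\bw)+\sum_{\bv'>\bv} b^+_{\bv,\bv'}L(\bv',\bw),\qquad b^+_{\bv,\bv'}\in v^{-1}\N[v^{-1}].
\]
For $\bv$ with $\bv^0\in\bigoplus_i\N\bv^i$, the terms $L(\bv',\bw)$ must be rewritten via Lemma~\ref{lem:Qin decomposition} as $L(\bv'^0,\bw'^0)\diamond L(\cdot)$. Here $\bv'^0$ involves some $\bv^{\Sigma i}$, so the corresponding $K$ lies in $\hU^0\setminus\mathbf{K}^+$. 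When $\bv'^0$ has no $\bv^{\Sigma i}$-part, these $L$'s are themselves $\iota_+(L^+)$-type elements.

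Inverting triangularly gives
\[
L(\bv,\bw)-\iota_+(b_-\circ b_+)\in\sum v^{-1}\Z[v^{-1}]\,K\diamond\iota_+(b'_-\circ b'_+).
\]
Combined with the bar-invariance of $L(\bv,\bw)$, uniqueness in Theorem~\ref{thm:doubleCB-U +} yields $L(\bv,\bw)=b_-\bullet b_+$. Applying the $\diamond$-action then identifies the whole positive double canonical basis with the dual canonical basis of Definition~\ref{def:dual CB}. The negative case is symmetric, using $\iota_-$ and \eqref{dCB - embedding-2}.

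\textbf{Main obstacle.} The delicate point is the bookkeeping in both steps. One must show that the correction terms in \eqref{dCB + embedding-1} and \eqref{dCB + embedding-2} have exactly the form required by Berenstein--Greenstein, that is:
\begin{itemize}
\item $K\neq 1$, respectively $K\notin\mathbf{K}^+$;
\item the $\Gamma$-degrees match;
\item the $\diamond$-twists introduced by rewriting $L(\bv^0+\bv'',\bw^0+\bw'')$ do not spoil the coefficient conditions $v\Z[v]$, respectively $v^{-1}\Z[v^{-1}]$.
\end{itemize}
I would handle these using the bar-compatibility \eqref{QG diamond action char} of $\diamond$ and the fact that $\diamond$ on $\hR$ matches $\diamond$ on $\hU$ without additional powers of $v$, as guaranteed by \eqref{eq:diamond for L def}. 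I would also compare $\Gamma$-degrees with $\bw$-gradings through Lemma~\ref{lem:Qin decomposition}.
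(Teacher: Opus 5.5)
Your route is the paper's: combine \eqref{dCB + embedding-1}, \eqref{dCB + embedding-2} and \eqref{eq:diamond for L def}, invert the unitriangular systems, and conclude by uniqueness in Theorems~\ref{thm:doubleCB-H+} and~\ref{thm:doubleCB-U +}. But the point you postpone as the ``main obstacle'' is the real content of the comparison, and the tool you propose for it (comparing $\Gamma$-degrees, or $\bw$-gradings via Lemma~\ref{lem:Qin decomposition}) cannot settle it.

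First, degrees do not rule out a correction term $L^+(\bv',\bw)$ in \eqref{dCB + embedding-1} with $\bv'>\bv^++\bv^-$ but $\bv'^0=0$. Such $\bv'$ do exist, since distinct strata of $\mathcal{M}_0(\bw^-,\mathcal{R})\times\mathcal{M}_0(\bw^+,\mathcal{R})$ share the same $\bw$. After inversion, such a term would appear as $K=1$ times some $b'_-b'_+$ with $\deg b'_\pm=\deg b_\pm$. That is perfectly degree-compatible whenever $\widetilde{\mathbf{B}}^\pm$ has several elements in that degree.

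Second, in Step~2 a term $L(\bv',\bw)$ with $\bv'^0\in\bigoplus_i\N\bv^i$ is equally degree-compatible. Your remark that such terms are harmless because they are ``$\iota_+(L^+)$-type elements'' is incorrect: $L(\bv',\bw)\neq\iota_+(L^+(\bv',\bw))$ in general. After inversion such a term would contribute $K\diamond\iota_+(b'_-\circ b'_+)$ with $K\in\mathbf{K}^+$ and a coefficient in $v^{-1}\Z[v^{-1}]$, and Theorem~\ref{thm:doubleCB-U +} excludes exactly this. What you need is that these coefficients vanish.

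That vanishing is a support property of IC sheaves under the closed embeddings $j^+$ and $i^+$.

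\emph{The $\mathcal{H}^+$ step.} Suppose $\bv'^0=0$. Then the stratum $\mathcal{M}_0^{\reg}(\bv',\bw,\mathcal{R})$ lies in the closed locus where all $\varepsilon_i$ and $\varepsilon_i'$ vanish, i.e.\ the image of $j^+$. There it is $\mathcal{M}_0^{\reg}(\bv'^-,\bw^-,\mathcal{R})\times\mathcal{M}_0^{\reg}(\bv'^+,\bw^+,\mathcal{R})$, so $(j^+)^!\mathcal{L}^+(\bv',\bw)=\mathcal{P}(\bv'^-,\bw^-)\boxtimes\mathcal{P}(\bv'^+,\bw^+)$. Hence the matrix in \eqref{dCB + embedding-1} is the identity on the block $\bv'^0=0$. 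Every genuine correction term therefore has $\bv'^0\neq0$, i.e.\ $K\neq1$.

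\emph{The $\hU$ step.} Suppose $\bv'^0\in\bigoplus_i\N\bv^i$. By \eqref{eq:decompRgr+}, the closure of the stratum lies in the closed fixed locus $\mathcal{M}_0(\bw_{\tg},\mathcal{R}^{\gr})$. So $(i^+)^*\mathcal{L}(\bv',\bw)=\mathcal{L}^+(\bv',\bw)$ is a single IC sheaf, and $b^+_{\bv,\bv'}=0$ for $\bv'\neq\bv$ in \eqref{dCB + embedding-2}. Equivalently, apply $\pi^+\circ\iota_+=\mathrm{id}$, using that $\pi^+$ sends $L(\bv',\bw)$ to $L^+(\bv',\bw)$ or to $0$. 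Thus every correction term in \eqref{dCB + embedding-2} has a nonzero $\bv^{\Sigma i}$-component, i.e.\ $K\in\hU^0\setminus\mathbf{K}^+$.

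This is what the paper uses, implicitly, in its proof of \eqref{dCB + embedding-1} and in the index condition $(\bv^0,\bw^0)\prec(\tilde{\bv}^0,\tilde{\bw}^0)$. Once it is in place, your inversion and uniqueness argument goes through. The $\diamond$-twists are harmless because \eqref{eq:diamond for L def} introduces no powers of $v$.
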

	
	\begin{proof}
		We only consider the case for $\mathcal{H}^+$, since that for $\mathcal{H}^-$ is similar. Using (\ref{dCB + embedding-1}), (\ref{dCB + embedding-2}) and \eqref{eq:diamond for L def}, for $(\bv,\bw)=(\bv^0,\bw^0)+(\bv^+,\bw^+)+(\bv^-,\bw^-)$, we can write
		\begin{align*}
			L^+(\mathbf{v},\mathbf{w})&\in L^+(\mathbf{v}^0,\mathbf{w}^0)\diamond (B(\mathbf{v}^-,\mathbf{w}^-)\cdot B(\mathbf{v}^+,\mathbf{w}^+))\\
			&+\sum_{\substack{ \mathbf{w}=\tilde{\mathbf{w}}^0+\tilde{\mathbf{w}}^++\tilde{\mathbf{w}}^-\\(\mathbf{v}^0,\mathbf{w}^0)\prec(\tilde{\mathbf{v}}^0,\tilde{\mathbf{w}}^0)}} v\Z[v]L^+(\tilde{\mathbf{v}}^0,\tilde{\mathbf{w}}^0) \diamond (B(\tilde{\mathbf{v}}^-,\tilde{\mathbf{w}}^-)\cdot B(\tilde{\mathbf{v}}^+,\tilde{\mathbf{w}}^+)),\\
			L(\mathbf{v},\mathbf{w})&\in L(\mathbf{v}^0,\mathbf{w}^0)\diamond \iota(L^+(\mathbf{v}^++\mathbf{v}^-,\mathbf{w}^++\mathbf{w}^-))\\
			&+\sum_{\substack{ \mathbf{w}=\tilde{\mathbf{w}}^0+\tilde{\mathbf{w}}^++\tilde{\mathbf{w}}^-\\(\mathbf{v}^0,\mathbf{w}^0)\prec(\tilde{\mathbf{v}}^0,\tilde{\mathbf{w}}^0)}} v^{-1}\Z[v^{-1}] L(\tilde{\mathbf{v}}^0,\tilde{\mathbf{w}}^0) \diamond \iota(L^+(\tilde{\mathbf{v}}^++\tilde{\mathbf{v}}^-,\tilde{\mathbf{w}}^++\tilde{\mathbf{w}}^-))
		\end{align*}
		Since $\{B(\bv^\pm,\bw^\pm)\mid (\bv^\pm,\bw^\pm)\in(V^\pm,W^\pm)\}$ gives rise to the rescaled dual canonical basis of $\U^\pm$ in view of Theorem~\ref{dCB of U^+ by L}, the result follows by comparing these with the defining relations of double canonical basis of $\mathcal{H}^+$ and $\hU$ in Theorem \ref{thm:doubleCB-H+} and Theorem \ref{thm:doubleCB-U +}.
	\end{proof}

	\begin{remark}
		It can be seen from the proof of Theorem~\ref{iCB is DCB} that 
		\begin{align}
			\label{eq:left-double-dCB}
			b_+\bullet b_-=b_-\bullet b_+,\qquad  \forall (b_-,b_+)\in\widetilde{\mathbf{B}}^-\times\widetilde{\mathbf{B}}^+. 
		\end{align}
		%for any $(b_-,b_+)\in\widetilde{\mathbf{B}}^-\times\widetilde{\mathbf{B}}^+$ we have $b_+\bullet b_-=b_-\bullet b_+$.
	\end{remark}

	Using Theorem~\ref{iCB is DCB}, we can answer many conjectures proposed by \cite{BG17}. For example, the following results have already been proved for the dual canonical basis of $\tU$ (viewed as iquantum group of diagonal type), so they are also valid for the double canonical basis.
	
	\begin{corollary}[{\cite[Conjecture 1.15]{BG17}; see \cite[Theorem 5.16]{LP25}}]\label{coro: QG double CB braid}
		The double ($=$ dual) canonical basis of $\tU$ is invariant under braid group actions.
	\end{corollary}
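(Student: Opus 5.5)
The corollary follows by combining Theorem~\ref{iCB is DCB} with the known braid invariance of the dual canonical basis. The only real work is to pass from $\hU$, where the double canonical basis is defined, to $\tU$, where the braid group acts.

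First I fix what ``double canonical basis of $\tU$'' means. Theorem~\ref{thm:doubleCB-U +} gives the positive double canonical basis $\{K\diamond(b_-\bullet b_+)\}$ of $\hU$, with $K\in\hU^0$. I extend it to $\tU$ by letting $K$ range over all monomials $K_\mu K'_\nu$ with $\mu,\nu\in\Z^\I$. The $\diamond$-action of $\tU^0$ is defined on $\tU$ by the same formula, and it is compatible with the localization $\hU\to\tU$.

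By Theorem~\ref{iCB is DCB}, $b_-\bullet b_+$ coincides with an element of the dual canonical basis of $\hU$. By the theorem recalled from \cite[Proposition 4.13]{LP25}, this element has the form $C_{0,0;\ba,\bc}$, and
\[K_\alpha K'_\beta\diamond C_{0,0;\ba,\bc}=C_{\alpha,\beta;\ba,\bc}.\]
After inverting $K_i,K_i'$, the set $\{C_{\alpha,\beta;\ba,\bc}\mid \alpha,\beta\in\Z^\I\}$ is the dual canonical basis of $\tU$ from Definition~\ref{def:dual CB}. This identification goes through the isomorphism $\widetilde{\kappa}$ and the $\diamond$-action \eqref{eq:diamond for L def} on $\tR$. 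So the double canonical basis of $\tU$ equals the dual canonical basis of $\tU$. By \eqref{eq:left-double-dCB}, it also does not matter whether one uses the positive or the negative version.

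Next I invoke \cite[Theorem 5.16]{LP25}, which proves that the dual canonical basis of $\tU$ is invariant under the braid group symmetries $\widetilde{T}_i$ of Proposition~\ref{prop:braid1}. There $\tU$ is viewed as an $\imath$quantum group of diagonal type. Transporting that statement through the identification above gives the corollary.

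The main point to check is that the braid operators in \cite{LP25} agree with the $\widetilde{T}_i$ of Proposition~\ref{prop:braid1}, up to the rescaling of generators in \eqref{eq:Udj-gen}. The check is that $\widetilde{T}_i$ is a composition of Lusztig's $\widetilde{T}''_{i,1}$ with rescalings $E_i\mapsto \ce_i$, $F_i\mapsto\cf_i$. These rescalings are homogeneous, so they do not change which elements are basis elements up to the fixed normalization. Since $\widetilde{T}_i$ also commutes with the bar involution (Lemma~\ref{lem:QGbraid-bar}), no sign or power-of-$v$ discrepancy can arise.
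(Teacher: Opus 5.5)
Your argument is essentially the paper's: the corollary is Theorem~\ref{iCB is DCB} combined with \cite[Theorem 5.16]{LP25}. The paper takes the latter as already proving invariance of the dual canonical basis of $\tU$ under the operators $\widetilde{T}_i$ of Proposition~\ref{prop:braid1}, so your final normalization check is not needed. As written that check is also slightly off: commuting with the bar involution (Lemma~\ref{lem:QGbraid-bar}) only forces a discrepancy scalar to be bar-invariant, which rules out $v^{k/2}$ with $k\neq 0$ but not a sign, and the rescaling in \eqref{eq:Udj-gen} is just a change of generators, not the source of the modification of $\widetilde{T}''_{i,1}$.
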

	
	\begin{corollary}[{\cite[Conjecture 1.21]{BG17}}%, see \eqref{dCB + embedding-1}--\eqref{dCB - embedding-2}
		]\label{coro: QG double CB positive}
		For any $(b_-,b_+)\in\widetilde{\mathbf{B}}^-\times\widetilde{\mathbf{B}}^+$, the transition coefficients of $b_-b_+$ (or $b_+b_-$) with respect to the double canonical basis of $\hU$ (and also $\tU$) belong to $\N[v,v^{-1}]$.
	\end{corollary}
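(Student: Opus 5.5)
The statement concerns the product $b_-b_+$ (or $b_+b_-$) of two rescaled dual canonical basis elements. I would prove positivity by computing this product geometrically inside $\hR$ and reading off the coefficients, rather than through the algebraic construction of Berenstein--Greenstein. By Theorem~\ref{dCB of U^+ by L} and its $\U^-$ analogue, we may write $b_\pm = B(\bv^\pm,\bw^\pm)$, viewed in $\hR^\pm\subset\hR$. The inclusions $\hR^\pm\hookrightarrow\hR$ are algebra homomorphisms, and there $B(\bv^\pm,\bw^\pm)$ is identified with $L(\bv^\pm,\bw^\pm)$. By Theorem~\ref{iCB is DCB}, the double canonical basis of $\hU$ is the dual IC basis $\{L(\bv,\bw)\}$. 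The claim therefore reduces to expanding $L(\bv^-,\bw^-)\ast L(\bv^+,\bw^+)$ in the basis $\{L(\bv,\bw)\}$ and showing the coefficients lie in $\N[v,v^{-1}]$ (and likewise for the other order).

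The key step is the multiplication formula \eqref{equation multiplication} together with the positivity \eqref{eq:positive}. By duality, the structure constants $c^{\bv}_{\bv_1,\bv_2}(v)$ are the graded multiplicities of $\cl(\bv_1,\bw_1)\boxtimes\cl(\bv_2,\bw_2)$ in $\widetilde{\Res}^{\bw}_{\bw_1,\bw_2}(\cl(\bv,\bw))$. Positivity follows from two facts: the restriction is identified with $(j^\pm)^!(i^\pm)^*$ as in \eqref{eq:res of tRi by pullback}, and, by Lemma~\ref{QV fixed point restriction on +} (hyperbolic localization along the attracting $\C^*$-action), this functor sends $\pi(\bv,\bw)$ to a direct sum of shifted $\pi$'s and hence preserves semisimple pure complexes. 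So the restriction of an IC complex is a direct sum of shifted IC complexes with multiplicities in $\N[v,v^{-1}]$.

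It remains to account for the twist \eqref{eq:tw} and the rescaling. The twisted product multiplies by $v^{\frac12\langle\bw^1,\bw^2\rangle_{Q^{\dbl},a}}$. Since $b_-b_+$ has a bar-invariant leading term and all the $L(\bv,\bw)$ are bar-invariant, this factor must combine with the leading coefficient \eqref{eqn: leading term} to give an integral power of $v$. I expect this bookkeeping to be the main obstacle, because the statement asserts $\N[v,v^{-1}]$ rather than $\N[v^{\frac12},v^{-\frac12}]$. I would first check it on the leading term: compute $d((\bv^-,\bw^-),(\bv^+,\bw^+))-d((\bv^+,\bw^+),(\bv^-,\bw^-))$ and add the half-integer exponent of the twist. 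If the total is not an integer, I would instead rely on the lemma on the embeddings, which already expresses $B(\bv^-,\bw^-)\cdot B(\bv^+,\bw^+)$ as $L^+$ plus terms with coefficients $a^+\in v\N[v]$. Combining this with \eqref{dCB + embedding-2}, where $b^+\in v^{-1}\N[v^{-1}]$, yields integral positive coefficients.

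Finally, for $\tU$ the statement follows by localizing at the central elements $L(\bv^i,\bw^i)$ and $L(\bv^{\Sigma i},\bw^i)$, whose $\diamond$-action permutes basis elements by \eqref{eq:diamond for L def}. The order $b_+b_-$ is handled symmetrically, using $\hR^+\otimes\hR^-$ and $\iota_-$.
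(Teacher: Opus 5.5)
Your argument is correct in substance, but your main route differs from the paper's, and the obstacle you flag does not actually arise. Your fallback is the paper's proof. The paper does not invoke the general positivity \eqref{eq:positive}. Instead it:
\begin{itemize}
\item computes $B(\bv^-,\bw^-)\cdot B(\bv^+,\bw^+)$ inside $\widehat{\mathbf{H}}^+$ via \eqref{dCB + embedding-1}, with leading coefficient $1$ and all other coefficients in $v\N[v]$;
\item pushes this forward with $\iota_+$ using \eqref{dCB + embedding-2}, whose coefficients lie in $v^{-1}\N[v^{-1}]$;
\item observes that $\iota_+$ of this product equals $b_-b_+$ computed in $\hU$.
\end{itemize}
You should state that last identification explicitly. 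It holds because $b_-b_+$ lies in the $\U^-\otimes\U^+$ component of the triangular decomposition and carries no Cartan factor.

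Your direct route also works and is shorter, but not for the reason you give. The bar-invariance argument is invalid: the bar map is an anti-involution, so $\ov{b_-b_+}=b_+b_-$, and nothing a priori forces the leading coefficient of $b_-b_+$ to be bar-invariant. Integrality holds for a simpler reason. Since $\bw^-\in W^-$ and $\bw^+\in W^+$ are supported on the disjoint components $Q'$ and $Q$ of $Q^{\dbl}$, we have $\langle \bw^-,\bw^+\rangle_{Q^{\dbl}}=\langle \bw^+,\bw^-\rangle_{Q^{\dbl}}=0$. So the twist in \eqref{eq:tw} is trivial, and the coefficients of $L(\bv^-,\bw^-)\cdot L(\bv^+,\bw^+)$ are literally the structure constants $c^{\bv}_{\bv^-,\bv^+}(v)\in\N[v,v^{-1}]$ of \eqref{eq:positive}.

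Moreover, both $d$-terms in \eqref{eqn: leading term} vanish here. Indeed, $\bv^+$, $\tau^*\bv^+$, ${\mathcal C}_q\bv^+$ and $\sigma^*\bw^+$ are supported on vertices labelled by modules, while their $-$ counterparts are supported on shifted modules. So the leading coefficient is exactly $1$, consistent with the paper's lemma. You also do not need to re-derive \eqref{eq:positive}: it is stated in the paper, and the relevant input for it is \eqref{eqn:comultiplication} rather than Lemma~\ref{QV fixed point restriction on +}.

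The paper's detour through $\widehat{\mathbf{H}}^+$ gives finer information. It writes the expansion as a $v\N[v]$-unitriangular matrix composed with a $v^{-1}\N[v^{-1}]$-unitriangular one, which is exactly the shape of the Berenstein--Greenstein conditions in Theorems~\ref{thm:doubleCB-H+} and \ref{thm:doubleCB-U +}. For the positivity statement alone, your appeal to \eqref{eq:positive} plus the vanishing of the twist suffices.
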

	\begin{proof}
		It suffices to consider $b_-b_+$. From \eqref{dCB + embedding-1} and \eqref{dCB + embedding-2} we see that
		\begin{equation*}
			\iota_+(L(\mathbf{v}^-,\mathbf{w}^-)\cdot L(\mathbf{v}^+,\mathbf{w}^+))=L(\mathbf{v}^++\bv^-,\mathbf{w}^++\bw^-)+\sum_{\bv'}\N[v,v^{-1}] L(\mathbf{v}',\mathbf{w}^++\bw^-).
		\end{equation*}
		From the definition we know that $\iota_+(L(\mathbf{v}^-,\mathbf{w}^-)\cdot L(\mathbf{v}^+,\mathbf{w}^+))$ equals to $L(\mathbf{v}^-,\mathbf{w}^-)\cdot L(\mathbf{v}^+,\mathbf{w}^+)$ calculated in $\hU$, the assertion therefore follows.
	\end{proof}
	
	\begin{corollary}
		The structure constants of double ($=$ dual) canonical basis are in $\N[v^{\frac{1}{2}},v^{-\frac{1}{2}}]$. 
	\end{corollary}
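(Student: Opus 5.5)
The plan is to transfer the question to the geometric side via Theorem~\ref{iCB is DCB} and Theorem~\ref{thm:Qin}. By Theorem~\ref{iCB is DCB}, the double canonical basis of $\hU$ coincides with the dual canonical basis. By Definition~\ref{def:dual CB}, the latter is the image under $\widetilde{\kappa}^{-1}$ of the dual IC basis $\{L(\bv,\bw)\}$ of $\hR$ (and of $\tR$ after localization). It therefore suffices to show that the structure constants of $\{L(\bv,\bw)\}$ with respect to the product of $\tR_\cz$ lie in $\N[v^{\frac12},v^{-\frac12}]$. We first do this in $\hR$ and then pass to $\tR$.

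\textbf{Step 1: positivity in $\hR$.} The untwisted product $\ast$ on $K^{\mathrm{gr}*}(\mod(\cs))$ satisfies \eqref{equation multiplication} with coefficients $c^{\bv}_{\bv_1,\bv_2}(v)\in\N[v,v^{-1}]$ by \eqref{eq:positive}. This is the standard consequence of the decomposition theorem: the restriction $\widetilde{\Delta}$ of a semisimple complex $\cl(\bv,\bw)$ decomposes, via the transversal slice theorem, into shifts of $\cl(\bv_1,\bw_1)\boxtimes\cl(\bv_2,\bw_2)$ with nonnegative multiplicities. The product $\cdot$ of $\hR_\cz$ is dual to the twisted comultiplication \eqref{eq:tw}. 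Hence
\[
L(\bv_1,\bw_1)\cdot L(\bv_2,\bw_2)=v^{\frac12\langle\bw_1,\bw_2\rangle_{Q^{\dbl},a}}\,L(\bv_1,\bw_1)\ast L(\bv_2,\bw_2),
\]
and the extra factor is a monomial in $v^{\pm\frac12}$. So all structure constants in $\hR_\cz$ lie in $\N[v^{\frac12},v^{-\frac12}]$.

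\textbf{Step 2: passing to $\tR$.} Every dual canonical basis element of $\tU$ has the form $K_\alpha K'_\beta\diamond C_{0,0;\ba,\bc}$ with $\alpha,\beta\in\Z^\I$. Geometrically, it is $L(\bv^0,\bw^0)^{-1}\diamond L(\bv,\bw)$ for suitable elements of the commutative monoid generated by the $L(\bv^i,\bw^i)$ and $L(\bv^{\Sigma i},\bw^i)$.

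The key inputs are \eqref{eq:diamond for L def} and \eqref{eqn: leading term}. The element $L(\bv^0,\bw^0)$ with $\sigma^*\bw^0-\mathcal{C}_q\bv^0=0$ is "central up to a power of $v$". Multiplication by it equals a monomial $v^{m/2}$ times the $\diamond$-action, which sends dual IC basis elements to dual IC basis elements: $L(\bv^0,\bw^0)\diamond L(\bv,\bw)=L(\bv^0+\bv,\bw^0+\bw)$. So the product of two localized basis elements
\[
\bigl(L(\bv^0_1,\bw^0_1)^{-1}\diamond L_1\bigr)\cdot\bigl(L(\bv^0_2,\bw^0_2)^{-1}\diamond L_2\bigr)
\]
equals a monomial $v^{k/2}$ times $L(\bv^0_1+\bv^0_2,\bw^0_1+\bw^0_2)^{-1}\diamond(L_1\cdot L_2)$. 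Expanding $L_1\cdot L_2$ by Step~1 and re-absorbing the $\diamond$-inverse gives a linear combination of dual canonical basis elements of $\tU$ with coefficients in $\N[v^{\frac12},v^{-\frac12}]$.

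\textbf{Main obstacle.} The step needing care is the precise compatibility between the honest product with $L(\bv^i,\bw^i)^{\pm1}$ and the $\diamond$-action. One must check that $L(\bv^0,\bw^0)\cdot L(\bv,\bw)$ is exactly a single monomial times $L(\bv^0+\bv,\bw^0+\bw)$, with no lower terms. I would obtain this from the transversal slice argument: when $\sigma^*\bw^0-\mathcal{C}_q\bv^0=0$, the stratum of $(\bv^0,\bw^0)$ is a point-like factor, so $\cm_0(\bw^0+\bw)$ is locally a product and the expansion \eqref{equation multiplication} has only its leading term, with coefficient given by \eqref{eqn: leading term}. Alternatively, one can use that $L(\bv^i,\bw^i)$ corresponds to $K_i$, which $v$-commutes with everything in $\tU$, together with the characterization \eqref{QG diamond action char} and bar-invariance of the dual canonical basis. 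Granting this, the localization introduces only monomial factors and the positivity of Step~1 persists.
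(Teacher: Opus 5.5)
Your proposal is correct and follows the paper's route. The paper gives no separate proof; it deduces the corollary from Theorem~\ref{iCB is DCB} together with the positivity \eqref{eq:positive} of the dual IC basis, which becomes positivity in $\N[v^{\frac12},v^{-\frac12}]$ after the monomial twist $v^{\frac12\langle\cdot,\cdot\rangle_{Q^{\dbl},a}}$. Passing to $\tR$ needs exactly the quasi-centrality statement you flag as the main obstacle, which the paper already records as Lemma~\ref{lem: multiply by L^0} (from \cite[Lemma 4.19]{LP26}), so you can cite it directly rather than re-derive it via transversal slices.
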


	In \cite{BG17} the authors defined $\Q(v^{\frac{1}{2}})$-linear anti-involutions ${\cdot}^*:\hU\rightarrow\hU$ and ${\cdot}^t:\hU\rightarrow\hU$ via
	\begin{gather*}
		E_i^*=E_i,\quad F_i^*=F_i,\quad K_i^*=K_i',\quad (K_i')^*=K_i,\\
		E_i^t=F_i,\quad F_i^t=E_i,\quad K_i^t=K_i,\quad (K_i')^t=K_i'.
	\end{gather*}
	Then $(\widetilde{\mathbf{B}}^{\pm})^t=\widetilde{\mathbf{B}}^{\mp}$, and it is proved in \cite[Lemma 3.5]{BG17} that ${\cdot}^*$ preserves $\widetilde{\mathbf{B}}^{\pm}$ as sets. It is not hard to see that ${\cdot}^t$ preserves the basis $\{K\diamond(b_-\circ b_+)\}$, so it preserves the double canonical basis (as a set) of $\tU$; see  \cite[Proposition 2.10]{BG17}. It is conjectured in \cite{BG17} that ${\cdot}^*$ also preserves the double canonical basis. Using Theorem~\ref{iCB is DCB} we can now verify this conjecture.

	\begin{proposition}[{\cite[Conjecture 1.11]{BG17}}]
		\label{QG dCB invariant under *}
		The double ($=$ dual) canonical basis of $\hU$ (and also $\tU$) is invariant under ${\cdot}^*$. More precisely, we have
		\[(K\diamond(b_-\bullet b_+))^*=K^*\diamond (b_+^*\bullet b_-^*)=K^*\diamond (b_-^*\bullet b_+^*).\]
	\end{proposition}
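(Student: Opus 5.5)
The proof rests on the uniqueness characterizations in Theorems~\ref{thm:doubleCB-U +} and \ref{thm:doubleCB-U -}, the identity \eqref{eq:left-double-dCB}, and the compatibility of $\cdot^*$ with the bar-involution and the $\diamond$-action. The second equality $K^*\diamond(b_+^*\bullet b_-^*)=K^*\diamond(b_-^*\bullet b_+^*)$ is \eqref{eq:left-double-dCB} applied to the pair $(b_-^*,b_+^*)$. Since $\cdot^*$ preserves $\widetilde{\mathbf{B}}^\pm$ by \cite[Lemma 3.5]{BG17}, this pair lies in $\widetilde{\mathbf{B}}^-\times\widetilde{\mathbf{B}}^+$. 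So the whole content is the first equality, $(K\diamond(b_-\bullet b_+))^*=K^*\diamond(b_+^*\bullet b_-^*)$.

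\emph{Compatibilities.} I will check the following directly on generators.
\begin{itemize}
\item $\cdot^*$ is a $\Q(v^{1/2})$-linear anti-involution commuting with $\bar{\cdot}$, since both fix $E_i,F_i$ and $\bar\cdot$ fixes the $K$'s. Note that $\bar{\cdot}$ is semilinear while $\cdot^*$ is linear, so the composite $\bar{\cdot}\circ\cdot^*$ is a semilinear automorphism.
\item $\cdot^*$ swaps $\mathbf{K}^+$ and $\mathbf{K}^-$. Hence it maps the ideal $\langle K_i'\rangle$ onto $\langle K_i\rangle$ and induces an anti-isomorphism $\mathcal{H}^+\to\mathcal{H}^-$.
\item $\cdot^*$ intertwines the splittings: $\iota_+(y)^*=\iota_-(y^*)$. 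Indeed, $\iota_+$ is defined via the normal form $\mathbf{K}^-\otimes\mathbf{K}^+\otimes\U^-\otimes\U^+$. Applying $\cdot^*$ to a monomial $K'_\beta K_\alpha yx$ gives $x^*y^*K'_\alpha K_\beta$, and moving the $K$'s to the front changes it only by a power of $v$ fixed by the $\Gamma$-degree. That gives exactly the $\iota_-$-normal form of the image.
\item $(K\diamond x)^*=K^*\diamond x^*$ for homogeneous $x$. The $\diamond$-twist is $v^{\mp\frac12\check\alpha_i(\deg_\Gamma x)}$. Since $\deg_\Gamma$ is preserved by $\cdot^*$, the sign swap between $K_i$ and $K_i'$ matches the $v$-power picked up when moving $K$ past $x^*$.
\end{itemize}
I expect this last bookkeeping step to be the main obstacle. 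If the direct check fails, I would instead use \eqref{QG diamond action char} to pin down $\diamond$ uniquely by bar-compatibility, and then transport it through $\cdot^*$.

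\emph{Transport through $\mathcal{H}^\pm$.} Apply $\cdot^*$ to the defining property of $b_-\circ b_+$ in Theorem~\ref{thm:doubleCB-H+}. The element $(b_-\circ b_+)^*\in\mathcal{H}^-$ is bar-invariant, and
\[
(b_-\circ b_+)^*-b_+^*b_-^*\in\sum v\Z[v]\,K^*\diamond\big((b'_+)^*(b'_-)^*\big),
\]
with $K^*$ running over $\mathbf{K}^-\setminus\{1\}$. Here the $v$-coefficients are untouched because $\cdot^*$ is linear. Since $(b'_\pm)^*\in\widetilde{\mathbf{B}}^\pm$, uniqueness in Theorem~\ref{thm:doubleCB-H-} gives $(b_-\circ b_+)^*=b_+^*\circ b_-^*$.

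\emph{Transport through $\hU$.} The same argument applies to Theorem~\ref{thm:doubleCB-U +}, using $\iota_+(y)^*=\iota_-(y^*)$ and that $\cdot^*$ maps $\hU^0\setminus\mathbf{K}^+$ onto $\hU^0\setminus\mathbf{K}^-$. The element $(b_-\bullet b_+)^*$ is bar-invariant and differs from $\iota_-(b_+^*\circ b_-^*)$ by a $v^{-1}\Z[v^{-1}]$-combination of elements $K^*\diamond\iota_-((b'_+)^*\circ(b'_-)^*)$. Uniqueness in Theorem~\ref{thm:doubleCB-U -} then gives $(b_-\bullet b_+)^*=b_+^*\bullet b_-^*$. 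Applying $(K\diamond x)^*=K^*\diamond x^*$ proves the first equality.

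\emph{Conclusion.} Combining the two equalities, $\cdot^*$ maps the positive double canonical basis into itself; since $\cdot^*$ is an involution, it is a bijection. By Theorem~\ref{iCB is DCB} this basis equals the dual canonical basis. For $\tU$, $\cdot^*$ extends to inverses of the $K$'s, so the statement follows by localization.
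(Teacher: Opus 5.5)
Your proposal is correct and follows essentially the same route as the paper. Both proofs establish $(K\diamond x)^*=K^*\diamond x^*$ and $\iota_+(y)^*=\iota_-(y^*)$, apply $\cdot^*$ to the defining relations of $b_-\circ b_+$ and then of $b_-\bullet b_+$, and conclude by bar-invariance together with uniqueness in Theorems~\ref{thm:doubleCB-H-} and \ref{thm:doubleCB-U -}, finishing with \eqref{eq:left-double-dCB}. The $\diamond$-bookkeeping you flagged as the main obstacle goes through directly, as in the paper's computation: moving $K_i'$ to the front past $x^*$ contributes $v^{\check\alpha_i(\deg_\Gamma x)}$, which turns $v^{-\frac12\check\alpha_i(\deg_\Gamma x)}$ into $v^{\frac12\check\alpha_i(\deg_\Gamma x)}$.
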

	
	\begin{proof}
		Note that for $(b_-,b_+)\in \widetilde{\mathbf{B}}^-\times\widetilde{\mathbf{B}}^+$ we have $\deg_\Gamma(b_-b_+)=\deg_\Gamma(b_+b_-)$, so
		\begin{align*}
			(K_i\diamond b_-b_+)^*&=v^{-\frac{1}{2}\check{\alpha}_i(\deg_\Gamma(b_-b_+)}(K_ib_-b_+)^*=v^{-\frac{1}{2}\check{\alpha}_i(\deg_\Gamma(b_-b_+)}b_+^*b_-^*K_i'\\
			&=v^{\frac{1}{2}\check{\alpha}_i(\deg_\Gamma(b_-b_+)}K_i'b_+^*b_-^*=K_i'\diamond(b_+^*b_-^*),
		\end{align*}
		and in general we can write
		\[(K\diamond(b_-b_+))^*=K^*\diamond (b_+^*b_-^*),\quad \forall K\in\hU^0.\]
		
		It is clear that ${\cdot}^*$ induces an anti-automorphism between $\mathcal{H}^\pm$ and is compatible with the inclusions $\iota_\pm$, that is
		\[\iota_\mp(x^*)=(\iota_\pm(x))^*,\quad \forall x\in\mathcal{H}^\pm.\]
		Now apply ${\cdot}^*$ to the defining equation of $b_-\circ b_+$, we find 
		\begin{equation}\label{QG dCB invariant under *-1}
			(b_-\circ b_+)^*-b_+^*b_-^*\in\sum v\Z[v]K^*\diamond({'b_+^*}{'b_-^*})
		\end{equation}
		where the sum is taken over $K\in\mathbf{K}^+\setminus\{1\}$ and ${'b_{\pm}}\in\widetilde{\mathbf{B}}^{\pm}$ such that $\deg_\Gamma(b_-b_+)=\deg_\Gamma(K)+\deg_\Gamma({'b_-}{'b_+})$. Moreover, since ${\cdot}^*$ commutes with $\ov{\cdot}$, the element $(b_-\circ b_+)^*$ is bar-invariant, so it must equal to $b_+^*\circ b_-^*$. The same arguments apply to $b_+\circ b_-$, in which case we find $(b_+\circ b_-)^*=b_-^*\circ b_+^*$.
		
		Finally, applying ${\cdot}^*$ to the defining equation of $b_-\bullet b_+$ and using \eqref{QG dCB invariant under *-1}, we get
		\[(b_-\bullet b_+)^*-\iota_-(b_+^*\circ b_-^*)\in\sum v^{-1}\Z[v^{-1}]K^*\diamond \iota_-({'b_+^*}\circ {'b_-^*})\]
		where the sum is taken over $K^*\in\hU^0\setminus\mathbf{K}^+$ and ${'b_{\pm}}\in\widetilde{\mathbf{B}}^{\pm}$ such that $\deg_\Gamma(b_-b_+)=\deg_\Gamma(K)+\deg_\Gamma({'b_-}{'b_+})$. Again since $(b_-\bullet b_+)^*$ is bar-invariant, it must equal to $b_+^*\bullet b_-^*$, and our assertion follows from \eqref{eq:left-double-dCB}.
	\end{proof}
	
	Recall that the Chevalley involution $\omega$ is a $\Q(v^{\frac12})$-algebra involution of $\hU$ (and also $\tU$) sending $E_i\mapsto F_i$, $F_i\mapsto E_i$, $K_i\mapsto K_i'$, $K_i'\mapsto K_i$. It is the composition of the anti-involution $\cdot^*$ and $\cdot ^t$, so we have the following corollary.
	
	\begin{corollary}
		\label{cor:dCB-chevalley}
		The double ($=$ dual) canonical basis of $\hU$ (and also $\tU$) is preserved by the Chevalley involution $\omega$. 
	\end{corollary}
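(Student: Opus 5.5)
The corollary should follow formally from Proposition~\ref{QG dCB invariant under *} together with the invariance of the double canonical basis under the anti-involution ${\cdot}^t$ recalled above (cf. \cite[Proposition 2.10]{BG17}). I will first check that $\omega = {\cdot}^t\circ{\cdot}^*$ on generators: ${\cdot}^*$ fixes $E_i,F_i$ and swaps $K_i\leftrightarrow K_i'$, while ${\cdot}^t$ swaps $E_i\leftrightarrow F_i$ and fixes $K_i,K_i'$. Hence the composite sends $E_i\mapsto F_i$, $F_i\mapsto E_i$, $K_i\mapsto K_i'$, $K_i'\mapsto K_i$. Being a composite of two $\Q(v^{\frac12})$-linear anti-involutions, it is an algebra automorphism, so it agrees with $\omega$ on all of $\hU$. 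The same argument works on $\tU$, since both anti-involutions extend to the localization.

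Next, I will confirm that ${\cdot}^t$ permutes the double canonical basis of $\hU$. The plan is to apply ${\cdot}^t$ to the defining relation in Theorem~\ref{thm:doubleCB-U +}. Three facts are needed: ${\cdot}^t$ commutes with the bar-involution, since both are determined on generators and fix the $K$'s; it satisfies $(\widetilde{\mathbf{B}}^{\pm})^t=\widetilde{\mathbf{B}}^{\mp}$; and it is compatible with the $\diamond$-action via $(K\diamond x)^t = K\diamond x^t$ up to the appropriate swap of $\Gamma$-degree. These should identify $(b_-\bullet b_+)^t$ with $b_+^t\bullet b_-^t$, computed using the negative double canonical basis of Theorem~\ref{thm:doubleCB-U -}. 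By \eqref{eq:left-double-dCB}, the positive and negative double canonical bases coincide, so ${\cdot}^t$ preserves the basis $\{K\diamond(b_-\bullet b_+)\}$ as a set. This is also the content quoted from \cite[Proposition 2.10]{BG17}, which I will take as given.

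Combining the two steps, $\omega$ sends each element $K\diamond(b_-\bullet b_+)$ first under ${\cdot}^*$ to $K^*\diamond(b_-^*\bullet b_+^*)$, by Proposition~\ref{QG dCB invariant under *}, and then under ${\cdot}^t$ to another basis element. Therefore $\omega$ permutes the double ($=$ dual) canonical basis. For $\tU$, I will note that the basis is obtained by letting $K$ range over the invertible Cartan elements, and the same computation applies.

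The only delicate point is the compatibility of ${\cdot}^t$ with the $\diamond$-twist, namely the powers $v^{\pm\frac12\check{\alpha}_i(\deg_\Gamma x)}$. Since ${\cdot}^t$ swaps $\alpha_{+i}$ and $\alpha_{-i}$, it negates $\check{\alpha}_i$. I expect this sign to cancel against the reversal of the order $K_ix\mapsto x^tK_i$ when $K_i$ is commuted back across $x^t$, exactly as in the computation for ${\cdot}^*$ in the proof of Proposition~\ref{QG dCB invariant under *}. If it did not cancel cleanly, I would fall back on citing \cite{BG17} for the ${\cdot}^t$-invariance.
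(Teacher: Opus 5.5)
Your proposal is correct and follows the paper's argument: $\omega={\cdot}^t\circ{\cdot}^*$, with ${\cdot}^*$-invariance supplied by Proposition~\ref{QG dCB invariant under *} and ${\cdot}^t$-invariance by \cite[Proposition 2.10]{BG17}. One small correction to your sketch of the ${\cdot}^t$ step: ${\cdot}^t$ fixes $K_i,K_i'$, so it induces an anti-automorphism of $\mathcal{H}^+$ and is compatible with $\iota_+$. Consequently $(b_-\bullet b_+)^t$ is the positive element $b_+^t\bullet b_-^t$ (note $b_+^t\in\widetilde{\mathbf{B}}^-$), not a negative one, and you do not need \eqref{eq:left-double-dCB} at that point; the sign cancellation you anticipated in $(K\diamond x)^t=K\diamond x^t$ does hold.
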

	
	Recall the PBW basis of $\hU$ (and $\tU$) given in \eqref{PBW}. The following is a corollary of \cite[Proposition 6.8]{LP26}. 
	
	\begin{corollary}
		\label{cor:positive-trans-PBW-dCB}
		The transition matrix coefficients of the PBW basis $\{F^{\ba}E^{\bc}K_\mu K'_{\nu}\mid \ba,\bc\in\N^{|\Phi^+|},\mu,\nu\in\Z^\I\}$ with respect to the double ($=$dual) canonical basis of $\tU$ belong to $\N[v^{\frac{1}{2}},v^{-\frac{1}{2}}]$.
	\end{corollary}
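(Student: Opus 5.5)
The plan is to deduce the statement from the positivity of the transition matrix between the PBW basis and the dual canonical basis established for $\imath$quantum groups in \cite[Proposition 6.8]{LP26}, specialized to the diagonal case. We then transport it to the double canonical basis using Theorem~\ref{iCB is DCB}.

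The first step is to identify the PBW basis \eqref{PBW} on the geometric side. Under $\widetilde{\kappa}$ of Theorem~\ref{thm:Qin}, we realize $\tU$ as the $\imath$quantum group of diagonal type attached to the doubled quiver $Q^{\dbl}$ with the swap involution. There we check that $F^{\ba}E^{\bc}$ corresponds, up to a power of $v^{1/2}$, to the dual of the pushforward $\pi(\bv,\bw)$ of a constant sheaf, or equivalently to a Hall (PBW-type) element of the $\imath$Hall algebra. The multiplication by $K_\mu K'_\nu$ is handled by the $\diamond$-action \eqref{eq:diamond for L def}. This action differs from ordinary multiplication by an explicit power of $v^{1/2}$ that depends only on degrees, and it sends dual canonical basis elements to dual canonical basis elements. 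The root vectors $F_{\beta_k},E_{\beta_k}$ of Proposition~\ref{prop:PBW-QG} are defined with the braid operators $\widetilde{T}_i$. The normalization $v^{\frac12 n_\ba}$ is chosen precisely so that, by \cite{LP25,LP26}, $F^{\ba}E^{\bc}$ matches the PBW element used in \cite[Proposition 6.8]{LP26}.

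The second step applies \cite[Proposition 6.8]{LP26}. Geometrically it expresses such a PBW element as the class of a pushforward $\pi(\bv,\bw)$, which by the decomposition theorem \eqref{eqn:decomposition theorem} has an expansion with coefficients $a_{\bv,\bv';\bw}(v)\in\N[v,v^{-1}]$ in the IC sheaves $\cl(\bv',\bw)$. Dualizing converts this into an expansion of the PBW element in the dual IC basis $L(\bv',\bw)$ with coefficients in $\N[v^{\frac12},v^{-\frac12}]$. Under Definition~\ref{def:dual CB}, $L(\bv',\bw)$ is the dual canonical basis. The last step is to invoke Theorem~\ref{iCB is DCB}, which replaces "dual" by "double". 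We then pass from $\hU$ to $\tU$ by inverting $K_i,K_i'$, which is compatible with the $\diamond$-action.

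The main obstacle is the first step: pinning down the precise rescaling of $F^\ba E^\bc$ relative to the Hall/PBW element in \cite{LP26}. This requires the compatibility of $\widetilde{T}_i$ with the reflection functors used there and the order of the factors $F$ before $E$. The risk is a sign, not just a power of $v^{1/2}$. A sign would destroy positivity, so I would check carefully that the normalizing scalars are pure monomials $v^{m/2}$. To do this, I would first use braid-invariance of the dual canonical basis (Corollary~\ref{coro: QG double CB braid}) to reduce to the case where root vectors are simple generators.
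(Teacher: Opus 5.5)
Your top-level argument is exactly the paper's proof: apply \cite[Proposition 6.8]{LP26} to $\tU$ regarded as an $\imath$quantum group of diagonal type, then pass from ``dual'' to ``double'' by Theorem~\ref{iCB is DCB}. The paper's proof is that citation and nothing more, and used as a black box it is fine. The problem is the geometric content you attach to it in Steps 1--2, which is wrong and would not produce positivity if you relied on it.

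Dualizing a change of basis transposes \emph{and inverts} the matrix. Suppose $\pi(\bv,\bw)=\sum_{\bv'}a_{\bv,\bv';\bw}(v)\,\cl(\bv',\bw)$ as in \eqref{eqn:decomposition theorem}, and let $\{\pi(\bv,\bw)^{\vee}\}$ be the basis dual to $\{\pi(\bv,\bw)\}$. Then $L(\bv',\bw)=\sum_{\bv}a_{\bv,\bv';\bw}(v)\,\pi(\bv,\bw)^{\vee}$. So positivity runs in the opposite direction. The expansion of $\pi(\bv,\bw)^{\vee}$ in the $L(\bv',\bw)$ uses the inverse transpose of $(a_{\bv,\bv';\bw})$, which has signs. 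For $\tU_v(\mathfrak{sl}_2)$ with $\bw_1=\bw_2$, \eqref{eq:rank I pi_vw decomp diagonal case} gives $\pi(\bv,\bw)^{\vee}=L(\bv,\bw)-L(\bv+\e_{\tS_1}+\e_{\Sigma\tS_1},\bw)$ whenever the second pair is $l$-dominant.

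So PBW elements are not ``duals of pushforwards''. Nor is the PBW basis the Hall basis; the remark following the statement distinguishes them. Hall-type elements correspond to stalk functionals, which pair with $\cl(\bv',\bw)$ through the stalk at a point. Their positive coefficients are Poincar\'e polynomials of IC stalks, not the multiplicities $a_{\bv,\bv';\bw}$.

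The normalization check in your last paragraph also fails as stated. No single $\widetilde{T}_w$ makes all the root vectors in $F_{\beta_1}^{a_1}\cdots F_{\beta_l}^{a_l}E_{\beta_1}^{c_1}\cdots E_{\beta_l}^{c_l}$ simple at once. What braid invariance (Corollary~\ref{coro: QG double CB braid}) does give is that each $E_{\beta_k}$, $F_{\beta_k}$ of \eqref{eq:F-root} is itself a dual canonical basis element. Indeed, it is the image of $E_{i_k}$ (with $\widetilde{\kappa}(E_{i_k})=L(0,\e_{\sigma\tS_{i_k}})$), resp.\ $F_{i_k}$, under an automorphism that permutes that basis. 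Also $K_\mu K'_\nu=C_{\mu,\nu;0,0}$ lies in it.

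Hence $F^{\ba}E^{\bc}K_\mu K'_\nu$ is $v^{\frac12(n_\ba+n_\bc)}$ times an ordered product of dual canonical basis elements. The positivity of structure constants (the corollary right after Corollary~\ref{coro: QG double CB positive}) then expands it with coefficients in $\N[v^{\frac12},v^{-\frac12}]$. Theorem~\ref{iCB is DCB} identifies this basis with the double canonical basis. This removes your ``main obstacle'': nothing needs to be matched against \cite{LP26}, and no sign can appear, since the only scalar is a monomial in $v^{\frac12}$.
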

	
	\begin{remark}
		Bridgland \cite{Br13} gave a Hall algebra realization of $\tU$ (and also $\hU$); see \cite[Theorem 8.5]{LW19} (or \cite[Lemma 3.10]{LP25}) for a reformulation via the generic semi-derived Hall algebra $\cs\cd \widetilde{\ch}(\Lambda)$ associated to the algebra $\Lambda$.  The Hall algebra $\cs\cd \widetilde{\ch}(\Lambda)$ has a natural basis (called Hall basis), which is different to the PBW basis. As a corollary of \cite[Theorem 6.6]{LP26}, we know the transition matrix coefficients of the Hall basis with respect to the double ($=$dual) canonical basis of $\tU$ belong to $\N[v^{\frac{1}{2}},v^{-\frac{1}{2}}]$.

		In  \cite{Lus90}, Lusztig proved that the transition matrix coefficients of the canonical basis of $\U^+$ (resp. $\U^-$) with respect to the PBW basis $\{E^\bc\mid \bc\in \N^{|\Phi^+|}\}$ (resp. $\{F^\ba\mid \ba\in \N^{|\Phi^+|}\}$) belong to $\N[v^{\frac{1}{2}},v^{-\frac{1}{2}}]$. Then Corollary \ref{cor:positive-trans-PBW-dCB} generalizes (the dual version of) Lusztig's result on $\U^\pm$ to the entire quantm group $\tU$.
	\end{remark}
	
	%%%%%%%%%%%
	%%%%%%%%%%%
	\section{Dual canonical basis of $\tU_v(\mathfrak{sl}_2)$}\label{sec:dCB rank I}
	
	In this subsection we determine the dual canonical basis of $\tU_v(\mathfrak{sl}_2)$. For this we need to consider the double of the quiver with one vertex $1$ and no edges. The quiver of the regular NKS category $\mcr$ is 
	\[
	\xymatrix{\tS_1\ar[d]^{\alpha_1} & \sigma(\tS_1) \ar[l]_{\beta_1} \\
		\sigma(\Sigma \tS_1) \ar[r]^{\beta_2}& \Sigma \tS_1\ar[u]^{\alpha_2} }
	\]
	subject to $\beta_2\alpha_1=0,\beta_1\alpha_2=0$.
	We have 
	\[\N^{\mathcal{R}_0-\mathcal{S}_0}=\N\e_{\tS_1}+\N\e_{\Sigma\tS_1},\quad \N^{\mathcal{S}_0}=\N\e_{\sigma\tS_1}+\N\e_{\sigma\Sigma\tS_1}.\]
	For vectors $\bv\in\N^{\mathcal{R}_0-\mathcal{S}_0}$ and $\bw\in\N^{\mathcal{S}_0}$, we will write
	\[\bv_1=\bv(\tS_1),\quad \bv_2=\bv(\Sigma\tS_1),\quad \bw_{1}=\bw(\sigma\tS_1),\quad \bw_2=\bw(\sigma\Sigma\tS_1).\]
	For any $\bw\in\N^{\mathcal{S}_0}$, let us fix a graded vector space $\bW=\bW_1\oplus \bW_2$ of dimension vector $(\bw_1,\bw_2)$. Then the quiver varieties are given by 
	\begin{align*}
		\mathcal{M}(\bv,\bw,\mathcal{R})&=\{(x,y,\bV\subseteq\bW)\mid 
		\dimv\bV=(\bv_1,\bv_2),x|_{\bV_1}=y|_{\bV_2}=0,x(\bW_1)\subseteq\bV_2,y(\bW_2)\subseteq\bV_1\},\\
		\mathcal{M}_0(\bw,\mathcal{R})&=\{(x,y) \mid xy=yx=0\},\\
		\mathcal{M}_0^{\reg}(\bw,\mathcal{R})&=\{(x,y)\in\mathcal{M}_0(\bv,\bw)\mid \rank(y)=\bv_1,\rank(x)=\bv_2\},
	\end{align*}
	where $x:\bW_1\rightarrow\bW_2,y:\bW_2\rightarrow\bW_1$ are linear maps. Note that for $\mathcal{M}_0^{\reg}(\bv,\bw,\mathcal{R})\neq\emptyset$ we must have $\bv_1+\bv_2\leq\min\{\bw_1,\bw_2\}$. Also, the variety $\mathcal{M}(\bv,\bw,\mathcal{R})$ is smooth and
	\[\dim\mathcal{M}(\bv,\bw,\mathcal{R})=(\bv_1+\bv_2)(\bw_1+\bw_2-\bv_1-\bv_2).\]
	
	The image of the map $\pi_{\bv,\bw}:\mathcal{M}(\bv,\bw,\mathcal{R})\rightarrow \mathcal{M}_0(\bw)$ consists of $\mathcal{M}_0^{\reg}(\bv',\bw,\mathcal{R})$ such that 
	\begin{equation}\label{eq:rank I pi_vw image char}
		\bv'_1\leq
		\min\{\bv_1,\bw_2-\bv_2'\},\quad \bv'_2\leq\min\{\bv_2,\bw_1-\bv_1'\}.
	\end{equation}
	and we denote by $I_\bv(\bw)$ the set of vectors $\bv'$ satisfying the above conditions.
	
	Let $F_{\bv,\bv'}$ denote the fiber of $\pi_{\bv,\bw}$ over a generic point of $\mathcal{M}_0^{\reg}(\bv',\bw,\mathcal{R})$. An immediate calculation shows that 
	\begin{equation}\label{eq:rank I defection number}
		2\dim F_{\bv,\bv'} + \dim\mathcal{M}_0^{\reg}(\bv',\bw,\mathcal{R}) - \dim\mathcal{M}(\bv,\bw,\mathcal{R}) = \tr_{\bv,\bv'}(\bw_1-\bw_2-\tr_{\bv,\bv'}),
	\end{equation}
	where $\tr_{\bv,\bv'}:=(\bv_1-\bv_2)-(\bv'_1-\bv'_2)$. In particular, if $|\bw_1-\bw_2|\leq 1$ then $\pi_{\bv,\bw}$ is semi-small, so by \cite{dM02} (cf. \cite[Theorem 7.1]{LP26}) we obtain an explicit decomposition
	\begin{equation}\label{eq:rank I pi_vw decomp diagonal case}
		\pi(\bv,\bw)=\begin{dcases}
			\bigoplus_{\substack{\bv'\in I_\bv(\bw)\\ \tr_{\bv,\bv'}=0}}\mathcal{L}(\bv',\bw,\mathcal{R})& \text{if $|\bw_1-\bw_2|=0$},\\
			\bigoplus_{\substack{\bv'\in I_\bv(\bw)\\ \text{$\tr_{\bv,\bv'}=0$ or $\bw_1-\bw_2$}}}\mathcal{L}(\bv',\bw,\mathcal{R})& \text{if $|\bw_1-\bw_2|=1$}.
		\end{dcases}
	\end{equation}
	
	Given vectors $\bw',\bw''$ in $\N^{\mathcal{S}_0}$, we consider the restriction functor $\Res^{\bw}_{\bw',\bw''}$, where $\bw=\bw'+\bw''$. In our case \eqref{eqn:comultiplication} reads
	\begin{equation}\label{eq:rank I res formula}
		\Res^{\bw}_{\bw',\bw''}(\pi(\bv,\bw))=\bigoplus_{\bv'+\bv''=\bv}\pi(\bv',\bw')\boxtimes\pi(\bv'',\bw'')[d((\bv',\bw'),(\bv'',\bw''))],
	\end{equation}
	where
	\[d((\bv',\bw'),(\bv'',\bw''))=(\bw'_1-\bw'_2)(\bv''_1-\bv''_2)-(\bw''_1-\bw''_2)(\bv'_1-\bv'_2).\]
	
	\begin{example}\label{eg:rank I rest to EF formula}
		For $\bw\in\N^{\mathcal{S}_0}$, we define vectors $\bw^{(1)}=\bw_1\e_{\sigma\tS_1}$ and $\bw^{(2)}=\bw_2\e_{\sigma\Sigma\tS_1}$. Then \eqref{eq:rank I res formula} implies
		\begin{equation*}
			\Res^{\bw}_{\bw^{(1)},\bw^{(2)}}(\pi(\bv,\bw))=\pi(\bv^{(1)},\bw^{(1)})\boxtimes \pi(\bv^{(2)},\bw^{(2)})[\bv_1\bw_2-\bv_2\bw_1]
		\end{equation*}
		where $\bv^{(1)}=\bv_1\e_{\tS_1}$ and $\bv^{(2)}=\bv_2\e_{\Sigma\tS_1}$. From the definition we easily derive that
		\[\pi(\bv^{(1)},\bw^{(1)})=\qbinom{\bw_1}{\bv_1}\mathcal{L}(0,\bw^{(1)}),\quad \pi(\bv^{(2)},\bw^{(2)})=\qbinom{\bw_2}{\bv_2}\mathcal{L}(0,\bw^{(2)})\]
		so 
		\[\Res^{(\bw_1,\bw_2)}_{(\bw_1,0),(0,\bw_2)}(\pi(\bv,\bw))=\qbinom{\bw_1}{\bv_1}\qbinom{\bw_2}{\bv_2} \mathcal{L}(0,\bw^{(1)}) \boxtimes \mathcal{L}(0,\bw^{(2)} [\bv_1\bw_2-\bv_2\bw_1].\]
	\end{example}
	%%%%%%%
	
	We now assume $\bw_1\geq\bw_2$ and set $k=\bw_1-\bw_2$. Then we may write 
	\begin{equation}\label{eq:rank I res of IC to diagonal-1}
		\begin{aligned}
			\Res^{\bw}_{k\e_{\sigma\tS_1},\bw-k\e_{\sigma\tS_1}} (\mathcal{L}(\bv,\bw))=\bigoplus_{\bv'}b_{\bv,\bv'}\cdot \mathcal{L}(0,k\e_{\sigma\tS_1}) \boxtimes \mathcal{L}(\bv',\bw-k\e_{\sigma\tS_1})[k(\bv'_1-\bv'_2)]
		\end{aligned}
	\end{equation}
	for some $b_{\bv,\bv'}\in\N[v,v^{-1}]$. Note that $b_{\bv,\bv}=1$ by \eqref{eqn: leading term}, and $b_{\bv,\bv'}=0$ if the condition $\bv'\leq\bv$ is not satisfied (this can be seen from \eqref{eqn:comultiplication}).
	
	Using the decomposition \eqref{eq:rank I pi_vw decomp diagonal case}, we find that 
	\begin{equation}\label{eq:rank I res of IC to diagonal-2}
		\begin{aligned}
			&\Res^{\bw}_{k\e_{\sigma\tS_1},\bw-k\e_{\sigma\tS_1}} (\pi(\bv,\bw))= \bigoplus_{r=0}^{k} \qbinom{k}{r} \pi(0,k\e_{\sigma\tS_1}) \boxtimes \pi(\bv-r\e_{\tS_1},\bw-k\e_{\sigma\tS_1})[k(\bv_1-\bv_2-r)]\\
			=&\bigoplus_{r=0}^{k}\bigoplus_{\substack{\bv'\in I_{\bv-r\e_{\tS_1}}(\bw-k\e_{\sigma\tS_1})\\ \tr_{\bv,\bv'}=r}} \qbinom{k}{r} \mathcal{L}(0,k\e_{\sigma\tS_1}) \boxtimes \mathcal{L}(\bv',\bw-k\e_{\sigma\tS_1})[k(\bv_1-\bv_2-r)].
		\end{aligned}
	\end{equation}
	It is easy to check that
	%\[\bigcup_{r=0}^{k}\{\bv'\in I_{\bv-r\e_{\tS_1}}(\bw-k\e_{\sigma\tS_1}) \mid \tr_{\bv,\bv'}=r\} = \bigcup_{r=0}^{k}\{\bv' \in I_{\bv}(\bw) \mid \tr_{\bv,\bv'}=r\}\]
	\[\{\bv'\in I_{\bv-r\e_{\tS_1}}(\bw-k\e_{\sigma\tS_1}) \mid \tr_{\bv,\bv'}=r\} = \{\bv' \in I_{\bv}(\bw) \mid \tr_{\bv,\bv'}=r\},\quad \forall 0\leq r\leq k.\]
	so (\ref{eq:rank I res of IC to diagonal-2}) can be rewritten as 
	\begin{equation}\label{eq:rank I res of IC to diagonal-3}
		\begin{aligned}
			\Res^{\bw}_{k\e_{\sigma\tS_1},\bw-k\e_{\sigma\tS_1}} (\pi(\bv,\bw)) = \bigoplus_{\substack{\bv'\in I_{\bv}(\bw)\\ 0\leq \tr_{\bv,\bv'}\leq k}} \qbinom{k}{\tr_{\bv,\bv'}} \mathcal{L}(0,k\e_{\sigma\tS_1}) \boxtimes \mathcal{L}(\bv',\bw-k\e_{\sigma\tS_1})[k(\bv_1-\bv_2-r)].
		\end{aligned}
	\end{equation}
	For any given $\bv'\in I_{\bv}(\bw)$, if $\mathcal{L}(\bv',\bw)$ apprears in the decomposition of $\pi(\bv',\bw)$ then $\Res^{\bw}_{k\e_{\sigma\tS_1},\bw-k\e_{\sigma\tS_1}}(\mathcal{L}(\bv',\bw))$ is a direct summand of $\Res^{\bw}_{k\e_{\sigma\tS_1},\bw-k\e_{\sigma\tS_1}} (\pi(\bv,\bw))$. In this case we will have
	\begin{equation*}%\label{eq:rank I res of IC to diagonal-4}
		\begin{aligned}
			\Res^{\bw}_{k\e_{\sigma\tS_1},\bw-k\e_{\sigma\tS_1}}(\mathcal{L}(\bv',\bw))=\bigoplus_{\substack{\bv''\leq\bv' \\ 0\leq \tr_{\bv,\bv''}\leq k}} b_{\bv',\bv''}\cdot \mathcal{L}(0,k\e_{\sigma\tS_1}) \boxtimes \mathcal{L}(\bv'',\bw-k\e_{\sigma\tS_1})[k(\bv'_1-\bv'_2-r)]
		\end{aligned}
	\end{equation*}
	
	On the other hand, if we write $\pi(\bv,\bw)=\bigoplus_{\bv'\in I_\bv(\bw)}a_{\bv,\bv'}\mathcal{L}(\bv',\bw)$, then
	\begin{align*}
		&\Res^{\bw}_{k\e_{\sigma\tS_1},\bw-k\e_{\sigma\tS_1}} (\pi(\bv,\bw) )= \bigoplus_{\bv'\in I_\bv(\bw)}a_{\bv,\bv'}\cdot\Res^{\bw}_{k\e_{\sigma\tS_1},\bw-k\e_{\sigma\tS_1}} \mathcal{L}(\bv',\bw)\\
		&=\bigoplus_{\bv'\in I_\bv(\bw)}\bigoplus_{\substack{\bv''\leq\bv'\\ 0\leq \tr_{\bv,\bv''}\leq k}} a_{\bv,\bv'}\cdot b_{\bv',\bv''} \mathcal{L}(0,k\e_{\sigma\tS_1}) \boxtimes \mathcal{L}(\bv'',\bw-k\e_{\sigma\tS_1})[k(\bv'_1-\bv'_2-r)].
	\end{align*}
	This implies for any fixed $\bv''\in I_{\bv}(\bw)$ with $0\leq \tr_{\bv,\bv''}\leq k$,
	\begin{equation}\label{eq:rank I res of IC to diagonal-5}
		\qbinom{k}{\tr_{\bv,\bv''}} v^{-k(\bv_1-\bv_2-\tr_{\bv,\bv''})} = \sum_{\substack{\bv''\leq\bv'\\ 0\leq \tr_{\bv,\bv''}\leq k}} a_{\bv,\bv'} b_{\bv',\bv''}\cdot v^{-k(\bv_1-\bv_2-\tr_{\bv,\bv'})}
	\end{equation}
	In particular, as $b_{\bv'',\bv''}=1$, the coefficient $a_{\bv,\bv'}$ is nonzero only for $0\leq \tr_{\bv,\bv'}\leq k$. Also recall that $\tr_{\bv,\bv'}-\tr_{\bv,\bv''}=r_{\bv'',\bv'}=-\tr_{\bv',\bv''}$, so (\ref{eq:rank I res of IC to diagonal-5}) gives
	\begin{equation}\label{eq:rank I res of IC to diagonal-6}
		\qbinom{k}{\tr_{\bv,\bv''}}=\sum_{\substack{\bv''\leq\bv'\\ 0\leq \tr_{\bv,\bv'}\leq k}} a_{\bv,\bv'} b_{\bv,\bv''}\cdot v^{-\tr_{\bv',\bv''}}.
	\end{equation}
	
	To proceed further we consider the automorphism $c:\mathcal{M}_0(\bw)\rightarrow\mathcal{M}_0(\bw)$ defined by $(x,y)\mapsto(y^t,x^t)$, where $\cdot^t$ denotes transpose. It is clear that this map permutes the stratification of $\mathcal{M}_0(\bw)$: it sends $\mathcal{M}_0^{\reg}(\bv,\bw,\mathcal{R})$ to $\mathcal{M}_0^{\reg}(\bv^c,\bw,\mathcal{R})$, where $\bv^c=\bv_2\e_{\tS_1}+\bv_1\e_{\Sigma\tS_1}$.
	
	\begin{lemma}\label{eq:rank I coefficient b symmetry}
		$\overline{b_{\bv',\bv''}}=b_{\bv'^c,\bv''^c}$.
	\end{lemma}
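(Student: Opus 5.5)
We will exploit the automorphism $c:(x,y)\mapsto(y^t,x^t)$ of $\mathcal{M}_0(\bw)$ together with its behaviour under the restriction functor and under Verdier duality. First, $c$ is an isomorphism of varieties that maps the stratum $\mathcal{M}_0^{\reg}(\bv,\bw,\mathcal{R})$ onto $\mathcal{M}_0^{\reg}(\bv^c,\bw,\mathcal{R})$. Hence $c^*\mathcal{L}(\bv^c,\bw)\cong\mathcal{L}(\bv,\bw)$, since IC complexes with trivial local system are preserved by isomorphisms. The same statement holds for $\bw-k\e_{\sigma\tS_1}$; note that $c$ preserves $\bw$, because it only exchanges the roles of $x$ and $y$ while $\bW_1,\bW_2$ stay in place.

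Second, we compare the convolution diagram defining $\Res^{\bw}_{k\e_{\sigma\tS_1},\bw-k\e_{\sigma\tS_1}}$ with its transform under $c$. Transposition exchanges subobjects and quotients: if $W_0\subset\bW$ is $(x,y)$-stable, then $W_0^\perp\subset\bW^*$ is $(y^t,x^t)$-stable. Identifying $\bW^*\cong\bW$, this shows
\[
c^*\circ \widetilde{\Res}^{\bw}_{\bw',\bw''}\cong (\text{swap})\circ \widetilde{\Res}^{\bw}_{\bw'',\bw'}{}^{\text{dual}}\circ c^*,
\]
where $\widetilde{\Res}^{\dual}$ denotes the restriction along the transposed diagram, namely $q'^!$ followed by $p'_*$. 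By Verdier duality this is $D\circ \widetilde{\Res}\circ D$. Since $\mathcal{L}(0,k\e_{\sigma\tS_1})$ is the constant sheaf on a point-like space (the variety $\rep(k\e_{\sigma\tS_1},\cs)$ is a point), the swap of factors is harmless up to relabelling: $\bw'=k\e_{\sigma\tS_1}$ is supported on the block with zero map. Equivalently, and more simply, we can use hyperbolic localization. By \eqref{eq:res of tRi by pullback}-type identifications, $\Res$ is a hyperbolic restriction for a $\C^*$-action whose attracting and repelling sets are exchanged by $c$. Braden's theorem gives $(j^+)^!(i^+)^*\cong(j^-)^*(i^-)^!$, so conjugating by $c$ turns $\Res$ into $D\circ\Res\circ D$.

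Third, we apply this to $\mathcal{L}(\bv',\bw)$, which is Verdier self-dual. We then have
\[
\Res(\mathcal{L}(\bv'^c,\bw))\cong \Res(c^*\mathcal{L}(\bv',\bw))\cong c^*D\,\Res(D\mathcal{L}(\bv',\bw))=c^*D\,\Res(\mathcal{L}(\bv',\bw)).
\]
Expanding via \eqref{eq:rank I res of IC to diagonal-1}, the operator $D$ sends the shift $[m]$ to $[-m]$, i.e.\ $v\mapsto v^{-1}$, and fixes each IC summand. The operator $c^*$ sends $\mathcal{L}(\bv'',\cdot)$ to $\mathcal{L}(\bv''^c,\cdot)$. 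Comparing coefficients of $\mathcal{L}(0,k\e_{\sigma\tS_1})\boxtimes\mathcal{L}(\bv''^c,\bw-k\e_{\sigma\tS_1})$ then yields $b_{\bv'^c,\bv''^c}\,v^{-k(\bv''_2-\bv''_1)}=\overline{b_{\bv',\bv''}}\,v^{k(\bv''_1-\bv''_2)}$. The normalising shifts $[k(\bv''_1-\bv''_2)]$ were built into \eqref{eq:rank I res of IC to diagonal-1} precisely so that they cancel, since $(\bv''^c)_1-(\bv''^c)_2=-(\bv''_1-\bv''_2)$. This gives $\overline{b_{\bv',\bv''}}=b_{\bv'^c,\bv''^c}$.

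The main obstacle is the second step: making precise that $c$ intertwines $\Res$ with its Verdier-dual version $D\Res D$. The hyperbolic-localization formulation is the cleanest route. Concretely, we would check that $c$ reverses the relevant $\C^*$-action, $t\mapsto t^{-1}$, up to the torus $G_\bw$, and then invoke Braden's theorem in the form of \cite[Theorem 2.10.3]{Ach21}. Equivariance holds because all $\pi(\bv,\bw)$ are $\C^*$-equivariant. A further check is needed that the twist $v^{\frac12\langle\cdot,\cdot\rangle_a}$ in \eqref{eq:tw} is symmetric under this exchange; if it is not, we would absorb the discrepancy into the explicit shift and verify the cancellation against \eqref{eq:rank I res formula} directly.
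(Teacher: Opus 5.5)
Your proposal is correct and follows essentially the paper's proof. Braden's hyperbolic localization identifies $(j^+)^!(i^+)^*$ (restriction through the attracting set) with $(j^-)^*(i^-)^!$ (through the repelling set), the transpose map $c$ exchanges the two convolution diagrams, and Verdier self-duality of the IC complexes together with the normalizing shifts $[k(\bv''_1-\bv''_2)]$ gives the coefficient comparison.

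Two remarks. First, your initial ``transposition'' formulation in the second step is not right by itself: conjugating by the isomorphism $c$ carries $p_!q^*$ to $p'_!q'^*$ on the transposed diagram, not to its Verdier dual, so the duality genuinely enters through Braden's theorem, as in the route you end up using. Second, your worry about the twist is moot here, since $Q^{\dbl}$ has no arrows in rank one and hence $\langle-,-\rangle_{Q^{\dbl},a}=0$.
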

	\begin{proof}
		Consider the following commutative convolution diagram for $\Res^{\bw}_{k\e_{\sigma\tS_1},\bw-k\e_{\sigma\tS_1}}$ (cf. \S\S~\ref{subsec: compare cb}):
		\[\begin{tikzcd}
			&F_{k\e_{\sigma\tS_1},\bw-k\e_{\sigma\tS_1}}\ar[rd,"j^+"]&\\
			\mathcal{M}_0(\bw-k\e_{\sigma\tS_1},\mathcal{R})\ar[ru,"i^+"]\ar[rd,swap,"i^-"]&&\mathcal{M}_0(\bw,\mathcal{R})\\
			&F_{\bw-k\e_{\sigma\tS_1},k\e_{\sigma\tS_1}}\ar[ru,swap,"j^-"]&
		\end{tikzcd}\]
		By \cite[Theorem 2.10.3]{Ach21}, we have the following isomorphism for restriction functors:
		\[\Res^{\bw}_{k\e_{\sigma\tS_1},\bw-k\e_{\sigma\tS_1}}=(i^+)^!(j^+)^*,\quad \Res^{\bw}_{\bw-k\e_{\sigma\tS_1},k\e_{\sigma\tS_1}}=(i^-)^!(j^-)^*.\]
		In particular, from the definition of $b_{\bv',\bv''}$ we have
		\[(i^+)^!(j^+)^*\mathcal{L}(\bv',\bw)=\bigoplus_{\bv''\leq\bv'} b_{\bv',\bv''}\cdot \mathcal{L}(0,k\e_{\sigma\tS_1}) \boxtimes \mathcal{L}(\bv'',\bw-k\e_{\sigma\tS_1})[k(\bv''_1-\bv''_2)].\]
		Taking Verdier dual, this gives
		\begin{equation}\label{eq:rank I coefficient b symmetry-1}
			(i^+)^*(j^+)^!\mathcal{L}(\bv',\bw)=\bigoplus_{\bv''\leq\bv'} \ov{b_{\bv',\bv''}}\cdot \mathcal{L}(0,k\e_{\sigma\tS_1}) \boxtimes \mathcal{L}(\bv'',\bw-k\e_{\sigma\tS_1})[k(\bv''_2-\bv''_1)].
		\end{equation}
		On the other hand, by the hyperbolic localization theorem \cite{Bra03}, we know that 
		\[(i^+)^!(j^+)^*\mathcal{L}(\bv',\bw) \cong (i^-)^*(j^-)^!\mathcal{L}(\bv',\bw)\]
		so taking Verdier dual gives
		\begin{equation}\label{eq:rank I coefficient b symmetry-2}
			\begin{aligned}
				(i^-)^!(j^-)^*\mathcal{L}(\bv',\bw) &= (i^+)^*(j^+)^!\mathcal{L}(\bv',\bw)\\
				&=\bigoplus_{\bv''\leq\bv'}\overline{b_{\bv',\bv''}}\cdot \mathcal{L}(0,k\e_{\sigma\tS_1}) \boxtimes \mathcal{L}(\bv'',\bw-k\e_{\sigma\tS_1})[k(\bv''_2-\bv''_1)].
			\end{aligned}
		\end{equation}
		Now we have a commutative diagram 
		\[\begin{tikzcd}
			\mathcal{M}_0(\bw-k\e_{\sigma\tS_1},\mathcal{R})\ar[r,"i^+"]\ar[d,shift left=2pt,"c"] & F_{k\e_{\sigma\tS_1},\bw-k\e_{\sigma\tS_1}}\ar[r,"j^+"]\ar[d,shift left=2pt,"c"] & \mathcal{M}_0(\bw,\mathcal{R})\ar[d,shift left=2pt,"c"]\\
			\mathcal{M}_0(\bw-k\e_{\sigma\tS_1},\mathcal{R})\ar[r,"i^-"]\ar[u,shift left=2pt,"c^{-1}"] & F_{\bw-k\e_{\sigma\tS_1},k\e_{\sigma\tS_1}}\ar[r,"j^-"]\ar[u,shift left=2pt,"c^{-1}"] & \mathcal{M}_0(\bw,\mathcal{R})\ar[u,shift left=2pt,"c^{-1}"]
		\end{tikzcd}\]
		Since $c$ is an isomorphism, we then have
		\[(c^{-1})^*(i^-)^!(j^-)^*c^*=(c^{-1})^*(i^-)^!c^*(j^+)^*=(c^{-1})^*c^*(i^+)^!(j^+)^*=(i^+)^!(j^+)^*\]
		whence by (\ref{eq:rank I coefficient b symmetry-2}),
		\begin{equation*}
			\begin{aligned}
				(i^+)^!(j^+)^*\mathcal{L}(\bv',\bw)&=(c^{-1})^*(i^-)^!(j^-)^*c^*\mathcal{L}(\bv',\bw)=(c^{-1})^*(i^-)^!(j^-)^*\mathcal{L}(\bv'^c,\bw)\\
				&=(c^{-1})^*\bigoplus_{\bv''^c\leq\bv'^c}\overline{b_{\bv'^c,\bv''^c}}\cdot \mathcal{L}(0,k\e_{\sigma\tS_1}) \boxtimes \mathcal{L}(\bv''^c,\bw-k\e_{\sigma\tS_1})[k(\bv''_1-\bv''_2)]\\
				&=\bigoplus_{\bv''\leq\bv'}\overline{b_{\bv'^c,\bv''^c}}\cdot \mathcal{L}(0,k\e_{\sigma\tS_1}) \boxtimes \mathcal{L}(\bv'',\bw-k\e_{\sigma\tS_1})[k(\bv''_1-\bv''_2)]
			\end{aligned}
		\end{equation*}
		Comparing with (\ref{eq:rank I coefficient b symmetry-1}) gives $\overline{b_{\bv',\bv''}}=b_{\bv'^c,\bv''^c}$.
	\end{proof}

	We may henceforth assume that $\bv_1+\bv_2\leq\min\{\bw_1,\bw_2\}$, so that $\bv$ is the maximal element in $I_{\bv}(\bw)$. We denote by $v^{-\dim\mathcal{M}_0^{\reg}(\bv',\bw)}P_{\bv',\bv''}$ the Poincar\'e polynomial of the stalk of $\mathcal{L}(\bv',\bw)$ at a generic point of $\mathcal{M}_0^{\reg}(\bv',\bw,\mathcal{R})$. It is well-known that
	\[\deg(P_{\bv',\bv''})<\dim\mathcal{M}_0^{\reg}(\bv',\bw,\mathcal{R})-\dim\mathcal{M}_0^{\reg}(\bv'',\bw,\mathcal{R}).\]
	Consider the decomposition of $\pi(\bv,\bw)$. Let $\bv''\in I_{\bv}(\bw)$ be such that $0\leq \tr_{\bv,\bv''}\leq k$; by taking stalk at $x_{\bv'',\bw}$, we find that 
	\begin{equation}\label{eq:rank I Poincare poly-1}
		\begin{aligned}
			v^{-\dim\mathcal{M}(\bv,\bw,\mathcal{R})} P(F_{\bv,\bv''}) =& a_{\bv,\bv} \cdot v^{-\dim\mathcal{M}(\bv,\bw,\mathcal{R})} P_{\bv,\bv''} + a_{\bv,\bv''} v^{-\dim\mathcal{M}_0^{\reg}(\bv'',\bw,\mathcal{R})} \\
			&+ \sum_{\bv'\in I_{\bv}(\bw)\setminus\{\bv\},\bv'>\bv''} a_{\bv,\bv'} \cdot v^{-\dim\mathcal{M}_0^{\reg}(\bv',\bw,\mathcal{R})} P_{\bv',\bv''}.
		\end{aligned}
	\end{equation}
	where $P(F_{\bv,\bv''})$ is the Poincar\'e polynomial of the fiber $F_{\bv,\bv''}$. By our assumption on $\bv$, it is easy to see that $a_{\bv,\bv}=1$. Therefore, applying \eqref{eq:rank I defection number}, the equation (\ref{eq:rank I Poincare poly-1}) gives 
	\begin{equation}\label{eq:rank I Poincare poly-2}
		\begin{aligned}
			P(F_{\bv,\bv''}) =& P_{\bv,\bv''} + v^{2\dim F_{\bv,\bv''}-\tr_{\bv,\bv''}(k-\tr_{\bv,\bv''})} a_{\bv,\bv''} \\
			&+ \sum_{\bv'\in I_{\bv}(\bw)\setminus\{\bv\},\bv'>\bv''} v^{2\dim F_{\bv,\bv'}-\tr_{\bv,\bv'}(k-\tr_{\bv,\bv'})} a_{\bv,\bv'}P_{\bv',\bv''}.
		\end{aligned}
	\end{equation}
	Note that the polynomial $P_{\bv,\bv''}$ satisfies
	\begin{equation}\label{eq:rank I Poincare poly-3}
		\deg(P_{\bv,\bv''}) < \dim\mathcal{M}_0^{\reg}(\bv,\bw,\mathcal{R}) - \dim\mathcal{M}_0^{\reg}(\bv'',\bw,\mathcal{R}) = 2\dim F_{\bv,\bv''}-\tr_{\bv,\bv''}(k-\tr_{\bv,\bv''}).
	\end{equation}
	Also, from \eqref{eq:rank I res of IC to diagonal-6} we see that $a_{\bv,\bv'}$ has degree $\leq \tr_{\bv,\bv'}(k-\tr_{\bv,\bv'})$, so 
	\begin{equation}\label{eq:rank I Poincare poly-4}
		\begin{aligned}
			&\deg\big(a_{\bv,\bv'} \cdot v^{2\dim F_{\bv,\bv'}-\tr_{\bv,\bv'}(k-\tr_{\bv,\bv'})} P_{\bv',\bv''}\big) \\
			&\leq \tr_{\bv,\bv'}(k-\tr_{\bv,\bv'})+2\dim F_{\bv,\bv'}-\tr_{\bv,\bv'}(k-\tr_{\bv,\bv'})+\deg(P_{\bv',\bv''})\\
			&=\tr_{\bv,\bv'}(k-\tr_{\bv,\bv'})+\dim\mathcal{M}(\bv,\bw,\mathcal{R})-\dim\mathcal{M}_0^{\reg}(\bv',\bw,\mathcal{R})+\deg(P_{\bv',\bv''})\\
			&=2\dim F_{\bv,\bv'}+\deg(P_{\bv',\bv''}).
		\end{aligned}
	\end{equation}
	
	\begin{lemma}\label{lem:rank I IC stalk inequality}
		For any $\bv''\leq\bv'$, we have
		\begin{equation}\label{IC sheaf stalk Poincare inequality-1}
			\deg(P_{\bv',\bv''})<\dim\mathcal{M}_0^{\reg}(\bv',\bw,\mathcal{R})-\dim\mathcal{M}_0^{\reg}(\bv'',\bw,\mathcal{R})-|\tr_{\bv',\bv''}|(k-|\tr_{\bv',\bv''}|).
		\end{equation}
	\end{lemma}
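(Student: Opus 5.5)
Set $\bv'=\bv$ (the maximal element of $I_{\bv}(\bw)$, using the standing assumption $\bv_1+\bv_2\le\min\{\bw_1,\bw_2\}$) and argue by induction on $\bv-\bv''$ in the partial order. The base case $\bv''=\bv$ is vacuous. For the inductive step, fix $\bv''$ with $0\le \tr_{\bv,\bv''}\le k$ and read off the top-degree part of \eqref{eq:rank I Poincare poly-2}. The fiber $F_{\bv,\bv''}$ is a compact variety; in rank one it is a product of Grassmannian-type varieties, from the explicit description of $\cm(\bv,\bw,\mcr)$. Hence $P(F_{\bv,\bv''})$ is palindromic of degree exactly $2\dim F_{\bv,\bv''}$ with leading coefficient the number of top-dimensional components.

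Next, compare degrees of the terms in \eqref{eq:rank I Poincare poly-2}. By \eqref{eq:rank I Poincare poly-4} together with the induction hypothesis applied to $P_{\bv',\bv''}$ for intermediate $\bv'$, every summand with $\bv'\neq\bv,\bv''$ has degree strictly below $2\dim F_{\bv,\bv''}-|\tr_{\bv',\bv''}|(k-|\tr_{\bv',\bv''}|)+(\text{correction})$. I would make this precise by using the identity $\dim F_{\bv,\bv'}+\dim F_{\bv',\bv''}$ versus $\dim F_{\bv,\bv''}$, which is computable from \eqref{eq:rank I defection number}. The summand $v^{2\dim F_{\bv,\bv''}-\tr(k-\tr)}a_{\bv,\bv''}$ has its degree controlled by \eqref{eq:rank I res of IC to diagonal-6}, which says $a_{\bv,\bv''}$ has degree at most $\tr_{\bv,\bv''}(k-\tr_{\bv,\bv''})$. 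Bar-invariance of $a_{\bv,\bv''}$ then gives that its bottom degree is at least $-\tr(k-\tr)$.

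The key input is the symmetry. Lemma~\ref{eq:rank I coefficient b symmetry} together with the automorphism $c$ shows that $P_{\bv',\bv''}=P_{\bv'^c,\bv''^c}$, since $c$ maps strata to strata and $\IC$ to $\IC$. Note that $\tr_{\bv'^c,\bv''^c}=-\tr_{\bv',\bv''}$, so it suffices to treat $\tr_{\bv',\bv''}\ge 0$, which is the case covered by \eqref{eq:rank I res of IC to diagonal-6}. The plan is then to compare the top-degree coefficients on both sides of \eqref{eq:rank I Poincare poly-2}. The palindromic fiber polynomial must be matched between degrees $2\dim F-\tr(k-\tr)$ and $2\dim F$ by the $a_{\bv,\bv''}$ term alone, since the $a$-term already saturates the band $[2\dim F-2\tr(k-\tr),\,2\dim F]$. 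This forces $P_{\bv,\bv''}$ to have degree strictly below $2\dim F_{\bv,\bv''}-2\tr(k-\tr)$, i.e. below $\dim\cm_0^{\reg}(\bv)-\dim\cm_0^{\reg}(\bv'')-\tr(k-\tr)$ by \eqref{eq:rank I Poincare poly-3}.

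The main obstacle is this saturation claim: showing that the coefficients of $P(F_{\bv,\bv''})$ in the top band are exactly matched by the $a_{\bv,\bv''}$ and intermediate terms. I would attack it by computing $P(F_{\bv,\bv''})$ explicitly as a product of $q$-binomials and comparing with $\qbinom{k}{\tr}$ from \eqref{eq:rank I res of IC to diagonal-6}. The general case $\bv'\ne\bv$ then follows by applying the same argument with $\bv'$ in place of $\bv$, replacing $\bw$ by a smaller $\bw$ if needed so that $\bv'$ is maximal.
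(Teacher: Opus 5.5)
\textbf{There is a genuine gap: you apply the symmetry $c$ in the wrong direction.} Write $\tr=\tr_{\bv',\bv''}$ and $D=\dim\cm_0^{\reg}(\bv',\bw,\mcr)-\dim\cm_0^{\reg}(\bv'',\bw,\mcr)$.

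You correctly note three facts about $c$: it gives $P_{\bv',\bv''}=P_{\bv'^c,\bv''^c}$, it preserves $\dim\cm_0^{\reg}$, and it flips the sign of $\tr$. So it is enough to prove the inequality for one sign of $\tr$.

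But you reduce to $\tr\ge 0$, which is exactly the side where the fibers are too large. There \eqref{eq:rank I defection number} gives $2\dim F_{\bv',\bv''}=D+\tr(k-\tr)$. The trivial bound $\deg P_{\bv',\bv''}\le 2\dim F_{\bv',\bv''}$ is then useless, and everything rests on your ``saturation claim'', which you do not prove.

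That claim is not routine. The relation \eqref{eq:rank I res of IC to diagonal-6} only bounds $\deg a_{\bv,\bv''}$ from above. To know that the band $[2\dim F-2\tr(k-\tr),\,2\dim F]$ of $P(F_{\bv,\bv''})$ comes from the $a_{\bv,\bv''}$-term rather than from $P_{\bv,\bv''}$, you would need the exact value $a_{\bv,\bv''}=\qbinom{k}{\tr}$ and the vanishing of the off-diagonal $b$'s.

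In the paper those facts are derived from this very lemma: the proposition that $a_{\bv,\bv''}\neq 0$, Corollary~\ref{eq:rank I b_vv only nonzero}, and Proposition~\ref{prop:rank I res to diagonal coefficient}. Using them here is circular. An independent computation through the explicit Grassmannian fibers would need a separate uniqueness argument, which the proposal does not contain.

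Separately, shrinking $\bw$ is neither needed nor harmless. Any $\bv'$ indexing $\cl(\bv',\bw)$ already satisfies $\bv'_1+\bv'_2\le\min\{\bw_1,\bw_2\}$, and changing $\bw$ changes $k$.

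\textbf{The missing observation: with the opposite sign the lemma is immediate, with no induction.} This is the paper's argument. Suppose $\tr<0$.

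\begin{itemize}
\item Since $\bv'$ is maximal in $I_{\bv'}(\bw)$, we have $\dim\cm(\bv',\bw,\mcr)=\dim\cm_0^{\reg}(\bv',\bw,\mcr)$.
\item $\cl(\bv',\bw)$ occurs in $\pi(\bv',\bw)$ with coefficient $1$.
\item Every other contribution to the stalk at a generic point of $\cm_0^{\reg}(\bv'',\bw,\mcr)$ has coefficients in $\N$, so nothing cancels.
\end{itemize}

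Hence $\deg P_{\bv',\bv''}\le \deg P(F_{\bv',\bv''})=2\dim F_{\bv',\bv''}=D-|\tr|(k+|\tr|)$. Because $|\tr|>0$, this is strictly less than $D-|\tr|(k-|\tr|)$.

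The case $\tr>0$ is carried to this one by $c$. The case $\tr=0$ with $\bv''\neq\bv'$ is just the IC support condition.
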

	\begin{proof}
		Since $\bv''\leq\bv'$, we can write $\bv''=\bv'-s\e_{\tS_1}-t\e_{\Sigma\tS_1}$. Moreover, as $P_{\bv',\bv''}=P_{\bv'^c,\bv''^c}$, we may assume that $s>t$ (the case $s=t$ being trivial). In this case, $\mathcal{L}(\bv''^c,\bw)$ does not appear in the decomposition of $\pi(\bv'^c,\bw)$ (this can be seen from \eqref{eq:rank I res of IC to diagonal-3}), so taking stalk of $\mathcal{L}(\bv''^c,\bw,\mathcal{R})$ at $\mathcal{M}_0^{\reg}(\bv''^c,\bw,\mathcal{R})$ shows that
		\begin{align*}
			\deg(P_{\bv',\bv''})&=\deg(P_{\bv'^c,\bv''^c})\leq 2\dim F_{\bv'^c,\bv''^c}\\
			&=\dim\mathcal{M}_0^{\reg}(\bv'^c,\bw,\mathcal{R})-\dim\mathcal{M}_0^{\reg}(\bv''^c,\bw,\mathcal{R})+(t-s)(k-(t-s))\\
			&=\dim\mathcal{M}_0^{\reg}(\bv',\bw,\mathcal{R})-\dim\mathcal{M}_0^{\reg}(\bv'',\bw,\mathcal{R})-(s-t)(k+(s-t))\\
			&<\dim\mathcal{M}_0^{\reg}(\bv',\bw,\mathcal{R})-\dim\mathcal{M}_0^{\reg}(\bv'',\bw,\mathcal{R})-(s-t)(k-(s-t))\\
			&=\dim\mathcal{M}_0^{\reg}(\bv',\bw,\mathcal{R})-\dim\mathcal{M}_0^{\reg}(\bv'',\bw,\mathcal{R})-\tr_{\bv',\bv''}(k-\tr_{\bv',\bv''}),
		\end{align*}
		where we have used \eqref{eq:rank I defection number}.
	\end{proof}
	
	\begin{proposition}
		The coefficient $a_{\bv,\bv''}$ is nonzero for any $\bv''\in I_\bv(\bw)$ such that $0\leq \tr_{\bv,\bv''}\leq k$.
	\end{proposition}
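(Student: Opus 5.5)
We argue by comparing top-degree terms in the stalk formula \eqref{eq:rank I Poincare poly-2}. Fix $\bv''\in I_\bv(\bw)$ with $0\leq \tr_{\bv,\bv''}\leq k$ and $\bv''\neq\bv$; the case $\bv''=\bv$ is $a_{\bv,\bv}=1$. Set $D=2\dim F_{\bv,\bv''}$. The fiber $F_{\bv,\bv''}$ is a projective variety of dimension $\dim F_{\bv,\bv''}$. Its Poincar\'e polynomial therefore has a nonzero coefficient in degree exactly $D$, given by the number of top-dimensional irreducible components. To see that the fiber is nonempty with the expected dimension, we will use the explicit description: it is a product/iterated Grassmannian-type variety of subspaces $\bV\subseteq\bW$ containing $\Im x,\Im y$ and killed by $x,y$. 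We then show that every term on the right-hand side of \eqref{eq:rank I Poincare poly-2} other than the $a_{\bv,\bv''}$ term has degree strictly less than $D$. It follows that $v^{D-\tr_{\bv,\bv''}(k-\tr_{\bv,\bv''})}a_{\bv,\bv''}$ must produce the degree-$D$ term, so $a_{\bv,\bv''}\neq0$.

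\textbf{Step 1: the term $P_{\bv,\bv''}$.} By \eqref{eq:rank I Poincare poly-3}, $\deg P_{\bv,\bv''}<D-\tr_{\bv,\bv''}(k-\tr_{\bv,\bv''})\leq D$, since $0\leq\tr_{\bv,\bv''}\leq k$.

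\textbf{Step 2: the terms indexed by $\bv'$.} Take $\bv'\in I_\bv(\bw)\setminus\{\bv\}$ with $\bv'>\bv''$. By \eqref{eq:rank I Poincare poly-4}, the degree of the summand is at most $2\dim F_{\bv,\bv'}+\deg P_{\bv',\bv''}$. Here the bound on $\deg a_{\bv,\bv'}$ comes from \eqref{eq:rank I res of IC to diagonal-6}; this also forces $0\leq \tr_{\bv,\bv'}\leq k$ whenever $a_{\bv,\bv'}\neq 0$. Now apply Lemma~\ref{lem:rank I IC stalk inequality} to the pair $\bv''\leq\bv'$ and use \eqref{eq:rank I defection number} twice, writing $\dim\mathcal{M}_0^{\reg}(\bv',\bw)-\dim\mathcal{M}_0^{\reg}(\bv'',\bw)$ through $2\dim F_{\bv,\bv'}$ and $2\dim F_{\bv,\bv''}$. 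With $r'=\tr_{\bv,\bv'}$, $r''=\tr_{\bv,\bv''}$ and $r'-r''=-\tr_{\bv',\bv''}$, the needed inequality $2\dim F_{\bv,\bv'}+\deg P_{\bv',\bv''}<D$ reduces to the elementary statement
\[
r''(k-r'')-r'(k-r')\leq |r'-r''|\,(k-|r'-r''|)\qquad\text{for }0\leq r',r''\leq k.
\]
Put $r'=r''+\delta$. The left side equals $\delta(r'+r''-k)$. This is at most $|\delta|(k-|\delta|)$, because $\delta(r'+r''-k)+|\delta|^2\leq |\delta|k$ follows from $r'+r''-k+\delta\leq k$ when $\delta\geq0$ (true since $r'\leq k$ and $r''\leq k$), and symmetrically when $\delta<0$.

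The main obstacle is the exact bookkeeping of the shifts in Step 2, together with the sign conventions of $\tr$ in Lemma~\ref{lem:rank I IC stalk inequality}, which is stated with $|\tr_{\bv',\bv''}|$. The elementary inequality must be checked with the correct signs. If strictness fails at a boundary case such as $|\delta|=k$, I would instead use strictness from Lemma~\ref{lem:rank I IC stalk inequality} itself, which is strict.
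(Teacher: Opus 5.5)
Your strategy is the paper's own: compare top degrees in \eqref{eq:rank I Poincare poly-2}, bound the $P_{\bv,\bv''}$ term by \eqref{eq:rank I Poincare poly-3}, bound each $\bv'$-summand by \eqref{eq:rank I Poincare poly-4} and Lemma~\ref{lem:rank I IC stalk inequality}, and finish with an elementary quadratic inequality. Step 1 is fine, and so is the top-degree argument for the projective fiber.

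The problem is the reduction in Step 2: the sign of its left-hand side is reversed. Applying \eqref{eq:rank I defection number} to $\bv'$ and to $\bv''$ gives
\[
\dim\mathcal{M}_0^{\reg}(\bv',\bw,\mathcal{R})-\dim\mathcal{M}_0^{\reg}(\bv'',\bw,\mathcal{R})=D-2\dim F_{\bv,\bv'}+r'(k-r')-r''(k-r'').
\]
So the Lemma yields $2\dim F_{\bv,\bv'}+\deg P_{\bv',\bv''}<D+r'(k-r')-r''(k-r'')-|\delta|(k-|\delta|)$, and what you must show is
\[
r'(k-r')-r''(k-r'')\leq|\delta|\,(k-|\delta|),\qquad \delta=r'-r''.
\]
The inequality you checked has $r''(k-r'')-r'(k-r')$ on the left, and it is a genuinely different statement. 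Your verification of it uses only $0\le r'\le k$, whereas the needed inequality rests on $0\le r''\le k$. For example, $k=4$, $r'=0$, $r''=5$ satisfies yours but violates the needed one. So Step 2 as written does not establish the bound.

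The repair is immediate. The correct left side equals $\delta(k-r'-r'')$. When $\delta\ge 0$ the inequality reduces to $r''\ge 0$, and when $\delta<0$ it reduces to $r''\le k$. Together these are exactly the hypothesis $0\le \tr_{\bv,\bv''}\le k$, and this is the paper's two-case computation. With this correction the proof goes through.

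Your closing worry about strictness is also unnecessary. Strictness already comes from Lemma~\ref{lem:rank I IC stalk inequality}, so the elementary inequality only needs to hold non-strictly.
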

	\begin{proof}
		Combined with Lemma~\ref{lem:rank I IC stalk inequality}, the equation (\ref{eq:rank I Poincare poly-4}) implies that
		\begin{equation*}
			\begin{aligned}
				&\deg\big(v^{2\dim F_{\bv,\bv'}-\tr_{\bv,\bv'}(k-\tr_{\bv,\bv'})} a_{\bv,\bv'} P_{\bv',\bv''}\big)\\
				&<2\dim F_{\bv,\bv'} + \dim\mathcal{M}_0^{\reg}(\bv',\bw,\mathcal{R}) - \dim\mathcal{M}_0^{\reg}(\bv'',\bw,\mathcal{R}) - |\tr_{\bv',\bv''}|(k-|\tr_{\bv',\bv''}|)\\
				&=\dim\mathcal{M}(\bv,\bw,\mathcal{R}) + \tr_{\bv,\bv'}(k-\tr_{\bv,\bv'}) - \dim\mathcal{M}_0^{\reg}(\bv'',\bw,\mathcal{R}) - |\tr_{\bv',\bv''}|(k - |\tr_{\bv',\bv''}|)\\
				&=2\dim F_{\bv,\bv''} + \tr_{\bv,\bv'}(k - \tr_{\bv,\bv'}) - \tr_{\bv,\bv''}(k - \tr_{\bv,\bv''}) - |\tr_{\bv',\bv''}|(k - |\tr_{\bv',\bv''}|).
			\end{aligned}
		\end{equation*}
		Recall that $\tr_{\bv',\bv''}=\tr_{\bv,\bv''}-\tr_{\bv,\bv'}$. If $\tr_{\bv',\bv''}\geq 0$, then 
		\begin{align*}
			&\tr_{\bv,\bv'}(k - \tr_{\bv,\bv'}) - \tr_{\bv,\bv''}(k - \tr_{\bv,\bv''}) - |\tr_{\bv',\bv''}|(k-|\tr_{\bv',\bv''}|)\\
			&=\tr_{\bv,\bv'}(k - \tr_{\bv,\bv'}) - \tr_{\bv,\bv''}(k - \tr_{\bv,\bv''}) - \tr_{\bv',\bv''}(k-\tr_{\bv',\bv''})\\
			&=k(\tr_{\bv,\bv'} - \tr_{\bv,\bv''} - \tr_{\bv',\bv''}) + \tr_{\bv,\bv''}^2-\tr_{\bv,\bv'}^2 + \tr_{\bv',\bv''}^2\\
			&=-2k\tr_{\bv',\bv''} + 2\tr_{\bv,\bv''} - 2\tr_{\bv,\bv'}\tr_{\bv,\bv''}\\
			&=2\tr_{\bv,\bv''}(\tr_{\bv',\bv''} - k)\leq 0.
		\end{align*}
		On the other hand, for $\tr_{\bv',\bv''}\leq 0$, we have
		\begin{align*}
			&\tr_{\bv,\bv'}(k - \tr_{\bv,\bv'}) - \tr_{\bv,\bv''}(k - \tr_{\bv,\bv''}) - |\tr_{\bv',\bv''}|(k - |\tr_{\bv',\bv''}|)\\
			&=\tr_{\bv,\bv'}(k - \tr_{\bv,\bv'}) - \tr_{\bv,\bv''}(k - \tr_{\bv,\bv''}) + \tr_{\bv',\bv''}(k + \tr_{\bv',\bv''})\\
			&=\tr_{\bv,\bv''}^2 - \tr_{\bv,\bv'}^2 + \tr_{\bv',\bv''}^2 = 2\tr_{\bv,\bv''}^2 - 2\tr_{\bv,\bv'}\tr_{\bv,\bv''}\\
			&=2\tr_{\bv,\bv''}\tr_{\bv',\bv''}\leq 0.
		\end{align*}
		So the following inequality is valid:
		\[\deg\big(v^{2\dim F_{\bv,\bv'}-\tr_{\bv,\bv'}(k-\tr_{\bv,\bv'})} a_{\bv,\bv'} P_{\bv',\bv''}\big)< 2\dim F_{\bv,\bv''}.\]
		Since the polynomial $P(F_{\bv,\bv''})$ has degree $2\dim F_{\bv,\bv''}$, we conclude from \eqref{eq:rank I Poincare poly-2} that $a_{\bv,\bv''}\neq 0$.
	\end{proof}

	\begin{corollary}\label{eq:rank I b_vv only nonzero}
		$b_{\bv',\bv''}\neq 0$ only if $\tr_{\bv',\bv''}=0$.
	\end{corollary}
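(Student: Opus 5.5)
We want to show that $b_{\bv',\bv''}\neq 0$ forces $\tr_{\bv',\bv''}=0$. The plan is to combine the identity \eqref{eq:rank I res of IC to diagonal-6} with the nonvanishing of all $a_{\bv,\bv''}$ from the preceding Proposition and with the bar-symmetry of Lemma~\ref{eq:rank I coefficient b symmetry}. Throughout, we apply \eqref{eq:rank I res of IC to diagonal-6} with the maximal vector $\bv:=\bv'$. This is legitimate because, by the standing assumption, $\bv'$ is maximal in $I_{\bv'}(\bw)$, so that $a_{\bv',\bv'}=1$. The formula then expresses $\qbinom{k}{\tr_{\bv',\bv''}}$ as $\sum_{\bu} a_{\bv',\bu}\, b_{\bu,\bv''}\, v^{-\tr_{\bu,\bv''}}$.

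\textbf{Step 1: positivity rules out cancellation.} All the coefficients $a_{\bv',\bu}$ and $b_{\bu,\bv''}$ lie in $\N[v,v^{-1}]$. Hence no cancellation can occur on the right-hand side of \eqref{eq:rank I res of IC to diagonal-6}. In particular, the term with $\bu=\bv'$ contributes $b_{\bv',\bv''}v^{-\tr_{\bv',\bv''}}$, and this is a summand of a positive sum equal to the Gaussian binomial $\qbinom{k}{\tr_{\bv',\bv''}}$. Two consequences follow:
\begin{itemize}
\item whenever $b_{\bv',\bv''}\neq0$, we must have $0\le \tr_{\bv',\bv''}\le k$;
\item the degree of $b_{\bv',\bv''}$ is bounded by $\tr_{\bv',\bv''}(k-\tr_{\bv',\bv''})+\tr_{\bv',\bv''}$, and its lowest degree is controlled in the same way.
\end{itemize}

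\textbf{Step 2: apply the involution $c$.} Note that $\tr_{\bv'^c,\bv''^c}=-\tr_{\bv',\bv''}$, and that $\bv'^c$ is again maximal in its $I$-set. By Lemma~\ref{eq:rank I coefficient b symmetry}, $b_{\bv'^c,\bv''^c}=\overline{b_{\bv',\bv''}}$, so it is nonzero whenever $b_{\bv',\bv''}$ is. Applying the first consequence of Step 1 to the pair $(\bv'^c,\bv''^c)$ gives $0\le -\tr_{\bv',\bv''}$. Together with $\tr_{\bv',\bv''}\ge0$ from Step 1, this yields $\tr_{\bv',\bv''}=0$.

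\textbf{Main obstacle.} The difficulty lies in justifying the sign constraint in Step 1 for a general $\bv''\le\bv'$, not just for those $\bv''$ already known to satisfy $0\le\tr\le k$. Equation \eqref{eq:rank I res of IC to diagonal-6} was derived only for $\bv''$ with $0\le \tr_{\bv,\bv''}\le k$. So for the remaining $\bv''$ I would go back to \eqref{eq:rank I res of IC to diagonal-3}: there the restriction of $\pi(\bv',\bw)$ has no component $\mathcal{L}(\bv'',\bw-k\e_{\sigma\tS_1})$ with $\tr_{\bv',\bv''}$ outside $[0,k]$. Since $\mathcal{L}(\bv',\bw)$ is a direct summand of $\pi(\bv',\bw)$ (because $a_{\bv',\bv'}=1$), its restriction, which contains $b_{\bv',\bv''}$, is a summand of that restriction and hence vanishes outside this range. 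This gives $b_{\bv',\bv''}=0$ unless $0\le\tr_{\bv',\bv''}\le k$. The symmetry argument of Step 2 then closes the proof.

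There is one caveat. If $k$ changes under $c$ (the roles of $\bw_1$ and $\bw_2$ are not swapped, since $c$ fixes $\bw$), the Lemma still applies because it was proved for the same $k$. So the only real care needed is checking the maximality hypothesis for $\bv'^c$, which follows from the symmetric form of the condition $\bv_1+\bv_2\le\min\{\bw_1,\bw_2\}$.
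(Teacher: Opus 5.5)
Your proposal is correct and follows the paper's proof. As there, taking $\bv=\bv'$ shows that $b_{\bv',\bv''}\neq0$ forces $0\le\tr_{\bv',\bv''}\le k$. The involution $c$ together with Lemma~\ref{eq:rank I coefficient b symmetry}, via $\tr_{\bv'^c,\bv''^c}=-\tr_{\bv',\bv''}$, then forces $\tr_{\bv',\bv''}\le0$.

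Your direct-summand argument via \eqref{eq:rank I res of IC to diagonal-3} makes explicit what the paper's ``taking $\bv=\bv'$'' step uses for $\bv''$ outside that range. It needs only $a_{\bv',\bv'}=1$, so the nonvanishing of all the $a$'s is not needed here, and your degree-bound bullet is never used.
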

	\begin{proof}
		Since $a_{\bv,\bv''}\neq 0$ for any $\bv''\in I_{\bv}(\bw)$, $0\leq \tr_{\bv,\bv''}\leq k$, the equation \eqref{eq:rank I res of IC to diagonal-6} implies that $b_{\bv',\bv''}\neq 0$ only if $0\leq \tr_{\bv,\bv''}\leq k$, where $\bv$ is any vector satisfying $\bv'\in I_{\bv}(\bw)$ and $0\leq \tr_{\bv,\bv'}\leq k$. In particular, taking $\bv=\bv'$ shows that $b_{\bv',\bv''}\neq 0$ only if $0\leq \tr_{\bv',\bv''}\leq k$. But the same arguments apply to $b_{\bv'^c,\bv''^c}$ and we have $r_{\bv'^c,\bv''^c}=-\tr_{\bv',\bv''}$, so Lemma~\ref{eq:rank I coefficient b symmetry} implies the claim.
	\end{proof}
	
	With Corollary~\ref{eq:rank I b_vv only nonzero}, equation \eqref{eq:rank I res of IC to diagonal-6} can be simplified as 
	\begin{equation}\label{eq:rank I res of IC to diagonal-7}
		\qbinom{k}{\tr_{\bv,\bv''}} = \sum_{\substack{ \bv''\leq\bv' \\ \tr_{\bv',\bv''}=0} } a_{\bv,\bv'} b_{\bv',\bv''}.
	\end{equation}
	
	\begin{proposition}\label{prop:rank I res to diagonal coefficient}
		$a_{\bv,\bv''}=\qbinom{k}{\tr_{\bv,\bv''}}$ and $b_{\bv',\bv''}=\delta_{\bv',\bv''}$.
	\end{proposition}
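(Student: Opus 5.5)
The plan is to extract everything from identity \eqref{eq:rank I res of IC to diagonal-7}, using three facts: positivity of the coefficients, the unitriangularity $a_{\bv,\bv}=1$ and $b_{\bv',\bv'}=1$, and the non-vanishing result proved just above (the Proposition saying $a_{\bv,\bv''}\neq0$ whenever $0\le \tr_{\bv,\bv''}\le k$). I would first prove $b_{\bv',\bv''}=\delta_{\bv',\bv''}$, and then read off $a_{\bv,\bv''}$.

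\emph{Step 1: $b_{\bv',\bv''}=\delta_{\bv',\bv''}$.}
\begin{itemize}
\item By Corollary~\ref{eq:rank I b_vv only nonzero}, it suffices to treat $\bv''\le \bv'$ with $\tr_{\bv',\bv''}=0$ and $\bv''\neq\bv'$. Here $\bv'$ indexes a nonempty stratum, so $\bv'_1+\bv'_2\le\min\{\bw_1,\bw_2\}$, and $\bv'$ is the maximal element of $I_{\bv'}(\bw)$.
\item I apply \eqref{eq:rank I res of IC to diagonal-7} with $\bv$ replaced by $\bv'$, so that $a_{\bv',\bv'}=1$, and evaluate at $\bv''$. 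Since $\tr_{\bv',\bv''}=0$, the left side is $\qbinom{k}{0}=1$.
\item The right side is a sum of products $a_{\bv',\bv'''}b_{\bv''',\bv''}$, each lying in $\N[v,v^{-1}]$. Among them are $a_{\bv',\bv'}b_{\bv',\bv''}=b_{\bv',\bv''}$ and $a_{\bv',\bv''}b_{\bv'',\bv''}=a_{\bv',\bv''}$.
\item By the preceding Proposition (with $\tr_{\bv',\bv''}=0\in[0,k]$ and $\bv''\in I_{\bv'}(\bw)$), $a_{\bv',\bv''}\neq 0$.
\item A sum of elements of $\N[v,v^{-1}]$ equal to $1$ contains exactly one nonzero summand, and that summand is the monomial $1$. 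Since $\bv''\neq \bv'$, the two displayed terms are distinct summands. Hence $a_{\bv',\bv''}=1$ and every other summand vanishes; in particular $b_{\bv',\bv''}=0$.
\end{itemize}
The only point needing care is that $\bv''$ actually lies in $I_{\bv'}(\bw)$, which follows from \eqref{eq:rank I pi_vw image char} once $\bv''\le\bv'$ and the stratum of $\bv''$ is nonempty.

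\emph{Step 2: the formula for $a_{\bv,\bv''}$.} Substituting $b_{\bv',\bv''}=\delta_{\bv',\bv''}$ into \eqref{eq:rank I res of IC to diagonal-7} leaves only the term $\bv'=\bv''$, so $a_{\bv,\bv''}=\qbinom{k}{\tr_{\bv,\bv''}}$ whenever $0\le\tr_{\bv,\bv''}\le k$.

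Outside that range $a_{\bv,\bv''}=0$, as already observed after \eqref{eq:rank I res of IC to diagonal-5}. This agrees with the convention that the quantum binomial vanishes there.

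\emph{Expected obstacle.} The delicate point is the positivity bookkeeping in Step 1, which requires three things.
\begin{itemize}
\item The identity \eqref{eq:rank I res of IC to diagonal-7} must hold on the nose with nonnegative coefficients. It does, since both the $a$'s and the $b$'s come from decompositions of semisimple complexes, by the decomposition theorem.
\item The two summands $b_{\bv',\bv''}$ and $a_{\bv',\bv''}$ must be genuinely distinct terms of the sum, which holds because $\bv''\neq\bv'$.
\item The maximality hypothesis $\bv'_1+\bv'_2\le\min\{\bw_1,\bw_2\}$, used to get $a_{\bv',\bv'}=1$, must be available for $\bv'$. It is, because $\mathcal{L}(\bv',\bw)$ is only defined when its stratum is nonempty.
\end{itemize}
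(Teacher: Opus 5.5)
Your proof is correct, and it is organized differently from the paper's.

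\textbf{The paper's route.} The paper fixes the top vector $\bv$ and writes $\bv''=\bv-s\e_{\tS_1}-t\e_{\Sigma\tS_1}$ with $0\le s-t\le k$. It then inducts on $t$. In \eqref{eq:rank I res of IC to diagonal-7} the left side is $\qbinom{k}{s-t}$. The intermediate coefficients $a_{\bv,\bv-(s-i)\e_{\tS_1}-(t-i)\e_{\Sigma\tS_1}}$, for $1\le i\le t$, are already known to equal $\qbinom{k}{s-t}$ by the induction hypothesis. Positivity and $a_{\bv,\bv''}\neq 0$ then force every $b_{\bv-(s-i)\e_{\tS_1}-(t-i)\e_{\Sigma\tS_1},\bv''}$ to vanish. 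This determines $a_{\bv,\bv''}$ and the relevant $b$'s at the same time.

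\textbf{Your route.} You separate the two claims. You re-apply \eqref{eq:rank I res of IC to diagonal-7} with the top vector replaced by $\bv'$ itself, so the left side is $\qbinom{k}{0}=1$. The values of the intermediate $a$'s are then irrelevant. The single nonzero summand $a_{\bv',\bv''}$ (nonzero by the preceding Proposition, applied to the maximal vector $\bv'$) must absorb the $1$. This kills $b_{\bv',\bv''}$ at once. Afterwards $a_{\bv,\bv''}$ is read off directly, with no induction.

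\textbf{Comparison.} Both arguments use the same inputs:
\begin{itemize}
\item positivity of the $a$'s and $b$'s;
\item the normalizations $a_{\bv',\bv'}=b_{\bv',\bv'}=1$;
\item the nonvanishing Proposition;
\item the freedom to change the top vector, which the paper also uses in the proof of Corollary~\ref{eq:rank I b_vv only nonzero}.
\end{itemize}
Your version is shorter and gives $a_{\bv',\bv''}=1$ whenever $\tr_{\bv',\bv''}=0$ as a by-product. The paper's induction stays with one fixed $\bv$ throughout and computes all the $a_{\bv,\cdot}$ for that $\bv$ step by step.

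Your checks that $\bv''\in I_{\bv'}(\bw)$ and that the two summands you isolate are distinct (because $\bv''\neq\bv'$) are exactly what makes your shortcut valid.
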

	\begin{proof}
		Write $\bv''=\bv-s\e_{\tS_1}-t\e_{\Sigma\tS_1}$ for $0\leq s-t\leq k$, we prove the proposition by induction on $t$. For the case $t=0$, the only sequence $\bv'\in I_{\bv}(\bw)$ such that $\bv'\geq\bv''$, $\tr_{\bv',\bv''}=0$ is $\bv'=\bv''$, so (\ref{eq:rank I res of IC to diagonal-7}) gives $a_{\bv,\bv''}=\qbinom{k}{s}$ and $b_{\bv',\bv''}=\delta_{\bv',\bv''}$. Now assume that the assertion is proved for any $\bv''=\bv-a\e_{\tS_1}-b\e_{\Sigma\tS_1}$ with $0\leq b<t$. By \eqref{eq:rank I res of IC to diagonal-7}, we have 
		\begin{align*}
			\qbinom{k}{s-t}&=\sum_{i=1}^{t}a_{\bv,\bv-(s-i)\e_{\tS_1}-(t-i)\e_{\Sigma\tS_1}} b_{\bv-(s-i)\e_{\tS_1}-(t-i)\e_{\Sigma\tS_1},\bv-s\e_{\tS_1}-t\e_{\Sigma\tS_1}}+a_{\bv,\bv-s\e_{\tS_1}-t\e_{\Sigma\tS_1}}\\
			&=\sum_{i=1}^{t}\qbinom{k}{s-t} b_{\bv-(s-i)\e_{\tS_1}-(t-i)\e_{\Sigma\tS_1},\bv-s\e_{\tS_1}-t\e_{\Sigma\tS_1}}+a_{\bv,\bv-s\e_{\tS_1}-t\e_{\Sigma\tS_1}}.
		\end{align*}
		Since each $b_{\bv-(s-i)\e_{\tS_1}-(t-i)\e_{\Sigma\tS_1},\bv-s\e_{\tS_1}-t\e_{\Sigma\tS_1}}$ and $a_{\bv,\bv-s\e_{\tS_1}-t\e_{\Sigma\tS_1}}$ belong to $\N[v,v^{-1}]$ and the latter is nonzero, the only possibility is that $b_{\bv-(s-i)\e_{\tS_1}-(t-i)\e_{\Sigma\tS_1},\bv-s\e_{\tS_1}-t\e_{\Sigma\tS_1}}=0$ for $1\leq i\leq t$. This completes the induction step.
	\end{proof}

	\begin{proposition}
		For any pair $(\bv,\bw)$, we have
		\[\pi(\bv,\bw)=\begin{dcases}
			\bigoplus_{\substack{\bv'\in I_{\bv}(\bw)\\ 0\leq \tr_{\bv,\bv'}\leq \bw_1-\bw_2}} \qbinom{\bw_1-\bw_2}{\tr_{\bv,\bv'}} \mathcal{L}(\bv',\bw)&\text{if $\bw_1-\bw_2\geq 0$},\\
			\bigoplus_{\substack{\bv'\in I_{\bv}(\bw)\\ \bw_1-\bw_2\leq \tr_{\bv,\bv'}\leq 0}} \qbinom{-\bw_1+\bw_2}{-\tr_{\bv,\bv'}} \mathcal{L}(\bv',\bw)&\text{if $\bw_1-\bw_2\leq 0$}.
		\end{dcases}\]
	\end{proposition}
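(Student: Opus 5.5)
The plan is to reduce the statement to Proposition~\ref{prop:rank I res to diagonal coefficient} in the case $\bw_1\geq\bw_2$, and to obtain the case $\bw_1\leq\bw_2$ from it by the symmetry $c$. Proposition~\ref{prop:rank I res to diagonal coefficient} was proved under the standing assumption $\bv_1+\bv_2\leq\min\{\bw_1,\bw_2\}$, which makes $\bv$ the maximal element of $I_\bv(\bw)$ and gives $a_{\bv,\bv}=1$. So the first step is to remove this assumption.

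If $\bv_1+\bv_2>\min\{\bw_1,\bw_2\}$, the image of $\pi_{\bv,\bw}$ is still the union of strata indexed by $I_\bv(\bw)$ described in \eqref{eq:rank I pi_vw image char}. Let $\bv^{\max}$ denote the maximal element of $I_\bv(\bw)$. One then compares $\pi(\bv,\bw)$ with $\pi(\bv^{\max},\bw)$. The cleanest route is to rerun the restriction argument \eqref{eq:rank I res of IC to diagonal-2}--\eqref{eq:rank I res of IC to diagonal-6} directly for $\bv$. That argument never used maximality of $\bv$, only the formula \eqref{eq:rank I res formula} and the fact that $b_{\bv',\bv''}=\delta_{\bv',\bv''}$. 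Since Proposition~\ref{prop:rank I res to diagonal coefficient} already gives $b_{\bv',\bv''}=\delta_{\bv',\bv''}$, and this is a statement about $\mathcal{L}$'s independent of $\bv$, equation \eqref{eq:rank I res of IC to diagonal-6} collapses immediately. It reads
\[
\qbinom{k}{\tr_{\bv,\bv''}}v^{-k(\bv_1-\bv_2-\tr_{\bv,\bv''})}=a_{\bv,\bv''}\,v^{-k(\bv_1-\bv_2-\tr_{\bv,\bv''})}
\]
for every $\bv''\in I_\bv(\bw)$ with $0\leq\tr_{\bv,\bv''}\leq k$. It also gives $a_{\bv,\bv''}=0$ outside this range, because the left-hand side of \eqref{eq:rank I res of IC to diagonal-3} only involves such $\bv''$ and the $\mathcal{L}(0,k\e_{\sigma\tS_1})\boxtimes\mathcal{L}(\bv'',\bw-k\e_{\sigma\tS_1})$ are linearly independent.

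The one point to check here is that the set identity below \eqref{eq:rank I res of IC to diagonal-2} holds without the maximality assumption. This is a direct inequality check on \eqref{eq:rank I pi_vw image char}, and it is where I expect the only real care is needed. Given it, we obtain $a_{\bv,\bv''}=\qbinom{\bw_1-\bw_2}{\tr_{\bv,\bv''}}$, which is the first case of the statement.

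For $\bw_1\leq\bw_2$, I apply the automorphism $c$. It sends $\mathcal{M}_0(\bw)$ isomorphically onto $\mathcal{M}_0(\bw^c)$, where $\bw^c=(\bw_2,\bw_1)$, in the same way as in Lemma~\ref{eq:rank I coefficient b symmetry}, and it lifts to $\mathcal{M}(\bv,\bw)\cong\mathcal{M}(\bv^c,\bw^c)$ via $(x,y,\bV)\mapsto(y^t,x^t,\bV^\perp)$-type duality. Hence $c^*\pi(\bv^c,\bw^c)=\pi(\bv,\bw)$ and $c^*\mathcal{L}(\bv'^c,\bw^c)=\mathcal{L}(\bv',\bw)$. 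Since $\tr_{\bv^c,\bv'^c}=-\tr_{\bv,\bv'}$, applying the first case to $(\bv^c,\bw^c)$ gives the second formula.
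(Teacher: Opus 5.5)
Your first step, removing the standing assumption $\bv_1+\bv_2\le\min\{\bw_1,\bw_2\}$, is where the proposal breaks. You claim that for $\bv_1+\bv_2>\min\{\bw_1,\bw_2\}$ the image of $\pi_{\bv,\bw}$ is still the union of the strata indexed by $I_\bv(\bw)$. This is false. A point $(x,y)$ of the stratum $\bv'$ lies in the image iff there exist $\bV_1,\bV_2$ of dimensions $\bv_1,\bv_2$ with $\Im y\subseteq\bV_1\subseteq\Ker x$ and $\Im x\subseteq\bV_2\subseteq\Ker y$. That is, iff $\bv'_1\le\bv_1\le\bw_1-\bv'_2$ and $\bv'_2\le\bv_2\le\bw_2-\bv'_1$, and the upper bounds are automatic only under the standing assumption.

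The formula itself genuinely fails beyond that assumption. Take $\bv=\bw=(2,2)$. One is forced to take $\bV=\bW$ and $x=y=0$, so $\mathcal{M}(\bv,\bw)$ is a point over the origin and $\pi(\bv,\bw)=\mathcal{L}(0,\bw)$. But $(1,1)\in I_\bv(\bw)$ with $\tr_{\bv,(1,1)}=0=k$, so the right-hand side also contains $\mathcal{L}((1,1),\bw)$. In your rerun of \eqref{eq:rank I res of IC to diagonal-2}--\eqref{eq:rank I res of IC to diagonal-6}, the failure is not the set identity you flagged (for $k=0$ it is vacuous). It is the input \eqref{eq:rank I pi_vw decomp diagonal case}, which, with $I$ as defined, is false for such vectors.

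So ``any pair'' must be read as any pair satisfying the standing assumption. In rank one this is exactly $l$-dominance, $\sigma^*\bw-{\mathcal C}_q\bv\ge0$, and it is all that is used later. Under it, your core argument is precisely the paper's. Once $b_{\bv',\bv''}=\delta_{\bv',\bv''}$ is known (you are right that this depends only on $\bv',\bv'',\bw$), \eqref{eq:rank I res of IC to diagonal-6} collapses to $a_{\bv,\bv''}=\qbinom{k}{\tr_{\bv,\bv''}}$. Moreover $a_{\bv,\bv'}=0$ outside $0\le\tr_{\bv,\bv'}\le k$ because $b_{\bv',\bv'}=1$.

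For $\bw_1\le\bw_2$, reducing to the first case by a symmetry is a fine way to make the paper's ``similar way'' precise, but you describe the wrong map. The involution $c$ of Lemma~\ref{eq:rank I coefficient b symmetry} fixes $\bw$. The transpose duality $(x,y,\bV)\mapsto(y^t,x^t,\bV^\perp)$ identifies $\mathcal{M}(\bv,\bw)$ with $\mathcal{M}(\bw-\bv,\bw)$ over that same $c$, not with $\mathcal{M}(\bv^c,\bw^c)$.

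What you need is simply the relabelling of the two vertices, $(x,y,\bV_1,\bV_2)\mapsto(y,x,\bV_2,\bV_1)$; this is the isomorphism $\Sigma^*$ of \eqref{QV shift functor automorphism} in rank one. Setting $\bw^c=(\bw_2,\bw_1)$, this map:
\begin{itemize}
\item gives $\mathcal{M}(\bv,\bw)\cong\mathcal{M}(\bv^c,\bw^c)$ over $\mathcal{M}_0(\bw)\cong\mathcal{M}_0(\bw^c)$;
\item sends the stratum $\bv'$ to $\bv'^c$ and maps $I_\bv(\bw)$ onto $I_{\bv^c}(\bw^c)$;
\item preserves the standing assumption;
\item negates $\tr$.
\end{itemize}
With this map, your deduction of the second case goes through.
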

	\begin{proof}
		Since we have already determined the coefficients $b_{\bv,\bv'}$, the first case follows from \eqref{eq:rank I res of IC to diagonal-6}. The case $\bw_1\leq\bw_2$ can be treated in a similar way.
	\end{proof}
	
	With the decomposition of $\pi(\bv,\bw)$, we can now deduce the explicit expression of $L(\bv,\bw)$ in terms of the generators of $\tR$. To simplify our notation we set
	\[E:=L(0,\e_{\sigma\tS_1}),\quad F:=L(0,\e_{\sigma\Sigma\tS_1}),\quad K:=L(\bv^1,\bw^1), \quad K'=L(\bv^{\Sigma 1},\bw^1).\]
	
	From Proposition~\ref{prop:rank I res to diagonal coefficient} (and its analogue) we immediately get the following multiplication formulas:
	
	\begin{corollary}\label{prop:rank I mult by EF formula}
		For any strongly $l$-dominant pair $(\bv,\bw)$ such that $\bw_1=\bw_2$, we have
		\begin{alignat*}{2}
			E^k\cdot L(\bv,\bw)&=v^{k(\bv_2-\bv_1)}L(\bv,\bw+k\e_{\sigma\tS_1}),&&\quad L(\bv,\bw)\cdot E^k=v^{k(\bv_1-\bv_2)}L(\bv,\bw+k\e_{\tS_1}),\\
			F^k\cdot L(\bv,\bw)&=v^{k(\bv_1-\bv_2)}L(\bv,\bw+k\e_{\sigma\Sigma\tS_1}),&&\quad L(\bv,\bw)\cdot F^k=v^{k(\bv_2-\bv_1)}L(\bv,\bw+k\e_{\sigma\Sigma\tS_1}).
		\end{alignat*}
	\end{corollary}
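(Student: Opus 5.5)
The multiplication formulas will be derived by dualizing the restriction formulas already in hand. The product on $\tR$ is dual to the twisted comultiplication $\Res$, and $E^k=L(0,k\e_{\sigma\tS_1})$ is dual to $\mathcal{L}(0,k\e_{\sigma\tS_1})$, up to the normalization fixed in Example~\ref{eg:rank I rest to EF formula} (where $\pi(0,k\e_{\sigma\tS_1})=\mathcal{L}(0,k\e_{\sigma\tS_1})$ is a single IC complex). So the coefficient of $L(\bv',\bw')$ in $E^k\cdot L(\bv,\bw)$, with $\bw'=\bw+k\e_{\sigma\tS_1}$, equals the coefficient of $\mathcal{L}(0,k\e_{\sigma\tS_1})\boxtimes\mathcal{L}(\bv,\bw)$ in $\Res^{\bw'}_{k\e_{\sigma\tS_1},\bw}\mathcal{L}(\bv',\bw')$, weighted by the twist $v^{\frac12\langle\cdot,\cdot\rangle_{Q^{\dbl},a}}$ from \eqref{eq:tw}.

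First, note that $\bw'_1-\bw'_2=k$, so we are exactly in the situation of \eqref{eq:rank I res of IC to diagonal-1}. There the restriction of $\mathcal{L}(\bv',\bw')$ is $\bigoplus b_{\bv',\bv''}\mathcal{L}(0,k\e_{\sigma\tS_1})\boxtimes\mathcal{L}(\bv'',\bw)[k(\bv''_1-\bv''_2)]$. By Proposition~\ref{prop:rank I res to diagonal coefficient}, $b_{\bv',\bv''}=\delta_{\bv',\bv''}$. Hence the only $\bv'$ contributing is $\bv'=\bv$, and the dual pairing gives
\[
E^k\cdot L(\bv,\bw)=v^{\epsilon}L(\bv,\bw+k\e_{\sigma\tS_1})
\]
for a single power $v^{\epsilon}$. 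The pair $(\bv,\bw')$ is still $l$-dominant, since $\bv_1+\bv_2\le\min\{\bw_1,\bw_2\}$ is unaffected by increasing $\bw_1$. The exponent $\epsilon$ combines the shift $k(\bv_1-\bv_2)$ (converted via $[d]\leftrightarrow v^{-d}$) with the twist $\frac12\langle k\e_{\sigma\tS_1},\bw\rangle_{Q^{\dbl},a}$. The twist vanishes when $\bw_1=\bw_2$, because $\bw$ then pairs antisymmetrically to zero with $\e_{\sigma\tS_1}$ in the rank one double quiver. This yields $v^{k(\bv_2-\bv_1)}$.

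Alternatively, and as a check on $\epsilon$, the leading-term formula \eqref{eqn: leading term} already gives the coefficient of $L(\bv,\bw')$. It equals $v^{d((0,k\e),(\bv,\bw))-d((\bv,\bw),(0,k\e))}$, which by the explicit rank one formula for $d$ is $v^{k(\bv_1-\bv_2)}$ up to sign conventions. So the real content is only that all other coefficients vanish, which is exactly $b=\delta$.

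For right multiplication by $E^k$, use $\Res^{\bw'}_{\bw,k\e_{\sigma\tS_1}}$. Its IC coefficients are again trivial: either rerun the argument of \eqref{eq:rank I res of IC to diagonal-1}--Proposition~\ref{prop:rank I res to diagonal coefficient} with the factors swapped, or transport through the hyperbolic localization identity and the automorphism $c$ used in Lemma~\ref{eq:rank I coefficient b symmetry}. Since the shift reverses sign, the result is $v^{k(\bv_1-\bv_2)}$. The $F^k$ formulas follow from the analogue for $\bw_1\le\bw_2$ (the "similar way" case), or directly by applying $c$, which swaps $\e_{\sigma\tS_1}\leftrightarrow\e_{\sigma\Sigma\tS_1}$ and $\bv_1\leftrightarrow\bv_2$. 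This explains the interchanged exponents.

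The main obstacle is purely bookkeeping: getting the sign of $\epsilon$ consistent among the shift convention $[d]\leftrightarrow v^{-d}$, the twist in \eqref{eq:tw}, and the orientation of $d(\cdot,\cdot)$. I would fix it by cross-checking against \eqref{eqn: leading term} in the simplest case $\bv=\e_{\tS_1}$, $\bw=\bw^1$ (i.e.\ $E\cdot K$), where $K E=v^{2}EK$ from \eqref{eq:EK} pins down the sign.
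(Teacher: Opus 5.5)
Your proposal is correct and is essentially the paper's argument. You dualize \eqref{eq:rank I res of IC to diagonal-1}, together with its analogues for $\bw_1\le\bw_2$ and for the opposite restriction $\Res^{\bw'}_{\bw,k\e_{\sigma\tS_1}}$, using $b_{\bv',\bv''}=\delta_{\bv',\bv''}$ from Proposition~\ref{prop:rank I res to diagonal coefficient}. The twist vanishes because in rank one the Euler form of $Q^{\dbl}$ is symmetric, not because $\bw_1=\bw_2$.

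One correction to your alternative route for the $F$-formulas: the automorphism $c$ fixes $\bw$ and exchanges sub- and quotient objects, so it relates left and right multiplication by $E^k$. The symmetry exchanging $E$ and $F$ is instead the shift $\Sigma^*$, which swaps $\bw_1\leftrightarrow\bw_2$ together with $\bv_1\leftrightarrow\bv_2$.
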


	\begin{proposition}
		For $a,b\geq 0$, we have
		\begin{align}
			\label{eq:rank I EF multiply}
			E^aF^b=\begin{dcases}
				\sum_{0\leq \bv_1+\bv_2\leq b}v^{a(\bv_2-\bv_1)}\qbinom{b+1}{\bv_1}\qbinom{b+1}{\bv_2}\frac{[b+1-\bv_1-\bv_2]}{[b+1]}L(\bv,a\e_{\sigma\tS_1}+b\e_{\sigma\Sigma\tS_1})&\text{if $a\geq b$},\\
				\sum_{0\leq \bv_1+\bv_2\leq a}v^{b(\bv_2-\bv_1)}\qbinom{a+1}{\bv_1}\qbinom{a+1}{\bv_2}\frac{[a+1-\bv_1-\bv_2]}{[a+1]}L(\bv,a\e_{\sigma\tS_1}+b\e_{\sigma\Sigma\tS_1})&\text{if $b\geq a$}.
			\end{dcases}
		\end{align}
	\end{proposition}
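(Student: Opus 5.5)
The product $E^aF^b$ is dual to the restriction $\Res^{\bw}_{a\e_{\sigma\tS_1},\,b\e_{\sigma\Sigma\tS_1}}$, where $\bw=a\e_{\sigma\tS_1}+b\e_{\sigma\Sigma\tS_1}$. So the coefficient of $L(\bv,\bw)$ in $E^aF^b$ equals the multiplicity (as a Laurent polynomial in $v$, including the twist from \eqref{eq:tw}) of $\cl(0,a\e_{\sigma\tS_1})\boxtimes\cl(0,b\e_{\sigma\Sigma\tS_1})$ in $\Res^{\bw}_{a\e_{\sigma\tS_1},b\e_{\sigma\Sigma\tS_1}}(\cl(\bv,\bw))$. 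By symmetry, via the automorphism $c$ or $\omega$, we treat only $a\geq b$, and set $k=a-b$.

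\textbf{Step 1.} Compute the restriction of every $\pi(\bv,\bw)$ using Example~\ref{eg:rank I rest to EF formula}. This gives
\[
\Res(\pi(\bv,\bw))=\qbinom{a}{\bv_1}\qbinom{b}{\bv_2}v^{-(\bv_1b-\bv_2a)}\,\cl(0,a\e_{\sigma\tS_1})\boxtimes\cl(0,b\e_{\sigma\Sigma\tS_1}),
\]
up to the twist $v^{\frac12\langle\cdot,\cdot\rangle_a}$, which is independent of $\bv$.

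\textbf{Step 2.} Invert the decomposition of $\pi(\bv,\bw)$ given by the last Proposition. It reads
\[
\pi(\bv,\bw)=\sum_{\bv'\in I_\bv(\bw),\,0\le \tr_{\bv,\bv'}\le k}\qbinom{k}{\tr_{\bv,\bv'}}\cl(\bv',\bw),
\]
where $\bv$ ranges over $\bv_1+\bv_2\le b$; this is the range in which $\cm^{\reg}_0(\bv,\bw)\neq\emptyset$. The matrix is unitriangular for the order $\bv'\le\bv$, so the restriction coefficients $c_\bv$ of $\cl(\bv,\bw)$ are determined recursively by
\[
\qbinom{a}{\bv_1}\qbinom{b}{\bv_2}v^{\bv_2a-\bv_1b}=\sum_{\bv'} \qbinom{k}{\tr_{\bv,\bv'}}c_{\bv'}.
\]
Rather than invert the matrix directly, I would verify that the proposed answer
\[
c_\bv=v^{a(\bv_2-\bv_1)}\qbinom{b+1}{\bv_1}\qbinom{b+1}{\bv_2}\frac{[b+1-\bv_1-\bv_2]}{[b+1]}
\]
satisfies this triangular system. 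Uniqueness of the solution then closes the argument. The factor $v^{a(\bv_2-\bv_1)}$ should match the exponent in Corollary~\ref{prop:rank I mult by EF formula}. A cleaner route is to factor $E^aF^b=E^{k}\cdot(E^bF^b)$ and apply that Corollary: this reduces everything to the diagonal case $a=b$, after tracking the $v^{k(\bv_2-\bv_1)}$ factor.

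\textbf{Step 3 (diagonal case $a=b$).} Here $k=0$, so $\pi(\bv,\bw)=\bigoplus_{\bv'\in I_\bv(\bw),\,\bv'_1-\bv'_2=\bv_1-\bv_2}\cl(\bv',\bw)$. The system becomes
\[
\qbinom{b}{\bv_1}\qbinom{b}{\bv_2}v^{b(\bv_2-\bv_1)}=\sum_{j\ge0}c_{\bv-j(\e_{\tS_1}+\e_{\Sigma\tS_1})}.
\]
Taking successive differences gives
\[
c_\bv=v^{b(\bv_2-\bv_1)}\Bigl(\qbinom{b}{\bv_1}\qbinom{b}{\bv_2}-\qbinom{b}{\bv_1-1}\qbinom{b}{\bv_2-1}\Bigr).
\]
It remains to check the $q$-binomial identity
\[
\qbinom{b}{m}\qbinom{b}{n}-\qbinom{b}{m-1}\qbinom{b}{n-1}=\qbinom{b+1}{m}\qbinom{b+1}{n}\frac{[b+1-m-n]}{[b+1]}.
\]
I expect this to be the main obstacle. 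I would prove it by writing $\qbinom{b}{m}=\qbinom{b+1}{m}\frac{[b+1-m]}{[b+1]}$ and $\qbinom{b}{m-1}=\qbinom{b+1}{m}\frac{[m]}{[b+1]}$. The identity then reduces to
\[
[b+1-m][b+1-n]-[m][n]=[b+1][b+1-m-n],
\]
which is a standard quantum-integer identity following from $[x][y]=\sum$ expansions, or checked directly via $(v-v^{-1})^2$ times both sides.
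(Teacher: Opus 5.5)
Your proposal is correct and follows essentially the same route as the paper. You reduce to $a=b$ via Corollary~\ref{prop:rank I mult by EF formula}, restrict each $\pi(\bv,\bw)$ using Example~\ref{eg:rank I rest to EF formula}, peel off the summands $\cl(\bv-j(\e_{\tS_1}+\e_{\Sigma\tS_1}),\bw)$ by successive differences, and finish with the $q$-binomial identity. Your reduction of that identity to $[b+1-m][b+1-n]-[m][n]=[b+1][b+1-m-n]$ is correct and fills in a step the paper states without verification.
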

	\begin{proof}
		In view of Corollary~\ref{prop:rank I mult by EF formula}, we may assume $a=b$. In this case we note
		\begin{equation}\label{eq:rank I EF multiply-1}
			\pi(\bv,b\e_{\sigma\tS_1}+b\e_{\sigma\Sigma\tS_1})=\bigoplus_{i=0}^{\min\{\bv_1,\bv_2\}}\mathcal{L}(\bv-i\e_{\sigma\tS_1}-i\e_{\sigma\Sigma\tS_1},b\e_{\sigma\tS_1}+b\e_{\sigma\Sigma\tS_1}).
		\end{equation}
		Then Example~\ref{eg:rank I rest to EF formula} gives 
		\[ \Res^{b\e_{\sigma\tS_1}+b\e_{\sigma\Sigma\tS_1}}_{b\e_{\sigma\tS_1},b\e_{\sigma\Sigma\tS_1}}(\pi(\bv,b\e_{\sigma\tS_1}+b\e_{\sigma\Sigma\tS_1})) = v^{b(\bv_2-\bv_1)} \qbinom{b}{\bv_1}\qbinom{b}{\bv_2} \mathcal{L}(0,b\e_{\sigma\tS_1})\boxtimes \mathcal{L}(0,b\e_{\sigma\Sigma\tS_1}),
		\]
		so by \eqref{eq:rank I EF multiply-1}, we get
		\begin{align*}
			&\Res^{b\e_{\sigma\tS_1}+b\e_{\sigma\Sigma\tS_1}}_{b\e_{\sigma\tS_1},b\e_{\sigma\Sigma\tS_1}}(\mathcal{L}(\bv,b\e_{\sigma\tS_1}+b\e_{\sigma\Sigma\tS_1}))\\
			&=v^{b(\bv_2-\bv_1)} \Big(\qbinom{b}{\bv_1}\qbinom{b}{\bv_2} - \qbinom{b}{\bv_1-1}\qbinom{b}{\bv_2-1}\Big) \mathcal{L}(0,b\e_{\sigma\tS_1})\boxtimes \mathcal{L}(0,b\e_{\sigma\Sigma\tS_1})\\
			&=v^{b(\bv_2-\bv_1)}\qbinom{b+1}{\bv_1}\qbinom{b+1}{\bv_2}\frac{[b+1-\bv_1-\bv_2]}{[b+1]} \mathcal{L}(0,b\e_{\sigma\tS_1})\boxtimes \mathcal{L}(0,b\e_{\sigma\Sigma\tS_1}).
		\end{align*}
		Taking dual, this gives the desired formula.
	\end{proof}
	
	To deduce the explicit formula for $L(\bv,\bw)$, we need to express $E^aF^bK^cK'^d$ in terms of the dual canonical basis. The following lemma tells us how to derive this from \eqref{eq:rank I EF multiply}.
	
	\begin{lemma}[{cf. \cite[Lemma 4.19]{LP26}}]\label{lem: multiply by L^0}
		For any $l$-dominant pairs $(\bv,\bw)$ and $(\bv^0,\bw^0)\in ({V^0},{W^0})$ such that $\sigma^*\bw^0-\mathcal{C}_q\bv^0=0$, we have 
		\[L(\bv^0,\bw^0)\cdot L(\bv,\bw)\in v^{\frac{1}{2}\Z}L(\bv+\bv^0,\bw+\bw^0).\]
	\end{lemma}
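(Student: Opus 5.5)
We argue on the coalgebra side: the product in $\tR$ is dual to the twisted restriction $\Res^{\bw+\bw^0}_{\bw^0,\bw}$, so the claim is equivalent to computing $\Res^{\bw+\bw^0}_{\bw^0,\bw}(\cl(\bv',\bw+\bw^0))$ for every $\bv'$. By \eqref{equation multiplication}--\eqref{eqn: leading term} we already know
\[
L(\bv^0,\bw^0)\cdot L(\bv,\bw)=v^{\frac12\langle\bw^0,\bw\rangle_{Q^{\dbl},a}}\Big(c^{\bv^0+\bv}_{\bv^0,\bv}(v)\,L(\bv^0+\bv,\bw^0+\bw)+\sum_{\bv'>\bv^0+\bv}c^{\bv'}_{\bv^0,\bv}(v)\,L(\bv',\bw^0+\bw)\Big),
\]
with the leading coefficient a monomial in $v$. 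So the whole task is to prove that $c^{\bv'}_{\bv^0,\bv}=0$ for $\bv'>\bv^0+\bv$.

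First, observe that $\cm_0(\bw^0)\cong\rep(\bw^0,\cs)$ contains the stratum $\cm^{\reg}_0(\bv^0,\bw^0)$, and $\sigma^*\bw^0-\mathcal C_q\bv^0=0$ means this stratum is the open, dense, maximal one. Indeed, by Qin's lemma, $(\bv^0,\bw^0)$ is a sum of the pairs $(\bv^i,\bw^i),(\bv^{\Sigma i},\bw^i)$, and the corresponding module is a direct sum of projective--injective $\Lambda$-modules. Hence $\cl(\bv^0,\bw^0)$ is the constant sheaf on a single point-orbit closure of a rigid module, with no other strata contributing in the relevant fiber. Concretely, the support of $p_!q^*\cl(\bv',\bw^0+\bw)$ in the first factor $\rep(\bw^0,\cs)$ only sees the generic (projective-injective) stratum.

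Second, we use the fact that a projective--injective module $P$ splits off every extension. If $M$ has a submodule isomorphic to the generic module of $\rep(\bw^0,\cs)$, then $M\cong P\oplus M/P$. Therefore, over the open set $\{P\}\times\rep(\bw,\cs)$, the convolution diagram $p,q$ is a vector-bundle/principal-homogeneous fibration identifying $\rep(\bw,\cs)$ with the locally closed subset $P\oplus\rep(\bw,\cs)$ of $\rep(\bw^0+\bw,\cs)$. Under the stratification \eqref{eqn:stratification}, this subset meets exactly the strata $\cm_0^{\reg}(\bv^0+\bv'',\bw^0+\bw)$, since adding $P$ shifts the $\bv$-labels by $\bv^0$. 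This is the same mechanism underlying the $\diamond$-action \eqref{eq:diamond for L def}.

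Third, we combine the two observations. The map $x\mapsto P\oplus x$ is a transversal slice: a neighborhood of $P\oplus x$ is a product of the orbit of $P\oplus x$ directions with a neighborhood of $x$. This is the transversal slice theorem of \cite{Na01, LW21b}. It gives that $\cl(\bv',\bw^0+\bw)$ restricted to the slice is $\cl(\bv'-\bv^0,\bw)$ (up to shift) when $\bv'\ge\bv^0$, and that $\Res^{\bw^0+\bw}_{\bw^0,\bw}\cl(\bv',\bw^0+\bw)$ is a shift of $\cl(\bv^0,\bw^0)\boxtimes\cl(\bv'-\bv^0,\bw)$. Dualizing, $L(\bv^0,\bw^0)\cdot L(\bv,\bw)$ pairs nontrivially only with $\cl(\bv^0+\bv,\bw^0+\bw)$, giving a single monomial in $v^{\frac12}$ times $L(\bv+\bv^0,\bw+\bw^0)$.

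The main obstacle is the second step: justifying that the fibers of $p$ over $\{P\}\times\rep(\bw,\cs)$ form an affine bundle, whose rank contributes only a shift, so that no lower strata appear in the pushforward. If this is delicate, we would instead cite \cite[Lemma 4.19]{LP26}, whose proof for $\imath$quiver varieties applies verbatim to the diagonal case.
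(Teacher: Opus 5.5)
The paper does not prove this lemma itself; it quotes it from \cite[Lemma 4.19]{LP26}. So your closing fallback is exactly the paper's route. The self-contained argument that precedes it, however, has a genuine gap at Step 2, and the formula asserted in Step 3 is false as stated.

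\textbf{Step 2.} The modules in $U=\mathcal{M}_0^{\mathrm{reg}}(\bv^0,\bw^0)$ are not projective--injective in general. Take $(\bv^0,\bw^0)=(\bv^i,\bw^i)$. Then $\rep(\bw^i,\mathcal{S})=\{(a,b)\in k^2\mid ab=0\}$, where $a,b$ are the values of $\varepsilon_i,\varepsilon_i'$, and $U$ is one punctured axis. Its points are the module $K_{\tS_i}$ (with $\tS_i$ at $i$ and at $i'$, $\varepsilon_i$ an isomorphism and $\varepsilon_i'=0$), or its mirror for $K_i'$.

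A module $K_X$ of this shape is projective (resp.\ injective) over $\Lambda$ iff $X$ is projective (resp.\ injective) over $kQ$. Hence $K_{\tS_i}$ is projective--injective only when $i$ is an isolated vertex. Your picture is the $\mathfrak{sl}_2$ picture, where $\Lambda$ happens to be self-injective.

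Concretely, suppose $\tS_i$ is not injective. An injective coresolution $0\to\tS_i\to I\to I'\to 0$ in $\mod(kQ)$ gives a sequence $0\to K_{\tS_i}\to K_I\to K_{I'}\to 0$ of $\Lambda$-modules. It is non-split, because restricting to the unprimed vertices is exact and returns the original sequence. Dually, if $\tS_i$ is not projective, you get a non-split sequence with $K_{\tS_i}$ as the quotient.

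So $q$ does not map the fibres of $p$ over $U\times\rep(\bw,\mathcal{S})$ into $\{N\oplus x\}$, and the identification of $\rep(\bw,\mathcal{S})$ with $N\oplus\rep(\bw,\mathcal{S})$ that Steps 2--3 rest on does not exist. The relation "$[K]*[M]$ is a multiple of $[K\oplus M]$" that you are implicitly mimicking holds in the semi-derived Hall algebra. It comes from finite homological dimension together with the imposed relations, not from extensions splitting.

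A geometric proof along your lines would instead have to show three things. First, that extensions of $x$ by $N\in U$ stay in the stratum of $N\oplus x$. Second, that the fibres of $p$ have constant dimension. Third, that the pullback of $\mathcal{L}(\bv',\bw^0+\bw)$ along $x\mapsto N\oplus x$ is controlled by a genuine transversal-slice argument. The third point is where $\sigma^*\bw^0-\mathcal{C}_q\bv^0=0$ really enters: it gives $\sigma^*(\bw^0+\bw)-\mathcal{C}_q(\bv^0+\bv)=\sigma^*\bw-\mathcal{C}_q\bv$, so the normal slices match. Step 3 simply asserts the outcome.

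\textbf{Two further inaccuracies.}
\begin{itemize}
\item $U$ is open, because $\bv^0$ is maximal among $l$-dominant vectors for $\bw^0$. Openness is all you need to isolate the multiplicity of $\mathcal{L}(\bv^0,\bw^0)\boxtimes\mathcal{L}(\bv,\bw)$ by restricting to $U\times\rep(\bw,\mathcal{S})$. But $U$ is not dense: already $\rep(\bw^i,\mathcal{S})$ is a union of two lines.
\item The claim that $\Res^{\bw^0+\bw}_{\bw^0,\bw}\mathcal{L}(\bv',\bw^0+\bw)$ is a single shift of $\mathcal{L}(\bv^0,\bw^0)\boxtimes\mathcal{L}(\bv'-\bv^0,\bw)$ is false. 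By the $\mathfrak{sl}_2$ remark at the end of the paper, $C\cdot C=C^{(2)}+KK'$ with $C=L(0,\bw^1)$, and $KK'$ is a monomial multiple of $L(\bv^1+\bv^{\Sigma 1},2\bw^1)$. Hence $\Res^{2\bw^1}_{\bw^1,\bw^1}\mathcal{L}(\bv^1+\bv^{\Sigma 1},2\bw^1)$ also contains $\mathcal{L}(0,\bw^1)\boxtimes\mathcal{L}(0,\bw^1)$. The same example refutes the remark in Step 1 that the first factor "only sees the generic stratum."
\end{itemize}

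What is true, and needed, is only the multiplicity statement over $U\times\rep(\bw,\mathcal{S})$. That is precisely the part your argument leaves unproved.
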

	
	\begin{lemma}\label{prop:rank I mult by K formula}
		For any strongly $l$-dominant pair $(\bv,\bw)$, we have 
		\begin{alignat*}{2}
			K\cdot L(\bv,\bw)&=v^{\bw_1-\bw_2}L(\bv+\e_{\sigma\tS_1},\bw),\quad\quad &&
			L(\bv,\bw)\cdot K=v^{\bw_2-\bw_1}L(\bv+\e_{\sigma\tS_1},\bw)\\
			K'\cdot L(\bv,\bw)&=v^{\bw_2-\bw_1}L(\bv+\e_{\sigma\Sigma\tS_1},\bw),\quad\quad &&
			L(\bv,\bw)\cdot K'=v^{\bw_1-\bw_2}L(\bv+\e_{\sigma\Sigma\tS_1},\bw).
		\end{alignat*}
	\end{lemma}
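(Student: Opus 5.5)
We would derive the statement from Lemma~\ref{lem: multiply by L^0}, which already gives $K\cdot L(\bv,\bw)=v^{m}L(\bv+\bv^1,\bw+\bw^1)$ for some $m\in\frac12\Z$ (and similarly for $K'$ and for right multiplication). The only remaining task is to determine the exponent $m$. (Here $\bv+\e_{\sigma\tS_1}$ in the statement should be read as adding $\bv^1$ with the corresponding shift in $\bw$, and likewise for $K'$.)

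To determine $m$ we compare leading terms in the product formula \eqref{equation multiplication}. By \eqref{eqn: leading term}, $L(\bv^1,\bw^1)\ast L(\bv,\bw)$ has leading coefficient $v^{d((\bv^1,\bw^1),(\bv,\bw))-d((\bv,\bw),(\bv^1,\bw^1))}$. We then account for the twist \eqref{eq:tw} by $v^{\frac12\langle \bw^1,\bw\rangle_{Q^{\dbl},a}}$, so that $m$ is the sum of these two exponents.

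In rank one the computation is explicit. We have $\sigma^*\bw^1-\mathcal C_q\bv^1=0$, and $\tau^*$ swaps the two coordinates $\tS_1,\Sigma\tS_1$, since $\tau=\Sigma$ when $F=\Sigma^2$ on $A_1$. Hence $d((\bv^1,\bw^1),(\bv,\bw))=\bv^1\cdot\sigma^*\bw=\bw_1$ (or $\bw_2$, depending on the support of $\bv^1$). The other term is
\[
d((\bv,\bw),(\bv^1,\bw^1))=(\sigma^*\bw-\mathcal C_q\bv)\cdot\tau^*\bv^1+\bv\cdot\sigma^*\bw^1 ,
\]
which evaluates to a combination of $\bw_1,\bw_2,\bv_1,\bv_2$. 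Since $\bw^1=\e_{\sigma\tS_1}+\e_{\sigma\Sigma\tS_1}$ is symmetric, the antisymmetric form $\langle\bw^1,\bw\rangle_{Q^{\dbl},a}$ contributes a multiple of $\bw_1-\bw_2$, because $\varepsilon$ and $\varepsilon'$ give opposite contributions. Summing these terms should yield $\bw_1-\bw_2$ for $K$ and $\bw_2-\bw_1$ for $K'$.

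For right multiplication, the exponents become negated because of the antisymmetry of both $d(\cdot,\cdot)-d(\cdot,\cdot)^{\rm op}$ and the twist. As a consistency check, we would verify that $K\diamond L=L(\bv+\bv^1,\bw+\bw^1)$ is bar-invariant, which forces $K\cdot L$ and $L\cdot K$ to have opposite exponents, as in \eqref{eq:diamond for L def} and \eqref{QG diamond action char}. Alternatively, we can check against Corollary~\ref{prop:rank I mult by EF formula} using $\Gamma$-degrees: $\check\alpha_1(\deg_\Gamma)=2(\bw_1-\bw_2)$ gives exponent $\pm(\bw_1-\bw_2)$, matching the normalisation of the $\diamond$-action.

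The main obstacle is the bookkeeping of the $\tau^*$ convention and of the sign of the twist $\langle-,-\rangle_{Q^{\dbl},a}$. We expect to fix these by testing the case $\bv=0$, $\bw=\e_{\sigma\tS_1}$, where $KE=v^2EK$ must hold. From $KE=v^2EK$ and $K\diamond E$ being bar-invariant, we get $KE=vL$ and $EK=v^{-1}L$, which pins down the normalisation.
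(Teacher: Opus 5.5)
Your proposal is correct and is essentially the paper's proof. Lemma~\ref{lem: multiply by L^0} reduces each product to a single power of $v$ times $L(\bv+\bv^1,\bw+\bw^1)$ (resp.\ with $\bv^{\Sigma 1}$). The bar anti-automorphism, which fixes $K$, $K'$ and every $L(\bv,\bw)$, gives right multiplication the opposite exponent. The exponent itself is read off from the comultiplication formula: you use its leading-term form \eqref{eqn: leading term}, while the paper uses \eqref{eq:rank I res formula} together with the decomposition of $\pi(\bv,\bw)$.

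One correction. The twist \eqref{eq:tw} is built from the Euler form of $Q^{\dbl}$, which has no $\varepsilon$-arrows (those belong to $Q^\sharp$). So in rank one $\langle-,-\rangle_{Q^{\dbl},a}\equiv 0$, and the whole exponent is $d((\bv^1,\bw^1),(\bv,\bw))-d((\bv,\bw),(\bv^1,\bw^1))=\bw_1-\bw_2$, since the two terms equal $\bw_1$ and $\bw_2$. The analogous computation gives $\bw_2-\bw_1$ for $\bv^{\Sigma 1}$, so your test-case normalisation is unnecessary.
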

	\begin{proof}
		By Lemma~\ref{lem: multiply by L^0} we have
		\[K\cdot L(\bv,\bw)=v^{a_\bw}L(\bv+\e_{\sigma\tS_1},\bw)=\ov{L(\bv,\bw)\cdot K},\]
		and similarly for $K'$. The integer $a_\bw$ can be determined using the decomposition of $\pi(\bv,\bw)$ and \eqref{eq:rank I res formula}.
	\end{proof}

	\begin{proposition}\label{prop:rank I L(vw) formula}
		For any strongly $l$-dominant pair $(\bv,\bw)$, we have
		\begin{align}
			\label{eq:rank I L formula}
			L(\bv,\bw)=\sum_{k=l}^{n}(-1)^{k-l} \sum_{\substack{\bv'_1\geq\bv_1,\bv'_2\geq\bv_2 \\ \bv'_1+\bv'_2=k}} 
			v^{f_{\bw}(\bv,\bv')} \qbinom{n-\bv'_2-\bv_1}{\bv'_1-\bv_1} \qbinom{n-\bv'_1-\bv_2}{\bv'_2-\bv_2} E^{\bw_1-k}F^{\bw_2-k}K^{\bv'_1}K'^{\bv'_2},
		\end{align}
		where $n=\min\{\bw_1,\bw_2\}$, $l=\bv_1+\bv_2$, and
		\[f_{\bw}(\bv,\bv')=(\bw_1-\bw_2)(\bv_1-\bv_2)+(n+1-\bv'_1-\bv'_2)[(\bv_1-\bv_2)-(\bv'_1-\bv'_2)].\]
	\end{proposition}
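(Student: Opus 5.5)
The plan is to obtain \eqref{eq:rank I L formula} by inverting a unitriangular system. First, express every monomial $E^{\bw_1-k}F^{\bw_2-k}K^{\bv'_1}K'^{\bv'_2}$ in the dual canonical basis. Since $K,K'$ are $L(\bv^0,\bw^0)$ with $\sigma^*\bw^0-\mathcal{C}_q\bv^0=0$, Lemma~\ref{prop:rank I mult by K formula} applies, so right multiplication by $K^{\bv'_1}K'^{\bv'_2}$ only shifts $\bv\mapsto\bv+\bv'$ and contributes an explicit power of $v$. This power depends only on $\bw$ and $\bv'$, because $\bw$ is unchanged by $K,K'$.

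Next, apply \eqref{eq:rank I EF multiply} with $a=\bw_1-k$ and $b=\bw_2-k$. The relevant parameter is $\min\{a,b\}=n-k$. This gives
\[
E^{\bw_1-k}F^{\bw_2-k}K^{\bv'_1}K'^{\bv'_2}=\sum_{\bu}v^{g(\bu,\bv')}\qbinom{n-k+1}{\bu_1}\qbinom{n-k+1}{\bu_2}\frac{[n-k+1-\bu_1-\bu_2]}{[n-k+1]}\,L(\bu+\bv',\bw),
\]
where $\bu$ ranges over $\bu_1+\bu_2\le n-k$, $g$ is an explicit linear expression, and $k=\bv'_1+\bv'_2$. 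In terms of $\bv=\bu+\bv'$, this is a triangular system indexed by the pairs $\bv'\le\bv$, with $\bv'$ ranging over pairs satisfying $\bv'_1+\bv'_2\le n$. The diagonal coefficient ($\bu=0$) is a pure power of $v$.

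The second step is to invert the system. The right-hand side of \eqref{eq:rank I L formula} gives a candidate, so it suffices to verify it. Substitute the expansion above into the proposed sum and collect the coefficient of each $L(\bv'',\bw)$ with $\bv''\ge\bv$. This coefficient is a double sum over $\bv'$ with $\bv\le\bv'\le\bv''$. After factoring out common $v$-powers, it should reduce to a product of two alternating sums of the form
\[
\sum_{j}(-1)^j v^{c\,j}\qbinom{N-\cdot}{j}\qbinom{N'-\cdot}{m-j},
\]
one in each coordinate, together with the correction factor $[n-k+1-\bu_1-\bu_2]/[n-k+1]$. I would rewrite that correction factor using
\[
\qbinom{N+1}{p}\qbinom{N+1}{q}\frac{[N+1-p-q]}{[N+1]}=\qbinom{N}{p}\qbinom{N}{q}-\qbinom{N}{p-1}\qbinom{N}{q-1},
\]
which appears in the proof of \eqref{eq:rank I EF multiply}. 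This turns each term into a difference of products of plain $q$-binomials. The needed vanishing for $\bv''\ne\bv$ and the value $1$ for $\bv''=\bv$ should then follow from the standard $q$-Vandermonde and $q$-binomial inversion identity $\sum_j(-1)^jv^{\pm j(m-1)}\qbinom{m}{j}=\delta_{m,0}$, after a telescoping between the two pieces of the difference.

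The main obstacle is the bookkeeping of the $v$-exponents. One has to check that $f_{\bw}(\bv,\bv')$, combined with the exponents from Lemma~\ref{prop:rank I mult by K formula} and \eqref{eq:rank I EF multiply}, gives exactly the exponent $c\,j$ required by the inversion identity. The term $(n+1-\bv'_1-\bv'_2)[(\bv_1-\bv_2)-(\bv'_1-\bv'_2)]$ should pair with the $[n-k+1]$ appearing in the denominator. I would first check the cases $n=0,1$ by hand to fix signs and conventions, for instance whether $\e_{\sigma\tS_1}$ in Lemma~\ref{prop:rank I mult by K formula} is to be read as $\e_{\tS_1}$. I would then do the general telescoping by induction on $n-l$.
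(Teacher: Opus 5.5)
Your plan is correct and uses the paper's ingredients: invert the unitriangular system obtained from \eqref{eq:rank I EF multiply} and Lemma~\ref{prop:rank I mult by K formula}, split the correction factor with $\qbinom{N+1}{p}\qbinom{N+1}{q}\frac{[N+1-p-q]}{[N+1]}=\qbinom{N}{p}\qbinom{N}{q}-\qbinom{N}{p-1}\qbinom{N}{q-1}$, and finish with negation and $q$-Vandermonde. The genuine difference is the order of composition.

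The paper first reduces to $\bw_1=\bw_2$ via Corollary~\ref{prop:rank I mult by EF formula}. It then substitutes \eqref{eq:rank I L formula} into \eqref{eq:rank I EF multiply}, compares coefficients of monomials, and uses the $K,K'$-shift to reduce to the single row $E^bF^b$. The inner sum then runs over the $L$-index $\bv\le\bv'$ with the monomial index $\bv'$ fixed. The top $b+1$ is constant and $\qbinom{b-\bv'_2-\bv_1}{\bv'_1-\bv_1}$ involves only $\bv_1$, so the two pieces $J_1,J_2$ factor at once and one checks $J_1=J_2$.

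You instead collect coefficients of $L(\bv'',\bw)$. This gives the formula for $L(\bv,\bw)$ directly, but your inner sum runs over the monomial index $\bv'$. There both $\qbinom{n-\bv'_2-\bv_1}{\bv'_1-\bv_1}$ and the top $n-k+1$ (with $k=\bv'_1+\bv'_2$) couple the two coordinates, so the factorization you assert is not automatic. It does hold after the difference identity, for two reasons:
\begin{itemize}
\item the rearrangement $\qbinom{n-\bv'_2-\bv_1}{\bv'_1-\bv_1}\qbinom{n-k}{\bv''_1-\bv'_1}=\qbinom{\bv''_1-\bv_1}{\bv'_1-\bv_1}\qbinom{n-\bv'_2-\bv_1}{\bv''_1-\bv_1}$ and its analogues separate the variables;
\item the total $v$-exponent is a constant plus a linear function of $\bv'_1$ plus a linear function of $\bv'_2$.
\end{itemize}

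For instance, take $\bv=0$, $\bw=(N,N)$, $\bv''=(p,q)$. The single sums are then of the form $\sum_x(-1)^xv^{x(p-q-1)}\qbinom{p}{x}\qbinom{N-x}{q}=v^{p(N-q)}\qbinom{N-p}{N-q}$. The first piece equals $\delta_{p,q}$, and the second equals $\delta_{p,q}$ if $p,q\geq1$ and $0$ otherwise. Their difference is $\delta_{\bv'',0}$, so no induction on $n-l$ is needed.

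A small correction: $K$ and $K'$ do change $\bw$, by $\bw^1$. In Lemma~\ref{prop:rank I mult by K formula} read $L(\bv+\e_{\tS_1},\bw+\bw^1)$ and $L(\bv+\e_{\Sigma\tS_1},\bw+\bw^1)$. What stays fixed is $\bw_1-\bw_2$, and that is all the $v$-power depends on.
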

	
	\begin{proof}
		We need to prove that \eqref{eq:rank I L formula} is the inverse formula to \eqref{eq:rank I EF multiply}. In view of Corollary~\ref{prop:rank I mult by EF formula}, it suffices to consider the case $a=b$. Plugging \eqref{eq:rank I L formula} into \eqref{eq:rank I EF multiply} and applying Lemma~\ref{prop:rank I mult by K formula}, we obtain
		\begin{align*}
			E^bF^bK'^cK^d&=\sum_{0\leq\bv_1+\bv_2\leq b}\sum_{\substack{\bv'_1+\bv'_2\leq b+c+d\\ \bv'_1\geq\bv_1+c,\bv'_2\geq\bv_2+d}} (-1)^{(\bv_1'+\bv_2')-(\bv_1+c+\bv_2+d)} v^{b(\bv_2-\bv_1)+f_{\bw}(\bv+\mathbf{u},\bv')}\\
			&\ \cdot \qbinom{b+1}{\bv_1}\qbinom{b+1}{\bv_2}\frac{[b+1-\bv_1-\bv_2]}{[b+1]}\qbinom{b+d-\bv'_2-\bv_1}{\bv'_1-\bv_1-c} \qbinom{b+c-\bv'_1-\bv_2}{\bv'_2-\bv_2-d}\\
			&\ \cdot E^{b+c+d-k}F^{b+c+d-k} K'^{\bv'_1}K^{\bv'_2}\\
			&=\sum_{0\leq \bv_1+\bv_2\leq b}\sum_{\substack{\bv'_1+\bv'_2\leq d\\ \bv'_1\geq\bv_1,\bv'_2\geq\bv_2}} (-1)^{(\bv_1'+\bv_2')-(\bv_1+\bv_2)} v^{b(\bv_2-\bv_1)+f_{\bw}(\bv+\mathbf{u},\bv'+\mathbf{u})}\\
			&\ \cdot \qbinom{b+1}{\bv_1}\qbinom{b+1}{\bv_2}\frac{[b+1-\bv_1-\bv_2]}{[b+1]}\qbinom{b-\bv'_2-\bv_1}{\bv'_1-\bv_1} \qbinom{b-\bv'_1-\bv_2}{\bv'_2-\bv_2}\\
			&\ \cdot E^{b+c+d-k}F^{b+c+d-k} K'^{\bv'_1+c}K^{\bv'_2+d}
		\end{align*}
		where we write
		\[\bw=(b+c+d)\e_{\sigma\tS_1}+(b+c+d)\e_{\sigma\Sigma\tS_1},\quad \mathbf{u}=c\e_{\tS_1}+d\e_{\Sigma\tS_1}.\]
		The coefficient of $E^bF^bK'^cK^d$ in the right-hand side is easily seen to be $1$, so it remains to prove that for any (fixed) $\bv'_1,\bv'_2\geq 0$ such that $0<\bv_1'+\bv'_2\leq b$, the following equation holds:
		\begin{align*}
			\sum_{0\leq \bv\leq\bv'}
			&(-1)^{\bv_1+\bv_2} \cdot v^{(\bv'_1+\bv'_2-1)(\bv_1-\bv_2)} \qbinom{b+1}{\bv_1} \qbinom{b+1}{\bv_2}  \\
			&\frac{[b+1-\bv_1-\bv_2]}{[b+1]}\qbinom{b-\bv'_2-\bv_1}{\bv'_1-\bv_1} \qbinom{b-\bv'_1-\bv_2}{\bv'_2-\bv_2}=0.
		\end{align*}
		On the other hand, recall that in the proof of (\ref{eq:rank I EF multiply}) we have seen the following identity:
		\[\qbinom{b}{\bv_1}\qbinom{b}{\bv_2}-\qbinom{b}{\bv_1-1}\qbinom{b}{\bv_2-1}=\qbinom{b+1}{\bv_1}\qbinom{b+1}{\bv_2}\frac{[b+1-\bv_1-\bv_2]}{[b+1]}.\]
		So it suffices to prove the following lemma.
	\end{proof}
	
	\begin{lemma}
		For any non-negative integers $\bv'_1,\bv'_2$ such that $0<\bv'_1+\bv'_2\leq b$, set
		\begin{align*}
			J_1=\sum_{0\leq \bv\leq\bv'}
			&(-1)^{\bv_1+\bv_2} \cdot v^{(\bv'_1+\bv'_2-1)(\bv_1-\bv_2)} \qbinom{b}{\bv_1} \qbinom{b}{\bv_2} \qbinom{b-\bv'_2-\bv_1}{\bv'_1-\bv_1} \qbinom{b-\bv'_1-\bv_2}{\bv'_2-\bv_2},\\
			J_2=\sum_{0\leq \bv\leq\bv'}
			&(-1)^{\bv_1+\bv_2} \cdot v^{(\bv'_1+\bv'_2-1)(\bv_1-\bv_2)} \qbinom{b}{\bv_1-1} \qbinom{b}{\bv_2-1} \qbinom{b-\bv'_2-\bv_1}{\bv'_1-\bv_1} \qbinom{b-\bv'_1-\bv_2}{\bv'_2-\bv_2}.
		\end{align*}
		Then $J_1=J_2$.
	\end{lemma}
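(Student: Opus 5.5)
Write $p=\bv'_1$, $q=\bv'_2$ (so $0<p+q\le b$), $x=\bv_1$, $y=\bv_2$, and $\epsilon=p+q-1\ge 0$. The plan is to reduce $J_2$ to a sum of the same shape as $J_1$ by an index shift, and then compare the two sums term by term or through a common closed form.

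\textbf{Step 1: reindex $J_2$.} In $J_2$ the terms with $x=0$ or $y=0$ vanish, so substitute $x\mapsto x+1$, $y\mapsto y+1$. This gives
\[J_2=\sum_{0\le x\le p-1,\,0\le y\le q-1}(-1)^{x+y}v^{\epsilon(x-y)}\qbinom{b}{x}\qbinom{b}{y}\qbinom{b-q-x-1}{p-1-x}\qbinom{b-p-y-1}{q-1-y}.\]
Note that $b-q-x-1=(b-1-(q-1))-x-1$, so, apart from the sign and power of $v$, this has the shape of $J_1$ with $(p,q)$ replaced by $(p-1,q-1)$.

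\textbf{Step 2: a one-variable identity.} The $x$- and $y$-sums in $J_1$ do not factor as they stand. They would factor if the lower entries were decoupled. Using $\qbinom{b}{x}\qbinom{b-q-x}{p-x}$, I would expand $\qbinom{b}{x}$ by $q$-Vandermonde (or rewrite the product as a $q$-multinomial) so that each factor depends on only one summation variable.

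The core lemma to establish is a closed form for
\[S(p,q)=\sum_{x=0}^{p}(-1)^x v^{\epsilon x}\qbinom{b}{x}\qbinom{b-q-x}{p-x}.\]
I expect $S(p,q)$ to be a single signed power of $v$ times $\qbinom{q+c}{p}$-type binomials. It can be computed by the $q$-Chu--Vandermonde identity together with $\qbinom{-n}{k}=(-1)^k\qbinom{n+k-1}{k}$. The exponent $\epsilon=p+q-1$ is exactly the one that makes Vandermonde apply with a clean power of $v$.

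\textbf{Step 3: compare.} Once the sum is decoupled in the form
\[J_1=\sum_{c}(\text{coefficient})\cdot S_x\cdot S_y,\]
I would evaluate both $J_1$ and $J_2$ and compare the results.

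\textbf{Alternative route.} If the factorization fails, I would prove $J_1=J_2$ by generating functions instead. Since $J_1-J_2$ is a specialization of the identity $\qbinom{b}{x}\qbinom{b}{y}-\qbinom{b}{x-1}\qbinom{b}{y-1}$, it would suffice to show that $\sum_{x,y}(-1)^{x+y}v^{\epsilon(x-y)}(\cdots)$ vanishes. I would read this as the coefficient extraction of a product of two $v$-binomial series $\prod(1+v^{j}z)$ against $\prod(1-v^{j}z)^{-1}$ that collapses.

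Alternatively, the statement might follow geometrically. The total sum being zero is equivalent to \eqref{eq:rank I L formula} inverting \eqref{eq:rank I EF multiply}, so one can check it for small cases ($p+q=1$: direct, both sides equal $\qbinom{b-1}{\cdot}$-sums that agree) and then induct on $p+q$ using a Pascal recursion $\qbinom{m}{k}=v^{k}\qbinom{m-1}{k}+v^{-(m-k)}\qbinom{m-1}{k-1}$ applied to $\qbinom{b}{x}$, reducing $(b,p,q)$ to smaller parameters.

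\textbf{Expected obstacle.} The main obstacle is the coupling of $x$ with $q$ (and $y$ with $p$) in the last two binomials. This prevents a naive factorization and forces either a careful double Vandermonde or an induction in which the $v$-exponent $\epsilon(x-y)$ must be tracked exactly. I would attempt the induction on $b$ first, since both $J_1$ and $J_2$ satisfy the same Pascal-type recursion in $b$, and the base case $b=p+q$ is checked directly.
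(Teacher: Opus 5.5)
Your toolkit (the negation identity plus $q$-Vandermonde) is the right one, and your $S(p,q)$ is exactly the sum the paper evaluates. However, the proposal is not yet a proof, because of a misreading in Step 2. Since $p=\bv'_1$ and $q=\bv'_2$ are fixed, the ``coupling of $x$ with $q$'' is coupling with a constant. The summand of $J_1$ is $f(x)g(y)$ with $f(x)=(-1)^xv^{\epsilon x}\qbinom{b}{x}\qbinom{b-q-x}{p-x}$ and $g(y)=(-1)^yv^{-\epsilon y}\qbinom{b}{y}\qbinom{b-p-y}{q-y}$. So $J_1=S(p,q)\,T(p,q)$ outright, with $T$ the analogous $y$-sum, and your reindexed $J_2$ factors in the same way. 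Having dismissed this, you postulate an unspecified decoupling $\sum_c(\text{coefficient})S_xS_y$ and never evaluate anything. The closed form of $S$ is only ``expected'', the $J_2$ factors are not computed, and ``evaluate both and compare'' is where all the content lies. The fallbacks do not fill the hole. The common Pascal recursion in $b$ is asserted, not checked. The Step 1 remark is inaccurate: the reindexed $J_2$ is not $J_1$ at $(p-1,q-1)$, since both $\epsilon$ and the upper entries change. The geometric route is circular, because this lemma is precisely what the paper uses to show that \eqref{eq:rank I L formula} inverts \eqref{eq:rank I EF multiply}.

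To close the gap (this is the paper's argument), rewrite $\qbinom{b-q-x}{p-x}=(-1)^{p-x}\qbinom{p+q-b-1}{p-x}$; its upper entry no longer depends on $x$. Vandermonde with upper entries $b$ and $p+q-b-1$ then applies; this is where $\epsilon=(p+q-b-1)+b$ enters. It gives
\[S(p,q)=(-1)^pv^{bp}\qbinom{p+q-1}{p},\qquad T(p,q)=(-1)^qv^{-bq}\qbinom{p+q-1}{q}.\]
After your shift $x\mapsto x+1$, $y\mapsto y+1$, the same two steps with lower indices $p-1$ and $q-1$ give the two factors of $J_2$: $(-1)^pv^{\epsilon+b(p-1)}\qbinom{p+q-1}{p-1}$ and $(-1)^qv^{-\epsilon-b(q-1)}\qbinom{p+q-1}{q-1}$. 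The $v^{\pm\epsilon}$ cancel, so $J_1$ and $J_2$ are both $(-1)^{p+q}v^{b(p-q)}$ times a product of two binomials. These products agree by $\qbinom{p+q-1}{p-1}=\qbinom{p+q-1}{q}$ and $\qbinom{p+q-1}{q-1}=\qbinom{p+q-1}{p}$, which hold since $p+q-1\ge 0$.
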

	\begin{proof}
		We shall make use the following standard binomial identities:
		\begin{gather}
			\qbinom{x}{n}=(-1)^n \qbinom{n-x-1}{n}, \label{binomial identity-1}\\
			\sum_{k=0}^n v^{xk -y(n-k)} \qbinom{x}{n-k} \qbinom{y}{k} = \qbinom{x+y}{n}, \label{binomial identity-2}
		\end{gather}
		where $x,y,m$ are integers and $n\geq 0$. Applying \eqref{binomial identity-1} to $J_1$, we find that 
		\begin{align*}
			&\sum_{\bv_1=0}^{\bv'_1}(-1)^{\bv_1}  v^{\bv_1(\bv'_1+\bv'_2-1)} \qbinom{b}{\bv_1} \qbinom{b-\bv'_2-\bv_1}{\bv'_1-\bv_1} \sum_{\bv_2=0}^{\bv'_2}(-1)^{\bv_2} v^{-\bv_2(\bv'_1+\bv'_2-1)} \qbinom{b}{\bv_2} \qbinom{b-\bv'_1-\bv_2}{\bv'_2-\bv_2}\\
			=&\sum_{\bv_1=0}^{\bv'_1}(-1)^{\bv'_1} v^{\bv_1(\bv'_1+\bv'_2-1)} \qbinom{b}{\bv_1} \qbinom{\bv'_1+\bv'_2-b-1}{\bv'_1-\bv_1} \\
			&\cdot\sum_{\bv_2=0}^{\bv'_2}(-1)^{\bv'_2} v^{-\bv_2(\bv'_1+\bv'_2-1)} \qbinom{b}{\bv_2} \qbinom{\bv'_1+\bv'_2-b-1}{\bv'_2-\bv_2}.
		\end{align*}
		Now we apply (\ref{binomial identity-2})  to give
		\begin{align*}
			\sum_{\bv_1=0}^{\bv'_1}v^{(b+1)\bv'_1-\bv_1(\bv'_1+\bv'_2)} \qbinom{b}{\bv_1} \qbinom{\bv'_1+\bv'_2-b-1}{\bv'_1-\bv_1}=\qbinom{\bv'_1+\bv'_2-1}{\bv'_1},\\
			\sum_{\bv_2=0}^{\bv'_2}v^{(b+1)\bv'_2-\bv_2(\bv'_1+\bv'_2)} \qbinom{b}{\bv_2} \qbinom{\bv'_1+\bv'_2-b-1}{\bv'_2-\bv_2}=\qbinom{\bv'_1+\bv'_2-1}{\bv'_2},
		\end{align*}
		from which we conclude
		\begin{align*}
			J_1&=(-1)^{\bv'_1}v^{b\bv'_1}\qbinom{\bv'_1+\bv'_2-1}{\bv'_1}\cdot (-1)^{\bv'_2}v^{-b\bv'_2}\qbinom{\bv'_1+\bv'_2-1}{\bv'_2}\\
			&=(-1)^{\bv'_1+\bv'_2} v^{b(\bv'_1-\bv'_2)} \qbinom{\bv'_1+\bv'_2-1}{\bv'_1} \qbinom{\bv'_1+\bv'_2-1}{\bv'_2}.
		\end{align*}
		A similar argument applies to $J_2$, in which case we get 
		\begin{align*}
			J_2=&\sum_{\bv_1=1}^{\bv'_1} (-1)^{\bv_1}  v^{\bv_1(\bv'_1+\bv'_2-1)} \qbinom{b}{\bv_1-1} \qbinom{b-\bv'_2-\bv_1}{\bv'_1-\bv_1}\\
			&\cdot \sum_{\bv_2=1}^{\bv'_2} (-1)^{\bv_2} v^{-\bv_2(\bv'_1+\bv'_2-1)} \qbinom{b}{\bv_2-1} \qbinom{b-\bv'_1-\bv_2}{\bv'_2-\bv_2}\\
			=&\sum_{\bv_1=0}^{\bv'_1-1}(-1)^{\bv'_1} v^{(\bv_1+1)(\bv'_1+\bv'_2-1)} \qbinom{b}{\bv_1} \qbinom{\bv'_1+\bv'_2-b-1}{\bv'_1-\bv_1-1}\\
			&\cdot \sum_{\bv_2=0}^{\bv'_2-1}(-1)^{\bv'_2}v^{-(\bv_2+1)(\bv'_1+\bv'_2-1)} \qbinom{b}{\bv_1} \qbinom{\bv'_1+\bv'_2-b-1}{\bv'_2-\bv_2-1}\\
			=&(-1)^{\bv'_1} v^{(b+1)\bv'_1+\bv'_2-b-1} \qbinom{\bv'_1+\bv'_2-1}{\bv'_1-1} \cdot (-1)^{\bv'_2} v^{-(b+1)\bv'_2-\bv'_1+b+1} \qbinom{\bv'_1+\bv'_2-1}{\bv'_2-1}\\
			=&(-1)^{\bv'+\bv'_2}v^{b(\bv'_1-\bv'_2)}\qbinom{\bv'_1+\bv'_2-1}{\bv'_1-1}\qbinom{\bv'_1+\bv'_2-1}{\bv'_2-1}.
		\end{align*}
		It is clear that $J_1=J_2$ in any case (if $\bv'_1=0$ or $\bv'_2=0$ then $J_1$ and $J_2$ both reduce to $0$).
	\end{proof}
	
	\begin{remark}
		In \cite{BG17} the authors have given an explicit formula for the double canonical basis of $\tU_v(\mathfrak{sl}_2)$, where the quantum Casimir element
		\[C=EF-v^{-1}K-vK'=FE-vK-v^{-1}K'\]
		plays an important role. From \eqref{eq:rank I L formula} we see that $C=L(0,\e_{\sigma\tS_1}+\e_{\sigma\Sigma\tS_1})$. In fact, the elements $C^{(m)}$ defined inductively by
		\[C\cdot C^{(m)}=C^{(m+1)}+KK'C^{(m-1)},\quad\forall m\geq 1\]
		in \cite[Section 4.1]{BG17} is nothing but $L(0,m\e_{\sigma\tS_1}+m\e_{\sigma\Sigma\tS_1})$ in our notation, and in particular the double canonical basis of $\tU_v(\mathfrak{sl}_2)$ coincides with the dual canonical basis. It also coincides with the formula of the basis $\Theta$ constructed in \cite{Sh22} for $\tU_v(\mathfrak{sl}_2)$. 
	\end{remark}


\begin{thebibliography}{DDPW01}
		
		%\bibitem[As11]{As} H. Asashiba,
		%{\em A generalization of Gabriel's Galois covering functors and derived equivalences}, J. Algebra {\bf 334} (2011), 109--149.
		\bibitem[Ach21]{Ach21} P. Achar, Perverse sheaves and applications to representation theory, vol. {\bf258}, American Mathematical Society, 2021.
		
		%\bibitem[AB89]{AB89} M. Auslander and R. Buchweitz,
		%{\em The homological theory of maximal Cohen-Macaulay approximation}, Colloque en l'honneur de Pierre Samuel (Orsay, 1987). M\'{e}m. Soc. Math. France (N.S.) {\bf 38} (1989), 5--37.
		
		%\bibitem[BKLW18]{BKLW} H. Bao, J. Kujawa, Y. Li and W. Wang,
		%{\it  Geometric Schur duality of classical type}, Transform. Groups {\bf 23} (2018), 329--389.
		%\href{https://arxiv.org/abs/1404.4000}{arXiv:1404.4000v3}
		
		\bibitem[BBD82]{BBD} 
		A.A. Beilinson, J. Bernstein, P. Deligne, \emph{Faisceaux pervers}, Ast\'{e}risque {\bf100}, 1982.
		
		
		\bibitem[BG17]{BG17} A. Berenstein and J. Greenstein, {\em Double canonical bases}, Adv. Math. {\bf 316} (2017), 381--468.
		
		
		
		\bibitem[Bra03]{Bra03} T. Braden, {\em Hyperbolic localization of intersection cohomology}, Transform. Groups {\bf8} (2003), no. 3, 209--216.
		
		\bibitem[Br13]{Br13} T. Bridgeland, {\em Quantum groups via Hall algebras of complexes}, Ann. Math. {\bf 177} (2013), 739--759.
		
		\bibitem[CZ26]{CZ26} X. Chen, X. Zhou, {\em Bases of the quantum group and $\imath$quantum group of $\mathfrak{sl}_2$}, J. Algebra {\bf690} (2026), 425--474.
		
		
		
		%\bibitem[CM06]{CM06} C. Cibils, E. Marcos, {\em Skew category, Galois covering and smash product of a $k$-category}, Proc. Amer. Math. Soc. {\bf134}
		%(2006), 39--50.
		
		\bibitem[DDPW08]{DDPW}
		B.~Deng, J.~Du, B.~Parshall and J.~Wang,
		Finite dimensional algebras and quantum groups,
		Mathematical Surveys and Monographs {\bf 150}.
		AMS, Providence, RI, 2008.
		
		\bibitem[dCM02]{dM02} M. A. A. de Cataldo and L. Migliorini, {\em The Hard Lefschetz Theorem and the topology of semismall maps}, Ann. Sci. \'Ecole Norm. Sup. (4) {\bf35} (2002), no. 5, 759–772.
		
		
		
		%\bibitem[FLX23]{FLX23} J. Fang, Y. Lan and J. Xiao, {\em Sheaf realization of Bridgeland's Hall algebra of Dynkin type
		%}, \href{arXiv:2303.0499}{https://arxiv.org/abs/2303.04993}
		
		%\bibitem[FK88]{FK88} E. Freitag and R. Kiehl, Etale Cohomology and the Weil Conjecture, Springer, 1988.
		
		
		%\bibitem[Fu17]{Fu17} R. Fujita,
		%{\em Affine highest weight categories and quantum affine Schur-Weyl duality of Dynkin quiver types}, 	Represent. Theory {\bf26} (2022), 211--263.  %\href{https://arxiv.org/abs/1710.11288}{arXiv:1710.11288}
		
		%\bibitem[Ga79]{Ga79} P. Gabriel, {\em Auslander-Reiten sequences and representation-finite algebras}, Representation theory, I (Proc.
		%Workshop, Carleton Univ., Ottawa, Ont., 1979), Springer, Berlin, 1980,  1–-71.
		
		
		%\bibitem[Ga81]{Ga} P. Gabriel, {\em The universal cover of a representation finite algebra}, in: Representation of Algebras, in: Lecture Notes in Math., vol. {\bf903} (1981), 65--105.
		
		
		%\bibitem[GLS12]{GLS12}  C. Geiss, B. Leclerc, and J. Schr\"{o}er, {\em Generic bases for cluster algebras and the Chamber Ansatz}, J.
		%Amer. Math. Soc. {\bf25} (2012), no. 1, 21–76.
		
		\bibitem[GLS13]{GLS13}  C. Geiss, B. Leclerc, and J. Schr\"{o}er, {\em Cluster structures on quantum coordinate rings}, Selecta Math. (N.S.) {\bf19} (2013), 337--397.
		
		
		
		
		
		%\bibitem[Ha87]{Ha1} D. Happel, {\em On the deived category of a finite-dimesional algebra}, Comment. Math. Helv. {\bf62} (3)(1987), 339--389.
		
		\bibitem[Ha88]{Ha2} D. Happel, Triangulated Categories in the Representation Theory of Finite Dimensional Algebras. London Math. Soc. Lecture Notes Ser. {\bf119}, Cambridge Univ. Press, Cambridge, 1988.
		
		\bibitem[HL15]{HL15} D. Hernandez and B. Leclerc, {\em Quantum Grothendieck rings and derived Hall algebras}, J. Reine Angew. Math. {\bf701} (2015), 77--126.
		
		%\bibitem[I05]{I05} O. Iyama, $\tau$-categories. I: Ladders, Algebr. Represent. Theory {\bf8} (2205), no.3, 297--321.
		
		\bibitem[KKKO18]{KKKO18} S.-J. Kang, M. Kashiwara, M. Kim, and S.-j. Oh, {\em Monoidal categorification of cluster algebras}, J. Amer. Math. Soc. 31 (2018), 349--426.
		
		
		\bibitem[Ka91]{Ka91} M. Kashiwara,
		{\em On crystal bases of the $Q$-analogue of universal enveloping algebras}, Duke Math.~J.~{\bf 63} (1991), 456--516.
		
		%\bibitem[Ka14]{Ka14} S. Kato, {\em Poincar\'{e}-Birkhoff-Witt bases and Khovanov-Lauda-Rouquier algebras}, Duke Math. J. {\bf163} (2014), 619--663.
		
		%\bibitem[Ke90]{Ke1} B. Keller, {\em Chain complexes and stable categories}, Manus. Math. {\bf67} (1990), 379--417.
		
		
		\bibitem[KS16]{KS16} B. Keller and S. Scherotzke, {\em Graded quiver varieties and derived categories}, J. Reine Angew. Math. {\bf713} (2016), 85--127.
		
		%\bibitem[KKK15]{KKK15} S.-J. Kang, M. Kashiwara, M. Kim,
		%{\em Symmetric quiver Hecke algebras and $R$-matrices of quantum affine algebras, II}, Duke Math.~J.~{\bf 164} (2015), 1549--1602.
		
		%\bibitem[KL09]{KL09}
		%M. Khovanov and A. Lauda,
		%\emph{A diagrammatic approach to  categorification of quantum groups. {I}}, Represent. Theory \textbf{13}  (2009), 309--347.
		
		
		
		
		
		\bibitem[Ko14]{Ko14} S. Kolb, {\em Quantum symmetric Kac-Moody pairs}, Adv. Math. {\bf267} (2014), 395--469.
		
		
		
		%Publications Math\'ematiques de L’Institut des Hautes Scientifiques {\bf 65}, 131--210 (1987).
		
		
		
		
		
		
		
		
		%\bibitem[LeBP90]{LBP} L. Le Bruyn and C. Procesi, {\em Semisimple representations of quivers}, Trans.
		%Amer. Math. Soc. {\bf 317} (1990), 585--598.
		
		\bibitem[Let99]{Let99} G. Letzter, {\em Symmetric pairs for quantized enveloping algebras}, J. Algebra {\bf220} (1999), 729--767.
		\bibitem[LP25]{LP25} M. Lu and X. Pan, {\em Dual canonical bases of quantum groups and $\imath$quantum groups I: Hall algebras}, \href{https://arxiv.org/abs/2504.19073v2}{arXiv:2504.19073v2}.
		
		
		\bibitem[LP26]{LP26} M. Lu and X. Pan, {\em Dual canonical bases of quantum groups and $\imath$quantum groups II: geometry}, \href{http://arxiv.org/abs/2603.01350}{arXiv:2603.01350}.
		
		
		\bibitem[LR24]{LR24} M. Lu and S. Ruan, $\imath$Hall algebras of weighted projective lines and quantum symmetric pairs III:
		quasi-split type, \href{http://arxiv.org/abs/2411.13078}{arXiv:2411.13078}.
		
		
		
		
		%\bibitem[LW19b]{LW19b} M. Lu and W. Wang, {\em Hall algebras and quantum symmetric pairs II: Reflection functors}, \href{http://export.arxiv.org/abs/1904.01621}{arXiv:1901.01621}
		
		%\bibitem[LW19e]{LW19e} M. Lu and W. Wang, {\em Hall algebras and quantum symmetric pairs V: Bases}, in preparation.
		
		%\bibitem[Lu19]{Lu19} M. Lu,
		%{\em Modified Ringel-Hall algebras of 1-Gorenstein algebras},
		%Appendix A to \cite{LW19}, \href{http://arxiv.org/abs/1901.11446}{arXiv:1901.11446}
		
		\bibitem[LW21a]{LW21a} M. Lu and W. Wang, {\em Hall algebras and quantum symmetric pairs II: reflection functors},  Commun. Math. Phys.  {\bf 381} (2021), 799--855. %, \href{http://arxiv.org/abs/1904.01621}{arXiv:1904.01621}
		
		
		\bibitem[LW21b]{LW21b} M. Lu and W. Wang,
		{\em Hall algebras and quantum symmetric pairs III: Quiver varieties}, Adv. Math. {\bf393} (2021), 108071.
		
		
		\bibitem[LW22a]{LW19} M. Lu and W. Wang, {\em Hall algebras and quantum symmetric pairs I: Foundations}, Proc. London Math. Soc. {\bf 124} (1):  1--82.
		
		
		
		
		
		\bibitem[Lus90a]{Lus90a} G.~Lusztig,
		\textit{Finite-dimensional Hopf algebras arising from quantized universal enveloping algebra}, J. Amer. Math. Soc. {\bf 3} (1990), 257--296.
		
		\bibitem[Lus90b]{Lus90} G. Lusztig, {\em Canonical bases arising from quantized enveloping algebras}, J. Amer. Math.
		Soc. {\bf 3} (1990),  447--498.
		
		\bibitem[Lus91]{Lus91} G. Lusztig,  {\em Quivers, perverse sheaves, and quantized enveloping algebras}, J. Amer.
		Math. Soc. {\bf4} (2)(1991), 365--421, 1991.
		
		\bibitem[Lus93]{Lus93} G. Lusztig, Introduction to Quantum Groups, Birkh\"{a}user, Boston, 1993.
		
		
		\bibitem[Na01]{Na01} H. Nakajima, {\em Quiver varieties and finite-dimensional representations of quantum affine algebras}, J. Amer. Math. Soc. {\bf14} (2001), 145--238 (electronic).
		
		\bibitem[Na04]{Na04} H. Nakajima, {\em Quiver varieties and $t$-analogs of $q$-characters of quantum affine algebras}, Ann. Math. {\bf160} (2004), 1057--1097.
		
		%\bibitem[NV04]{NV04} C. N\v{a}st\v{a}sescu and F. Van Oystaeyen, Methods of Graded Rings, Lecture Notes in
		%Mathematics, {\bf1836}, Springer-Verlag, Berlin, 2004.
		
		
		\bibitem[Q16]{Qin} F. Qin, {\em Quantum groups via cyclic quiver varieties I}, Compos. Math. {\bf152} (2016), 299--326.
		
		%\bibitem[Q21]{Qin21} F. Qin, {\em Dual canonical bases and quantum cluster algebras}, \href{https://arxiv.org/abs/2003.13674v3}{arxiv:2003.13674}
		
		
		%\bibitem[Qui73]{Q} D. Quillen, {\em Higher algebraic K-theory, I}, Springer Lecture Notes in Mathematics {\bf 341} (1973), 85--147.
		
		%\bibitem[Rie86]{Rie86} C. Riedtmann, {\em Degenerations for representations of quivers with relations}, Ann. scient. \'{E}c. Norm. Sup. {\bf 19} (1986), 275--301.
		
		\bibitem[Rin90]{Rin90} C. Ringel, {\em Hall algebras and quantum groups}, Invent. Math. {\bf101} (1990), 583--591.
		
		\bibitem[Rin96]{Rin3} C.M. Ringel,
		{\em PBW-bases of quantum groups}, J. reine angrew. Math. {\bf 470} (1996), 51--88.
		
		%\bibitem[Rin96b]{Rin4}
		%C.M. Ringel, {\em Green's theorem on Hall algebras} in Representations of Algebras and Related Topics, CMS Conference Proceedings {\bf 19}, Providennce, 1996,185--245.
		
		
		
		
		%\bibitem[R12]{R12}
		%\bysame,
		%{\em  Quiver Hecke algebras and 2-Lie algebras},  Algebra Colloq. \textbf{19} (2012),  359--410.
		
		%\bibitem[Sch17]{Sch17} S. Scherotzke, {\em Desingularization of Quiver Grassmannians via Nakajima Categories}, Algebr. Represent. Theor. {\bf 20} (2017), 231--243.
		
		\bibitem[Sch19]{Sch19} S. Scherotzke, {\em Generalized quiver varieties and triangulated categories}, Math. Z. {\bf 292} (2019), 1453--1478. %,  \href{https://arxiv.org/abs/1405.4729}{arXiv:1405.4729v6}
		
		\bibitem[SS16]{SS16}  S. Scherotzke and N. Sibilla, {\em Quiver varieties and Hall algebras}, Proc. London Math. Soc. {\bf 112} (2016), 1002--1018.
		
		
		%\bibitem[Sh22]{Sh22} L. Shen, {\em Duals of Semisimple Poisson–Lie Groups and Cluster Theory of Moduli Spaces of G-local
		% Systems},  
		% IMRN {\bf 18} (2022), 14295--14318.
		
		\bibitem[Sh22]{Sh22} L. Shen, {\em Cluster nature of quantum groups}, \href{https://arxiv.org/abs/2209.06258}{arXiv:2209.06258}
		
		\bibitem[VV03]{VV} M. Varagnolo and E. Vasserot, {\em Perverse sheaves and quantum Grothendieck rings}, Studies in memory
		of Issai Schur (Chevaleret/Rehovot, 2000), Progress in Mathematics {\bf210} (Birkhauser Boston, Boston, MA,
		2003), 345--365.
		
		
		
		
		
		%\bibitem[XZ05]{XZ05} J. Xiao and B. Zhu, {\em Locally finite triangulated categories}, J. Algebra {\bf 290} (2005), 473--490.
	\end{thebibliography}
\end{document}